\documentclass[10pt]{article}

\usepackage{microtype}

\usepackage{amsmath,amssymb,amsthm,mathtools}
\usepackage{enumitem}
\usepackage[margin=0.82in]{geometry}
\usepackage{xcolor}
\usepackage{tikz}
\usepackage{hyperref}

\hypersetup{
    colorlinks=true,     linkcolor=blue,     citecolor=blue,     urlcolor=blue,
    pdftitle={A Hadamard Formula for Equilibrium Envelopes under Parallel Deformation},
    pdfauthor={Ziyu Li} }

\newtheorem{theorem}{Theorem}[section]
\newtheorem{proposition}[theorem]{Proposition}
\newtheorem{lemma}[theorem]{Lemma}
\newtheorem{corollary}[theorem]{Corollary}
\newtheorem{mainresult}{Theorem}

\theoremstyle{definition}
\newtheorem{definition}[theorem]{Definition}
\newtheorem{assumption}[theorem]{Assumption}
\newtheorem{remark}[theorem]{Remark}
\newtheorem*{remark*}{Remark}

\newtheorem{convention}[theorem]{Convention}
\newcommand{\ddc}{dd^c}
\newcommand{\dc}{d^c}
\newcommand{\PSH}{\mathrm{PSH}}
\newcommand{\MA}{\mathrm{MA}}
\newcommand{\tr}{\mathrm{tr}}

\newcommand{\Vol}{\mathrm{Vol}}

\newcommand{\Tr}{\mathrm{Tr}}

\DeclareMathOperator{\DM}{DM}

\newcounter{dynexample}
\renewcommand{\thedynexample}{\arabic{dynexample}}

\newenvironment{dynexample}[1]{%
	\refstepcounter{dynexample}%
	\subsubsection*{Example~\thedynexample: #1}%
	\addcontentsline{toc}{subsubsection}{Example~\thedynexample: #1}%
}{}

\setlist{leftmargin=*,itemsep=2pt,topsep=2pt}
\allowdisplaybreaks
\begin{document}

\title{A Hadamard Formula for Equilibrium Envelopes under Parallel Deformation}
\author{Ziyu Li\\Yau Mathematical Sciences Center, Tsinghua University\\\texttt{liziyu20@mails.tsinghua.edu.cn}}
\date{}
\maketitle

\begin{abstract}
Let $(X,\omega_0)$ be a compact K\"ahler manifold of complex dimension $n$, and let $U\Subset X$ have $C^{3,1}$ uniformly strongly pseudoconvex boundary. Consider the signed parallel family $U_t=\{\rho<t\}$, where $\rho$ is a defining function agreeing near $\partial U$ with the signed distance, negative in $U$. Let $u_t$ be the equilibrium envelope with zero obstacle on $X\setminus U_t$, and write $\Sigma_t=\partial U_t$, with outward unit normal $\nu_t$. We prove that the normalized Aubin--Mabuchi energy $e(t)=E_{\omega_0}(u_t)$ belongs to $C^{1,1/2}([-\tau,\tau])$ for some $\tau>0$ and satisfies the Hadamard formula
\[
e'(t)=\frac{1}{n+1}\int_{\Sigma_t}\left(-\partial_{\nu_t}u_t\right)^2\,d\sigma_{\rho,t}^{\mathrm{tot}},
\qquad |t|<\tau.
\]
Here $d\sigma_{\rho,t}^{\mathrm{tot}}$ is the interior outward Anzellotti trace of
\[
\frac{1}{V}\,\dc\rho\wedge \sum_{p=0}^{n-1}(p+1)\, \omega_{u_t}^{p}\wedge\omega_0^{n-1-p},
\qquad V=\int_X\omega_0^n,\quad \omega_{u_t}=\omega_0+\ddc u_t.
\]
This measure is positive and uniformly comparable to the induced surface measure.

The proof combines uniform $C^{1,1}$ estimates up to the boundary from the interior with $1/2$-H\"older stability of the normal derivatives under the normal-flow identifications. We identify the boundary part of the normalized Monge--Amp\`ere measure with the negative outward trace of a divergence-measure flux current. The one-sided weak-* limits of the mixed Bedford--Taylor boundary measures yield the weighted total flux appearing in the variation formula.
\end{abstract}

\medskip
\noindent\textbf{2020 Mathematics Subject Classification.} Primary 32W20; Secondary 32U15, 32Q15.

\medskip
\noindent\textbf{Key words and phrases.} complex Monge--Amp\`ere equation; equilibrium envelope; pluripotential theory; Hadamard formula; shape derivative; divergence-measure current.

\section{Introduction and the main results}\label{sec:intro}

\subsection{Moving envelopes and boundary flux}

Berman and Lu proved that, under suitable assumptions, the zero-temperature limit of the twisted K\"ahler--Ricci flow is described by a family of time-dependent Monge--Amp\`ere envelopes in their pioneering work \cite{BermanLu2018MovingFreeBoundaries}.  The non-contact regions of the corresponding envelopes, as they evolve with the parameter, give rise to moving free boundaries for an obstacle problem. In particular, on a compact Riemann surface, a singular twist arising from point divisors recovers the weak Hele--Shaw flow after a proper time reparametrization. In higher dimensions, this phenomenon corresponds to the so-called Monge--Amp\`ere growth along effective divisors. Ross and Witt Nystr\"om further established connections between Hele--Shaw flow and holomorphic discs in a series of elegant works \cite{RossWittNystrom2015HeleShawDiscs,RossWittNystrom2015HarmonicDiscs,RossWittNystrom2017PrescribedSingularities,RossWittNystrom2019HCMAHeleShaw}. Their works linked Hele--Shaw envelopes with the homogeneous complex Monge--Amp\`ere equation, prescribed singularities, and weak geodesic rays under suitable settings.

Free boundaries appear in the problems they discuss as soon as one prescribes singularities, sources, and envelopes. The evolution of these free boundaries is a central problem in pluripotential theory. In this paper, we want to approach the same issue from a different perspective: given a proper interface, how do the associated quantities change as it moves in parallel? The quantity of interest to us in this context is Monge--Amp\`ere energy. One reason for our choice comes from the variational result of Berman--Boucksom on equilibrium energy: they fixed the compact set $K$, varied the weight function $\phi$ and provided an expression for the energy derivative in terms of the equilibrium Monge--Amp\`ere measure. As opposed to their case, where they varied the weight function while fixing the set of constraints, we will be dealing with a fixed weight and varying constraint set, which entails studying the measures associated with the moving interface and their normal traces.

The work of Berman--Lu \cite{BermanLu2018MovingFreeBoundaries} also mentions Darcy's law, which in fact provides intuition for our problem. Let $t$ denote the time parameter, $D_t$ the fluid domain, $\Gamma_t=\partial D_t$ its boundary, and $\nu_t$ the corresponding outward unit normal. If $u_t$ is the pressure and $V_t$ the normal velocity, then Darcy's law \cite{SaffmanTaylor1958Penetration,Richardson1972HeleShawFreeBoundary, 	GustafssonVasiliev2006HeleShaw} tells us that
\[
V_t=-\partial_{\nu_t}u_t.
\]
From the familiar fact in classical mechanics that energy dissipation is proportional to the square of the velocity, one may roughly expect the term of the form $(\partial_{\nu_t}u_t)^2$ will appear in our formula. Note that in our setting, it is the envelope function that plays the role of pressure---and in the notation that follows, we will also denote it by $u$.

\subsection{Hadamard's variational principle}

Now let us consider our problem from a different perspective. We wish to study the dissipation of energy, which is a type of generalized volume. It is well known that the derivative of volume is surface area, and the derivative of a generalized volume is a generalized surface measure such as flux. This intuitive idea has guided the development of many results in PDEs.

Let us first look at the case of Dirichlet energy. Let $U\subset\mathbb R^N$ be a smooth bounded domain, and define
\[
U_\varepsilon=(\operatorname{id}+\varepsilon W)(U), \qquad V_n:=W\cdot\nu.
\]
Let $v_\varepsilon$ satisfy
\[
-\Delta v_\varepsilon=f \quad\text{in }U_\varepsilon, \qquad v_\varepsilon=0 \quad\text{on }\partial U_\varepsilon,
\]
where $f$ is fixed in a neighbourhood of $\overline U$. The corresponding Dirichlet energy
\[
\mathcal D(U_\varepsilon) := \int_{U_\varepsilon} \left( \frac12|\nabla v_\varepsilon|^2-fv_\varepsilon \right)\,dx
\]
satisfies
\[
\left.\frac{d}{d\varepsilon}\right|_{\varepsilon=0} \mathcal D(U_\varepsilon) = -\frac12\int_{\partial U} (\partial_\nu v_0)^2V_n\,dS.
\]
For details, see \cite{SokolowskiZolesio1992Shape,HenrotPierre2005Shape}.

The proof of the formula proceeds primarily through three mechanisms. Firstly, one can use the PDE to eliminate the variation of state in the interior so that the first variation only relies on contributions from the boundary. Secondly, differentiating the boundary condition produces the normal derivative. Finally, by the smoothness of the solution and the classical Green formula, one obtains a flux density with respect to the boundary measure. In our degenerate complex Monge--Amp\`ere setting, classical pluripotential theory and differential-geometric techniques allow us to extend the first two mechanisms without difficulty. However, extending the third mechanism requires the theory of divergence-measure fields to describe boundary fluxes \cite{Anzellotti1983Pairings,ChenFrid1999Divergence,ChenComiTorres2019CauchyFluxes}.

Hadamard originally described this phenomenon using the language of Green functions. Let $G_U(\,\cdot\,,y)$ be the Dirichlet Green function normalized by $-\Delta G_U(\,\cdot\,,y)=\delta_y$ and $G_U(\,\cdot\,,y)|_{\partial U}=0$. Hadamard showed that \cite{Hadamard1908Memoire,SuzukiTsuchiya2016HadamardGreen}
\[
\left.\frac{d}{d\varepsilon}\right|_{\varepsilon=0} G_{U_\varepsilon}(x,y) = \int_{\partial U} \partial_{\nu_\xi}G_U(x,\xi)\, \partial_{\nu_\xi}G_U(y,\xi)\, (W\cdot\nu)(\xi)\,dS(\xi).
\]
Although we initially encountered some false leads, after noticing this formula we realized that mixed terms should also appear in our final formula.

Our final formula is
\[
\frac{d}{dt}E_{\omega_0}(u_t) = \frac1{n+1}\int_{\Sigma_t} \left(-\partial_{\nu_t}u_t\right)^2\,d\sigma_{\rho,t}^{\mathrm{tot}}.
\]
The left-hand side is the first variation of the energy under consideration. On the right, the term $\left(-\partial_{\nu_t}u_t\right)^2$ is the square of the velocity anticipated earlier, and $d\sigma_{\rho,t}^{\mathrm{tot}}$ is a boundary surface measure formulated in the language of divergence-measure fields. The prefactor $\frac1{n+1}$ arises from the mixed terms, but these mixed terms are eventually absorbed into the expression of $d\sigma_{\rho,t}^{\mathrm{tot}}$.

We shall not attempt a comprehensive review of the remaining literature related to Hadamard's formula. For interested readers, we point to the following references: Novruzi and Pierre discussed the structure of the first-order shape derivative, namely that it depends only on the normal component of the boundary deformation \cite{NovruziPierre2002Structure}. Lamboley and Pierre extended such structural results to domains that are not necessarily regular \cite{LamboleyPierre2007Irregular}. Colesanti, Nystr\"om, Salani, Xiao, Yang, and Zhang established a Hadamard variational formula for the $p$-capacity \cite{ColesantiNystromSalaniXiaoYangZhang2015PCapacity}. Brandolini, Nitsch, and Trombetti studied shape optimization problems for the real Monge--Amp\`ere equation using the method of domain derivatives \cite{BrandoliniNitschTrombetti2011MongeAmpereShape}.

\subsection{Prescribed-interface setting}
Let $(X,\omega_0)$ be a compact K\"ahler manifold of complex dimension $n$, and let $g_0$ be the metric defined by $\omega_0$.  We write
\[
V=\int_X\omega_0^n,\qquad \dc=\frac{i}{2}(\bar\partial-\partial),\qquad \ddc=i\partial\bar\partial,\qquad \omega_u=\omega_0+\ddc u.
\]
Next, we discuss the geometric setting. We fix a domain $U\Subset X$ whose boundary satisfies the following assumption.
\begin{assumption}[Uniformly strongly pseudoconvex $C^{3,1}$ boundary]\label{ass:BH}
$\Sigma:=\partial U$ is a compact embedded $C^{3,1}$ strongly pseudoconvex real hypersurface.
\end{assumption}
Let $d_\Sigma$ be the signed $g_0$-distance to $\Sigma$, negative in $U$.  If $\delta_0$ is small enough, then on the collar $\mathcal N_{2\delta_0}(\Sigma):=\{\operatorname{dist}_{g_0}(x,\Sigma)<2\delta_0\}$ we have
\[
d_\Sigma\in C^{3,1},\qquad |\nabla_{g_0}d_\Sigma|_{g_0}=1 \quad\text{on }\mathcal N_{2\delta_0}(\Sigma).
\]
We fix a global defining function $\rho\in W^{2,\infty}(X)$ such that
\[
\rho=d_\Sigma\ \text{on }\mathcal N_{\delta_0}(\Sigma),\qquad U=\{\rho<0\},\qquad \Sigma=\{\rho=0\},\qquad \Omega=X\setminus U=\{\rho\ge0\},
\]
and $|\rho|$ is bounded away from zero outside $\mathcal N_{\delta_0}(\Sigma)$.  Then $\dc\rho\in W^{1,\infty}(X)$ and $\ddc\rho\in L^\infty(X)$.

For $|t|<\delta_0$, all level sets share the same collar around $\Sigma$.  We set
\[
\rho_t:=\rho-t,\qquad U_t=\{\rho_t<0\}=\{\rho<t\},\qquad \Sigma_t=\{\rho=t\},\qquad \Omega_t=X\setminus U_t,\qquad \nu_t=\nabla_{g_0}\rho|_{\Sigma_t},
\]
Then $\nu_t$ is the outward unit normal to $U_t$, and we put $\nu_{\mathrm{out}}:=\nu_0$.  

Next, we turn to the function setting.  The central objects of pluripotential theory are the plurisubharmonic functions, or PSH functions for short.  A function $u\colon X\to[-\infty,+\infty)$ is called $\omega_0$-plurisubharmonic if it is upper semicontinuous, not identically equal to $-\infty$, and, for every local potential $\psi$ of $\omega_0$, the function $u+\psi$ is plurisubharmonic, which can be denoted $u \in \PSH(X,\omega_0)$. More details and the properties of Bedford--Taylor products are given in Section~\ref{sec:preliminaries}.

We define
\[
u_t:=\sup\{v\in\PSH(X,\omega_0):v\le0         \text{ on }\Omega_t\}^{*}, \qquad q_t:=-\partial_{\nu_t}u_t.
\]
Here ${}^{*}$ denotes upper semicontinuous regularization.  By the standard Perron--Bremermann construction \cite{Bremermann1959GeneralizedDirichlet,Demailly2012Complex,BermanDemailly2012Envelopes}, $u_t\in\PSH(X,\omega_0)$, $u_t=0$ on $\Omega_t$, and $\omega_{u_t}^n=0$ on $U_t$ in the Bedford--Taylor sense.  Since $\dc\rho_t=\dc\rho$, recentering the defining function at level $t$ does not change the reference fluxes below.

For bounded $\omega_0$-psh functions we set
\begin{equation}\label{eq:intro-MA-energy-definitions}
\MA(u):=\frac1V\omega_u^n,\qquad E_{\omega_0}(u):=\frac1{(n+1)V}\sum_{j=0}^{n} \int_Xu\,\omega_u^j\wedge\omega_0^{n-j}.
\end{equation}

For Theorems~\ref{intro:thm-mixed-traces}--\ref{intro:thm-hadamard} we write
\[
B_t^{\mathrm{tot}}:=\sum_{p=0}^{n-1}(p+1) \omega_{u_t}^p\wedge\omega_0^{n-1-p},\qquad \mathcal J[\rho_t;\mathrm{tot}]:=\frac1V\dc\rho\wedge B_t^{\mathrm{tot}}.
\]
We will prove that this current has an interior outward trace on $\Sigma_t$, which we denote by $d\sigma_{\rho,t}^{\mathrm{tot}}:=\Tr_{\Sigma_t}^{\mathrm{out}}\mathcal J[\rho_t;\mathrm{tot}]$.

Next, we make some remarks about measures.  We write $\Delta_{\omega_0}f=\tr_{\omega_0}(\ddc f)$, so that $\Delta_{g_0}f=2\Delta_{\omega_0}f$.  We write $\mu\llcorner A$ for the restriction of a measure, and $\mathcal M(\Sigma_t)$ for the finite signed Radon measures on $\Sigma_t$.  All traces in this paper are interior outward Anzellotti traces; see Section~\ref{sec:preliminaries} for the details.  Unless otherwise indicated, the constant $C$ depends only on $(X,\omega_0,U,\rho)$ and not on $t$, $\varepsilon$, $\delta$, but $C$ may differ from line to line.  Table~\ref{tab:notation} collects the notation used repeatedly in this paper.

\begin{convention}[Standing parameters]\label{intro:conv-parameters}
	To avoid confusion, we first fix the small parameters in the order
	\[
	\delta_0 \;>\; \varepsilon_0 \;>\; \varepsilon_1 \;>\; 2\tau \;>\; 0,
	\]
	and each of them may be taken sufficiently small.  The number $\delta_0$ is the collar radius; $\varepsilon_0$ appears in Proposition~\ref{prop:BH_to_collar} and is the range of the uniform collar; $\varepsilon_1$ is the half-width of the parallel family in Theorem~\ref{intro:thm-uniform-regularity}, and it is decreased in Section~\ref{reg:sec-regularity} (see Convention~\ref{reg:conv-uniform-collar}); $\tau$ is the half-width appearing in Section~\ref{sec:hadamard}.  The parameters $\varsigma_0,\varsigma$ appear only in Appendix~\ref{app:sec-proofs-regularity}.
\end{convention}

\begin{table}[!htbp]
\centering
\small
\caption{Principal recurring notation}
\label{tab:notation}
\setlength{\tabcolsep}{5pt}
\renewcommand{\arraystretch}{1.05}
\begin{tabular}{@{}llll@{}}
\multicolumn{2}{@{}l}{\textbf{Geometry and envelopes}} & \multicolumn{2}{l}{\textbf{Fluxes and traces}}\\
$V$ & volume $\int_X\omega_0^n$ & $\mathcal J[u]$ & flux current $V^{-1}\dc u\wedge T_u$\\
$\rho$, $d_\Sigma$ & defining function; signed distance & $\mathcal J[\rho;c]$ & weighted reference flux\\
$\rho_t,U_t,\Sigma_t,\Omega_t$ & $\rho-t$, $\{\rho_t<0\}$, $\{\rho_t=0\}$, $X\setminus U_t$ & $\mathcal J[u_0;\mathrm{tot}]$ & total flux (weights $c_p=p+1$)\\
$\nu_t$ & outward unit normal to $\Sigma_t$ & $B_c$, $B_t^{\mathrm{tot}}$ & mixed wedge sums\\
$u_t$ & equilibrium envelope of $\Omega_t$ & $\mathcal J_\varepsilon^{(j)},\mathcal J_0^{(j)}$ & mixed approximant fluxes\\
$u_{t,\delta}$ & positive-density Dirichlet approximant & $\Tr_{\Sigma}^{\mathrm{out}}$ & interior outward trace\\
$q_t$ & $-\partial_{\nu_t}u_t$ & $d\sigma_\rho$, $d\sigma_{\rho,t}^{\mathrm{tot}}$ & reference flux traces\\
$\omega_u$ & $\omega_0+\ddc u$ & $\sigma_\varepsilon^{(j)},\sigma_{0+}^{(j)}$ & mixed boundary traces\\
$\MA(u)$ & $V^{-1}\omega_u^n$ & $\sigma_{\mathrm{tot}}$ & $\sum_{j=0}^{n-1}\sigma_{0+}^{(j)}$\\
$E_{\omega_0}(u)$ & Aubin--Mabuchi energy & $\mu_0^{\mathrm{sing}}$ & singular boundary mass of $\MA(u_0)$\\
$\delta_0,\varepsilon_0,\varepsilon_1,\tau$ & standing parameters & $\DM^\infty(U)$ & divergence-measure currents\\
\end{tabular}
\end{table}

\subsection{Main results}
Theorem~\ref{intro:thm-hadamard} is the central result of this paper.  To obtain it, we first need the PDE result Theorem~\ref{intro:thm-uniform-regularity}, which gives the uniform interior regularity and moving-domain stability needed to define and compare the boundary fluxes.  Theorem~\ref{intro:thm-boundary-mass} connects pluripotential theory with geometric measure theory: it identifies the singular boundary mass with a weak normal trace.  To identify the mixed Monge--Amp\`ere measures, however, we also need Theorem~\ref{intro:thm-mixed-traces}.

Fix $\varepsilon_1>0$ so that Proposition~\ref{prop:BH_to_collar} and Lemma~\ref{prelim:lem-parallel-identifications} hold uniformly for $|t|\le\varepsilon_1$. The first theorem gives the interior Hessian bounds and moving-level stability we need, and these provide the regularity needed to place the later fluxes in the divergence-measure class. 
\begin{mainresult}[Uniform regularity and stability]\label{intro:thm-uniform-regularity}
There is $C>0$, depending only on $(X,\omega_0,U,\rho)$, such that for $|t|\le\varepsilon_1$,
\[
\|u_t\|_{C^{1,1}(\overline{U_t})} +\|\Delta_{\omega_0}u_t\|_{L^\infty(U_t)}\le C, \qquad -\omega_0\le\ddc u_t\le C\omega_0 \quad\text{a.e. on }U_t\text{, from the interior side}.
\]
For every $0<\beta<1$ and all $s,t$ in this interval,
\[
\|u_t-u_s\|_{L^\infty(X)}\le C|t-s|, \qquad \|u_t\circ\Xi_{s,t}-u_s\|_{C^{1,\beta}(\overline{U_s})} \le C_\beta|t-s|^{(1-\beta)/2},
\]
where $\Xi_{s,t}$ is the normal-flow identification of Lemma~\ref{prelim:lem-parallel-identifications}.  If $u_{t,\delta}$ is the positive-density Dirichlet approximant (the solution of \eqref{reg:eq-approx-moving} with $\varepsilon=t$), then
\[
\|u_{t,\delta}-u_t\|_{L^\infty(U_t)}\le C\delta^{1/n}, \qquad \|u_{t,\delta}-u_t\|_{C^{1,\beta}(\overline{U_t})} \le C_\beta\delta^{(1-\beta)/(2n)}.
\]
In particular, with $\Psi_{s,t}=\Xi_{s,t}|_{\Sigma_s}$,
\[
\|q_t\circ\Psi_{s,t}-q_s\|_{C^0(\Sigma_s)} \le C|t-s|^{1/2}.
\]
See Propositions~\ref{reg:lem-uniform-inner-C11} and~\ref{reg:prop-quantitative-delta-domain-stability}.
\end{mainresult}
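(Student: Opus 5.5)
The plan is to work throughout with the positive-density Dirichlet approximants $u_{t,\delta}$ and pass to the limit $\delta\to0$. For fixed $t$, $u_{t,\delta}$ solves the nondegenerate Dirichlet problem on $U_t$,
\[
\omega_{u_{t,\delta}}^n=\delta\,\omega_0^n \quad\text{in } U_t,\qquad u_{t,\delta}=0\ \text{on }\Sigma_t ,
\]
which is our reading of \eqref{reg:eq-approx-moving}. Since $\Sigma_t$ is $C^{3,1}$ and uniformly strongly pseudoconvex for $|t|\le\varepsilon_1$ (Proposition~\ref{prop:BH_to_collar}), the Caffarelli--Kohn--Nirenberg--Spruck/Guan machinery on $(X,\omega_0)$ applies. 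With a subsolution built from $\rho_t$ and $A\rho_t^2$, it gives $C^0$ and boundary gradient bounds. Tangential--tangential second derivatives follow from the boundary condition. Tangential--normal ones follow from barriers with the linearized operator. The double-normal bound comes from the determinant equation together with strong pseudoconvexity. Finally, a global Hessian bound follows from the maximum principle applied to $\log\lambda_{\max}$ or $\Delta_{\omega_0}u$.

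The key point is to track that none of these constants depend on $\delta$. Only the normal--normal step uses the right-hand side, and the resulting bound on $\partial_{\nu\nu}u$ degenerates mildly. We would first try the Guan--Trudinger--Wang/Błocki approach, which gives $\delta$-independent $C^{1,1}$ bounds for the degenerate equation on strongly pseudoconvex domains, using $C^{3,1}$ data. The constants depend only on the collar geometry, so they are uniform in $t$. By comparison, $u_{t,\delta}\downarrow u_t$ on $U_t$: the envelope is the Perron solution of the homogeneous Dirichlet problem with zero data. Passing to the limit yields the $C^{1,1}(\overline{U_t})$ bound, the bound on $\Delta_{\omega_0}u_t$, and $-\omega_0\le \ddc u_t\le C\omega_0$, where the lower bound is simply $\omega_0$-plurisubharmonicity. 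This uniform $\delta$-independent $C^{1,1}$ up to the boundary is the main obstacle. We expect the double-normal estimate to be the delicate step.

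For the $L^\infty$ estimates we use comparison. Since $\rho$ is the signed distance near $\Sigma$, $\Omega_t\subset\Omega_s$ for $s<t$. Hence $u_s\le u_t$. Moreover, $u_s - C(t-s)$ should be a competitor for $\Omega_t$ after correction, because $u_s\ge -C\,\mathrm{dist}(\cdot,\Omega_s)$ by the gradient bound. More precisely, $u_t-C(t-s)\le 0$ on $\Omega_s\setminus\Omega_t$, by the Lipschitz bound and $u_t=0$ on $\Sigma_t$. This gives $\|u_t-u_s\|_\infty\le C|t-s|$. For the approximant, $u_t+\delta^{1/n}(\text{bounded strictly psh function vanishing on }\Sigma_t)$, for instance $\delta^{1/n}A\rho_t$ suitably modified, is a subsolution of the $\delta$-problem. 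Together with $u_{t,\delta}\ge u_t$ reversed appropriately, comparison gives $\|u_{t,\delta}-u_t\|_{L^\infty}\le C\delta^{1/n}$.

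The $C^{1,\beta}$ rates then come from interpolation. Pull back by the normal-flow map $\Xi_{s,t}$ of Lemma~\ref{prelim:lem-parallel-identifications}. This map is $C^{2,1}$ and $|t-s|$-close to the identity, so $u_t\circ\Xi_{s,t}-u_s$ vanishes on $\Sigma_s$. The $L^\infty$ bound and the chain rule give $\|u_t\circ\Xi_{s,t}-u_s\|_{L^\infty}\le C|t-s|$, and its $C^{1,1}(\overline{U_s})$ norm is bounded uniformly. The interpolation inequality $\|f\|_{C^{1,\beta}}\le C\|f\|_{L^\infty}^{(1-\beta)/2}\|f\|_{C^{1,1}}^{(1+\beta)/2}$ on the uniformly Lipschitz domain $U_s$ yields the rate $|t-s|^{(1-\beta)/2}$. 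The same argument with $\delta^{1/n}$ in place of $|t-s|$ gives the approximant estimate. Taking $\beta=0$ and restricting normal derivatives to $\Sigma_s$, we use that $d\Xi_{s,t}$ maps $\nu_s$ to $\nu_t$, with a $C|t-s|$ error from the $C^{2,1}$ flow. This yields $\|q_t\circ\Psi_{s,t}-q_s\|_{C^0}\le C|t-s|^{1/2}$.
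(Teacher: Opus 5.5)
Your architecture is the paper's: uniform estimates for $u_{t,\delta}$, then $\delta\downarrow0$, then an $L^\infty$ comparison in $t$, then interpolation between $L^\infty$ and $C^{1,1}$, then a chain-rule comparison of normal derivatives under $\Xi_{s,t}$. Your ``more precisely'' competitor $u_t-C(t-s)$ for $\Omega_s$ is correct. The paper uses the barrier $L(t-\rho)_+$ of Lemma~\ref{reg:lem-barrier} instead, which does not depend on the $C^{1,1}$ theory.

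\textbf{Main gap.} Everything rests on a bound for $\partial_{\nu\nu}u_{t,\delta}$ on $\Sigma_t$ that is independent of $\delta$, and your plan does not supply one. The mechanism you describe (expand the determinant, divide by the complex-tangential block, use strong pseudoconvexity) cannot give it.

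Since $u_{t,\delta}=0$ on $\Sigma_t$ and $u_{t,\delta}\ge0$ inside, the restriction of $\omega_{u_{t,\delta}}$ to $T^{1,0}\Sigma_t$ is $\alpha_t-q_{t,\delta}\,d\theta_t$. This is the form $A_t$ of Proposition~\ref{stat:prop-explicit-parallel-density}. The Levi form enters with a \emph{negative} sign, so strong pseudoconvexity works against you, and the block is merely semipositive.

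The block has no $\delta$-uniform lower bound. Take a ball in $\mathbb{P}^n$ with $n\ge2$. The envelope is $\max\{\log\frac{1+R^2}{1+|z|^2},0\}$, as in Example~\ref{dyn:ex-cp1-verification}, so $\omega_{u_0}\equiv0$ on $U$. The block depends only on $q_{t,\delta}$, and $q_{t,\delta}\to q_t$, so the block tends to zero. Hence the CKNS bound $u_{\nu\nu}\lesssim(\delta+C)/\det(\text{block})$ blows up; it does not ``degenerate mildly''.

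The references you fall back on do not close this:
\begin{itemize}
\item Guan--Trudinger--Wang treats the real Monge--Amp\`ere equation.
\item B{\l}ocki's estimate yields only the tangential--normal derivatives, which is exactly how the paper uses it as input (E1).
\item Krylov's degenerate $C^{1,1}$ theorem is for domains in $\mathbb{C}^n$. Here $U_t$ lies in a compact K\"ahler manifold and may contain compact complex submanifolds ($E\subset U_R$ in Example~\ref{dyn:ex-toric-collapse}). So there is no global flat potential, and curvature terms must be controlled.
\end{itemize}

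The paper fills this in Appendix~\ref{app:sec-proofs-regularity}. The Dinew--Sr{\'o}ka three-face barrier gives $M_E\le C_*(\varsigma M_E+C_\varsigma)$, provided the Hessian on the cap at depth $c_*\varsigma^4$ is at most $c_0\varsigma^5M_E+C_{\mathrm{cap},\varsigma}$. That small-coefficient cap bound comes from the collar--core bridge lemma: a maximum principle for the normalized quotient $W/\mathcal V$ that uses no lower eigenvalue bound on the linearization, combined with Chu's interior-to-boundary Hessian propagation. Choosing $C_*\varsigma<1/2$ then closes the absorption. Without an argument of this kind the first display of the theorem is unproved, and every downstream rate depends on it.

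\textbf{Second, fixable error: the $\delta$-rate.} Your subsolution $u_t+\delta^{1/n}A\rho_t$, ``suitably modified'', needs a strictly plurisubharmonic function on all of $U_t$ vanishing on $\Sigma_t$. But $\rho$ is strictly psh only in complex-tangential directions near $\Sigma_t$. Worse, no strictly psh function exists on $U_t$ at all when $U_t$ contains a compact complex subvariety, as in the toric example.

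Use instead $(1-a)u_t$ with $a=\delta^{1/n}$. Since $\omega_0+\ddc((1-a)u_t)=a\,\omega_0+(1-a)\,\omega_{u_t}\ge a\,\omega_0$, comparison gives $(1-a)u_t\le u_{t,\delta}\le u_t$. Hence $\|u_{t,\delta}-u_t\|_{L^\infty}\le a\sup_Xu_t$.

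Note also the directions: $u_{t,\delta}\le u_t$, and $u_{t,\delta}$ increases (not decreases) to $u_t$ as $\delta\downarrow0$. With that fix, and granting the uniform $C^{1,1}$ bound, your interpolation and chain-rule steps go through as written.
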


The exponent $1/2$ appears again in Theorem~\ref{intro:thm-hadamard}.

\begin{mainresult}[Singular boundary mass as normal trace]\label{intro:thm-boundary-mass} 
Let $\mu_0=\MA(u_0)$ and $\mu_0^{\mathrm{sing}}:=\mu_0\llcorner\partial U$. Define
\[
T_{u_0}=\sum_{p=0}^{n-1}\omega_{u_0}^{p}\wedge\omega_0^{n-1-p}, \qquad \mathcal J[u_0]=\frac1V\dc u_0\wedge T_{u_0}, \qquad \mathcal J[\rho]=\frac1V\dc\rho\wedge T_{u_0},
\]
where the currents are defined on $U$ using the interior Bedford--Taylor representatives.  The geometric reference trace $d\sigma_\rho:=\Tr_{\partial U}^{\mathrm{out}}\mathcal J[\rho]$ is a positive Radon measure, and
\[
\mu_0^{\mathrm{sing}}=-\Tr_{\partial U}^{\mathrm{out}}\mathcal J[u_0]=\left(-\partial_{\nu_{\mathrm{out}}}u_0\right)d\sigma_\rho
\]
as an identity of Radon measures on $\partial U$; see Propositions~\ref{prop:jump-density} and~\ref{stat:prop-boundary-mass}.
\end{mainresult}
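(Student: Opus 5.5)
The plan is to derive both identities from a global Stokes computation against test functions, using the decomposition
\[
\omega_{u_0}^n-\omega_0^n=\ddc u_0\wedge T_{u_0},
\]
which is the algebraic identity $a^n-b^n=(a-b)\sum_p a^p b^{n-1-p}$ for commuting forms. Since $u_0\equiv 0$ on $\Omega$, the measure $\MA(u_0)$ equals $V^{-1}\omega_0^n$ on the interior of $\Omega$. On $U$ it vanishes, because $\omega_{u_0}^n=0$ there. Hence $\mu_0$ is supported on $\overline\Omega$ and $\mu_0^{\mathrm{sing}}$ is the only part not captured by the two open pieces.

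\emph{Step 1 (divergence-measure class).} By Theorem~\ref{intro:thm-uniform-regularity} with $t=0$, $u_0\in C^{1,1}(\overline U)$ and $-\omega_0\le \ddc u_0\le C\omega_0$ a.e.\ on $U$. So $\omega_{u_0}$ has $L^\infty$ coefficients on $U$, and $T_{u_0}$ is a bounded closed positive $(n-1,n-1)$-current there. Closedness holds in the Bedford--Taylor sense, since $\omega_{u_0}$ is closed and the products are continuous under the decreasing smooth regularizations of $u_0$.

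It follows that $\mathcal J[u_0]$ and $\mathcal J[\rho]$ have $L^\infty$ coefficients on $U$. Their differentials are $V^{-1}\ddc u_0\wedge T_{u_0}$ and $V^{-1}\ddc\rho\wedge T_{u_0}$, both bounded measures. Thus both lie in $\DM^\infty(U)$, and the Anzellotti interior outward traces exist as distributions on $\partial U$. By the definition recalled in Section~\ref{sec:preliminaries}, for $\phi\in C^1(X)$ we have
\[
\langle \Tr^{\mathrm{out}}_{\partial U}\mathcal J,\phi\rangle=\int_U d\phi\wedge\mathcal J+\int_U\phi\, d\mathcal J.
\]
Applied to $\mathcal J[u_0]$, this gives $\int_U d\phi\wedge \mathcal J[u_0]+\int_U\phi\,V^{-1}(\omega_{u_0}^n-\omega_0^n)$, and the term $\int_U\phi\,V^{-1}\omega_{u_0}^n$ vanishes.

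\emph{Step 2 (boundary mass equals minus the trace).} Compute $\int_X\phi\,\MA(u_0)$ by writing $\MA(u_0)=V^{-1}\omega_0^n+V^{-1}\ddc u_0\wedge T_{u_0}$ and integrating by parts globally:
\[
\int_X\phi\,\ddc u_0\wedge T_{u_0}=-\int_X d\phi\wedge \dc u_0\wedge T_{u_0}.
\]
This is legitimate for bounded psh potentials with a $C^1$ test function. Since $u_0$ is Lipschitz and vanishes on $\Omega$, $du_0=0$ a.e.\ on $\Omega$. Because $\dc u_0$ is an $L^\infty$ form, $\dc u_0\wedge T_{u_0}$ puts no mass on the null set $\partial U$. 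The right-hand side therefore reduces to $-\int_U d\phi\wedge\dc u_0\wedge T_{u_0}$.

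Comparing with Step 1, on $U$ we have $V^{-1}\omega_0^n=-V^{-1}\ddc u_0\wedge T_{u_0}$. Collecting terms gives
\[
\int_X\phi\,\MA(u_0)=\int_{\Omega}\phi\,V^{-1}\omega_0^n-\langle \Tr^{\mathrm{out}}_{\partial U}\mathcal J[u_0],\phi\rangle.
\]
The first term carries no mass on $\partial U$, since $\partial U$ is a $C^{3,1}$ hypersurface and hence Lebesgue-null. Restricting to $\partial U$ yields $\mu_0^{\mathrm{sing}}=-\Tr^{\mathrm{out}}_{\partial U}\mathcal J[u_0]$. In particular this trace is a measure.

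\emph{Step 3 (factorization and positivity).} Near $\partial U$, decompose $\dc u_0=(\partial_\nu u_0)\,\dc\rho+\eta$. Here $\eta$ is the part of $\dc u_0$ built from tangential derivatives of $u_0$ (and of $\rho$). Since $u_0\in C^{1,\beta}(\overline U)$ vanishes on $\partial U$, $\eta$ is continuous up to $\partial U$ and vanishes there.

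\emph{This is the main obstacle:} the trace of a divergence-measure current is only weakly defined, so one cannot simply restrict pointwise. I would use the product rule for Anzellotti traces, $\Tr(f\mathcal J)=f|_{\partial U}\Tr\mathcal J$ for $f$ continuous on $\overline U$ with $df\wedge\mathcal J$ integrable. For the $\eta$-term, I would show its trace vanishes via the collar formula: the trace equals the weak limit of $s^{-1}\int_{\{-s<\rho<0\}}\phi\,d\rho\wedge\eta\wedge T_{u_0}$, and $\|\eta\|_{L^\infty(\{-s<\rho<0\})}\to0$.

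This gives $\Tr\mathcal J[u_0]=(\partial_\nu u_0)\,d\sigma_\rho$. The same collar formula shows $d\sigma_\rho$ is the weak limit of
\[
s^{-1}\int_{\{-s<\rho<0\}}\phi\,V^{-1}d\rho\wedge\dc\rho\wedge T_{u_0},
\]
which is nonnegative for $\phi\ge0$ because $d\rho\wedge\dc\rho\ge0$ and $T_{u_0}$ is positive. So $d\sigma_\rho$ is a positive Radon measure, with total mass bounded by the $L^\infty$ bounds of Step 1.

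Finally, the sign $-\partial_{\nu_{\mathrm{out}}}u_0\ge0$ follows because $u_0\le0$ vanishes on $\partial U$. This makes the identity an equality of positive measures.
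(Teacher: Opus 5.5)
Your overall architecture is the paper's, and Steps~1 and~3 are fine. The common steps are a global Bedford--Taylor integration by parts, Gauss--Green extraction of the boundary part (Lemma~\ref{stat:lem-bt-gg-extraction}, Proposition~\ref{stat:prop-boundary-mass}), and then proportionality and positivity via the product rule, the vanishing tangential part of $\dc u_0$, and $d\rho\wedge\dc\rho\wedge T_{u_0}\ge0$. Your shell averages are, by coarea, the inner-parallel recovery of Lemma~\ref{stat:lem-boundary-flux-proportionality} and Proposition~\ref{prop:jump-density}.

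The gap is the Step~2 sentence ``because $\dc u_0$ is an $L^\infty$ form, $\dc u_0\wedge T_{u_0}$ puts no mass on the null set $\partial U$.'' This assumes the \emph{global} current $T_{u_0}$ has $L^\infty$ coefficients on $X$, which is false for $n\ge2$. Since $\dc u_0$ jumps across $\partial U$ from $(\partial_{\nu_{\mathrm{out}}}u_0)\,\dc\rho$ to $0$, the global current $\omega_{u_0}=\omega_0+d\dc u_0$ carries the layer $(-\partial_{\nu_{\mathrm{out}}}u_0)\,\delta(\rho)\,d\rho\wedge\dc\rho$. The powers $\omega_{u_0}^p$, $p\ge1$, inside $T_{u_0}$ inherit singular parts on $\partial U$, and this layer is what produces $\mu_0^{\mathrm{sing}}$. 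In Example~\ref{dyn:ex-toric-collapse}, for instance, the radial moment of $\omega_{u_R}$ jumps from $b$ to $a$, giving a layer proportional to $dt\wedge\dc t$. An $L^\infty$ form with no well-defined value on $\partial U$, wedged against a current that charges $\partial U$, is not controlled by its sup norm; globally, $\dc u_0\wedge T_{u_0}$ only means $\dc(u_0T_{u_0})$. Your conclusion is true, but for a different reason. In the toric case the product vanishes because $\dc u_R$ is a multiple of $\dc t$. In general, what kills the boundary contribution is $u_0|_{\partial U}=0$, not the boundedness of $\dc u_0$.

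The repair is the paper's device in Lemma~\ref{stat:lem-bt-gg-extraction}. First move both derivatives onto $\phi$ globally: $\int_X\phi\,\ddc u_0\wedge T_{u_0}=\int_Xu_0\,\ddc\phi\wedge T_{u_0}=\int_Uu_0\,\ddc\phi\wedge T_{u_0}$. The last equality uses only that $u_0$ is continuous and vanishes on $\Omega\supset\partial U$, so the singular layer is multiplied by zero and only the interior $L^\infty$ representative survives. Then integrate by parts once inside $U$ with Theorem~\ref{thm:global-trace}, applied to $u_0\,\dc\phi\wedge T_{u_0}\in\DM^\infty(U)$, whose divergence is $du_0\wedge\dc\phi\wedge T_{u_0}+u_0\,\ddc\phi\wedge T_{u_0}$. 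Its outward trace vanishes by the product rule (Lemma~\ref{lem:product}), since $u_0|_{\partial U}=0$. The $(n-1,n-1)$ type of $T_{u_0}$ then gives $-\int_Ud\phi\wedge\dc u_0\wedge T_{u_0}$, and the rest of your Step~2 bookkeeping goes through unchanged.

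There is also a sign slip in your last line. The envelope satisfies $u_0\ge0$, since the zero function is admissible, not $u_0\le0$. It is $u_0\ge0$ in $U$ with $u_0=0$ on $\partial U$ that yields $\partial_{\nu_{\mathrm{out}}}u_0\le0$; $u_0\le0$ would give the opposite sign.
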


The quantity $\mu_0^{\mathrm{sing}}$ comes from pluripotential theory: it describes the singular mass on the boundary.  The quantity $-\Tr_{\partial U}^{\mathrm{out}}\mathcal J[u_0]$ which is a trace in the sense of divergence-measure fields comes from geometric measure theory.  And $\left(-\partial_{\nu_{\mathrm{out}}}u_0\right)d\sigma_\rho$ is a normal slope times a positive geometric hypersurface measure, which clearly lies in differential geometry.  Theorem~\ref{intro:thm-boundary-mass} relates these three with each other.

For $0<\varepsilon\le\varepsilon_1$ and $0\le j\le n-1$, set
\[
\mu_{0,\varepsilon}^{(j)} :=\frac1V\omega_{u_\varepsilon}^{j}\wedge\omega_{u_0}^{n-j}, \qquad \sigma_\varepsilon^{(j)} :=\mu_{0,\varepsilon}^{(j)}\llcorner\partial U.
\]
For $0\le j\le n-1$, define the mixed flux current
\[
\mathcal J_0^{(j)} :=\frac1V\dc u_0\wedge\omega_{u_0}^{j}\wedge \sum_{k=0}^{n-j-1}\omega_0^{k}\wedge\omega_{u_0}^{n-j-1-k}.
\]

Using this notation, we obtain the following result, which is parallel to Theorem~\ref{intro:thm-boundary-mass}:

\begin{mainresult}[Mixed traces and total flux]\label{intro:thm-mixed-traces}
	The currents above use the interior Bedford--Taylor representatives on $U$.
	\begin{enumerate}[label=\textup{(\roman*)}]
		\item For every $0\le j\le n-1$, the current $\mathcal J_0^{(j)}$ admits an outward trace, and
		\[
		\sigma_{0+}^{(j)}:=-\Tr_{\partial U}^{\mathrm{out}}\mathcal J_0^{(j)}
		\]
		is a Radon measure satisfying
		\[
		\sigma_\varepsilon^{(j)}\stackrel{*}{\rightharpoonup}\sigma_{0+}^{(j)} 		\quad\text{in }\mathcal M(\partial U) 		\quad\text{as }\varepsilon\downarrow0.
		\]
		
		\item Set
		\[
		\sigma_{\mathrm{tot}}:=\sum_{j=0}^{n-1}\sigma_{0+}^{(j)}, 		\qquad 		\mathcal J[u_0;\mathrm{tot}] 		:=\frac1V\dc u_0\wedge B_0^{\mathrm{tot}}, 		\qquad 		d\sigma_\rho^{\mathrm{tot}}:=d\sigma_{\rho,0}^{\mathrm{tot}}.
		\]
		Then $d\sigma_\rho^{\mathrm{tot}}$ is positive and
		\[
		\sigma_{\mathrm{tot}} 		=-\Tr_{\partial U}^{\mathrm{out}}\mathcal J[u_0;\mathrm{tot}] 		=\left(-\partial_{\nu_{\mathrm{out}}}u_0\right)d\sigma_\rho^{\mathrm{tot}}.
		\]
	\end{enumerate}
	See Theorem~\ref{thm:mixed_trace_stability} and Proposition~\ref{prop:total_jump_density}.
\end{mainresult}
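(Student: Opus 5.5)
Our plan is to run the argument of Theorem~\ref{intro:thm-boundary-mass} again, with $\omega_{u_0}^n$ replaced by the mixed products $\omega_{u_\varepsilon}^j\wedge\omega_{u_0}^{n-j}$. The starting point is the telescoping identity on $U$ in the interior Bedford--Taylor sense:
\[
\omega_{u_\varepsilon}^{j}\wedge\omega_{u_0}^{n-j}
=\omega_{u_\varepsilon}^{j}\wedge\omega_0^{n-j}
+\ddc u_0\wedge\omega_{u_\varepsilon}^{j}\wedge\sum_{k=0}^{n-j-1}\omega_0^{k}\wedge\omega_{u_0}^{n-j-1-k}.
\]
Since $U\subset U_\varepsilon$ for $\varepsilon>0$, Theorem~\ref{intro:thm-uniform-regularity} shows that $u_\varepsilon$ is $C^{1,1}$ across $\partial U$. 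Hence $\omega_{u_\varepsilon}$ has $L^\infty$ coefficients near $\partial U$ and contributes no singular mass there. The mass of $\mu^{(j)}_{0,\varepsilon}$ on $\partial U$ therefore comes only from the jump of $\partial_\nu u_0$ across $\partial U$ (recall $u_0\equiv0$ on $\Omega$).

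We compute this mass by testing against $\chi\in C^\infty(X)$ and integrating by parts on $U$ and on $\Omega$ separately. On $\Omega$ all terms containing $\dc u_0$ vanish. On $U$, Stokes' theorem in the divergence-measure form gives
\[
\int_{\partial U}\chi\,d\sigma^{(j)}_\varepsilon=-\Tr^{\mathrm{out}}_{\partial U}\mathcal J^{(j)}_\varepsilon(\chi),\qquad
\mathcal J^{(j)}_\varepsilon:=\frac1V\dc u_0\wedge\omega_{u_\varepsilon}^{j}\wedge\sum_k\omega_0^k\wedge\omega_{u_0}^{n-j-1-k}.
\]
Justifying this requires $\mathcal J^{(j)}_\varepsilon\in\DM^\infty(U)$. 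It has bounded coefficients by the $C^{1,1}$ bounds, and its $d$ is a positive current of bounded mass. Because the trace is linear and Lipschitz in $\dc u_0$, we can write it as $(-\partial_{\nu_{\mathrm{out}}}u_0)$ times the trace of $\frac1V\dc\rho\wedge(\cdots)$. The reason is that $\dc u_0$ restricted to $\partial U$ from the inside equals $\partial_\nu u_0\,\dc\rho$, since $u_0=0$ on $\partial U$, so the tangential derivatives vanish. This is the step used in Proposition~\ref{prop:jump-density}.

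Next we let $\varepsilon\downarrow0$. For (i), the masses $\sigma^{(j)}_\varepsilon(\partial U)$ are uniformly bounded, since the total masses are $\le V$, so weak-$*$ limit points exist. To identify the limit we show $\Tr\mathcal J^{(j)}_\varepsilon\to\Tr\mathcal J^{(j)}_0$ by expanding $\omega_{u_\varepsilon}^j=(\omega_{u_0}+\ddc(u_\varepsilon-u_0))^j$ and integrating the difference terms by parts once more on $U$. The error terms then contain $d(u_\varepsilon-u_0)$ on $\overline U$ paired with bounded currents. The stability estimate of Theorem~\ref{intro:thm-uniform-regularity} controls $u_\varepsilon-u_0$ in $C^{1,\beta}(\overline U)$; the normal-flow map $\Xi_{0,\varepsilon}$ is $C^{2,1}$-close to the identity, so we can transfer the estimate from $\overline{U_\varepsilon}$ to $\overline U$. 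This gives an error $O(\varepsilon^{(1-\beta)/2})\to0$.

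This step is the main obstacle. The interior representative of $\ddc u_\varepsilon$ restricted to $U$ does not converge strongly, only weakly-$*$ in $L^\infty$. So the convergence has to be routed through the $C^1$ convergence of $\dc u_\varepsilon$ together with an extra integration by parts that puts $d$ on the test form, and the boundary terms created by that integration by parts have to be shown to cancel. If that fails, a fallback is to apply the Anzellotti pairing continuity: $\dc(u_\varepsilon-u_0)\to0$ uniformly, and the remaining factors are uniformly $\DM^\infty$-bounded.

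For (ii) we sum over $j$. Reindexing the double sum $\sum_{j}\omega_{u_0}^j\wedge\sum_k\omega_0^k\wedge\omega_{u_0}^{n-j-1-k}$ gives $\sum_{p}(p+1)\omega_{u_0}^{p}\wedge\omega_0^{n-1-p}=B_0^{\mathrm{tot}}$. The first identity in (ii) then follows from (i) by linearity of the trace, and the second from the same tangential-vanishing argument with $\dc\rho$ in place of $\dc u_0$. Positivity of $d\sigma^{\mathrm{tot}}_\rho$ follows because each summand $\frac1V\dc\rho\wedge\omega_{u_0}^p\wedge\omega_0^{n-1-p}$ has a trace equal to the positive measure $d\rho\wedge\dc\rho\wedge(\cdots)$ restricted to $\partial U$, up to an approximation. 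This uses $\omega_{u_0}\ge0$ and the strong pseudoconvexity of $\partial U$, exactly as in the proof that $d\sigma_\rho\ge0$ in Theorem~\ref{intro:thm-boundary-mass}.
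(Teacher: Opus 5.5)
Your proposal is correct in outline. For the finite-$\varepsilon$ identity $\sigma^{(j)}_\varepsilon=-\Tr^{\mathrm{out}}_{\partial U}\mathcal J^{(j)}_\varepsilon$ and for part (ii), it follows the paper: Proposition~\ref{dyn:thm-finite-eps-mixed-flux}, Lemma~\ref{stat:lem-boundary-flux-proportionality} and Proposition~\ref{prop:jump-density}, including the reindexing to $B_0^{\mathrm{tot}}$, the relation $\dc u_0=\partial_{\nu_{\mathrm{out}}}u_0\,\dc\rho$ on $\partial U$, and positivity via $d\rho\wedge\dc\rho\wedge B_0^{\mathrm{tot}}\ge0$ on inner levels.

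You genuinely differ in the passage $\varepsilon\downarrow0$.

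\emph{The paper's route.} The paper needs no strong convergence of $\ddc u_\varepsilon$. The trace is the bulk functional $\int_U\widetilde\varphi\,d(dJ)+\int_U d\widetilde\varphi\wedge J$, so Lemma~\ref{dyn:lem-trace-stability} only uses two facts on each $U_{-r}\Subset U$. These are $\mathcal J^{(j)}_\varepsilon\rightharpoonup\mathcal J^{(j)}_0$ and $d\mathcal J^{(j)}_\varepsilon=\mu^{(j)}_{0,\varepsilon}-\lambda^{(j)}_\varepsilon\rightharpoonup^* d\mathcal J^{(j)}_0$. Both come from Bedford--Taylor continuity under uniform convergence, with the fixed factor $\dc u_0$ handled by Lemma~\ref{stat:lem-strong-weak-flux}. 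Uniform $L^\infty$ bounds then make the collar contribution $O(r)$. So the ``main obstacle'' you describe never arises in the paper.

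\emph{Your route.} It also closes, but the boundary terms are small rather than cancelling. Let $w=u_\varepsilon-u_0$, and let $R$ be the closed current with bounded coefficients obtained from $\omega_{u_\varepsilon}^j-\omega_{u_0}^j=\ddc w\wedge\sum_i\omega_{u_\varepsilon}^i\wedge\omega_{u_0}^{j-1-i}$ wedged with $S_0^{(j)}$. Then
\[
V\bigl(\mathcal J^{(j)}_\varepsilon-\mathcal J^{(j)}_0\bigr)=\ddc u_0\wedge\dc w\wedge R-dK,\qquad K:=\dc u_0\wedge\dc w\wedge R.
\]
The first term lies in $\DM^\infty(U)$, since its $d$ is $\ddc u_0\wedge\ddc w\wedge R$. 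Theorem~\ref{thm:global-trace} therefore bounds its trace by $C\|dw\|_{L^\infty(U)}$. For the exact term, $d(dK)=0$ gives
\[
\Tr^{\mathrm{out}}_{\partial U}(dK)(\varphi)=\int_U d\widetilde\varphi\wedge dK=-\Tr^{\mathrm{out}}_{\partial U}(d\widetilde\varphi\wedge K)(1),
\]
and this is at most $C\|\widetilde\varphi\|_{C^1}\|dw\|_{L^\infty(U)}$. Applying \eqref{reg:eq-uniform-gradient-interpolation} to $u_\varepsilon-u_0$ on the common domain $U$, as in \eqref{reg:eq-t-common-domain-C1beta-rate}, gives $\|dw\|_{L^\infty(U)}\le C\varepsilon^{1/2}$. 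No transfer through $\Xi_{0,\varepsilon}$ is needed.

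\emph{Comparison.} Your route yields a rate $O(\varepsilon^{1/2})$ against $C^1$ test functions, and the uniform mass bound extends the convergence to $C(\partial U)$. The paper's soft argument gives no rate. The price of yours is the extra integration by parts and its bookkeeping.

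\emph{Small corrections.}
\begin{enumerate}
\item $d\mathcal J^{(j)}_\varepsilon$ is not positive; for $j=0$ it equals $-V^{-1}\omega_0^n$ on $U$. Membership in $\DM^\infty$ only needs a measure, here one with bounded density.
\item Under the normalization, $\mu^{(j)}_{0,\varepsilon}$ has total mass exactly $1$, not at most $V$.
\item Nonnegativity of $d\sigma^{\mathrm{tot}}_\rho$ uses only $B_0^{\mathrm{tot}}\ge0$ and $d\rho\wedge\dc\rho\ge0$, with coarea and inner-level recovery. Strong pseudoconvexity plays no role there.
\end{enumerate}
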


All these quantities, which we view as surface areas, admit explicit positive densities that are equivalent to the $g_0$-surface measure; see Proposition~\ref{stat:prop-explicit-parallel-density}. We remind the reader once more that, in higher dimensions, $\sigma_{0+}^{(j)}$ is the one-sided boundary limit formed as $\varepsilon\downarrow0$.  It cannot be replaced directly by $\mu_0^{\mathrm{sing}}$ at $\varepsilon=0$, so the total trace is in general not equal to $n\mu_0^{\mathrm{sing}}$.

Using Theorem~\ref{intro:thm-mixed-traces} and the boundary expansion mechanism described above,
\[
u_\varepsilon-u_0 =-\varepsilon\,\partial_{\nu_{\mathrm{out}}}u_0+o(\varepsilon) \qquad\text{uniformly on }\partial U,
\]
we obtain:
\begin{mainresult}[Hadamard formula along the signed family]\label{intro:thm-hadamard}
	For some $\tau>0$, the map $t\mapsto E_{\omega_0}(u_t)$ belongs to 	$C^{1,1/2}([ -\tau,\tau])$, and
	\[
	\frac{d}{dt}E_{\omega_0}(u_t) 	=\frac1{n+1}\int_{\Sigma_t} 	\left(-\partial_{\nu_t}u_t\right)^2d\sigma_{\rho,t}^{\mathrm{tot}}, 	\qquad -\tau<t<\tau,
	\]
	with the corresponding one-sided formulas at the endpoints.  Equivalently,
	\[
	E_{\omega_0}(u_t)-E_{\omega_0}(u_s) 	=\int_s^t\frac1{n+1}\int_{\Sigma_r} 	\left(-\partial_{\nu_r}u_r\right)^2 	d\sigma_{\rho,r}^{\mathrm{tot}}\,dr.
	\]
	See Theorem~\ref{dyn:thm-signed-family-hadamard}.
\end{mainresult}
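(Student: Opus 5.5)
We plan to compute one-sided difference quotients of $e(t)=E_{\omega_0}(u_t)$ directly from the cocycle formula for the Aubin--Mabuchi energy, identify their limits with the total flux of Theorem~\ref{intro:thm-mixed-traces}, and then upgrade to $C^{1,1/2}$ using the stability estimates of Theorem~\ref{intro:thm-uniform-regularity}. By translating the defining function ($\rho_t=\rho-t$, $\dc\rho_t=\dc\rho$), every statement at level $t$ has the same structure as at level $0$. It therefore suffices to treat $t=0$ and the right derivative, together with a uniform-in-$t$ version of the estimates; the left derivative is symmetric, with $u_\varepsilon\le u_0$ for $\varepsilon<0$.

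\textbf{Step 1 (cocycle).} For bounded $\omega_0$-psh functions we use the standard identity
\[
E_{\omega_0}(u_\varepsilon)-E_{\omega_0}(u_0)=\frac{1}{n+1}\sum_{j=0}^{n}\frac1V\int_X(u_\varepsilon-u_0)\,\omega_{u_\varepsilon}^{j}\wedge\omega_{u_0}^{n-j}.
\]
For $\varepsilon>0$ we have $\Omega_\varepsilon\subset\Omega_0$, so $u_\varepsilon\ge u_0$ and $u_\varepsilon-u_0$ is supported in $\overline{U_\varepsilon}$. On $U$, $\omega_{u_0}^n=0$. More generally, each mixed measure $\omega_{u_\varepsilon}^j\wedge\omega_{u_0}^{n-j}$ with $n-j\ge1$ restricted to the open set $U$ should vanish or contribute $O(\varepsilon^2)$. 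The reason is that $\omega_{u_0}$ has rank $<n$ a.e.\ on $U$ (interior $C^{1,1}$ from Theorem~\ref{intro:thm-uniform-regularity}), so one expects that on $U$ only the term with a genuine $\omega_{u_0}^{n}$-type degeneracy drops. I will instead split the integral as $\int_{U}+\int_{\partial U}+\int_{U_\varepsilon\setminus\overline U}$. The term $j=n$ is supported in $\overline{U_\varepsilon}$, where $\omega_{u_\varepsilon}^n$ lives only on $\Sigma_\varepsilon$, and there $u_\varepsilon-u_0=-u_0=0$. On $U_\varepsilon\setminus\overline U$ we have $u_0=0$, $\omega_{u_0}=\omega_0$, and $0\le u_\varepsilon\le C\varepsilon^2$, because $u_\varepsilon$ vanishes to first order with bounded Hessian at $\Sigma_\varepsilon$. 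The collar has volume $O(\varepsilon)$, so these terms are $O(\varepsilon^3)$. On $U$ itself the contribution is bounded by $\|u_\varepsilon-u_0\|_\infty$ times the interior mass. This is $O(\varepsilon)\cdot o(1)$ provided the interior masses of the mixed measures converge to the interior mass of $\omega_{u_0}^n$, which is $0$. Making this precise is part of the main obstacle below. What remains is $\frac{1}{n+1}\sum_{j<n}\int_{\partial U}(u_\varepsilon-u_0)\,d\sigma_\varepsilon^{(j)}$.

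\textbf{Step 2 (boundary limit).} On $\partial U$ we have $u_0=0$ and $u_\varepsilon-u_0=-\varepsilon\partial_{\nu_{\mathrm{out}}}u_0+o(\varepsilon)$ uniformly. This follows from the $C^{1,\beta}$ stability under $\Xi_{0,\varepsilon}$ in Theorem~\ref{intro:thm-uniform-regularity}, which is uniform Lipschitz control of $u_\varepsilon$ along the normal segment, together with $u_\varepsilon=0$ on $\Sigma_\varepsilon$. The total masses of $\sigma_\varepsilon^{(j)}$ are bounded by $1$. Dividing by $\varepsilon$ and using the weak-* convergence $\sigma^{(j)}_\varepsilon\rightharpoonup\sigma^{(j)}_{0+}$ of Theorem~\ref{intro:thm-mixed-traces}(i), with continuous test function $q_0$, gives
\[
e'(0^+)=\frac1{n+1}\int_{\partial U}q_0\,d\sigma_{\mathrm{tot}}=\frac1{n+1}\int_{\partial U}q_0^2\,d\sigma^{\mathrm{tot}}_{\rho},
\]
by part (ii).

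\textbf{Step 3 (regularity).} Set $F(t)=\int_{\Sigma_t}q_t^2\,d\sigma_{\rho,t}^{\mathrm{tot}}$. Pull back by $\Psi_{s,t}$. Then $\|q_t\circ\Psi_{s,t}-q_s\|_{C^0}\le C|t-s|^{1/2}$, and $q_t$ is uniformly bounded. The pushed-forward densities of $d\sigma^{\mathrm{tot}}_{\rho,t}$ need to be $1/2$-Hölder in $t$ in a weak sense against bounded test functions. I would derive this from the explicit density formula (Proposition~\ref{stat:prop-explicit-parallel-density}). Alternatively, I would write $\int q_t^2\,d\sigma^{\mathrm{tot}}_{\rho,t}$ as a bulk divergence integral of $\mathcal J[\rho_t;\mathrm{tot}]$ against an extension of $q_t^2$, and use $C^{1,\beta}$ stability of $u_t$. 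This gives $|F(t)-F(s)|\le C|t-s|^{1/2}$. A function with continuous one-sided derivatives that agree is $C^1$, so $e\in C^{1,1/2}$, and integration gives the integral form.

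\textbf{Main obstacle.} The main obstacle is Step 1. We must show that the interior contributions $\frac1\varepsilon\int_U(u_\varepsilon-u_0)\,\omega_{u_\varepsilon}^j\wedge\omega_{u_0}^{n-j}$ vanish in the limit. Equivalently, all the mass relevant at first order must concentrate on $\partial U$. I would try integrating by parts on $U$, writing $\omega_{u_\varepsilon}=\omega_{u_0}+\ddc(u_\varepsilon-u_0)$ and telescoping. Interior terms then become $\int_U d(u_\varepsilon-u_0)\wedge\dc(u_\varepsilon-u_0)\wedge(\cdots)=O(\varepsilon^2)$, using the Lipschitz bound $\|u_\varepsilon-u_0\|_{C^1}\lesssim\varepsilon^{1/2}$, but perhaps requiring the sharper $C^1$ bound $O(\varepsilon)$ inside. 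The boundary terms produced are exactly the flux traces $\mathcal J_0^{(j)}$, which is precisely what Theorem~\ref{intro:thm-mixed-traces} is designed to handle.
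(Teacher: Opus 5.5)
Your plan follows the paper's architecture: the telescoping identity, boundary localization, the uniform first-order expansion on $\partial U$, weak-$*$ convergence of $\sigma_\varepsilon^{(j)}$ from Theorem~\ref{intro:thm-mixed-traces}, and the explicit densities for the $1/2$-H\"older continuity of $F$. However, two steps are not closed.

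\textbf{The bulk term on $U$.} You leave this open as the ``main obstacle'', and the integration-by-parts route you sketch does not close it as stated. With only $\|\nabla(u_\varepsilon-u_0)\|_{L^\infty}\lesssim\varepsilon^{1/2}$, a quadratic term $\int_U d(u_\varepsilon-u_0)\wedge\dc(u_\varepsilon-u_0)\wedge(\cdots)$ is merely $O(\varepsilon)$, not $O(\varepsilon^2)$. The rank heuristic does not help either: the kernels of $\omega_{u_\varepsilon}$ and $\omega_{u_0}$ differ, so there is no reason for $\omega_{u_\varepsilon}^j\wedge\omega_{u_0}^{n-j}$ to vanish on $U$; it only tends to $0$ weakly. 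What is missing is an elementary verification of the criterion you yourself wrote down. Since $0\le u_\varepsilon-u_0\le C\varepsilon$, it suffices to show $\mu_{0,\varepsilon}^{(j)}(U)\to0$. Split $U=U_{-r}\cup(U\setminus U_{-r})$. On a compact neighbourhood of $\overline{U_{-r}}$ in $U$, Bedford--Taylor continuity under the uniform convergence $u_\varepsilon\to u_0$ gives $\mu_{0,\varepsilon}^{(j)}\rightharpoonup V^{-1}\omega_{u_0}^n=0$, so that mass tends to $0$. On the inner collar, $u_0$ and $u_\varepsilon$ are uniformly $C^{1,1}$ up to $\partial U$ (Theorem~\ref{intro:thm-uniform-regularity}), so the mixed densities are bounded independently of $\varepsilon$ and the mass is at most $Cr$. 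Take $\limsup_{\varepsilon\downarrow0}$ and then let $r\downarrow0$; this is Lemma~\ref{dyn:lem-bulk-shell-vanishing}. Separately, on the shell $\{0<\rho<\varepsilon\}$ the function $u_\varepsilon$ is of order $\varepsilon$, not $\varepsilon^2$, because its normal slope $q_\varepsilon$ is bounded below. So the shell contributes $O(\varepsilon^2)$ rather than $O(\varepsilon^3)$, which is still enough.

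\textbf{The left derivative.} Your claim that it is ``symmetric'' is unjustified. For $\varepsilon<0$ the integrand $u_0-u_\varepsilon$ vanishes on $\partial U$, and the relevant boundary mass sits on the moving hypersurface $\Sigma_\varepsilon$. There the kinked function is $u_\varepsilon$, while $u_0$ is $C^{1,1}$ across. Theorem~\ref{intro:thm-mixed-traces} concerns the fixed boundary $\partial U$ and the fixed kinked base $u_0$, so it says nothing about these measures; you would need a trace-stability result on moving boundaries. You do not need left derivatives at all:
\begin{itemize}
\item The telescoping identity, whose measures $\MA_j(u_t,u_s)$ are probability measures, gives $|e(t)-e(s)|\le\|u_t-u_s\|_{L^\infty(X)}\le C|t-s|$, so $e$ is Lipschitz.
\item Recentering gives $D_+e(t_0)=F(t_0)$ for every $t_0\in[-\tau,\tau)$, hence $e'=F$ almost everywhere.
\item Absolute continuity then gives $e(t)-e(s)=\int_s^tF(r)\,dr$.
\item The H\"older continuity of $F$ yields $e\in C^{1,1/2}$ and the one-sided endpoint formulas.
\end{itemize}
This is how the paper concludes Theorem~\ref{dyn:thm-signed-family-hadamard}. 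Equivalently, you may invoke the classical fact that a continuous function whose right derivative exists everywhere and is continuous is $C^1$.
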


We stress that this result relies heavily on the degeneracy.  If $\varepsilon\mapsto u_\varepsilon$ were smooth, we would have
\[
\frac d{d\varepsilon}E_{\omega_0}(u_\varepsilon) =\frac1V\int_X\dot u_\varepsilon\,\omega_{u_\varepsilon}^n,
\]
and this formula misses the factor $1/2$ even when $n=1$.

At the end of the paper, we present two examples.  Example~\ref{dyn:ex-cp1-verification} was an important reference for us when we checked the formula, and it also helps the reader develop intuition for the result.  Example~\ref{dyn:ex-toric-collapse} is somewhat more involved; we found it only after the proof was essentially complete.  Verifying it also strengthened our confidence in the correctness of the result.

\paragraph{Organization.} Section~\ref{sec:preliminaries} fixes the geometric and weak-trace tools, Section~\ref{reg:sec-regularity} proves regularity and stability, and Sections~\ref{sec:flux}--\ref{sec:hadamard} derive the boundary traces and Hadamard formula.  Appendix~\ref{app:sec-proofs-regularity} contains the two long uniform-estimate proofs.

\section{Preliminaries}\label{sec:preliminaries}
\subsection{Parallel collar geometry}
In this subsection we collect the needed estimates for the underlying geometric quantities.  All the proofs are standard, so we omit some details.

\begin{proposition}\label{prop:BH_to_collar}
There exists $0<\varepsilon_0<\delta_0$ such that $\rho\in C^{3,1}$ on $\{|\rho|<\varepsilon_0\}$, $|\nabla\rho|_{g_0}=1$, and the hypersurfaces $\Sigma_t=\{\rho=t\}$ are uniformly $C^{3,1}$ and uniformly strongly pseudoconvex.  Their geometric features, such as normal projection, second fundamental forms, and surface measures, are all uniformly controlled, and
\[
\Vol_{\omega_0}\{a\le\rho\le b\}\le C|b-a|.
\]
More precisely, strong pseudoconvexity is oriented with respect to $U_t$: for some $\kappa>0$ independent of $t$,
\[
\ddc\rho(\tau,\bar\tau)\ge\kappa\,\omega_0(\tau,\bar\tau), \qquad \tau\in T^{1,0}\Sigma_t.
\]
\end{proposition}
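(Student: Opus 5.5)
The plan is to derive everything from the standard tubular-neighbourhood theory of a $C^{3,1}$ hypersurface in a compact Riemannian manifold, and then to use compactness of $\Sigma$ to turn the pointwise conditions into uniform ones.

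\emph{Step 1: the collar.} Since $\Sigma$ is compact and embedded of class $C^{3,1}$ and $g_0$ is smooth, the normal exponential map
\[
\Phi(y,s)=\exp_y(s\,\nu_{\mathrm{out}}(y))
\]
is a $C^{2,1}$ diffeomorphism from $\Sigma\times(-2\delta_0,2\delta_0)$ onto $\mathcal N_{2\delta_0}(\Sigma)$, and $d_\Sigma(\Phi(y,s))=s$. The stated facts $d_\Sigma\in C^{3,1}$ and $|\nabla d_\Sigma|=1$ on this collar are the classical distance-function regularity; the gain of one derivative comes from $\nabla d_\Sigma(\Phi(y,s))=\dot\gamma_y(s)$. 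Choosing $\varepsilon_0<\delta_0$ gives $\rho=d_\Sigma\in C^{3,1}$ on $\{|\rho|<\varepsilon_0\}$ with $|\nabla\rho|=1$, and $\Sigma_t=\Phi(\Sigma\times\{t\})$.

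\emph{Step 2: uniform geometry of the level sets.} Each $\Sigma_t$ is a regular level set of a $C^{3,1}$ function with unit gradient, so it is a $C^{3,1}$ hypersurface. Its $C^{3,1}$ norms, its second fundamental form $\nabla^2\rho|_{T\Sigma_t}$, the normal projection $\pi(\Phi(y,s))=y$, and its surface measure (via the Jacobian of $\Phi$) are all bounded by the $C^{3,1}$ norm of $\rho$ on the closed collar. They are therefore uniform in $|t|\le\varepsilon_0$.

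For the volume bound, apply the coarea formula with $|\nabla\rho|=1$:
\[
\Vol\{a\le\rho\le b\}=\int_a^b \mathcal H^{2n-1}(\Sigma_s)\,ds\le C|b-a|
\]
when $[a,b]\subset(-\varepsilon_0,\varepsilon_0)$. For general $a,b$, use that $|\rho|$ is bounded below by some $c_0>0$ outside the collar. Then either the slab lies in the collar, or $|b-a|\gtrsim\min(c_0,\varepsilon_0)$, in which case $\Vol(X)\le C|b-a|$ trivially (after enlarging $C$).

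\emph{Step 3: uniform strong pseudoconvexity.} This is the only non-routine point, and it is handled by continuity and compactness. Strong pseudoconvexity of $\Sigma=\{\rho=0\}$, oriented toward $U=\{\rho<0\}$, means the Levi form $\ddc\rho$ is positive definite on $T^{1,0}\Sigma$. The Levi form is independent of the chosen defining function up to a positive factor, and $\rho$ is a legitimate defining function. Consider the compact set
\[
K=\{(x,\tau):\ |\rho(x)|\le\varepsilon_0',\ \tau\in T^{1,0}_x\Sigma_{\rho(x)},\ |\tau|_{\omega_0}=1\},
\]
on which the function $(x,\tau)\mapsto\ddc\rho(\tau,\bar\tau)$ is continuous, because $\ddc\rho\in C^{1,1}$. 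The distribution $T^{1,0}\Sigma_t=\ker\partial\rho$ varies continuously, since $\partial\rho\in C^{2,1}$ does not vanish. On the slice $x\in\Sigma$ this function has a positive minimum $2\kappa$ by compactness. Uniform continuity then gives $\ddc\rho(\tau,\bar\tau)\ge\kappa$ on $K$ once $\varepsilon_0$ is shrunk. Homogeneity extends this to all $\tau$, giving
\[
\ddc\rho(\tau,\bar\tau)\ge\kappa\,\omega_0(\tau,\bar\tau),\qquad \tau\in T^{1,0}\Sigma_t .
\]
The orientation is consistent because $U_t=\{\rho<t\}$ and $\rho_t=\rho-t$ has the same $\ddc$ as $\rho$. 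The main care needed is to shrink $\varepsilon_0$ once at the end so that all the uniform bounds hold simultaneously.
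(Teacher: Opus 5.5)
Your proposal is correct and follows essentially the same route as the paper. The $C^{2,1}$ normal exponential map together with the Gauss-lemma identity $\nabla d_\Sigma(\Phi(y,s))=\partial_s\Phi(y,s)$ gives the $C^{3,1}$ endpoint. Compactness and openness of strong pseudoconvexity, plus the coarea formula, give the rest.

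There are two small corrections. First, you also need $\varepsilon_0\le\inf_{X\setminus\mathcal N_{\delta_0}(\Sigma)}|\rho|$, so that $\{|\rho|<\varepsilon_0\}$ really lies where $\rho=d_\Sigma$; otherwise $\Sigma_t$ could pick up components where $\rho$ is only $W^{2,\infty}$. Second, your extension of the volume bound to arbitrary $a,b$ is false: if $\rho$ is constant on an open set away from the collar, then $\{a\le\rho\le a\}$ has positive volume. The estimate should therefore be stated only for slabs in the collar range, which is all the paper uses.
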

\begin{proof}
The $C^k$ version of the assertion was proved by Foote, who also pointed out that it applies to the case of Riemannian manifolds \cite[Theorem~1 and Remarks~(1)--(2)]{Foote1984Distance}.  For the $C^{3,1}$ endpoint needed here, one further bootstrap based on Gauss' lemma is required.  The key point is that a local $C^{3,1}$ parametrization has a $C^{2,1}$ unit normal.  Let $\phi(y)$ and $\nu(y)$ denote the corresponding parametrization and unit normal; then the normal exponential map $F(y,r)=\exp_{\phi(y)}(r\nu(y))$ and its inverse are $C^{2,1}$ on a smaller collar.  Gauss' lemma \cite[Chapter~3]{doCarmo1992Riemannian} gives $\nabla d_\Sigma(F(y,r))=\partial_rF(y,r)$, which yields $\nabla d_\Sigma\in C^{2,1}$ and hence $d_\Sigma\in C^{3,1}$.  The remaining part of the proof follows from compactness, the openness of strong pseudoconvexity, and the coarea formula $\Vol\{a\le\rho\le b\}=\int_a^b \mathrm{Area}(\Sigma_t)\,dt$ \cite[Theorem~3.2.22]{Federer1969GMT}.

\end{proof}

\begin{lemma}
\label{prelim:lem-parallel-identifications}
After decreasing $\varepsilon_0$, there are $C^{2,1}$ diffeomorphisms
\[
\Xi_{s,t}:\overline{U_s}\longrightarrow\overline{U_t}, \qquad \Psi_{s,t}:=\Xi_{s,t}|_{\Sigma_s}:\Sigma_s\longrightarrow\Sigma_t, \qquad |s|,|t|<\varepsilon_0,
\]
such that $\Xi_{t,t}=\operatorname{Id}$ and, in the fixed finite atlas,
\begin{equation}\label{prelim:eq-parallel-identification-bounds}
\|\Xi_{s,t}-\operatorname{Id}\|_{C^2} +\|\Xi_{s,t}^{-1}-\operatorname{Id}\|_{C^2} \le C|t-s|.
\end{equation}
The boundary map $\Psi_{s,t}$ is the normal projection along the signed-distance collar.
\end{lemma}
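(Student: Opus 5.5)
The plan is to build $\Xi_{s,t}$ from the normal flow of the collar, cut off so that it acts only near the boundary. Proposition~\ref{prop:BH_to_collar} gives $\rho\in C^{3,1}$ with $|\nabla\rho|_{g_0}=1$ on $\{|\rho|<\varepsilon_0\}$. The gradient field $N:=\nabla_{g_0}\rho$ is therefore $C^{2,1}$ there, and its integral curves are unit-speed normal geodesics. Writing $\phi^r$ for the flow of $N$, we get $\rho(\phi^r(x))=\rho(x)+r$, so $\phi^{t-s}$ maps $\Sigma_s$ onto $\Sigma_t$. Since $\rho=d_\Sigma$, this restriction is exactly the normal projection along the signed-distance collar, and we set $\Psi_{s,t}:=\phi^{t-s}|_{\Sigma_s}$.

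To extend to all of $\overline{U_s}$, fix a cutoff $\chi\in C^\infty(\mathbb R)$ with $\chi=1$ on $[-\varepsilon_0/2,\varepsilon_0/2]$ and $\chi=0$ outside $(-3\varepsilon_0/4,3\varepsilon_0/4)$. Then define the time-dependent field
\[
W_{s,t}(x):=\chi(\rho(x))\,N(x),
\]
extended by zero, which is a global $C^{2,1}$ vector field on $X$. Let $\Xi_{s,t}$ be its time-$(t-s)$ flow. We restrict to $|s|,|t|<\varepsilon_0/4$, which is harmless after decreasing $\varepsilon_0$.

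Along any trajectory starting near $\Sigma_s$ we have $\frac{d}{dr}\rho=\chi(\rho)=1$ as long as $|\rho|<\varepsilon_0/2$, so on that band the flow agrees with $\phi^{r}$. For $x$ with $\rho(x)\le s$, the quantity $\rho$ is nondecreasing along the flow when $t\ge s$ (nonincreasing when $t<s$). Since $\chi(\rho)=1$ near levels between $s$ and $t$, the level $s$ is carried exactly to the level $t$. Consequently $\Xi_{s,t}(\{\rho\le s\})=\{\rho\le t\}$, using that the flow is a diffeomorphism of $X$ preserving the foliation by levels of $\rho$ near the collar. Hence $\Xi_{s,t}:\overline{U_s}\to\overline{U_t}$ is a $C^{2,1}$ diffeomorphism with inverse $\Xi_{t,s}$, we have $\Xi_{t,t}=\operatorname{Id}$, and its restriction to $\Sigma_s$ is $\Psi_{s,t}$.

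The bound \eqref{prelim:eq-parallel-identification-bounds} comes from standard ODE dependence. The flow of a $C^{2,1}$ field is $C^{2,1}$ in space, and its spatial $C^2$ norm satisfies $\|\Xi^r-\operatorname{Id}\|_{C^2}\le C|r|$. To see this, differentiate $\frac{d}{dr}\Xi^r=W\circ\Xi^r$ up to twice in $x$ and apply Gronwall. The resulting linear ODEs for $D\Xi^r-I$ and $D^2\Xi^r$ have forcing bounded by $\|W\|_{C^2}$, so each quantity is $O(|r|)$. The constant depends only on $\|W\|_{C^{2}}$, hence only on $(X,\omega_0,U,\rho)$, and the computation is done in a fixed finite atlas. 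The same argument applies to the inverse, which is the flow for time $s-t$.

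I expect the main obstacle to be the regularity bookkeeping. Since $\rho$ is only $C^{3,1}$, the field $N$ is $C^{2,1}$, and we must check that second derivatives of the flow remain Lipschitz. This requires the Lipschitz ($W^{3,\infty}$-type) control of $D^2W$ in the variational equation for $D^2\Xi^r$, which holds because $D^2 W$ is Lipschitz. Apart from this, the argument only requires verifying that the cutoff never acts on the relevant levels, which the choice $|s|,|t|<\varepsilon_0/4$ guarantees.
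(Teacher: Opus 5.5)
Your proposal is correct and is essentially the paper's construction. The paper translates from level $r=s$ to $r=t$ in signed collar coordinates $(y,r)$ and splices this translation to the identity. Your time-$(t-s)$ flow of $\chi(\rho)\nabla_{g_0}\rho$ is exactly such a spliced translation: it is $(y,r)\mapsto(y,f^{t-s}(r))$, where $f$ is the flow of $\dot r=\chi(r)$. The only difference is that you get the $C^2$ bound from Gronwall, whereas the paper reads it off the $C^{2,1}$ collar chart. As a minor point, the field $\chi(\rho)\nabla_{g_0}\rho$ does not depend on $s,t$, so calling it time-dependent is a misnomer, and the inverse map is simply the time-$(s-t)$ flow.
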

\begin{proof}
In signed collar coordinates $(y,r)$, translate $r=s$ to $r=t$ and splice this translation to the identity in a fixed inner subcollar.  Uniform $C^{3,1}$ collar bounds give \eqref{prelim:eq-parallel-identification-bounds}, and the boundary restriction is the normal projection.
\end{proof}
Since non-holomorphic pullbacks do not preserve the complex Monge-Amp\`ere operator, the use of these comparison maps is restricted to scalar functions and boundary forms.

\begin{remark}
Strong pseudoconvexity supplies the tangential boundary barriers.  The one-sided barriers require less regularity, but $C^{3,1}$ keeps the boundary Hessian estimates and moving-domain constants uniform along the parallel family.
\end{remark}

\subsection{Compact K\"ahler pluripotential conventions}
This subsection collects the results from pluripotential theory that we use.  For the reader, Bedford--Taylor \cite{BedfordTaylor1976Dirichlet,BedfordTaylor1982Capacity} constructed the generalized complex Monge--Amp\`ere operator for locally bounded plurisubharmonic functions, and developed the corresponding Dirichlet problem, monotone continuity, and capacity theory.  Guedj--Zeriahi \cite{GuedjZeriahi2005IntrinsicCapacities,GuedjZeriahi2007WeightedEnergy} built the framework of intrinsic capacities and finite-energy classes on compact K\"ahler manifolds.  For a systematic introduction to $\omega_0$-psh functions, Bedford--Taylor products, the comparison principle, and maximality, we refer to \cite{Demailly2012Complex,GuedjZeriahi2012Dirichlet,GuedjZeriahi2017Degenerate}.

\begin{definition}[$\omega_0$-psh functions]
A function $u:X\to[-\infty,+\infty)$ belongs to $\PSH(X,\omega_0)$ if $u\in L^1(X)$, $u$ is upper semicontinuous, and $u+\phi$ is plurisubharmonic on every chart where $\omega_0=\ddc\phi$.  We write $h^*$ for the upper semicontinuous regularization of $h$.
\end{definition}
\begin{definition}[Bedford--Taylor products]
For bounded $\omega_0$-psh functions $u_1,\ldots,u_m$, all mixed products
\[
\omega_{u_1}^{j_1}\wedge\cdots\wedge\omega_{u_m}^{j_m}, \qquad j_1+\cdots+j_m\le n,
\]
are defined by the local Bedford--Taylor construction after adding a local potential for $\omega_0$; the resulting currents agree on overlaps and are measures when the total degree is $n$.  We set
\[
\MA(u):=\frac{\omega_u^n}{V},\qquad \MA_j(u,v):=\frac1V\omega_u^j\wedge\omega_v^{n-j}.
\]
\end{definition}
A locally bounded $u\in\PSH(D,\omega_0)$ is \emph{locally maximal} if, for every $G\Subset D$ and every $v\in\PSH(G,\omega_0)\cap L^\infty(G)$ with
\[
\limsup_{G\ni x\to\zeta}(v-u)(x)\le0\qquad(\zeta\in\partial G),
\]
one has $v\le u$ on $G$.
\begin{theorem}
Mixed Bedford--Taylor products converge weakly as currents when each factor is replaced by a pointwise decreasing sequence of locally bounded $\omega_0$-psh functions with locally bounded limit.  If $u_k,u\in\PSH(D,\omega_0)\cap L^\infty_{\rm loc}(D)$, $u_k\uparrow u$ almost everywhere, and $(u_k)$ is locally uniformly bounded, then
\[
(\omega_0+\ddc u_k)^n\rightharpoonup(\omega_0+\ddc u)^n
\]
weakly as Radon measures.  Moreover, $u\in\PSH(D,\omega_0)\cap L^\infty_{\rm loc}(D)$ is locally maximal on $D$ if and only if $(\omega_0+\ddc u)^n=0$ on $D$.
\end{theorem}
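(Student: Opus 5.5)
This statement collects standard facts of Bedford--Taylor theory, so the plan is to reduce everything to local statements and invoke the classical results. First, fix a finite atlas of coordinate balls $B_\alpha$ with local potentials $\phi_\alpha$, $\omega_0=\ddc\phi_\alpha$, and a partition of unity. On each chart, a bounded $\omega_0$-psh function $u$ becomes the bounded psh function $u+\phi_\alpha$, and $\omega_u=\ddc(u+\phi_\alpha)$. Mixed products $\omega_{u_1}^{j_1}\wedge\cdots\wedge\omega_{u_m}^{j_m}$ are then the local Bedford--Taylor currents $\ddc(u_1+\phi_\alpha)^{j_1}\wedge\cdots$. Each assertion is local and independent of the choice of potential, since $\phi_\alpha-\phi_\beta$ is pluriharmonic on overlaps. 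It therefore suffices to prove everything for psh functions on a ball. Since $\phi_\alpha$ is smooth, sequences decreasing or increasing in $\PSH(\cdot,\omega_0)$ stay monotone after adding $\phi_\alpha$.

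\textbf{Decreasing convergence.} I will invoke the Bedford--Taylor monotone convergence theorem \cite{BedfordTaylor1982Capacity}. Its proof runs by induction on the degree: one writes $\ddc v_k\wedge T_k=\ddc(v_kT_k)$. The induction hypothesis and $v_k\downarrow v$ give $v_kT_k\to vT$, which uses quasi-continuity and capacity estimates, namely the Chern--Levine--Nirenberg inequality together with the fact that $v_k\to v$ uniformly off a set of small capacity. I will simply cite this rather than reprove it.

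\textbf{Increasing almost everywhere convergence.} Suppose $u_k\uparrow u$ almost everywhere with uniform local bounds. Then $(\sup_k u_k)^*=u$, because two psh functions that agree almost everywhere agree everywhere, and the set $\{\sup_k u_k<(\sup_k u_k)^*\}$ is negligible, hence pluripolar. The increasing convergence theorem of Bedford--Taylor again uses quasi-continuity: off an open set of small capacity, convergence is uniform after a Dini-type argument. Combined with the uniform mass bound from Chern--Levine--Nirenberg, this yields weak convergence of $(\omega_0+\ddc u_k)^n$. This step is where the capacity machinery is genuinely needed, and it is the main technical point. I will handle it by citation to \cite{BedfordTaylor1982Capacity} and \cite{GuedjZeriahi2017Degenerate}.

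\textbf{Maximality characterization.} This is local as well. If $(\ddc(u+\phi))^n=0$ on $D$, then the comparison principle on $G\Subset D$ gives $v\le u$. To apply it, take $v\in\PSH(G,\omega_0)$ with boundary limsup at most $0$, replace $v$ by $v-\eta+\eta'|z|^2$-type perturbations to obtain strict subsolutions, and let the perturbations tend to zero. Conversely, if $u$ is maximal, take a small ball $B\Subset D$ and solve the Dirichlet problem on $B$ with boundary data $u+\phi$, approximated from above by continuous data and passed to the limit by monotone convergence. The Perron solution $w$ satisfies $w\ge u+\phi$, and maximality gives equality, so $(\ddc(u+\phi))^n=(\ddc w)^n=0$ on $B$. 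Here the subtle point is that the local potential $\phi$ must be absorbed consistently. Maximality for $\omega_0$-psh functions on $G$ is equivalent to maximality of $u+\phi$ among psh functions, since $v\mapsto v+\phi$ is a bijection preserving the boundary condition. The standard proof in \cite{BedfordTaylor1976Dirichlet,Demailly2012Complex} then applies verbatim.
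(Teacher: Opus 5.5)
Your proposal follows the paper's route. The paper likewise reduces to local plurisubharmonic functions by adding a local potential of $\omega_0$, then cites \cite[Theorem~2.4 and Proposition~5.2]{BedfordTaylor1982Capacity} for the monotone convergence statements and \cite[Theorem~2.20]{GuedjZeriahi2012Dirichlet} for the maximality criterion, so your sketches add detail on top of the same citations.

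Two small points in the maximality sketches need adjusting.
\begin{enumerate}
\item Under the paper's boundary condition $\limsup_{x\to\zeta}(v-u)(x)\le0$, the Perron solution $w$ on $B$ is not directly an admissible competitor. You only know $\limsup_{x\to\zeta}w(x)\le(u+\phi)(\zeta)$, and $u$ is merely upper semicontinuous. Glue $w$ to $u+\phi$ outside $B$ via Lemma~\ref{prelim:lem-max-gluing}, then test maximality on a slightly larger ball, where the glued function coincides with $u+\phi$ near the boundary.
\item For $G$ not contained in a single chart, $\eta'|z|^2$ is not defined on $G$. Use the global comparison principle with the strict subsolution $(1-\eta')v-\eta$ instead.
\end{enumerate}
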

The monotone-continuity statements are \cite[Theorem~2.4 and Proposition~5.2]{BedfordTaylor1982Capacity}; the maximality criterion is \cite[Theorem~2.20]{GuedjZeriahi2012Dirichlet}, applied after adding a local potential of $\omega_0$.
\begin{theorem}[Comparison principle]
Let $D\Subset X$ and let $u,v\in\PSH(D,\omega_0)\cap L^\infty(D)$.  If $\liminf_{D\ni x\to\zeta}(u-v)(x)\ge0$ for all $\zeta\in\partial D$, then
\[
\int_{\{u<v\}}(\omega_0+\ddc v)^n\le\int_{\{u<v\}}(\omega_0+\ddc u)^n.
\]
\end{theorem}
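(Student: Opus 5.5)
This comparison principle is the compact K\"ahler version of the Bedford--Taylor comparison theorem. I will prove it by reducing to the local statement in $\mathbb C^n$ with a fixed local potential, and then using the standard perturbation trick. First I reduce to strict inequalities. For $\eta>0$ set $v_\eta:=v-\eta$. Then $\liminf_{x\to\zeta}(u-v_\eta)\ge\eta>0$, so $\{u<v_\eta\}\Subset D$. Since $\{u<v_\eta\}\uparrow\{u<v\}$ as $\eta\downarrow0$ and $\omega_{v_\eta}=\omega_v$, monotone convergence of measures shows it suffices to prove
\[
\int_{\{u<v\}}\omega_v^n\le\int_{\{u<v\}}\omega_u^n
\]
under the extra hypothesis that $K:=\overline{\{u<v\}}$ is a compact subset of $D$.

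Next I use the maximum construction. For $\varepsilon>0$ let $w:=\max(u,v-\varepsilon)$. This is bounded and $\omega_0$-psh, and it equals $u$ near $\partial D$ (outside $K$). Hence $\omega_w^n-\omega_u^n$ is a compactly supported current of the form $\ddc$ of a compactly supported degree-$(2n-2)$ current; concretely $(w-u)\,T$ with $T=\sum\omega_w^p\wedge\omega_u^{n-1-p}$, which is legitimate by Bedford--Taylor. By Stokes, $\int_{D'}\omega_w^n=\int_{D'}\omega_u^n$ for a relatively compact open $D'\supset K$ with smooth boundary. On the open set $\{u<v-\varepsilon\}$ we have $w=v-\varepsilon$, so $\omega_w^n=\omega_v^n$ there, using locality of Bedford--Taylor products on plurifine open sets. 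On $\{u>v-\varepsilon\}$ we have $w=u$. Therefore
\[
\int_{\{u<v-\varepsilon\}}\omega_v^n+\int_{\{u\ge v-\varepsilon\}\cap D'}\omega_w^n=\int_{D'}\omega_u^n .
\]
The inequality $\mathbf 1_{\{u\ge v-\varepsilon\}}\omega_w^n\ge\mathbf 1_{\{u> v-\varepsilon\}}\omega_u^n$ is the delicate point. I will handle it with the standard Demailly inequality $\omega_{\max(u,v)}^n\ge\mathbf 1_{\{u\ge v\}}\omega_u^n+\mathbf 1_{\{u<v\}}\omega_v^n$. This gives $\int_{\{u<v-\varepsilon\}}\omega_v^n\le\int_{\{u<v-\varepsilon\}}\omega_u^n$ after subtracting the common part on $\{u\ge v-\varepsilon\}$. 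Letting $\varepsilon\downarrow0$ and using $\{u<v-\varepsilon\}\uparrow\{u<v\}$ yields the claim.

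The main obstacle is justifying the plurifine locality and the maximum inequality when $u,v$ are merely bounded, not continuous. I would first prove everything for continuous $u,v$: then $\{u<v-\varepsilon\}$ is genuinely open and locality is immediate. I would then approximate by decreasing sequences of smooth $\omega_0$-psh functions, available locally via convolution of $u+\phi$, or globally by Demailly regularization. I would pass to the limit using the monotone continuity theorem above together with quasi-continuity, i.e.\ Bedford--Taylor capacity, which handles the indicator functions of the sets $\{u<v\}$ up to sets of small capacity. This is exactly the argument of \cite{BedfordTaylor1982Capacity} and \cite[Theorem~3.7]{GuedjZeriahi2012Dirichlet}, so in the paper I would simply cite it after the local-potential reduction.
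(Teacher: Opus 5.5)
Your proof is correct, but its core differs from the argument the paper invokes. The paper gives no proof of its own. It cites Bedford--Taylor's Theorem~4.1 and remarks that the local-potential proof carries over. That classical proof first treats continuous $u,v$, where $\{u<v\}$ is an honest open set on which one can integrate by parts. It then reaches bounded functions by decreasing approximation together with quasi-continuity and convergence in capacity.

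You instead use the maximum construction. After the shift, $w=\max(u,v-\varepsilon)$ agrees with $u$ off a compact set, so $\omega_w^n-\omega_u^n=\ddc\bigl((w-u)T\bigr)$ has total mass zero. Plurifine locality (or Demailly's inequality) then turns this mass balance into the comparison inequality. Your route is more transparent on a manifold: the only global step is a one-line Stokes identity that visibly rests on $d\omega_0=0$, and every delicate pluripotential ingredient is a local statement about measures, checkable chart by chart. The cost is that you import plurifine locality, whose own proof uses quasi-continuity. So the capacity argument has not disappeared; it has been moved into a lemma.

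A few small corrections:
\begin{itemize}
\item The theorem is not chart-local, since $\{u<v\}$ may meet many charts, so there is no reduction to $\mathbb C^n$. Only locality and the maximum inequality are proved in charts.
\item The approximants in those charts must come from local convolution of $u+\phi$. Global Demailly regularization is not available for functions defined only on $D$.
\item The step you flag as delicate can be bypassed. Plurifine locality alone gives $\mathbf 1_{\{u>v-\varepsilon\}}\omega_w^n=\mathbf 1_{\{u>v-\varepsilon\}}\omega_u^n$ and $\mathbf 1_{\{u<v-\varepsilon\}}\omega_w^n=\mathbf 1_{\{u<v-\varepsilon\}}\omega_v^n$. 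Discarding the nonnegative mass on the contact set yields $\int_{\{u<v-\varepsilon\}}\omega_v^n\le\int_{\{u\le v-\varepsilon\}}\omega_u^n$. Since $\{u\le v-\varepsilon\}\uparrow\{u<v\}$ as $\varepsilon\downarrow0$, the claim follows without Demailly's inequality on $\{u=v-\varepsilon\}$.
\item Your separate $\eta$- and $\varepsilon$-shifts are redundant. A single shift already makes $\{u\le v-\varepsilon\}\Subset D$.
\end{itemize}
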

This is \cite[Theorem~4.1]{BedfordTaylor1982Capacity}; its local-potential proof applies unchanged on a K\"ahler domain.
\begin{corollary}\label{prelim:cor-homogeneous-dirichlet-comparison}
If $u\in\PSH(D,\omega_0)\cap L^\infty(D)$ satisfies $(\omega_0+\ddc u)^n=0$ and $w\in\PSH(D,\omega_0)\cap L^\infty(D)$ has $\limsup_{D\ni x\to\zeta}(w-u)(x)\le0$ on $\partial D$, then $w\le u$ on $D$.
\end{corollary}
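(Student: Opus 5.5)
The corollary follows from the comparison principle by a small strict perturbation. The idea is to make $w$ strictly subsolution-like, so that any set where it exceeds $u$ carries positive Monge--Amp\`ere mass for the perturbed function but zero mass for $u$. The boundary condition $\limsup(w-u)\le 0$ is handled by shifting $w$ down by a constant.

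Fix $\eta>0$. First suppose we have a bounded function $\psi$ on $D$ with $\omega_0+\ddc\psi\ge c\,\omega_0$ for some $c>0$. On a general $D$ such a $\psi$ may not exist globally, so we localize.

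\textbf{Step 1 (localization).} Suppose $w(x_0)>u(x_0)$ at some $x_0\in D$. The statement is local in the following sense. Choose a small coordinate ball $B\Subset D$ around any point, with $\omega_0=\ddc\phi$ on $B$. Set $\psi=|z-z_0|^2-r^2$, which is smooth, strictly psh, negative on $B$, and vanishes on $\partial B$. The hypothesis $(\omega_0+\ddc u)^n=0$ on $D$ says $u$ is locally maximal, by the maximality criterion in the theorem above.

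It is cleaner, however, to argue globally on $D$. For $\eta>0$ set
\[
w_\eta:=(1-\eta)w+\eta\,\chi-\eta',
\]
which fails without a global potential. So instead we use the comparison principle directly with a strict perturbation carried by the $\omega_0$-part:
\[
v_\eta:=(1-\eta)w-\eta M-\eta,\qquad M:=\sup_D|w|+\sup_D|u|.
\]
Then $\omega_0+\ddc v_\eta=\eta\,\omega_0+(1-\eta)(\omega_0+\ddc w)\ge\eta\,\omega_0$, so $v_\eta\in\PSH(D,\omega_0)\cap L^\infty(D)$ and
\[
(\omega_0+\ddc v_\eta)^n\ge\eta^n\omega_0^n.
\]

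\textbf{Step 2 (boundary inequality).} At each $\zeta\in\partial D$ we have $v_\eta-u\le(1-\eta)(w-u)-\eta u-\eta M-\eta\le(1-\eta)(w-u)-\eta$. Hence
\[
\liminf(u-v_\eta)\ge\eta>0.
\]

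\textbf{Step 3 (comparison).} By the comparison principle,
\[
\eta^n\int_{\{u<v_\eta\}}\omega_0^n\le\int_{\{u<v_\eta\}}(\omega_0+\ddc v_\eta)^n\le\int_{\{u<v_\eta\}}(\omega_0+\ddc u)^n=0.
\]
Therefore $\{u<v_\eta\}$ is Lebesgue-null, so $v_\eta\le u$ almost everywhere.

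\textbf{Step 4 (from a.e.\ to everywhere).} Two $\omega_0$-psh functions that satisfy $v_\eta\le u$ almost everywhere satisfy it everywhere. This holds because each is the limsup of its averages over shrinking balls, after adding a local potential: locally $\psi$-psh functions equal their essential limsup. Letting $\eta\downarrow0$, $v_\eta\to w$ pointwise, and so $w\le u$ on $D$.

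The only delicate point is Step 4 together with the need for a strictly positive lower bound on the perturbed Monge--Amp\`ere mass. The $\eta\omega_0$ trick supplies that bound without any global strictly psh exhaustion on $D$.
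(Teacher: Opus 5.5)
Your argument is correct, and it is more self-contained than the paper's. The paper proves the corollary in one line: since $(\omega_0+\ddc u)^n=0\le(\omega_0+\ddc w)^n$, it calls this the homogeneous case of the flat-space domination principle of Guedj--Zeriahi. There, strict positivity would normally come from a global strictly psh perturbation such as $\epsilon|z|^2$.

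You instead derive the statement directly from the comparison principle that the paper records just before. You take the strict positivity from the K\"ahler form itself: $\omega_0+\ddc\bigl((1-\eta)w\bigr)=\eta\,\omega_0+(1-\eta)\omega_w\ge\eta\,\omega_0$. Multilinearity and positivity of Bedford--Taylor products then give $(\omega_0+\ddc v_\eta)^n\ge\eta^n\omega_0^n$. The paper uses the same device in Proposition~\ref{reg:prop-quantitative-delta-domain-stability}.

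This buys something. Your argument works on any $D\Subset X$ without a bounded strictly psh function on $D$, and such a function need not exist: if $D$ contains a compact complex curve, every psh function is constant along it. It therefore makes explicit why the flat-space principle transfers to domains in $X$, which the one-line citation leaves implicit. Adding a local potential only converts $\omega_0$-psh functions to psh ones chart by chart, while the domination principle is a global statement on $D$.

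The remaining steps are sound. Your constant shift gives $\liminf(u-v_\eta)\ge\eta$ at every boundary point, so $\{u<v_\eta\}\Subset D$ and the comparison principle applies cleanly. The passage from almost everywhere to everywhere via ball averages of $u+\phi$ and $v_\eta+\phi$ is standard. Letting $\eta\downarrow0$ is legitimate because $w$ is bounded.

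For a clean write-up, delete the abandoned localization with $|z-z_0|^2-r^2$ and the discarded $w_\eta$; they play no role. Also, $M:=\sup_D|u|$ already suffices in Step 2.
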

Indeed, $(\omega_0+\ddc u)^n=0\le(\omega_0+\ddc w)^n$, so this is the homogeneous case of the domination principle \cite[Corollary~2.19]{GuedjZeriahi2012Dirichlet}.
\begin{lemma}[Maximum gluing]\label{prelim:lem-max-gluing}
If $G\Subset X$, $u\in\PSH(G,\omega_0)$, $w\in\PSH(X,\omega_0)$, and $\limsup_{G\ni x\to\zeta}u(x)\le w(\zeta)$ on $\partial G$, then the function equal to $\max\{u,w\}$ on $G$ and to $w$ on $X\setminus G$ belongs to $\PSH(X,\omega_0)$.
\end{lemma}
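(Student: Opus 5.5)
The plan is to prove the Maximum gluing lemma, Lemma~\ref{prelim:lem-max-gluing}, locally, reducing it to the classical gluing lemma for plurisubharmonic functions. Write $\tilde w$ for the glued function: $\tilde w=\max\{u,w\}$ on $G$ and $\tilde w=w$ on $X\setminus G$. Being $\omega_0$-psh is a local property. So it suffices to check that, near every point $x_0\in X$, $\tilde w$ is upper semicontinuous and $\tilde w+\phi$ is plurisubharmonic, where $\phi$ is a local potential with $\ddc\phi=\omega_0$ on a small coordinate ball $B\ni x_0$. Integrability and $\tilde w\not\equiv-\infty$ are automatic, because $\tilde w\ge w$ and $w\in L^1(X)$.

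I split into three cases according to where $x_0$ lies. If $x_0\in G$, choose $B\subset G$. There $\tilde w+\phi=\max\{u+\phi,w+\phi\}$ is a maximum of two psh functions, hence psh. If $x_0\in X\setminus\overline G$, choose $B\subset X\setminus\overline G$. There $\tilde w=w$, which is $\omega_0$-psh by assumption.

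The substantive case is $x_0=\zeta\in\partial G$; I expect it to be the only real obstacle, and it is where the hypothesis enters.

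\emph{Upper semicontinuity at $\zeta$.} Here $\tilde w(\zeta)=w(\zeta)$, and we need $\limsup_{x\to\zeta}\tilde w(x)\le w(\zeta)$. Along points of $X\setminus G$ this follows from the upper semicontinuity of $w$. Along points of $G$ it follows from
\[
\limsup_{G\ni x\to\zeta}\max\{u,w\}(x)\le\max\Bigl\{\limsup_{G\ni x\to\zeta}u(x),\,w(\zeta)\Bigr\}\le w(\zeta),
\]
using the hypothesis $\limsup_{G\ni x\to\zeta}u(x)\le w(\zeta)$.

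\emph{Sub-mean value property.} By upper semicontinuity, $\tilde w+\phi$ is psh on $B$ if its restriction to every complex line is subharmonic. Equivalently, it suffices to check the local sub-mean value inequality on small discs centred at each point, since an usc function with this local property is subharmonic.
\begin{itemize}
\item At centres in $G\cap B$, we use $\max\{u,w\}$ as before.
\item At centres $z\in B\setminus G$, we use $\tilde w(z)=w(z)$, the sub-mean value property of $w+\phi$ on small discs, and the global inequality $\tilde w\ge w$. Together these give
\[
(\tilde w+\phi)(z)=(w+\phi)(z)\le\operatorname{avg}(w+\phi)\le\operatorname{avg}(\tilde w+\phi).
\]
\end{itemize}
This covers every centre, including boundary points. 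Hence $\tilde w+\phi$ is psh on $B$, and therefore $\tilde w\in\PSH(X,\omega_0)$.

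The one point needing care is that $u$ is only defined on $G$, so no continuity of $u$ up to $\partial G$ is available. The argument above never uses it: only the boundary $\limsup$ inequality and the global bound $\tilde w\ge w$ are needed.
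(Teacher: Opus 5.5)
Your proposal is correct and follows the same route as the paper, which simply says that after adding a local potential of $\omega_0$ the claim is the usual psh pasting lemma on complex lines. Your case analysis is exactly the standard proof of that lemma: upper semicontinuity at $\partial G$ comes from the $\limsup$ hypothesis, and the local sub-mean value inequality at centres outside $G$ comes from $\tilde w\ge w$.
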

After adding a local potential of $\omega_0$, this is the usual psh pasting lemma on complex lines.
\begin{lemma}[Weak maximum principle]\label{prelim:lem-linear-weak-max}
If $D\Subset X$, $w\in L^1(D)$, and $\Delta_{\omega_0}w\ge0$ distributionally, then its canonical upper semicontinuous subharmonic representative satisfies
\[
\sup_Dw^*\le\sup_{\zeta\in\partial D}\limsup_{D\ni x\to\zeta}w^*(x).
\]
\end{lemma}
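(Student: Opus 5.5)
The plan is to reduce to the classical sub-mean-value property of subharmonic functions for the linear elliptic operator $\Delta_{\omega_0}=\tr_{\omega_0}\ddc$. Locally, in holomorphic coordinates, $\Delta_{\omega_0}$ is a second-order elliptic operator with smooth coefficients and no zeroth-order term. Set $M:=\sup_{\zeta\in\partial D}\limsup_{D\ni x\to\zeta}w^*(x)$, and assume $M<+\infty$, since otherwise there is nothing to prove. First I will record that a distribution $w\in L^1(D)$ with $\Delta_{\omega_0}w\ge0$ has a canonical representative $w^*$ which is upper semicontinuous and satisfies the local sub-mean-value inequality with respect to harmonic measures of small coordinate balls. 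I will construct it by mollifying in a chart. An equivalent route is to solve the local Dirichlet problem for $\Delta_{\omega_0}$ on small balls and define
\[
w^*(x):=\lim_{r\to0}\ \int w\,d\mu^{x}_{r},
\]
where $\mu^x_r$ is the harmonic measure (for $\Delta_{\omega_0}$) of the ball $B_r(x)$. Monotonicity in $r$ follows from $\Delta_{\omega_0}w\ge0$ and Green's representation. This is standard elliptic potential theory (Hervé, or Littman--Stampacchia--Weinberger), and I will cite it rather than redo it.

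The core step is a strong maximum principle argument. Suppose that for some $\eta>0$ the set where $w^*>M+\eta$ is nonempty. Consider the open set $D'=\{x\in D: w^*(x)>M+\eta/2\}$ intersected with a neighbourhood, and its closure. By the definition of $M$, every boundary point $\zeta$ has a neighbourhood in $D$ on which $w^*<M+\eta/2$. Hence $K:=\{w^*\ge M+\eta\}$ is closed in $D$ and stays away from $\partial D$, so it is compact. Since $w^*$ is upper semicontinuous, it attains its maximum $m>M+\eta$ on $K$ at some point $x_0$. The set $\{w^*=m\}$ is then compact and nonempty. Take a point $x_1$ of it and a small ball around $x_1$ that meets $\{w^*<m\}$ in positive measure, which is possible unless $w^*\equiv m$ on the whole connected component. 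In that case the component's closure meets $\partial D$, contradicting $m>M$. The sub-mean-value inequality $m=w^*(x_1)\le\int w^*\,d\mu^{x_1}_r<m$ then gives a contradiction. Here I use that harmonic measure charges every open subset of the sphere, which holds by the Harnack inequality.

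A cleaner alternative that avoids the connectedness case analysis is to add a strictly subharmonic perturbation. On $D\Subset X$, pick a smooth function $h$ with $\Delta_{\omega_0}h\ge1$ on $\overline D$. Such an $h$ exists locally, for example $|z|^2$ in a chart. For a general $D\Subset X$ I would instead use, for each point, the local version together with a covering argument, since a global strictly $\omega_0$-subharmonic function on a compact $X$ does not exist. I therefore plan to use the first argument, with the strong maximum principle on the connected component containing $x_1$.

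The main obstacle is the passage from the distributional hypothesis to a pointwise upper semicontinuous representative that obeys a sub-mean-value inequality for a variable-coefficient operator on a manifold. This is handled by the standard local theory, and the rest is the routine compactness and strong-maximum-principle argument above.
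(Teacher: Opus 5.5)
Your argument is correct and follows the paper's route: the paper only notes that $\Delta_{\omega_0}=\tfrac12\Delta_{g_0}$ is a multiple of the Laplace--Beltrami operator and cites the classical weak maximum principle for the canonical subharmonic representative. You unpack that cited principle using the sub-mean-value property for $\Delta_{\omega_0}$-harmonic measure, compactness of $\{w^*\ge c\}$ away from $\partial D$, and the strong maximum principle on a connected component. Two small touch-ups: build $w^*$ through the Green/harmonic-measure (Riesz decomposition) route rather than chart mollification, because mollification does not commute with the variable-coefficient operator and so does not preserve subharmonicity; and replace $M+\eta/2<M+\eta$ by arbitrary reals $M<c_1<c_2<\sup_Dw^*$ so that the case $M=-\infty$ is also covered.
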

Indeed, $\Delta_{\omega_0}$ is a positive constant multiple of the Laplace--Beltrami operator of the fixed K\"ahler metric. The weak maximum principle for this uniformly elliptic operator applies to its canonical subharmonic representative and gives the asserted boundary bound. No subharmonicity on individual complex lines is required.

\subsection{Eroded admissible sets and equilibrium envelopes}
\begin{definition}[Equilibrium envelope]
For $|t|<\varepsilon_0$, let $u_t$ be the envelope defined in Section~\ref{sec:intro} for the obstacle set $\Omega_t$.
\end{definition}
\begin{lemma}
For each $|t|<\varepsilon_0$, $u_t\in\PSH(X,\omega_0)\cap L^\infty(X)$, $u_t\ge0$, $u_t=0$ on $\Omega_t$, and $u_s\le u_t$ whenever $-\varepsilon_0<s\le t<\varepsilon_0$.
\end{lemma}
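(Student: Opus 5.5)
We argue directly from the definition of $u_t$ as the upper semicontinuous regularization of the upper envelope
\[
h_t:=\sup\{v\in\PSH(X,\omega_0):v\le0\text{ on }\Omega_t\}.
\]
The competitor family $\mathcal F_t$ is nonempty, since it contains $v\equiv 0$. It is also uniformly bounded above: $\Omega_t$ contains the nonempty open set $\{\rho>t\}$ (recall $|\rho|$ is bounded away from zero outside the collar and $U\Subset X$). By compactness of $\{v\in\PSH(X,\omega_0):\sup_X v=0\}$ in $L^1$, we have $\sup_X v\le \int_{\{\rho>t\}}v\,\omega_0^n/\Vol(\{\rho>t\})+C\le C$ for every $v\in\mathcal F_t$, with $C$ uniform for $|t|<\varepsilon_0$ since $\{\rho>\varepsilon_0\}\subset\{\rho>t\}$. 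Hence $h_t$ is bounded. By Choquet's lemma and the standard fact that the usc regularization of a locally bounded-above family of $\omega_0$-psh functions is $\omega_0$-psh, we get $u_t\in\PSH(X,\omega_0)\cap L^\infty(X)$. Moreover $u_t\ge h_t\ge 0$, because $0\in\mathcal F_t$.

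The main point is $u_t=0$ on $\Omega_t$, i.e.\ that the regularization does not raise the values on the obstacle. If $u_t$ itself belongs to $\mathcal F_t$, then $u_t\le h_t\le u_t$, so $u_t=h_t$ and $u_t=0$ on $\Omega_t$. Thus it suffices to show $u_t\le 0$ on $\Omega_t$. At interior points of $\Omega_t$, that is on $\{\rho>t\}$, we have $h_t\le0$ on a neighbourhood, so $u_t=\limsup h_t\le0$ there. At a boundary point $\zeta\in\Sigma_t$, we use the closed set $\Omega_t=\overline{\{\rho>t\}}$, which holds since $|\nabla\rho|=1$ along $\Sigma_t$. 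The $\omega_0$-psh function $u_t$ satisfies $u_t(\zeta)=\limsup_{x\to\zeta,\,x\neq\zeta}u_t(x)$; more precisely, by the sub-mean value property on small balls in a chart, $u_t(\zeta)$ is controlled by averages of $u_t+\phi$. Because $\Sigma_t$ is a $C^{3,1}$ hypersurface, every small ball around $\zeta$ meets $\{\rho>t\}$ in a set of proportion bounded below (about $1/2$). Combined with upper semicontinuity this is not quite enough by itself, so instead I would invoke the standard fact that a closed set with nonempty interior at each point is non-pluripolar at each point; equivalently, the points of $\Omega_t$ are all regular. Concretely, I would use the local barrier $b=A\rho_t-B\rho_t^2$ near $\zeta$, with strong pseudoconvexity from Proposition~\ref{prop:BH_to_collar} not even needed. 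It is simplest to use the fact that $\{u_t>h_t\}$ is pluripolar (Bedford--Taylor), hence has Lebesgue measure zero. So $u_t\le0$ a.e.\ on $\{\rho>t\}$ near $\zeta$, and the mean value inequality over small balls $B_r(\zeta)$ in $\{\rho>t\}$ tangent at $\zeta$ does not directly apply. Instead we use the cone condition, i.e.\ sub-mean values over small balls centred at $\zeta$ together with the exterior region. This is the step I expect to be delicate.

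A cleaner route for the boundary point, which I would try first, is to note that $\{u_t>h_t\}$ is pluripolar while $\Omega_t\cap B_r(\zeta)$ has positive measure. Apply the Bedford--Taylor negligible-set theorem, $u_t=h_t$ quasi-everywhere, together with the fine continuity of psh functions: the fine closure of the interior $\{\rho>t\}$ contains $\zeta$, since $\{\rho>t\}$ is not thin at $\zeta$ (it contains a truncated cone with vertex $\zeta$, by the $C^1$ regularity of $\Sigma_t$, and cones are not thin by Wiener's criterion). By fine upper semicontinuity, $u_t(\zeta)\le\limsup$ along $\{\rho>t\}$, which is $\le0$. Hence $u_t=0$ on $\Omega_t$.

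Monotonicity is immediate. If $s\le t$, then $U_s\subset U_t$, so $\Omega_t\subset\Omega_s$. Every competitor for $\Omega_s$ is therefore a competitor for $\Omega_t$, giving $h_s\le h_t$, and taking usc regularizations yields $u_s\le u_t$.
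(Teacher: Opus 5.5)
Your final route is correct, and its skeleton is the paper's: Choquet's lemma and the upper-envelope theorem with an $L^1$-compactness bound from above, the zero competitor for $u_t\ge0$, and inclusion of obstacles for monotonicity. The only real difference is the step $u_t=0$ on $\Sigma_t$. The paper cites the regularity of the smooth compact set $\Omega_t$ from Berman--Boucksom. You instead prove that regularity directly. The set $\{\rho>t\}$ contains a truncated cone with vertex $\zeta$, and a cone is not thin at its vertex by Wiener's criterion. Since $u_t+\phi$ is subharmonic in a chart, non-thinness yields $(u_t+\phi)(\zeta)=\limsup_{\{\rho>t\}\ni x\to\zeta}(u_t+\phi)(x)\le\phi(\zeta)$. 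Your argument is self-contained and shows that only an interior cone condition on $\Omega_t$ is used, with no pseudoconvexity and no $C^{3,1}$ regularity. The citation is shorter but hides which property of $\Omega_t$ matters.

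When you write it up, fix three things.
\begin{itemize}
\item The property you use is fine continuity (equivalently, the definition of non-thinness), not ``fine upper semicontinuity''. Upper semicontinuity only gives $\limsup\le u_t(\zeta)$, which is the wrong inequality.
\item The Bedford--Taylor negligible-set step is superfluous. You already showed $u_t\le0$ at every point of the open set $\{\rho>t\}$, not just almost everywhere.
\item Delete the abandoned detours. The asserted equivalence between being non-pluripolar near each point and being regular is false: a closed set such as $\{\zeta\}$ together with a sequence of rapidly shrinking closed balls accumulating at $\zeta$ is non-pluripolar in every neighbourhood of $\zeta$ yet thin there. Also, a barrier argument applied to $u_t$ itself would be circular. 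The maximum principle needs $\limsup u_t\le0$ at $\Sigma_t$, which is exactly what you are proving. It does work if you apply it to each competitor $v$ before taking the supremum and regularizing, with a barrier of the type used in Lemma~\ref{reg:lem-barrier}.
\end{itemize}
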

Normalized compactness and the upper-envelope theorem place $u_t$ in $\PSH(X,\omega_0)\cap L^\infty(X)$ \cite[Chapter~I, Proposition~5.9 and Theorem~5.7]{Demailly2012Complex}.  The zero competitor shows that $u_t\ge0$, and regularity of the $C^{3,1}$ compact set $\Omega_t$ gives $u_t=0$ there \cite[\S3.2, p.~360]{BermanBoucksom2010EnergyEquilibrium}.  If $s\le t$, then $\Omega_s\supset\Omega_t$, so $u_s\le u_t$.
\begin{proposition}[Envelope maximality]\label{prelim:prop-envelope-ma}
For every $|t|<\varepsilon_0$, $u_t$ is locally maximal on $U_t$.  Consequently
\[
(\omega_0+\ddc u_t)^n=0\quad\text{on }U_t.
\]
\end{proposition}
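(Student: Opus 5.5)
The plan is the standard Perron balayage argument, run inside the compact Kähler setting with the tools collected above. We prove local maximality of $u_t$ on $U_t$ and then deduce $(\omega_0+\ddc u_t)^n=0$ on $U_t$ from the maximality criterion in the Bedford--Taylor theorem. The criterion applies because $u_t\in\PSH(X,\omega_0)\cap L^\infty(X)$ by the preceding lemma.

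Fix $G\Subset U_t$ and $v\in\PSH(G,\omega_0)\cap L^\infty(G)$ with $\limsup_{G\ni x\to\zeta}(v-u_t)(x)\le0$ for every $\zeta\in\partial G$. We must show $v\le u_t$ on $G$.

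\emph{Step 1: reduce to the upper boundary condition of Lemma~\ref{prelim:lem-max-gluing}.} Upper semicontinuity of $u_t$ only gives $\limsup_{x\to\zeta}u_t(x)\le u_t(\zeta)$, so the hypothesis on $v$ does not immediately give $\limsup_{x\to\zeta}v(x)\le u_t(\zeta)$. To avoid this, fix $\eta>0$ and replace $v$ by $v-\eta$. By compactness of $\partial G$ and the hypothesis, there is a neighbourhood $W$ of $\partial G$ with $v-\eta<u_t$ on $W\cap G$. Choose a smooth domain $G'$ with $G\setminus W\Subset G'\Subset G$. Then $v-\eta<u_t$ on a neighbourhood of $\partial G'$ inside $G$. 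Since $u_t$ is upper semicontinuous, this yields $\limsup_{G'\ni x\to\zeta}(v-\eta)(x)\le u_t(\zeta)$ for $\zeta\in\partial G'$.

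\emph{Step 2: glue and compare with the envelope.} Apply Lemma~\ref{prelim:lem-max-gluing} with $u=v-\eta$ on $G'$ and $w=u_t$. The glued function $\tilde v$, equal to $\max\{v-\eta,u_t\}$ on $G'$ and to $u_t$ elsewhere, lies in $\PSH(X,\omega_0)$. Since $\overline{G'}\subset U_t$, we have $\tilde v=u_t=0$ on $\Omega_t$, so $\tilde v$ is a competitor in the envelope defining $u_t$. Hence $\tilde v\le u_t$. Therefore $v-\eta\le u_t$ on $G'$, and $v-\eta<u_t$ on $G\setminus G'\subset W$ by construction. Letting $\eta\downarrow0$ gives $v\le u_t$ on $G$.

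\emph{Step 3: conclude.} This is exactly local maximality of $u_t$ on $U_t$, and the maximality criterion then gives $(\omega_0+\ddc u_t)^n=0$ there. As a consistency check, Corollary~\ref{prelim:cor-homogeneous-dirichlet-comparison} is the converse direction and is not needed here.

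The only delicate point is Step 1. The envelope is a priori only upper semicontinuous (before the later regularity theory), and the hypothesis of local maximality is stated for $v-u_t$ rather than for $v$ against $u_t(\zeta)$. The $\eta$-shift combined with shrinking to $G'$ handles this cleanly. The fact that $u_t$ equals its own envelope (no further usc regularization issue) follows from $u_t\in\PSH(X,\omega_0)$ and $u_t=0$ on $\Omega_t$, both given by the preceding lemma.
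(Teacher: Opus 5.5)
Your argument is correct and is essentially the paper's proof. Like the paper, you paste a competitor on $G\Subset U_t$ to $u_t$ via Lemma~\ref{prelim:lem-max-gluing}, note that the obstacle condition on $\Omega_t$ is untouched, compare with the envelope, and then invoke the maximality criterion. Your Step~1 is harmless but unnecessary, because $v-u_t$ and $u_t$ are bounded and upper semicontinuity points the right way: $\limsup_{G\ni x\to\zeta}v(x)\le\limsup_{G\ni x\to\zeta}(v-u_t)(x)+\limsup_{x\to\zeta}u_t(x)\le u_t(\zeta)$, so the gluing lemma applies directly on $G$.
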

\begin{proof}
The maximum-gluing argument in the proof of \cite[Proposition~2.9]{GuedjZeriahi2012Dirichlet} applies locally: a competitor on $G\Subset U_t$ can be pasted to $u_t$ without changing the obstacle condition.  Hence $u_t$ is locally maximal, and the equation follows from \cite[Theorem~2.20]{GuedjZeriahi2012Dirichlet}.
\end{proof}

\subsection{Monge--Amp\`ere energy}
\begin{definition}[Normalized Monge--Amp\`ere energy]
For $u\in\PSH(X,\omega_0)\cap L^\infty(X)$, let $E_{\omega_0}(u)$ be the normalized energy in \eqref{eq:intro-MA-energy-definitions}.
\end{definition}
For a smooth path $u_t$ of $\omega_0$-psh functions,
\[
\frac{d}{dt}E_{\omega_0}(u_t)=\int_X\dot u_t\,\MA(u_t),
\]
so $E_{\omega_0}$ is a primitive of the Monge--Amp\`ere operator.  This is why its variation is the natural quantity in the Hadamard formula.
\begin{lemma}[Telescoping identity]\label{prelim:lem-am-telescoping}
For bounded $\omega_0$-psh functions $u,v$,
\[
E_{\omega_0}(u)-E_{\omega_0}(v)=\frac1{n+1}\sum_{j=0}^{n}\int_X(u-v)\MA_j(u,v).
\]
\end{lemma}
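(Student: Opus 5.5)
The plan is to reduce the identity to a one-variable computation along the affine path joining $v$ to $u$. For $s\in[0,1]$ put $w_s:=v+s(u-v)=(1-s)v+su$. Since $\omega_{w_s}=(1-s)\omega_v+s\omega_u\ge0$ in the sense of currents, $w_s$ is a bounded $\omega_0$-psh function. By multilinearity of Bedford--Taylor products, every product $\omega_{w_s}^j\wedge\omega_0^{n-j}$ is a polynomial in $s$ whose coefficients are mixed products of $\omega_u,\omega_v,\omega_0$. Hence $f(s):=E_{\omega_0}(w_s)$ is a polynomial in $s$, with $f(1)-f(0)=E_{\omega_0}(u)-E_{\omega_0}(v)$.

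\textbf{Step 1 (integration by parts).} The main tool is the symmetry
\[
\int_X\varphi\,\ddc\psi\wedge T=\int_X\psi\,\ddc\varphi\wedge T,
\]
where $\varphi,\psi$ are differences of bounded $\omega_0$-psh functions and $T$ is a mixed product of the $\omega_{u_i}$'s and $\omega_0$ of bidegree $(n-1,n-1)$. I would prove it in three moves:
\begin{itemize}
\item For smooth potentials it is Stokes' theorem on the compact manifold $X$, since $T$ is closed.
\item In general, approximate each bounded $\omega_0$-psh function by a decreasing sequence of smooth $\omega_0$-psh functions (Demailly's regularization on a compact K\"ahler manifold; after shrinking $\omega_0$-positivity slightly, a standard adjustment if needed).
\item Pass to the limit using the monotone continuity theorem quoted above. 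The products with an extra bounded factor $\varphi$ converge by the Bedford--Taylor convergence theorem for $\varphi\,\ddc\psi\wedge T$ along decreasing sequences.
\end{itemize}
This is the step I expect to require the most care, though it is standard; I would cite it in this form.

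\textbf{Step 2 (derivative of $f$).} Next I would show
\[
f'(s)=\frac1V\int_X(u-v)\,\omega_{w_s}^n.
\]
Write $\dot w=u-v$ and differentiate the polynomial $f$ termwise. This gives
\[
(n+1)V f'(s)=\sum_{j=0}^n\Big(\int_X\dot w\,\omega_{w}^j\wedge\omega_0^{n-j}+\sum_{k=0}^{j-1}\int_X w\,\ddc\dot w\wedge\omega_w^{k}\wedge\omega_w^{j-1-k}\wedge\omega_0^{n-j}\Big).
\]
By Step 1, $\int w\,\ddc\dot w\wedge S=\int\dot w\,\ddc w\wedge S$, and $\ddc w=\omega_w-\omega_0$. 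Substituting, the second sum becomes
\[
\sum_j j\int\dot w\,(\omega_w^{j}\wedge\omega_0^{n-j}-\omega_w^{j-1}\wedge\omega_0^{n-j+1}),
\]
which telescopes. Summing over $j$, every term $\int\dot w\,\omega_w^j\wedge\omega_0^{n-j}$ with $j<n$ cancels and what remains is $(n+1)\int\dot w\,\omega_w^n$. This is the usual computation showing that $E_{\omega_0}$ is a primitive of $\MA$, made rigorous by Step 1.

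\textbf{Step 3 (integration in $s$).} Integrating Step 2 from $0$ to $1$ and expanding $\omega_{w_s}^n=\sum_j\binom nj s^j(1-s)^{n-j}\omega_u^j\wedge\omega_v^{n-j}$ gives
\[
E_{\omega_0}(u)-E_{\omega_0}(v)=\frac1V\sum_{j=0}^n\binom nj\Big(\int_0^1 s^j(1-s)^{n-j}ds\Big)\int_X(u-v)\,\omega_u^j\wedge\omega_v^{n-j}.
\]
The Beta integral equals $\binom nj^{-1}\frac1{n+1}$, so the right-hand side is exactly $\frac1{n+1}\sum_j\int_X(u-v)\MA_j(u,v)$, as claimed.
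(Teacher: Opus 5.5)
Your proof is correct, but it does not follow the paper, which gives no argument for this lemma: it cites \cite[Corollary~4.2]{BermanBoucksom2010EnergyEquilibrium} and adjusts for the normalization by $V$.

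You instead derive the identity from scratch. You differentiate $f(s)=E_{\omega_0}(w_s)$ along the affine path $w_s=(1-s)v+su$, and use symmetric integration by parts to get $f'(s)=V^{-1}\int_X(u-v)\,\omega_{w_s}^n$. You then integrate the binomial expansion of $\omega_{w_s}^n$ against the Beta weights $\int_0^1 s^j(1-s)^{n-j}\,ds=\bigl((n+1)\binom nj\bigr)^{-1}$. The telescoping in Step~2 does leave $(n+1)\int_X\dot w\,\omega_{w_s}^n$. It is exactly the Beta integral that produces the uniform coefficient $\frac1{n+1}$ in front of every mixed term.

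This is the standard mechanism behind the cocycle property of the Aubin--Mabuchi energy. Your version costs a page but shows that the only analytic inputs are:
\begin{itemize}
\item multilinearity of Bedford--Taylor products;
\item global integration by parts for bounded $\omega_0$-psh functions, obtained from decreasing smooth $\omega_0$-psh regularization and Bedford--Taylor monotone continuity.
\end{itemize}
The regularization is Demailly's, with exact $\omega_0$-positivity due to B{\l}ocki--Ko{\l}odziej; this is the ``standard adjustment'' you allude to.

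That same global integration by parts is invoked without proof in Propositions~\ref{stat:prop-boundary-mass} and~\ref{dyn:thm-finite-eps-mixed-flux}. Your Step~2 also makes rigorous, for affine paths of bounded potentials, the paper's informal remark that $E_{\omega_0}$ is a primitive of $\MA$.

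The citation is immediate and places the identity in Berman--Boucksom's more general setting of weights with minimal singularities. That generality exceeds what is needed here, since every function in this paper is bounded.
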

This is \cite[Corollary~4.2]{BermanBoucksom2010EnergyEquilibrium}, with the normalization by $V$ used here.

\subsection{A \texorpdfstring{$\DM^\infty$}{DM-infinity} toolkit}\label{prelim:subsec-dm}
For divergence-measure fields and their weak normal traces, Anzellotti provided the concept of pairing and the Gauss--Green framework. Chen--Frid systematically formulated the theory of $\DM^\infty$ fields, normal traces, product rules, and boundary recovery under regular deformations. After this, Chen--Comi--Torres studied the distance-type approximation of general open sets \cite{Anzellotti1983Pairings,ChenFrid1999Divergence,ChenComiTorres2019CauchyFluxes}. For a short survey of this theory, we refer to \cite{ChenTorres2021Survey}.

Note hodge contraction identifies a bounded real $(2n-1)$-form $J$ with measure-valued $dJ$ with a divergence-measure field.  The results below define its interior outward trace and recover it from inner parallel levels.
\begin{definition}[$\DM^\infty$ fields]
For $D\subset\mathbb R^N$ open,
\[
\DM^\infty(D):=\{F\in L^\infty(D;\mathbb R^N):\operatorname{div}F\in\mathcal M(D)\}.
\]
Here $\mathcal M(D)$ denotes the finite signed Radon measures and the divergence is distributional.  On a Riemannian manifold we use the equivalent localized definition.
\end{definition}
\begin{theorem}[Anzellotti trace]
Let $\Omega\Subset D$ have Lipschitz boundary.  If $F\in \DM^\infty(D)$, then there is a unique trace $[F\cdot\nu]\in L^\infty(\partial\Omega)$ such that
\[
\int_{\partial\Omega}\varphi[F\cdot\nu]dH^{N-1}=\int_{\Omega}\varphi\,d(\operatorname{div}F)+\int_{\Omega}F\cdot\nabla\varphi dx
\]
for every $\varphi\in C_c^1(D)$.  The normal $\nu$ is the outward measure-theoretic unit normal, and $[F\cdot\nu]$ is the density of the trace measure with respect to $H^{N-1}|_{\partial\Omega}$.
\end{theorem}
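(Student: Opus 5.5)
We will obtain $[F\cdot\nu]$ by duality. Define, for $\varphi\in C_c^1(D)$,
\[
T(\varphi):=\int_{\Omega}\varphi\,d(\operatorname{div}F)+\int_{\Omega}F\cdot\nabla\varphi\,dx .
\]
This is well defined because $|\operatorname{div}F|(\Omega)<\infty$ and $F\in L^\infty$. The plan has three steps. First, show that $T(\varphi)$ depends only on $\varphi|_{\partial\Omega}$. Second, show that $T$ satisfies $|T(\varphi)|\le C\|F\|_{L^\infty}\int_{\partial\Omega}|\varphi|\,dH^{N-1}$. Third, extend $T$ to $L^1(\partial\Omega,H^{N-1})$ and apply the Riesz representation $L^1(\partial\Omega)^*=L^\infty(\partial\Omega)$.

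\emph{The key identity.} Let $d(x)=\operatorname{dist}(x,\partial\Omega)$ on $\Omega$ and $\psi_k:=\min\{1,k\,d\}$. The function $\psi_k$ is Lipschitz, vanishes on $\partial\Omega$, and has $|\nabla\psi_k|\le k$, supported in the strip $S_k=\{0<d<1/k\}$. Since $\varphi\psi_k$ is Lipschitz with compact support in $\Omega$, we have $T(\varphi\psi_k)=0$. This follows from the distributional definition of $\operatorname{div}F$, after mollifying $\varphi\psi_k$ inside $\Omega$: the mollifications converge uniformly, and their gradients converge boundedly a.e., so dominated convergence applies against the finite measure $|\operatorname{div}F|$ and against $F\,dx$. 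Expanding $T(\varphi)=T(\varphi(1-\psi_k))+T(\varphi\psi_k)$ then gives
\[
T(\varphi)=\int_\Omega\varphi(1-\psi_k)\,d(\operatorname{div}F)+\int_\Omega(1-\psi_k)F\cdot\nabla\varphi\,dx-\int_{S_k}\varphi\,F\cdot\nabla\psi_k\,dx .
\]
Since $1-\psi_k\to0$ pointwise on the open set $\Omega$ and is bounded by $1$, the first two terms tend to $0$ by dominated convergence. Note that $\operatorname{div}F$ may charge $\partial\Omega$, but that set is excluded from the integral over $\Omega$. Hence
\[
T(\varphi)=-\lim_{k\to\infty}\int_{S_k}\varphi\,F\cdot\nabla\psi_k\,dx ,
\]
and this limit visibly depends only on the values of $\varphi$ near $\partial\Omega$, hence, by continuity, only on $\varphi|_{\partial\Omega}$.

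\emph{Boundary estimate.} From the identity,
\[
|T(\varphi)|\le\|F\|_\infty\limsup_k k\int_{S_k}|\varphi|\,dx .
\]
Since $\varphi$ is continuous, it suffices to know that $k\,\mathcal L^N\llcorner S_k\stackrel{*}{\rightharpoonup}\theta\,H^{N-1}\llcorner\partial\Omega$ with $\theta\le C$ (in fact $\theta=1$). We expect this to be the main technical point. I would prove it by localizing with a partition of unity to charts where $\partial\Omega$ is a Lipschitz graph, using the coarea formula for $d$ (where $|\nabla d|=1$ a.e.), and comparing $H^{N-1}(\{d=s\})$ with $H^{N-1}(\partial\Omega)$ locally. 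A cruder bi-Lipschitz comparison with the graph parametrization already gives a uniform constant $C$, which suffices for existence. Alternatively one can quote Federer's theorem that Minkowski content equals $H^{N-1}$ for closed $(N-1)$-rectifiable sets, in its localized weighted form.

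\emph{Conclusion.} Restrictions to $\partial\Omega$ of $C_c^1(D)$ functions are dense in $L^1(\partial\Omega,H^{N-1})$. Therefore $T$ extends uniquely to a bounded linear functional on $L^1(\partial\Omega)$ with norm $\le C\|F\|_\infty$. Riesz representation then yields $[F\cdot\nu]\in L^\infty(\partial\Omega)$ with $T(\varphi)=\int_{\partial\Omega}\varphi[F\cdot\nu]\,dH^{N-1}$, which is the stated Gauss--Green formula. Uniqueness follows from the same density. Finally, when $F$ is smooth up to the boundary, the classical divergence theorem identifies $[F\cdot\nu]$ with $F\cdot\nu$ for the outward measure-theoretic normal, confirming the orientation convention.
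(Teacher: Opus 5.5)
Your argument is correct and self-contained. The paper gives no argument of its own here: it imports the statement, together with its deformation form, from Anzellotti (Theorems~1.2 and~1.9), Chen--Frid (Theorem~2.2) and Chen--Comi--Torres.

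\textbf{Comparison of routes.} Structurally you follow the standard duality route, which is the structure of Anzellotti's argument: show that the Gauss--Green functional $T$ sees only $\varphi|_{\partial\Omega}$, bound it by $C\|F\|_{L^\infty}\|\varphi\|_{L^1(\partial\Omega)}$, and represent it through $L^1(\partial\Omega)^*=L^\infty(\partial\Omega)$. Anzellotti controls $T$ through a Gagliardo-type extension of boundary data whose gradient mass is concentrated near $\partial\Omega$. You instead use the explicit cutoffs $\psi_k$ and a strip-volume estimate.

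Your crude version rests on $x_N-\gamma(x')\le\sqrt{1+L^2}\,d(x)$ in a graph chart. It gives the constant $\max_i\sqrt{1+L_i^2}$, which is exactly the kind of atlas-dependent constant $C_\Sigma$ recorded in Theorem~\ref{thm:global-trace}. Moreover, by the coarea formula your key identity reads
\[
T(\varphi)=\lim_{k\to\infty}k\int_0^{1/k}\!\!\int_{\{d=s\}}\varphi\,F\cdot(-\nabla d)\,dH^{N-1}\,ds,
\]
which is an averaged form of inner-parallel recovery.

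The cited literature adds two things your argument does not give directly:
\begin{itemize}
\item the sharp bound $\|[F\cdot\nu]\|_{L^\infty}\le\|F\|_{L^\infty}$, for which you would need the localized one-sided Minkowski content with $\theta=1$;
\item the essential-limit recovery on individual inner parallel levels in Theorem~\ref{prelim:thm-innerparallel}, which needs the Chen--Frid deformation argument rather than averaging.
\end{itemize}

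\textbf{A slip in your first step.} $\varphi\psi_k$ is \emph{not} compactly supported in $\Omega$, because $\psi_k=kd>0$ all the way up to $\partial\Omega$. The identity $T(\varphi\psi_k)=0$ is still true. The zero extension of $\varphi\psi_k$ is Lipschitz with compact support in $D$, so mollify it in $D$ rather than inside $\Omega$, and pass to the limit as you describe. The limit does not see $\operatorname{div}F\llcorner\partial\Omega$, because the function vanishes there, and its gradient vanishes a.e.\ off $\Omega$. Alternatively, use the cutoff $\min\{1,(kd-1)_+\}$, which does have compact support in $\Omega$.

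The same zero-extension argument shows directly that $T(\varphi)=0$ whenever $\varphi|_{\partial\Omega}=0$. This is cleaner than your ``by continuity'' step, which as written silently relies on the strip bound $\limsup_k k|S_k|<\infty$ from your second paragraph.
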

For smooth $F$ this is the classical normal component.  The result and its deformation form follow from \cite[Theorems~1.2 and 1.9]{Anzellotti1983Pairings}, \cite[Theorem~2.2]{ChenFrid1999Divergence}, and \cite[Theorem~8.3 and Corollary~8.1]{ChenComiTorres2019CauchyFluxes}.  All later traces are interior outward traces from the fixed domain under consideration.
\begin{theorem}[Inner parallel recovery]\label{prelim:thm-innerparallel}
Let $G\Subset D$ have $C^2$ boundary, let $r$ be its signed distance, negative in $G$, and set
\[
G_{-t}:=\{r<-t\},\qquad \nu_{-t}:=\nabla r|_{\partial G_{-t}}.
\]
For small $t>0$, the inward normal map $\Psi_t(x)=x-t\nu(x)$ identifies $\partial G$ with $\partial G_{-t}$.  If $F\in\DM^\infty(D)$, there is a full-measure set $T_F\subset(0,t_0)$ such that, for every $\varphi\in C(\partial G)$,
\[
\int_{\partial G}\varphi[F\cdot\nu]dH^{N-1} =\operatorname*{ess\,lim}_{\substack{t\downarrow0\\ t\in T_F}} \int_{\partial G_{-t}}(\varphi\circ\Psi_t^{-1})(F\cdot\nu_{-t})dH^{N-1}.
\]
In the terminology of Chen--Frid, $(\Psi_t)$ is a regular deformation: in every boundary graph chart its tangential derivatives converge in $L^1_{\rm loc}$ to those of the boundary parametrization.  For $t\in T_F$, coarea selects levels on which a precise representative of $F$ has an $H^{N-1}$-almost-everywhere restriction; no pointwise trace of the individual components of $F$ on $\partial G$ is assumed.
\end{theorem}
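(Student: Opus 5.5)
The statement to prove is Theorem~\ref{prelim:thm-innerparallel} (inner parallel recovery). My plan is to derive it from the Anzellotti Gauss--Green formula applied twice, once on $G$ and once on each inner parallel set $G_{-t}$, and to compare the two through the layer $G\setminus\overline{G_{-t}}$, whose $\operatorname{div}F$-mass tends to zero.

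\textbf{Step 1 (extend the test function).} Fix $\varphi\in C(\partial G)$. First assume $\varphi\in C^1(\partial G)$; density and uniform $L^\infty$ bounds will handle general continuous $\varphi$ at the end. Extend $\varphi$ to $\tilde\varphi\in C^1_c(D)$ that is constant along the normal lines of the $C^2$ collar, $\tilde\varphi(x-s\nu(x))=\varphi(x)$ for $|s|<t_0$. Then $\tilde\varphi|_{\partial G_{-t}}=\varphi\circ\Psi_t^{-1}$, and $\nabla\tilde\varphi\cdot\nabla r=0$ in the collar.

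\textbf{Step 2 (select good levels by coarea).} Let $F$ be a precise representative. By Fubini together with the coarea formula for the Lipschitz function $r$ (where $|\nabla r|=1$), for a.e.\ $t\in(0,t_0)$ the restriction $F|_{\partial G_{-t}}$ is $H^{N-1}$-measurable and bounded by $\|F\|_\infty$. Also $|\operatorname{div}F|(\partial G_{-t})=0$ except for countably many $t$, since these are disjoint sets of a finite measure. Along with Lebesgue points of the maps below, these conditions define $T_F$.

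\textbf{Step 3 (identify the trace on good levels).} For $t\in T_F$, I will show that the Anzellotti trace of $F$ on $\partial G_{-t}$ is $F\cdot\nu_{-t}$ $H^{N-1}$-a.e. To do this, apply the Gauss--Green formula on $G_{-t}$ with cutoffs $\chi_h(x)=\min(1,(-r(x)-t)_+/h)$. The term $\int F\cdot\nabla\chi_h\,\tilde\varphi$ equals $-h^{-1}\int_{t<-r<t+h}\tilde\varphi\,F\cdot\nabla r$, which by coarea is an average over levels. At Lebesgue points $t$ of $s\mapsto\int_{\partial G_{-s}}\tilde\varphi\,F\cdot\nu_{-s}$, this average converges to the boundary integral. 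Hence
\[
\int_{\partial G_{-t}}(\varphi\circ\Psi_t^{-1})F\cdot\nu_{-t}\,dH^{N-1}=\int_{G_{-t}}\tilde\varphi\,d\operatorname{div}F+\int_{G_{-t}}F\cdot\nabla\tilde\varphi\,dx.
\]

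\textbf{Step 4 (pass to the limit).} As $t\downarrow0$, the right-hand side of Step~3 converges to $\int_G\tilde\varphi\,d\operatorname{div}F+\int_G F\cdot\nabla\tilde\varphi$. Indeed, $|\operatorname{div}F|(G\setminus G_{-t})\to0$ by continuity of the measure, since $\bigcap_t(G\setminus G_{-t})=\emptyset$, and the volume term is $O(t)$. By the Anzellotti theorem this limit equals $\int_{\partial G}\varphi[F\cdot\nu]\,dH^{N-1}$. For general $\varphi\in C(\partial G)$, approximate uniformly by $C^1$ functions. The errors are bounded by $\|F\|_\infty\sup_t H^{N-1}(\partial G_{-t})\,\|\varphi-\varphi_k\|_\infty$, uniformly in $t$, by the $C^2$ collar bounds. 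Finally, the intersection of the good sets for a countable dense family of $\varphi$ remains of full measure, so $T_F$ can be chosen independently of $\varphi$.

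\textbf{Main obstacle.} The delicate point is Step~3, where the a.e.\ restriction of $F$ must be matched with the weak trace. The Lebesgue-point argument has to be run for countably many test functions at once while staying consistent with the chosen precise representative. The regular-deformation property in the sense of Chen--Frid then follows from the $C^2$ graph description $\Psi_t=\mathrm{Id}-t\nu$, whose tangential derivatives converge uniformly.
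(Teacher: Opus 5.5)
Your proof is correct, but it follows a different route from the paper. The paper gives no argument of its own: it imports the statement from Chen--Frid (Theorems~2.1--2.2) and from Chen--Comi--Torres (Theorem~8.4). Those results recover the normal trace along an arbitrary regular Lipschitz deformation of the boundary.

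You instead give a self-contained proof tailored to the deformation generated by the signed distance. Because $|\nabla r|=1$, the coarea formula turns $h^{-1}\int_{\{t<-r<t+h\}}\tilde\varphi\,F\cdot\nabla r\,dx$ directly into an average of the level fluxes $g(s)$, with no Jacobian factor. Lebesgue points of $g$ then yield the Gauss--Green identity on each good level $\partial G_{-t}$. Continuity from below of $|\operatorname{div}F|$ along $G_{-t}\uparrow G$, together with the $O(t)$ volume term, carries this identity to the Anzellotti formula on $\partial G$.

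What your route buys is an elementary proof in which the regular-deformation property is never used. You verify it only as a side remark, via $D\Psi_t=I-t\,D\nu$. What the cited theorems buy is generality: they cover bi-Lipschitz deformations, where the Jacobian and the regularity hypothesis genuinely matter.

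Two small remarks on your argument:
\begin{enumerate}
\item Excluding the levels where $|\operatorname{div}F|(\partial G_{-t})>0$ is unnecessary. Your cutoffs satisfy $\chi_h\to\mathbf 1_{G_{-t}}$ pointwise everywhere, since $\chi_h$ vanishes on $\partial G_{-t}$.
\item The ``main obstacle'' you flag disappears once you fix a single Borel representative of $F$, bounded everywhere by $\|F\|_{L^\infty}$, before defining the $g_k$. Then $T_F$ is simply the intersection of countably many full-measure Lebesgue-point sets, and your density step in $\varphi$ goes through exactly as written.
\end{enumerate}
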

This is \cite[Theorems~2.1--2.2]{ChenFrid1999Divergence}; see also the Lipschitz-deformation formulation \cite[Theorem~8.4]{ChenComiTorres2019CauchyFluxes}.
\begin{lemma}[Product rule]\label{lem:product}
If $F\in \DM^\infty(D)$ and $g\in W^{1,\infty}(D)$, then $gF\in \DM^\infty(D)$ and, for every Lipschitz domain $\Omega\Subset D$,
\[
[gF\cdot\nu]=g|_{\partial\Omega}[F\cdot\nu] \qquad H^{N-1}\text{-a.e. on }\partial\Omega.
\]
\end{lemma}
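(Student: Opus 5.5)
We prove Lemma~\ref{lem:product} by establishing the divergence identity for $gF$ and then comparing Gauss--Green formulas.

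\textbf{Step 1: $gF\in\DM^\infty(D)$.} Since $g$ is bounded, $gF\in L^\infty$. The candidate divergence is
\[
\operatorname{div}(gF)=g^\ast\operatorname{div}F+\nabla g\cdot F,
\]
where $g^\ast$ is the continuous representative of $g$ (Lipschitz, so no ambiguity). The second term lies in $L^\infty$ and the first is a finite measure, so the right side is in $\mathcal M(D)$. To justify the identity, mollify $g$ to $g_k\to g$ uniformly, with $\nabla g_k\to\nabla g$ a.e. and $\|\nabla g_k\|_\infty$ bounded. For smooth $g_k$ and test $\varphi\in C_c^\infty(D)$, the product $\varphi g_k$ is an admissible test function, so
\[
\langle\operatorname{div}(g_kF),\varphi\rangle=-\int F\cdot\nabla(\varphi g_k)=\int\varphi g_k\,d(\operatorname{div}F)+\int\varphi F\cdot\nabla g_k.
\]
Passing to the limit uses uniform convergence against the measure and dominated convergence in the last integral. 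This gives the identity distributionally. On a manifold the argument is localized by a partition of unity.

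\textbf{Step 2: trace identity.} Apply the Anzellotti Gauss--Green formula to $gF$ with test $\varphi\in C^1_c(D)$:
\[
\int_{\partial\Omega}\varphi[gF\cdot\nu]=\int_\Omega\varphi g\,d(\operatorname{div}F)+\int_\Omega\varphi\nabla g\cdot F+\int_\Omega gF\cdot\nabla\varphi.
\]
Next apply the formula for $F$ with the test function $\varphi g$. Since $\nabla(\varphi g)=g\nabla\varphi+\varphi\nabla g$, the right side coincides exactly with the one above, so
\[
\int_{\partial\Omega}\varphi[gF\cdot\nu]=\int_{\partial\Omega}\varphi g[F\cdot\nu].
\]
Here $\varphi g$ is only Lipschitz, not $C^1$. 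To extend the Gauss--Green formula to Lipschitz test functions, approximate $\varphi g$ by $\varphi g_k$. The convergence is justified as follows:
\begin{itemize}
\item $\varphi g_k\to\varphi g$ uniformly, which suffices against the finite measure $\operatorname{div}F\llcorner\Omega$ and against the $L^\infty$ boundary density.
\item $\nabla(\varphi g_k)\to\nabla(\varphi g)$ boundedly a.e., which suffices in the volume term by dominated convergence.
\end{itemize}
Since $\varphi|_{\partial\Omega}$ ranges over a dense subset of $C(\partial\Omega)$ and both traces are in $L^\infty$, the two densities agree $H^{N-1}$-a.e.

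\textbf{Main obstacle.} The only delicate point is the possible concentration of $\operatorname{div}F$ on $\partial\Omega$. Two facts must be checked:
\begin{itemize}
\item The Gauss--Green formula integrates over the open set $\Omega$, so no boundary mass enters either side. The cancellation in Step 2 is therefore exact.
\item The factor multiplying $\operatorname{div}F$ must be the pointwise continuous representative of $g$. This is automatic for Lipschitz $g$, and it is why $W^{1,\infty}$ rather than $W^{1,p}$ is required.
\end{itemize}
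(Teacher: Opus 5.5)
Your proposal is correct and follows the paper's route. The paper cites \cite[Theorem~3.1]{ChenFrid1999Divergence} for $gF\in\DM^\infty(D)$ with $\operatorname{div}(gF)=g\operatorname{div}F+F\cdot\nabla g$, and then obtains the trace identity by inserting $g\varphi$ into the Gauss--Green formula. You make both steps self-contained: you prove the Lipschitz product rule by mollification, and you justify the non-$C^1$ test function $g\varphi$ through the approximation $\varphi g_k$.
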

This is \cite[Theorem~3.1]{ChenFrid1999Divergence}; the trace identity follows by using $g\varphi$ in the Gauss--Green formula.
\begin{lemma}[Hodge dictionary]\label{lem:hodge}
On an oriented Riemannian $N$-manifold, every $(N-1)$-form $J$ with $L^\infty$ coefficients can be uniquely written as $J=\iota_FdV_g$ with $F\in L^\infty$.  Moreover $dJ=(\operatorname{div}_gF)dV_g$.
\end{lemma}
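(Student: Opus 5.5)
This is pointwise linear algebra plus one distributional identity, which I would verify by duality. For existence and uniqueness, work in a local oriented chart (or a local oriented orthonormal frame) with Riemannian volume form $dV_g$. The contraction map $F\mapsto \iota_F dV_g$ is a pointwise linear isomorphism $T_xM\to\Lambda^{N-1}T_x^*M$: both spaces have dimension $N$, and $\iota_F dV_g=0$ forces $F=0$ because $dV_g$ is nondegenerate. Its inverse is the Hodge star up to sign, explicitly $F=(-1)^{N-1}(\star J)^\sharp$. Its coefficients are smooth functions of the metric, and the metric is smooth (at least $C^{1}$ is enough here). Hence an $L^\infty$ form $J$ corresponds to an $L^\infty$ field $F$, with $|F|_g=|J|_g$ pointwise. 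Uniqueness is pointwise uniqueness a.e. The local descriptions patch together because the construction is intrinsic.

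For the divergence identity, first take $F$ smooth. Cartan's formula together with $d(dV_g)=0$ gives $d(\iota_F dV_g)=\mathcal L_F dV_g=(\operatorname{div}_gF)\,dV_g$, which is the definition of $\operatorname{div}_g$.

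For general $F\in L^\infty$, interpret both sides distributionally. Here $dJ$ is the current $\varphi\mapsto (-1)^{N}\int J\wedge d\varphi$, with the sign fixed by Stokes on compactly supported smooth $\varphi$. The distributional divergence is $\langle\operatorname{div}_gF,\varphi\rangle=-\int F\cdot\nabla\varphi\,dV_g$. The needed pointwise identity is
\[
d\varphi\wedge\iota_FdV_g=\langle d\varphi,F\rangle\,dV_g=g(\nabla\varphi,F)\,dV_g,
\]
which follows from $\iota_F(d\varphi\wedge dV_g)=0$. Substituting it shows that the two distributions agree for every test function $\varphi$. Checking the identity for smooth $F$ fixes the sign conventions, and density of smooth fields (mollification in charts, $L^1_{\rm loc}$ convergence) is then not even needed, because the pointwise identity holds for any measurable $F$. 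In particular, $dJ$ is a measure exactly when $\operatorname{div}_gF$ is, so $\DM^\infty$ forms and $\DM^\infty$ fields correspond.

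There is no real obstacle. The only care needed is bookkeeping of orientation and sign conventions, especially the sign in $d\varphi\wedge\iota_FdV_g$ versus $\iota_F dV_g\wedge d\varphi$. I would fix these once by the smooth computation.
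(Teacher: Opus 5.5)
Your proposal is correct and follows essentially the same route as the paper. The pointwise isomorphism $F\mapsto\iota_F dV_g$ gives existence and uniqueness with $L^\infty$ control, and testing $dJ$ against $\varphi\in C_c^\infty$ via $d\varphi\wedge\iota_F dV_g=d\varphi(F)\,dV_g$ yields $dJ=(\operatorname{div}_gF)\,dV_g$ distributionally; your extra smooth-case check via Cartan's formula is harmless but, as you note, not needed.
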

\begin{proof}
	Pointwise, the map
	\[
	F\longmapsto\iota_FdV_g=*F^\flat
	\]
	is a linear isomorphism between $TM$ and $\Lambda^{N-1}T^*M$, so $J$ determines $F\in L^\infty$ uniquely.  For every $\varphi\in C_c^\infty$, we have
	\[
	\langle dJ,\varphi\rangle =-\int d\varphi\wedge J =-\int F(\varphi)\,dV_g =\langle(\operatorname{div}_gF)dV_g,\varphi\rangle,
	\]
	which gives the desired distributional identity.
\end{proof}
\begin{lemma}[Local reduction]\label{lem:local}
In a coordinate chart, let $\chi$ be a compactly supported smooth cutoff.  The field $\widetilde F^i=\sqrt{\det g}\chi F^i$ satisfies
\[
\operatorname{div}_{\rm Eucl}\widetilde F=\chi\sqrt{\det g}\operatorname{div}_gF+\sqrt{\det g}F^i\partial_i\chi.
\]
Thus the Euclidean trace theorem localizes to the K\"ahler setting.
\end{lemma}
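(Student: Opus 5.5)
The identity is a local computation in coordinates, so the plan is to transport everything to flat space through Lemma~\ref{lem:hodge} and check the single formula by the Leibniz rule. Fix a chart with coordinates $x^1,\dots,x^N$ and Riemannian volume $dV_g=\sqrt{\det g}\,dx^1\wedge\cdots\wedge dx^N$. By Lemma~\ref{lem:hodge}, the bounded $(N-1)$-form $J$ equals $\iota_F dV_g$ with $F\in L^\infty$, and $dJ=(\operatorname{div}_gF)\,dV_g$. In coordinates, $\iota_F dV_g=\sum_i \sqrt{\det g}\,F^i\,(-1)^{i-1}dx^1\wedge\cdots\widehat{dx^i}\cdots\wedge dx^N$. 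This shows that $\sqrt{\det g}\,F$ is the Euclidean field attached to $J$ under the flat dictionary, and that $\operatorname{div}_gF=(\det g)^{-1/2}\partial_i(\sqrt{\det g}\,F^i)$, where the derivative is understood distributionally.

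Next we insert the cutoff. For $\widetilde F^i=\chi\sqrt{\det g}F^i$ and a test function $\varphi\in C_c^\infty$ of the chart, we compute $\langle\operatorname{div}_{\rm Eucl}\widetilde F,\varphi\rangle=-\int \chi\sqrt{\det g}F^i\partial_i\varphi\,dx$. We then write $\chi\partial_i\varphi=\partial_i(\chi\varphi)-\varphi\partial_i\chi$. Because $\chi\varphi$ is an admissible test function, the first term is $\langle \sqrt{\det g}\operatorname{div}_gF\,dx,\chi\varphi\rangle$. This holds because $\operatorname{div}_gF\,dV_g$ is exactly the distribution $-\int F(\psi)\,dV_g$ tested against $\psi$. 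The second term is $\int\varphi\sqrt{\det g}F^i\partial_i\chi\,dx$. Together these give the displayed formula as an identity of distributions. Its right-hand side is a finite measure, because $\chi\sqrt{\det g}$ is smooth and bounded and $F\in L^\infty$. So $\widetilde F\in\DM^\infty(\mathbb R^N)$ with compact support whenever $F\in\DM^\infty$ in the Riemannian sense.

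Finally we justify the localization of traces. Take a partition of unity $\{\chi_\alpha\}$ subordinate to a finite atlas. Applying the Euclidean Anzellotti theorem to each $\widetilde F_\alpha$ gives traces $[\widetilde F_\alpha\cdot\nu_{\rm Eucl}]$. One then rescales them by $\sqrt{\det g}$ and by the ratio between Euclidean and Riemannian surface measures and normals. Summing over $\alpha$ produces a Riemannian trace satisfying the Gauss--Green formula with respect to $dV_g$ and $d\sigma_g$.

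The main obstacle is checking that these chart pieces are compatible. The Gauss--Green formulas in different charts must agree on overlaps, and the result must not depend on the partition of unity. I would settle this through the uniqueness clause of the trace theorem, because the Riemannian Gauss--Green identity characterizes the trace, and each local piece satisfies it for test functions supported in its chart.
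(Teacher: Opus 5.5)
Your proposal is correct and follows essentially the same route as the paper: you identify $\partial_i(\sqrt{\det g}\,F^i)$ with the coordinate representative of $dJ=(\operatorname{div}_gF)\,dV_g$, obtain the displayed formula by the Leibniz rule (which you verify explicitly against the test function $\chi\varphi$, where the paper just states it), and glue the chartwise Euclidean traces with a partition of unity, using uniqueness of the Gauss--Green functional. The one cosmetic difference is that the paper fixes the sign of the outward trace through the smooth identity $\iota_{\partial U}^*J=(F\cdot\nu_g)\,dS_g$, whereas you handle it implicitly when you rescale between Euclidean and Riemannian normals and surface measures.
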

\begin{proof}
	In an oriented chart, write $dV_g=\gamma\,dx$ with $\gamma=\sqrt{\det(g_{ij})}$, and set 	$\mu=dJ=(\operatorname{div}_gF)dV_g$.  If we still write $\mu$ for its coordinate representation, then
	\[
	\partial_i(\gamma F^i)=\mu
	\]
	holds in the sense of distributions.  Hence
	\[
	\operatorname{div}_{\rm Eucl}\widetilde F =\partial_i(\gamma\chi F^i) =\chi\mu+\gamma F^i\partial_i\chi\,dx,
	\]
	so $\widetilde F\in\DM^\infty$, and the trace theorem in Euclidean space applies chart by chart.  The local traces are determined uniquely by the same Gauss--Green functional, so they agree on overlaps and can be glued via a partition of unity; the identity
	\[
	\iota_{\partial U}^*J=(F\cdot\nu_g)\,dS_g
	\]
	in the smooth case fixes the sign of the outward trace.
\end{proof}
\begin{theorem}[Global normal trace]\label{thm:global-trace}
Let $J$ be a real $(2n-1)$-form on $U$ with $L^\infty$ coefficients, and suppose that the top-degree distribution $dJ$ is a finite signed Radon measure.  Then $J$ admits a unique outward trace $\Tr_{\partial U}^{\mathrm{out}}J\in\mathcal M(\partial U)$ characterized as follows: for every $\varphi\in C^1(\partial U)$ and every $C^1$ extension $\widetilde\varphi$ to $\overline U$,
\[
\int_{\partial U}\varphi\,d\bigl(\Tr_{\partial U}^{\mathrm{out}}J\bigr)=\int_U\widetilde\varphi\,d(dJ)+\int_Ud\widetilde\varphi\wedge J.
\]
The right-hand side is independent of the chosen extension.  The trace is absolutely continuous with respect to the surface measure, and its density, denoted by the same symbol, satisfies
\[
\bigl\|\Tr_{\partial U}^{\mathrm{out}}J\bigr\|_{L^\infty(\partial U)} \le C_\Sigma\|J\|_{L^\infty(U)}.
\]
Here $C_\Sigma$ depends only on the fixed boundary atlas and the background metric.  The trace is recovered by inner parallel hypersurfaces in the essential-limit sense of Theorem~\ref{prelim:thm-innerparallel} and satisfies the product rule with $W^{1,\infty}$ multipliers.
\end{theorem}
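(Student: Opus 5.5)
The plan is to deduce Theorem~\ref{thm:global-trace} from the Euclidean theory by combining the Hodge dictionary (Lemma~\ref{lem:hodge}), the local reduction (Lemma~\ref{lem:local}) and the Anzellotti trace theorem. Write $J=\iota_F dV_g$ with $F\in L^\infty(U)$, so that $dJ=(\operatorname{div}_gF)dV_g$ is a finite Radon measure and $F\in\DM^\infty(U)$ in the localized sense.

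\emph{Extension.} The Anzellotti theorem is stated for $\Omega\Subset D$, so we first need $F$ on a neighbourhood of $\overline U$. To obtain this, extend $F$ by zero across $\partial U$ and regard the trace as the interior one. More concretely, we define the functional
\[
L(\varphi):=\int_U\widetilde\varphi\,d(dJ)+\int_U d\widetilde\varphi\wedge J .
\]
If $\widetilde\varphi$ vanishes on $\partial U$, then approximating $\widetilde\varphi$ by functions in $C^1_c(U)$ together with the distributional identity $\langle dJ,\psi\rangle=-\int d\psi\wedge J$ gives $L=0$. The approximation can be done with the cutoffs $\chi(d_\Sigma/\eta)$: the error term is bounded by $\|J\|_\infty\int_{\{-\eta<\rho<0\}}\eta^{-1}|\widetilde\varphi|$, and this tends to $0$ because $|\widetilde\varphi|\le C\eta$ on the collar, by Proposition~\ref{prop:BH_to_collar}. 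The term $\int_{\{-\eta<\rho<0\}}|\widetilde\varphi|\,d|dJ|$ also tends to $0$, since $|dJ|$ is finite and the collar shrinks. Hence $L(\varphi)$ depends only on $\varphi|_{\partial U}$, which proves independence of the extension.

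\emph{Bound and absolute continuity.} The main step is the estimate $|L(\varphi)|\le C_\Sigma\|J\|_\infty\|\varphi\|_{L^1(\partial U)}$. I would test with the extension $\widetilde\varphi_\eta=(\varphi\circ\pi)\,\chi(-d_\Sigma/\eta)$, where $\pi$ is the normal projection and $\chi$ is a cutoff equal to $1$ near $0$. For this choice, $\int_U\widetilde\varphi_\eta\,d(dJ)\to0$ as $\eta\to0$, since the supports shrink to $\partial U$ and $|dJ|(\{-\eta<\rho<0\})\to0$. The term with $d(\varphi\circ\pi)$ is $O(\eta)$. What remains is $\eta^{-1}\int\chi'\,(\varphi\circ\pi)\,d\rho\wedge J$. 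By coarea and the uniform comparability of the surface measures on the levels $\Sigma_t$ (Proposition~\ref{prop:BH_to_collar}), this is bounded by $C\|J\|_\infty\|\varphi\|_{L^1(\partial U)}$. Density of $C^1$ in $L^1(\partial U)$ together with Riesz representation then give a measure with an $L^\infty$ density bounded by $C_\Sigma\|J\|_\infty$. The constant depends only on the collar geometry and the atlas through the factor $\sqrt{\det g}$ from Lemma~\ref{lem:local}.

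\emph{Remaining properties.} Locally, in charts, the measure we obtain coincides with the Euclidean Anzellotti trace of $\widetilde F$ from Lemma~\ref{lem:local}. Uniqueness of the Gauss--Green functional lets these local traces glue, and the smooth case fixes the sign. Recovery from inner parallel levels then follows from Theorem~\ref{prelim:thm-innerparallel} applied chart by chart, since the signed distance is $C^{3,1}$. The product rule follows from Lemma~\ref{lem:product}, or directly by inserting $g\widetilde\varphi$ into the defining identity and using $d(g\widetilde\varphi)=g\,d\widetilde\varphi+\widetilde\varphi\,dg$ with $dg\wedge J$ bounded.

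The hardest step is the collar cutoff estimate, which must control $|dJ|$ near $\partial U$ without assuming that $F$ extends beyond $U$. This works because the total variation $|dJ|(U)$ is finite, so it cannot concentrate on the shrinking collars.
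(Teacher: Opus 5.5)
Your argument is correct, but for existence, uniqueness and the $L^\infty$ bound it takes a different route from the paper. The paper constructs nothing. It uses Lemmas~\ref{lem:hodge}--\ref{lem:local} to turn $J$ into a Euclidean $\DM^\infty$ field chart by chart. It then imports existence, the density bound, inner recovery and the product rule from Chen--Frid and Chen--Comi--Torres. Finally it glues the local traces by uniqueness of the Gauss--Green functional and a partition of unity, and gets $C_\Sigma$ by summing over the atlas.

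You instead define the trace intrinsically as the global functional $L$. Extension-independence comes from collar cutoffs, using $|dJ|(U)<\infty$ and the fact that $\widetilde\varphi$ vanishes at a Lipschitz rate at $\partial U$. The $L^1$-dual bound comes from the slope term of $(\varphi\circ\pi)\chi(-d_\Sigma/\eta)$ via coarea. Absolute continuity and the density bound then follow by duality, with no gluing. Citations enter only for the essential-limit recovery and, optionally, the product rule.

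Your route buys three things:
\begin{itemize}
\item It is self-contained.
\item It works directly for $J$ defined only on $U$. The version of Anzellotti's theorem stated in the paper assumes a field on a neighbourhood of $\overline\Omega$, so the paper's chartwise reduction implicitly relies on the interior-trace versions of the cited results.
\item It gives an explicit constant controlled by the Jacobians of $\pi|_{\Sigma_t}$.
\end{itemize}

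Your computation also gives most of the recovery statement. With a piecewise linear $\chi$, the main term is exactly $\eta^{-1}\int_{-2\eta}^{-\eta}\int_{\Sigma_t}(\varphi\circ\pi)\,\iota_t^*J\,dt$, so you already have recovery in averaged form. The essential limit then follows because $t\mapsto\int_{\Sigma_t}(\varphi\circ\pi)\,\iota_t^*J$ has essential oscillation on $(s,t)$ bounded by $|dJ|(\{s<\rho<t\})+C|t-s|$. The paper's route is shorter and gets every property from one set of references.

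One small correction: your opening suggestion to extend $F$ by zero cannot serve as a first step. The zero extension having measure-valued divergence is equivalent to the trace existing, so using it first would be circular. Your functional argument never actually uses it, so nothing is lost.
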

\begin{proof}
Lemmas~\ref{lem:hodge}--\ref{lem:local} reduce the statement, chart by chart, to \cite[Theorem~2.2]{ChenFrid1999Divergence} and \cite[Theorem~8.4 and Corollary~8.1]{ChenComiTorres2019CauchyFluxes}.  The Gauss--Green characterization forces the local traces to agree on overlaps, so a partition of unity defines the global trace.  Summing over the fixed finite atlas yields the stated $L^\infty$ bound.  Inner recovery and the multiplier rule follow chartwise from Theorem~\ref{prelim:thm-innerparallel} and Lemma~\ref{lem:product}; uniqueness reconciles the resulting formulas on overlaps.
\end{proof}

\section{Regularity for degenerate Dirichlet envelopes}\label{reg:sec-regularity}
After decreasing $\varepsilon_1>0$ (Convention~\ref{intro:conv-parameters}), set
\[
I:=[-\varepsilon_1,\varepsilon_1], \qquad U_\varepsilon=\{\rho<\varepsilon\},\quad \Sigma_\varepsilon=\{\rho=\varepsilon\},\quad \Omega_\varepsilon=X\setminus U_\varepsilon, \qquad \varepsilon\in I.
\]
We will make an approximation with two parameters, and we take the limits one after the other. The envelope can be viewed as a degenerate solution of the complex Monge--Amp\`ere equation.  However, the degeneracy and the moving domain are not covered by the classical theory of the complex Monge--Amp\`ere equation, so we need this technical treatment.

\subsection{Smooth approximation and stability}
Lemma~\ref{reg:lem-smooth-parallel-geometry} gives the regularization of the domains, and Proposition~\ref{reg:conv-c31-smoothing-reduction} regularizes the equation by adding the positive density $\delta$.  The core technical step of this section is the boundary Hessian estimate of Proposition~\ref{reg:prop-uniform-degenerate-boundary}; its full proof is given in Appendix~\ref{app:sec-proofs-regularity}.  Lemma~\ref{reg:lem-uniform-collar-core-bridge} extracts the most delicate step from that proof.

\begin{lemma}[Smooth approximation of the parallel geometry]\label{reg:lem-smooth-parallel-geometry}
Fix $0<\alpha<1$.  Choose $\eta>\varepsilon_1$ so that $N_{3\eta}:=\{|\rho|<3\eta\}$ lies in the signed-distance collar, and put $N_{2\eta}:=\{|\rho|<2\eta\}$.  There exist $\rho_m\in C^\infty(N_{3\eta})$ with $\rho_m\to\rho$ in $C^{3,\alpha}(\overline{N_{2\eta}})$ and
\[
\sup_m\|\rho_m\|_{C^{3,1}(\overline{N_{2\eta}})}<\infty.
\]
Set
\[
U_{\varepsilon,m}:=\{\rho<-\eta\}\cup \{x\in N_{2\eta}:\rho_m(x)<\varepsilon\}, \qquad \varepsilon\in I.
\]
Then, for all large $m$ and uniformly in $\varepsilon\in I$:  $\partial U_{\varepsilon,m}=\{\rho_m=\varepsilon\}$;  $|d\rho_m|_{g_0}$ and the Levi form of $\partial U_{\varepsilon,m}$ have positive lower bounds; and the $\partial U_{\varepsilon,m}$ admit boundary charts from one fixed finite holomorphic atlas and a common tubular radius.  Gradient flow of $\rho_m$, extended by the identity off $N_{2\eta}$, yields diffeomorphisms $\Phi_{\varepsilon,m}:\overline{U_\varepsilon}\to\overline{U_{\varepsilon,m}}$ converging to the identity in $C^{3,\alpha}$, uniformly in $\varepsilon\in I$.
\end{lemma}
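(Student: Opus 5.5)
I would construct $\rho_m$ by standard mollification of $\rho$ on the collar. Cover $\overline{N_{3\eta}}$ (or a slightly smaller neighbourhood of $\overline{N_{2\eta}}$) by finitely many holomorphic coordinate charts from a fixed atlas, with a subordinate partition of unity $\{\chi_k\}$. Then set
\[
\rho_m:=\sum_k\chi_k\,(\rho\circ\phi_k^{-1}*\theta_{1/m})\circ\phi_k .
\]
Since $\rho\in C^{3,1}$ on the collar (Proposition~\ref{prop:BH_to_collar}), the mollifications are bounded in $C^{3,1}$, uniformly in $m$. They converge to $\rho$ in $C^3$. Interpolating between the uniform $C^{3,1}$ bound and the $C^3$ convergence gives convergence in $C^{3,\alpha}(\overline{N_{2\eta}})$ for every $\alpha<1$. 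I would arrange $\rho_m$ to be defined on all of $N_{3\eta}$ by taking the charts to cover a neighbourhood of $\overline{N_{3\eta}}$ inside the collar, which is legitimate because $\eta$ was chosen so that $N_{3\eta}$ lies in the signed-distance collar.

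Next come the uniform geometric properties, which follow by perturbation. On $\overline{N_{2\eta}}$ we have $|d\rho|_{g_0}=1$, so $|d\rho_m|\ge 1/2$ for large $m$. For the Levi form, Proposition~\ref{prop:BH_to_collar} gives $\ddc\rho(\tau,\bar\tau)\ge\kappa|\tau|^2$ on $T^{1,0}\{\rho=t\}$ for all $|t|<\varepsilon_0$. I expect this holds on $N_{2\eta}$ once $\varepsilon_1<\eta$ is small; I would shrink $\eta$ inside the collar if needed. The $C^2$ convergence of $\rho_m$ then moves $T^{1,0}\{\rho_m=\varepsilon\}$ by $o(1)$ relative to $T^{1,0}\{\rho=\rho(x)\}$, and $\ddc\rho_m$ by $o(1)$ uniformly. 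So the Levi form of $\{\rho_m=\varepsilon\}$ is at least $\kappa/2$ uniformly in $\varepsilon\in I$.

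For the boundary identification, note that $|\rho_m-\rho|\to0$ uniformly. Hence for $|\varepsilon|\le\varepsilon_1<\eta$ the level set $\{\rho_m=\varepsilon\}\cap N_{2\eta}$ lies inside $\{|\rho|<\eta+o(1)\}$, which stays away from $\partial N_{2\eta}$ and from $\{\rho<-\eta\}$. Near $\{\rho=-\eta\}$ we have $\rho_m<\varepsilon$, so the two pieces in the definition of $U_{\varepsilon,m}$ glue into an open set with $\partial U_{\varepsilon,m}=\{\rho_m=\varepsilon\}$. The uniform $C^{3,1}$ bounds together with $|d\rho_m|\ge1/2$ give implicit-function graph charts of uniform size over fixed holomorphic charts, and they also give a common tubular radius.

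For the diffeomorphisms $\Phi_{\varepsilon,m}$, I would use the normalized gradient field $Y_m=\nabla\rho_m/|\nabla\rho_m|^2$, cut off by a function $\psi$ that equals $1$ on $\{|\rho|\le\eta+\frac{\eta}{4}\}$ and is supported in $N_{2\eta}$. Flowing $\rho$-levels to $\rho_m$-levels alone doesn't work directly; I would instead use the ansatz $\Phi(x)=\mathrm{Fl}^{Y_m}_{s(x)}(x)$ with $s(x)$ solving $\rho_m(\mathrm{Fl}^{Y_m}_{s}(x))=\rho_m(x)+(\rho(x)-\rho_m(x))\cdot\tilde\psi(x)$. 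Then:
\begin{itemize}
\item the flow time is simply $s(x)=\tilde\psi(x)(\rho(x)-\rho_m(x))$ wherever $\psi=1$ along the trajectory, since $Y_m\rho_m=1$;
\item this requires $\tilde\psi=1$ near $\Sigma_\varepsilon$ and supported where $\psi=1$;
\item it maps $\{\rho=\varepsilon\}$ onto $\{\rho_m=\varepsilon\}$, since $\Phi^*\rho_m=\rho$ near the boundary;
\item it is the identity off $N_{2\eta}$.
\end{itemize}

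\textbf{Main obstacle: regularity of $\Phi_{\varepsilon,m}$.} $Y_m$ is only controlled in $C^{2,1}$ uniformly, and $s=\tilde\psi(\rho-\rho_m)$ is small in $C^{3,\alpha}$. The flow $(s,x)\mapsto\mathrm{Fl}_s(x)$ is $C^{2,1}$ in $x$ with norms uniform in $m$. Its deviation from the identity is bounded by $|s|$ times these norms, and derivatives of the composition involve $D^3 s\to0$ in $C^{0,\alpha}$. So I would first establish $\Phi_{\varepsilon,m}\to\mathrm{Id}$ in $C^{2,\alpha}$, and $C^3$ convergence only of the part $x+s(x)Y_m(x)+O(s^2)$. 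Honest $C^{3,\alpha}$ convergence may need $Y_m$ replaced by the smooth field $\nabla\rho_{m'}$ for a fixed finer mollification, or the flow time written as $\int_0^{s}$ with Taylor control.

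I would verify this $C^3$ loss carefully. If needed, I would use the fixed smooth field $\nabla_{g_0}\rho_{M}$ (with $M$ large but fixed) in place of $Y_m$, correcting the flow time by the implicit function theorem, which is $C^{3,\alpha}$-stable. Uniformity in $\varepsilon\in I$ is automatic because $\varepsilon$ enters only as a level constant.
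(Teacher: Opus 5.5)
Your construction is the paper's: mollify $\rho$ in a fixed finite holomorphic collar atlas, glue with a partition of unity, and set $\Phi_{\varepsilon,m}(x)=\mathrm{Fl}^{s_m(x)}_{X_m}(x)$ with $X_m=\nabla\rho_m/|d\rho_m|_{g_0}^2$ and $s_m=\chi(\rho-\rho_m)$, using $\rho_m\circ\mathrm{Fl}^t_{X_m}=\rho_m+t$. Your extra cutoff on the field is harmless. If the time cutoff equals $1$ on a neighbourhood of $\{|\rho|\le\varepsilon_1\}$, one map serves every $\varepsilon\in I$.

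The one step you leave open is the $C^{3,\alpha}$ convergence of $\Phi_{\varepsilon,m}$, and it closes without changing the field. Put $F_m(t,x)=\mathrm{Fl}^t_{X_m}(x)$ and expand $D^3[F_m(s_m(x),x)]$ by the chain rule.
\begin{itemize}
\item Each term containing a derivative of $s_m$ carries a factor $D^js_m$ ($1\le j\le3$), which tends to $0$ in $C^{0,\alpha}$. This factor multiplies some $(\partial_t^aD_x^bF_m)(s_m(x),x)$ with $a\ge1$, $a+b\le3$. Because $\partial_tF_m=X_m\circ F_m$, these factors involve only $D^{\le2}X_m$ and $D_x^{\le2}F_m$, so they are uniformly Lipschitz. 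Hence all such terms tend to $0$ in $C^{0,\alpha}$.
\item The only other term is $h_m(x):=D_x^3F_m(s_m(x),x)$, and $D_x^3F_m(0,\cdot)=0$. Here rates do the work. Since $\rho\in C^{3,1}$, mollification at scale $1/m$ gives $\|\rho-\rho_m\|_{C^0}\le C/m$ and $\|D^5\rho_m\|_{L^\infty}\le Cm$. So $D^3X_m$ stays uniformly bounded while $\|D^4X_m\|_{L^\infty}\le Cm$, whence $|D_x^4F_m(t,\cdot)|\le Cm|t|$.
\item Therefore $\|h_m\|_{C^0}\le C\|s_m\|_{C^0}\le C/m$ and $\mathrm{Lip}(h_m)\le C+Cm\|s_m\|_{C^0}\le C$. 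Interpolation gives $[h_m]_{C^{0,\alpha}}\le Cm^{-(1-\alpha)}\to0$.
\end{itemize}
The smallness of the flow time pays for the derivative the field lacks.

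Your fallback also works and avoids rates: use a fixed smooth field, with the flow time determined by the implicit function theorem from $\rho_m(\mathrm{Fl}_s(x))=\rho(x)$. All derivatives of a fixed flow are bounded, and the $C^{3,\alpha}$ implicit function theorem gives $s_m\to0$ in $C^{3,\alpha}$. It is simply not needed.

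The paper asserts this convergence in a single sentence, so your caution is reasonable. As written, though, your proposal does not yet commit to an argument for it.

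A minor point: Levi positivity is needed only near $\{|\rho|\le\varepsilon_1\}$. Proposition~\ref{prop:BH_to_collar} already provides this since $\varepsilon_1<\varepsilon_0$, so you do not need to shrink $\eta$ for it.
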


\begin{proof}
	We mollify $\rho$ in a fixed finite holomorphic collar atlas and patch the local mollifications together with a partition of unity; this yields a globally smooth $\rho_m\in C^\infty(N_{3\eta})$.  From the basic properties of convolution, $\rho_m$ inherits all the regularity statements listed above, and for $m$ large enough one has $\partial U_{\varepsilon,m}=\{\rho_m=\varepsilon\}\Subset N_\eta$.  To compensate for the error caused by changing the defining function, we then translate along a short normal flow.  Take $X_m=\nabla\rho_m/|d\rho_m|_{g_0}^2$; its flow satisfies $\rho_m\circ\mathrm{Fl}^t_{X_m}=\rho_m+t$, and accordingly we set $\Phi_{\varepsilon,m}(x):=\mathrm{Fl}^{\chi(x)(\rho-\rho_m)(x)}_{X_m}(x)$, where $\chi\in C^\infty_c(N_{2\eta})$ is the cutoff function equal to $1$ on $\{|\rho|\le\varepsilon_1\}$.
\end{proof}

\begin{lemma}[Uniform collar--core bridge]
\label{reg:lem-uniform-collar-core-bridge}
Assume $n\ge2$.  Let $u_{\varepsilon,\delta,m}\in C^4(\overline{U_{\varepsilon,m}})$ be an admissible solution of
\[
(\omega_0+\ddc u_{\varepsilon,\delta,m})^n =\delta\,\omega_0^n, \qquad u_{\varepsilon,\delta,m}=0 \quad\text{on }\partial U_{\varepsilon,m},
\]
where $\varepsilon\in I$, $0<\delta\le2^{-n}$, and $m$ is sufficiently large, and suppose that
\[
\|u_{\varepsilon,\delta,m}\|_{C^{0,1}(\overline{U_{\varepsilon,m}})}\le G
\]
with $G$ independent of $(\varepsilon,\delta,m)$.  Write $D=U_{\varepsilon,m}$, $u=u_{\varepsilon,\delta,m}$, let $\nu$ be the outward unit normal, and set
\begin{equation}
 M_\delta:=1+\sup_{\partial D}(u_{\nu\nu})^+.
 \label{reg:eq-Mdelta}
\end{equation}
Each $D$ admits a uniformly controlled inward collar function $\varrho$, positive in $D$ and zero on $\partial D$, and there is a fixed collar radius $R_0>0$ with the following property.  For every $0<r_*<R_0$ and every $\eta_0>0$, one can choose $0<r_b<r_*$ and $C_{\eta_0,r_b}>0$, independently of $(\varepsilon,\delta,m)$, such that
\[
 \sup_{\{\varrho\ge r_b\}}|\nabla^2_{g_0}u|_{g_0}  \le\eta_0M_\delta+C_{\eta_0,r_b}.
\]
\end{lemma}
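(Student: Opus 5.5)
We prove the bound by combining an interior second-derivative estimate, which depends only on a global Hessian bound, with the standard blow-up (global-to-boundary reduction) for complex Monge--Amp\`ere equations. That reduction bounds the global Hessian by the boundary Hessian, and the boundary Hessian is in turn controlled by $M_\delta$ up to a small fraction. The precise mechanism is as follows.

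First we construct $\varrho$. We take $\varrho:=\varepsilon-\rho_m$ on the tubular neighbourhood given by Lemma~\ref{reg:lem-smooth-parallel-geometry}, and we extend it by a fixed positive smooth function deeper inside. The uniform $C^{3,1}$ bounds on $\rho_m$ and the common tubular radius give $R_0$ and make $\varrho$ uniformly controlled in $(\varepsilon,m)$.

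Next we obtain a global second-order estimate of Hou--Ma--Wu / Phong--Sturm type. We apply the maximum principle to $\log\lambda_{\max}(\nabla^2 u)+\varphi(|\nabla u|^2)+\psi(u)$, or to the simpler quantity $\log\Delta_{\omega_0}u$ with a barrier term $-Au$. The right-hand side $\delta\omega_0^n$ is constant, so $\log\delta$ has vanishing derivatives. As a result the constants are independent of $\delta$; this independence holds only because $\delta$ enters only through $\log$ of a constant. We obtain
\[
\sup_D|\nabla^2u|\le C\bigl(1+\sup_{\partial D}|\nabla^2u|\bigr).
\]
The tangential--tangential derivatives on $\partial D$ are bounded by $G$ and the geometry, because $u=0$ there. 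The tangential--normal derivatives are bounded by the usual Guan barrier built from $\varrho$, strong pseudoconvexity and the Lipschitz bound $G$. The normal--normal term is at most $M_\delta$ from above. From below it is controlled by $\Delta u\ge -n$ together with the tangential bounds. Together these give $\sup_D|\nabla^2u|\le C(M_\delta+1)$.

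To gain the small factor $\eta_0$, we must not use the bound at the boundary; we localize in the interior instead. On $\{\varrho\ge r_b\}$ we run a cutoff version of the same maximum principle, with test quantity $\zeta(\varrho)^{k}\,\lambda_{\max}(\nabla^2u)\,e^{\varphi(|\nabla u|^2)-Au}$, where $\zeta$ vanishes on $\{\varrho\le r_b/2\}$. The cutoff derivatives produce errors of the form $C r_b^{-2}\zeta^{k-2}\lambda$. These are absorbed by the good term $\lambda^{2}\zeta^k/\sum F^{ii}$ as in Chou--Wang interior estimates, giving
\[
\sup_{\varrho\ge r_b}\zeta^k\lambda\le C r_b^{-2}+C\sup\lambda^{1-\theta}
\]
for some $\theta>0$. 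Inserting the global bound $C(M_\delta+1)$ into the right-hand side and applying Young's inequality then gives $\eta_0M_\delta+C_{\eta_0,r_b}$. This step is the main obstacle. Pure interior Pogorelov-type estimates for complex Monge--Amp\`ere fail in general, since the complex Pogorelov estimate requires $n\ge2$-specific structure and is known to break down without strict convexity. We would therefore first try to exploit $\omega_0$-psh strict positivity near the collar together with the uniform Lipschitz bound $G$, following B\l ocki's gradient-weighted interior argument.

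If the sublinear power $\lambda^{1-\theta}$ cannot be obtained, the fallback is an interpolation argument. We apply the global estimate on the subdomain $\{\varrho>r\}$, whose boundary is again uniformly strongly pseudoconvex for small $r$. By the pluripotential Lipschitz bound, the boundary Hessian there is governed by difference quotients of $\nabla u$ across a collar of width $r$. These quotients are at most $Cr^{-1}$ times the $C^{1,\beta}$-oscillation. That oscillation interpolates between $G$ and $\sup|\nabla^2 u|\le C(M_\delta+1)$, which yields the factor $\eta_0$ once $r_b$ is chosen appropriately.
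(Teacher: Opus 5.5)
There is a genuine gap, and it is the step you yourself call the main obstacle: the small coefficient $\eta_0$ in front of $M_\delta$. Your preliminary reductions are fine and match what the paper imports. These are the tangential--tangential bounds from $u|_{\partial D}=0$, the tangential--normal bounds from a B{\l}ocki/Guan barrier, and a $\delta$-independent global-to-boundary real-Hessian estimate of Chu type. Together, however, they only give $\sup_D|\nabla^2u|\le CM_\delta$ with a fixed $C$, which is useless for the later absorption.

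Your cutoff maximum principle does not give the improvement uniformly in $\delta$. Read literally, absorbing the error $Cr_b^{-2}\zeta^{k-2}\lambda$ into the good term $\lambda^2\zeta^k/\sum_iF^{i\bar i}$ is possible only where $\zeta^2\lambda\ge Cr_b^{-2}\sum_iF^{i\bar i}$. Since $\det\widetilde\omega=\delta\det\omega_0$, we have $\sum_iF^{i\bar i}=\tr_{\widetilde\omega}\omega_0\ge n\delta^{-1/n}$. So the alternative at the maximum point yields a bound of order $r_b^{-2}\delta^{-1/n}$, not $Cr_b^{-2}$. This is exactly the loss of uniform ellipticity the paper emphasizes, and, as you note, the complex equation has no Pogorelov mechanism to compensate. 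The sublinear term $\sup\lambda^{1-\theta}$ is asserted, not derived.

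The fallback fails too. Control of difference quotients of $\nabla u$ across a collar of width $r$, or any $C^{1,\beta}$ bound, controls averages of $\nabla^2u$, not its pointwise values on $\{\varrho=r\}$. Interpolation only goes from $C^0$ and $C^{1,1}$ down to $C^{1,\beta}$, never back up. You also cannot redo the boundary reduction on $\{\varrho=r\}$, because $u$ satisfies no boundary condition there. Chu's estimate on the core $\{\varrho\ge r_b\}$, which the paper does use in Step~4b, therefore needs the full Hessian on the inner level $\Gamma_{r_b}$, and that is precisely the unknown.

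The missing idea is a collar maximum principle whose weight is singular in the normal direction at $\partial D$. The paper proceeds as follows.
\begin{itemize}
\item It normalizes the linearization to $a=\widetilde g^{-1}/\tr_{\widetilde\omega}\omega_0$, so that $\tr_{g_0}a=1$.
\item It takes $\varrho=c_\varrho(1-e^{\Lambda(\rho_m-\varepsilon)})$, so that $L\varrho\le-2$ for every such $a$. Your $\varepsilon-\rho_m$ would not suffice, since $\ddc\rho_m$ has no sign in the complex normal direction.
\item It applies the maximum principle on the direction bundle over the collar to a Dinew--Sroka-type quotient $W/\mathcal V$. Here $W=\nabla^2u(V,V)+K|\zeta|^2|\nabla u|^2$, $\mathcal V=t^{-\alpha}|\partial\varrho(\zeta)|^2+\alpha^{-1}t^{1-\alpha}|\zeta|^2$, and $t=\varrho+\beta$. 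Both are coercive without any lower eigenvalue bound on $a$.
\item At $\partial D$, the factor $\beta^{-\alpha}$ on the normal component turns $M_\delta$ into $\beta^\alpha M_\delta$. The outer level contributes only a feedback term $q_{\rm col}H_{r_b}$ with $q_{\rm col}$ small.
\item Chu's core estimate closes the loop, and letting $\beta\downarrow0$ makes the coefficient of $M_\delta$ smaller than $\eta_0$.
\end{itemize}
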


\begin{proof}[Proof outline]
The proof is given in Appendix~\ref{app:subsec-proof-bridge}. As the smallest eigenvalue of the elliptic operator  could become very close to $0$, the conventional methods cannot be used. Thus, it is necessary first to normalize the trace and define a coercive quotient  $W/\mathcal V$; and then one can apply the maximum principle to the quotient in order to achieve the required result.
\end{proof}

\begin{proposition}[Uniform degenerate boundary Hessian estimate]
\label{reg:prop-uniform-degenerate-boundary}
Let $u_{\varepsilon,\delta,m}\in C^4(\overline{U_{\varepsilon,m}})$ be an admissible solution of
\[
(\omega_0+\ddc u_{\varepsilon,\delta,m})^n =\delta\,\omega_0^n, \qquad u_{\varepsilon,\delta,m}=0 \quad\text{on }\partial U_{\varepsilon,m},
\]
where $\varepsilon\in I$, $0<\delta\le1$, and $m$ is sufficiently large.  If
\[
\|u_{\varepsilon,\delta,m}\|_{C^{0,1}(\overline{U_{\varepsilon,m}})}\le G
\]
with $G$ independent of $(\varepsilon,\delta,m)$, then
\[
\sup_{\partial U_{\varepsilon,m}} |\nabla^2_{g_0}u_{\varepsilon,\delta,m}|_{g_0}\le C,
\]
where $C$ is independent of $(\varepsilon,\delta,m)$. In fact the same constant bounds the real Hessian on the whole domain.  At a boundary point, if
\[
\omega_0+\ddc u_{\varepsilon,\delta,m} =\begin{pmatrix}B&c\\ c^*&d\end{pmatrix}>0
\]
in a unitary frame adapted to the complex tangent space, then also $c^*B^{-1}c\le C$ uniformly.
\end{proposition}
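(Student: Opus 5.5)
We follow the classical Caffarelli--Kohn--Nirenberg--Spruck / Guan scheme for boundary second derivatives, with the degenerate right-hand side $\delta\,\omega_0^n$ handled through the quantity $M_\delta$ and the collar--core bridge of Lemma~\ref{reg:lem-uniform-collar-core-bridge}. In a boundary chart from the fixed holomorphic atlas of Lemma~\ref{reg:lem-smooth-parallel-geometry}, write $u=u_{\varepsilon,\delta,m}$ and split the real Hessian at $p\in\partial D$ into three blocks: tangential--tangential, tangential--normal, and normal--normal. The uniform Levi lower bound and $|d\rho_m|$ lower bound hold uniformly in $(\varepsilon,m)$, and all constants will depend only on $G$ and these uniform geometric bounds.

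\emph{Tangential--tangential.} Since $u=0$ on $\partial D$, we can write $u=h\,\varrho_m$ near $\partial D$, where $\varrho_m$ is the defining function $\varepsilon-\rho_m$. Differentiating twice along the boundary gives $u_{\alpha\beta}=-u_\nu\,(\rho_m)_{\alpha\beta}$ for tangential directions, so this block is bounded by $G\,\|\rho_m\|_{C^2}$.

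\emph{Tangential--normal.} Let $T$ be a tangential vector field extended by the linearization of the boundary graph, and apply the linearized operator $L=\tr_{\omega_u}\ddc$ to $T u$. Since $\log\det(\omega_0+\ddc u)=\log\delta+\log\det\omega_0$, we get $|L(Tu)|\le C(1+\tr_{\omega_u}\omega_0)$, which is uniform in $\delta$. Build a barrier $\Phi=A_1\varrho-A_2\varrho^2+A_3|z-p|^2$, using the strongly psh defining function from Proposition~\ref{prop:BH_to_collar} together with $\ddc u\ge-\omega_0$. The uniform Levi lower bound gives $L\Phi\le -c\,\tr_{\omega_u}\omega_0$, and the maximum principle then bounds $|\partial_\nu T u(p)|\le C$. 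The same computation controls the complex mixed entries $c$ in the adapted unitary frame.

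\emph{Normal--normal.} This is the main obstacle, because $\delta\to0$ means we cannot divide by $\det B$ in the usual expansion $d=\bigl(\delta\det\omega_0/\det B\bigr)+c^*B^{-1}c$. Using the tangential bounds, $B\le C$, so the quantity to control is $c^*B^{-1}c$, which requires a \emph{uniform} lower bound $B\ge c_0 I$ on the complex tangential block. We plan to obtain this by the Trudinger / Guan minimum argument. Take the boundary point $p_0$ where $\min_{\xi\in T^{1,0}\partial D,\ |\xi|=1}(\omega_0+\ddc u)(\xi,\bar\xi)$ is attained. Writing $B=(\omega_0+\ddc u_{\alpha\bar\beta}|_{\partial D})=\omega_0-u_\nu\,\mathcal L_{\rho_m}$, we have $u\ge0$ in $D$, so $u_\nu\le0$; positivity of the Levi form $\mathcal L_{\rho_m}$ then gives $B\ge\omega_0|_{T^{1,0}}$. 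This tangential block is therefore bounded below by $1$ outright. It follows that $c^*B^{-1}c\le|c|^2\le C$.

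It remains to bound $u_{\nu\nu}$ from above. From the determinant expansion, $d\le \delta C+C$, so the complex normal entry is bounded; the real normal--normal derivative $u_{\nu\nu}$ then follows from $d$ together with the bounded tangential data, using $Ju_{\nu\nu}$ as a tangential--normal term. If that direct argument leaks $\delta$-independent constants only through $M_\delta$, we use Lemma~\ref{reg:lem-uniform-collar-core-bridge} with small $\eta_0$ to absorb the interior contribution. Finally, we propagate the boundary bound to the whole domain by the standard global estimate for $\Delta u$ via $L(\log\tr_{\omega_0}\omega_u-Au)$, which is again $\delta$-independent since $\log\delta$ is constant, combined with $\ddc u\ge-\omega_0$.
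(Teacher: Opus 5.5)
Your tangential--tangential and tangential--normal steps are fine; the paper imports the latter from B{\l}ocki. The normal--normal step, however, rests on a sign error, and with the correct sign it collapses. Write $u=h\,(\rho_m-\varepsilon)$ near $\partial D$. Then $h=\partial_\nu u/|d\rho_m|_{g_0}$ on $\partial D$ for the outward normal $\nu$, so for $\xi\in T^{1,0}\partial D$
\[
(\omega_0+\ddc u)(\xi,\bar\xi)=\omega_0(\xi,\bar\xi)+\frac{\partial_\nu u}{|d\rho_m|_{g_0}}\,\ddc\rho_m(\xi,\bar\xi)\le\omega_0(\xi,\bar\xi),
\]
because $\partial_\nu u\le0$ and $\ddc\rho_m$ is positive on $T^{1,0}\partial D$. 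The same slip already appears in your formula $u_{\alpha\beta}=-u_\nu(\rho_m)_{\alpha\beta}$, where it is harmless.

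Strong pseudoconvexity therefore \emph{lowers} the tangential block $B$ rather than raising it. Compare $A_t=\alpha_t-q_t\,d\theta_t$ in Proposition~\ref{stat:prop-explicit-parallel-density}, which the paper only shows to be semipositive. In Example~\ref{dyn:ex-toric-collapse} the tangential block of $\omega_{u_R}$ is $b\,\omega_{FS}$, while that of $\omega_0$ is $a\,\omega_{FS}$ with $b<a$, which contradicts your claimed $B\ge\omega_0|_{T^{1,0}}$.

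So your argument yields no lower bound $B\ge c_0I$ at all. In a $g_0$-unitary frame the identity $d=\delta/\det B+c^*B^{-1}c$ then gives no $\delta$-uniform control of $d$. The Trudinger/CKNS minimum-point argument for $\min B$ draws its constant from a positive floor of the density. This is why the paper uses the classical nondegenerate argument only in the range $2^{-n}\le\delta\le1$.

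What is missing is a bound on the double-normal quantity $M_\delta$ that never passes through $\det B$. In the paper the Schur-complement statement comes last, as $0\le c^*B^{-1}c<d\le C$ once $M_\delta$ is bounded. Invoking Lemma~\ref{reg:lem-uniform-collar-core-bridge} does not by itself supply this bound. An interior estimate $\sup_{\{\varrho\ge r_b\}}|\nabla^2u|\le\eta_0M_\delta+C_{\eta_0,r_b}$ says nothing about $u_{\nu\nu}$ on $\partial D$ without a device that carries it to the boundary.

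The paper's device is a Dinew--Sroka barrier on the three-face cylinder $\omega_\varsigma(q)$, of width $\varsigma$ and depth $c_*\varsigma^4$. It is applied to the tangential second derivative minus its affine Taylor part, and each face is controlled separately:
\begin{itemize}
\item the physical face by the quartic bound $C|\tau_q|^2+CM_E|\tau_q|^4$;
\item the lateral face by the fixed-scale estimate, which itself rests on Chu's coarse bound $\sup_D|\nabla^2u|\le CM_\delta$;
\item the cap by the bridge lemma with $\eta_0$ of order $\varsigma^5$.
\end{itemize}
This yields $M_E\le C_*(\varsigma M_E+C_\varsigma)$ with $C_*$ independent of $\varsigma$ and $\delta$. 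Choosing $C_*\varsigma<1/2$ then absorbs the first term.

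Finally, your propagation via $L(\log\tr_{\omega_0}\omega_u-Au)$ only bounds $\Delta_{\omega_0}u$, that is, the complex Hessian. The statement asserts a bound on the full real Hessian in $D$, which the paper obtains from Chu's real-Hessian estimate, Input~\ref{app:input-chu}.
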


\begin{proof}[Proof outline]
Appendix~\ref{app:subsec-proof-boundary} carries out the three boundary steps. We reduce the boundary regularity problem to a comparison domain that has only three parts. Then, by showing that the same barrier dominates on each part, we derive the boundary Hessian bound for the degenerate equation again.
\end{proof}

\begin{proposition}[Stability of smooth-domain Dirichlet approximants]\label{reg:conv-c31-smoothing-reduction}
For fixed $(\varepsilon,\delta)\in I\times(0,1]$, let $u_{\varepsilon,\delta,m}$ be the classical solution of
\[
(\omega_0+\ddc u_{\varepsilon,\delta,m})^n=\delta\,\omega_0^n \quad\text{in }U_{\varepsilon,m}, \qquad u_{\varepsilon,\delta,m}=0 \quad\text{on }\partial U_{\varepsilon,m}.
\]
The gradient and full real-Hessian estimates are uniform in $(\varepsilon,\delta,m)$.  In particular,
\begin{equation}\label{reg:eq-pulled-W2infty}
\sup_{\substack{\varepsilon\in I,\ 0<\delta\le1\\m\gg1}} \|u_{\varepsilon,\delta,m}\circ\Phi_{\varepsilon,m}\|_{W^{2,\infty}(U_\varepsilon)} \le C.
\end{equation}
where $\Phi_{\varepsilon,m}$ is defined in Lemma~\ref{reg:lem-smooth-parallel-geometry}. For every $0<\beta<1$,
\[
u_{\varepsilon,\delta,m}\circ\Phi_{\varepsilon,m} \longrightarrow u_{\varepsilon,\delta} \quad\text{in }C^{1,\beta}(\overline{U_\varepsilon}) \quad\text{as }m\to\infty,
\]
where $u_{\varepsilon,\delta}$ is the Dirichlet solution on $U_\varepsilon$. The convergence is also weakly-* in $W^{2,\infty}(U_\varepsilon)$ after pullback, and
\begin{equation}\label{reg:eq-approximant-C11}
u_{\varepsilon,\delta}\in C^{1,1}(\overline{U_\varepsilon}), \qquad \|u_{\varepsilon,\delta}\|_{C^{1,1}(\overline{U_\varepsilon})}\le C
\end{equation}
uniformly in $(\varepsilon,\delta)$.  On each $K\Subset U_\varepsilon$, the coefficients of the unpulled forms $\ddc u_{\varepsilon,\delta,m}$ converge weakly-* in $L^\infty(K)$, and the mixed Monge--Amp\`ere currents converge in the Bedford--Taylor sense.  For every fixed smooth test form of complementary degree defined near $\overline{U_\varepsilon}$, their pairings over $U_{\varepsilon,m}$ converge to the corresponding pairing over $U_\varepsilon$.
\end{proposition}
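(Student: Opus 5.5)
Throughout, fix $(\varepsilon,\delta)\in I\times(0,1]$ and let $m\to\infty$. The plan is to combine three ingredients: the uniform a priori estimates of Proposition~\ref{reg:prop-uniform-degenerate-boundary}, pullback by the diffeomorphisms $\Phi_{\varepsilon,m}$ of Lemma~\ref{reg:lem-smooth-parallel-geometry}, and a uniqueness argument for the limit Dirichlet problem on $U_\varepsilon$.

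\textbf{Step 1: existence and uniform estimates on $U_{\varepsilon,m}$.} Since $\partial U_{\varepsilon,m}$ is smooth and strongly pseudoconvex and $\delta>0$, the classical Caffarelli--Kohn--Nirenberg--Spruck / Guan theory gives a smooth admissible solution $u_{\varepsilon,\delta,m}$, provided a subsolution exists. For this I will take $\underline u=A\rho_m$ near the boundary, glued with a max-construction to a global $\omega_0$-psh function. In the interior, $\delta\omega_0^n\le\omega_0^n$ gives $u\ge 0$ by comparison with $0$. A global $C^0$ bound comes from comparison with the envelope. A boundary gradient bound comes from the barrier $\underline u$ together with a harmonic-type upper barrier, using $\Delta_{\omega_0}u\ge -n$. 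The global gradient bound then follows from the standard Błocki/Guan maximum-principle argument, whose constants depend only on $\|u\|_{C^0}$ and the geometry. This supplies the uniform $G$ needed in Proposition~\ref{reg:prop-uniform-degenerate-boundary}. That proposition then gives the real Hessian bound on all of $U_{\varepsilon,m}$, uniform in $(\varepsilon,\delta,m)$. Since $\Phi_{\varepsilon,m}\to\mathrm{Id}$ in $C^{3,\alpha}$ uniformly, the chain rule converts this into the pulled-back bound \eqref{reg:eq-pulled-W2infty}.

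\textbf{Step 2: compactness and identification of the limit.} The functions $w_m:=u_{\varepsilon,\delta,m}\circ\Phi_{\varepsilon,m}$ are bounded in $W^{2,\infty}(U_\varepsilon)$. By Arzelà--Ascoli and Banach--Alaoglu, a subsequence converges in $C^{1,\beta}(\overline{U_\varepsilon})$ and weakly-* in $W^{2,\infty}$ to some $w$, with $w=0$ on $\partial U_\varepsilon$ and $\|w\|_{C^{1,1}}\le C$.

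Since $\Phi_{\varepsilon,m}\to\mathrm{Id}$, the unpulled functions $u_{\varepsilon,\delta,m}$ converge to $w$ locally uniformly on each $K\Subset U_\varepsilon$; note that $K\subset U_{\varepsilon,m}$ for $m$ large. The Hessian coefficients are bounded, so they converge weakly-* in $L^\infty(K)$. Locally uniform convergence of bounded psh functions gives Bedford--Taylor convergence of all mixed products. Hence $w$ is $\omega_0$-psh on $U_\varepsilon$, continuous up to the boundary, and solves $(\omega_0+\ddc w)^n=\delta\omega_0^n$.

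By the comparison principle, the bounded solution of this Dirichlet problem with continuous zero boundary data is unique. Therefore $w=u_{\varepsilon,\delta}$, and the whole sequence converges. The bound \eqref{reg:eq-approximant-C11} follows from lower semicontinuity of the $C^{1,1}$ norm under $C^1$ convergence.

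\textbf{Step 3: convergence of boundary-inclusive pairings.} For a smooth test form $\theta$ defined near $\overline{U_\varepsilon}$, write the pairing over $U_{\varepsilon,m}$ as
\[
\int_{U_{\varepsilon,m}}\omega_{u_m}^{j}\wedge\omega_0^{k}\wedge\theta=\int_{U_\varepsilon}\Phi_{\varepsilon,m}^*\bigl(\omega_{u_m}^{j}\wedge\omega_0^{k}\wedge\theta\bigr).
\]
The pulled-back integrand is a polynomial in the second derivatives of $w_m$, with coefficients converging uniformly. A polynomial of degree greater than one does not pass to weak-* limits directly, so each wedge term has to be handled separately. The plan is to integrate by parts, rewriting $\ddc u_m\wedge S$ as $d(\dc u_m\wedge S)$ minus lower-order terms, and to use that $S$ is closed up to smooth terms. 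Each application of Stokes produces a boundary term on $\partial U_{\varepsilon}$ involving $\nabla w_m$, which converges uniformly by the $C^{1,\beta}$ convergence. Iterating this reduces every product to integrals of first derivatives times Hessian factors, and these converge by weak-* times strong convergence.

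\textbf{Main obstacle.} I expect the main difficulty to be Step 3 rather than Steps 1--2. The non-holomorphic pullback destroys the $dd^c$ structure, so the Stokes rewriting must be carried out on $U_{\varepsilon,m}$ itself. The sequence of domains has to be controlled by the uniform $C^{1,\beta}$ boundary convergence, together with the uniform tubular radius from Lemma~\ref{reg:lem-smooth-parallel-geometry}, to justify passage to the limit of the boundary integrals.
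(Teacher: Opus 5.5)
Your Steps 1 and 2 follow the paper's proof in structure. Step 1 is a uniform $C^0$ bound, a boundary barrier, the Guan--Li global gradient estimate (the paper treats $n=1$ separately, since the equation is then linear), and then Proposition~\ref{reg:prop-uniform-degenerate-boundary} plus the chain rule. Step 2 is compactness, Bedford--Taylor continuity on compacts, and uniqueness by comparison. There is a small slip: $A\rho_m$ does not vanish on $\partial U_{\varepsilon,m}=\{\rho_m=\varepsilon\}$. No constructed subsolution is needed anyway, since $\delta\le1$ makes $0$ a subsolution.

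\textbf{The gap is in Step 3: the iterated Stokes scheme does not close as described.}
\begin{itemize}
\item After one integration by parts, the interior term is $\dc u_m\wedge\omega_{u_m}^{j-1}\wedge\omega_0^{k}\wedge d\theta$. For $j\ge3$ this still contains products of two or more Hessian coefficients. Weak-* times strong convergence handles only a single weak-* factor.
\item Re-applying your rewriting to a factor $\ddc u_m$ next to $\dc u_m$ is vacuous. Because $\dc u_m\wedge\dc u_m=0$, it only returns the identity $\ddc u_m\wedge\dc u_m=\dc u_m\wedge\ddc u_m$.
\item The Stokes boundary term is $\int_{\partial U_{\varepsilon,m}}\iota^*\bigl(\dc u_m\wedge\omega_{u_m}^{j-1}\wedge\omega_0^k\wedge\theta\bigr)$. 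It contains $\iota^*\ddc u_m$, i.e.\ second derivatives of $u_m$ along the boundary, not just $\nabla w_m$. It reduces to first-order data only through the Dirichlet cancellation $\iota^*\dc u_m=-a_m\theta_m$, $\theta_m\wedge da_m\wedge\theta_m=0$ (as in Step~1 of Proposition~\ref{stat:prop-explicit-parallel-density}), which you never invoke.
\end{itemize}
Repairing your route would require a full Bedford--Taylor induction, moving all of $\ddc$ onto the test form and tracking these boundary terms at every stage.

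\textbf{None of this is needed.} The uniform $W^{2,\infty}$ bound from your Step 1 already makes the boundary region harmless, and this is the paper's argument.
\begin{itemize}
\item Fix $r>0$ and $\chi_r\in C_c^\infty(U_\varepsilon)$ with $0\le\chi_r\le1$, equal to $1$ outside an $r$-collar of $\Sigma_\varepsilon$. For $m$ large, $\operatorname{supp}\chi_r\subset U_{\varepsilon,m}$.
\item The $\chi_r$-part of the pairing converges by the interior Bedford--Taylor convergence you proved in Step 2, tested against the compactly supported smooth form $\chi_r\theta$. This is where the nonlinearity is handled: through the psh structure, not through weak-* limits of polynomials.
\item The $(1-\chi_r)$-parts, over $U_{\varepsilon,m}$ and over $U_\varepsilon$, are supported in the $r$-collar together with $U_{\varepsilon,m}\triangle U_\varepsilon$. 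There the integrand is bounded by $C\|\theta\|_{C^0}$ uniformly in $m$, because $|\ddc u_{\varepsilon,\delta,m}|_{g_0}\le C$ and $u_{\varepsilon,\delta}\in C^{1,1}$. So these parts contribute at most $C\bigl(r+\Vol(U_{\varepsilon,m}\triangle U_\varepsilon)\bigr)$.
\item Let $m\to\infty$, then $r\downarrow0$.
\end{itemize}
So Step 3 is routine. The real weight of the proposition lies in Step 1, in the appeal to Proposition~\ref{reg:prop-uniform-degenerate-boundary}, not where you located the main obstacle.
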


\begin{proof}
	The classical Dirichlet theory gives the properties of the solution for positive $\delta$; here we need bounds that remain uniform as $\delta\downarrow0$. For $n\ge2$, we use \cite[Prop.~3.1 and (3.1)]{GuanLi2010TotallyReal}, which bounds the global gradient in terms of the zero-order bound and the boundary gradient on a compact Hermitian manifold with smooth boundary. In their convention the potential term is $(\sqrt{-1}/2)\partial\bar\partial v$, so we set $v=2u_{\varepsilon,\delta,m}$ and $\chi=\omega=\omega_0$, with $\psi=\delta$. The constant is independent of $\inf\psi$; the $C^1$ norm of $\psi^{1/n}=\delta^{1/n}$ is uniformly bounded for $0<\delta\le1$. To build the zero-order bound, observe that $\Delta_{\omega_0}u_{\varepsilon,\delta,m}\ge-n$, and take $H$ solving
	\[
	\Delta_{\omega_0}H=-n\quad\text{in }\widehat U, 	\qquad H=0\quad\text{on }\partial\widehat U,
	\]
	where $\widehat U\Subset X$ is a fixed smooth domain containing every $U_{\varepsilon,m}$. Comparison with the zero function gives $u_{\varepsilon,\delta,m}\ge0$, since its Monge--Amp\`ere density is $\delta\le1$. The linear maximum principle applied to $u_{\varepsilon,\delta,m}-H$ gives $u_{\varepsilon,\delta,m}\le H$. Similarly, the barrier
	\[
	b_{\varepsilon,m}=C_1(\varepsilon-\rho_m)-C_2(\varepsilon-\rho_m)^2
	\]
	gives a first-order bound at the boundary. For $n\ge2$, the common geometry, zero-order bound and boundary gradient bound therefore allow us to apply \cite[Prop.~3.1]{GuanLi2010TotallyReal}. For $n=1$, the equation is linear and the uniform boundary Schauder estimate gives the same first-order bound. Thus
	\[
	\|u_{\varepsilon,\delta,m}\|_{C^{0,1}(\overline{U_{\varepsilon,m}})}\le C.
	\]
	Proposition~\ref{reg:prop-uniform-degenerate-boundary} then gives the second-order real-Hessian bound of $u_{\varepsilon,\delta,m}$, uniformly in $(\varepsilon,\delta,m)$.  Computing $D^2(u_{\varepsilon,\delta,m}\circ\Phi_{\varepsilon,m})$ in local coordinates immediately gives \eqref{reg:eq-pulled-W2infty}.
	
	The rest of the proof is routine, so we only outline the idea.  Thanks to \eqref{reg:eq-pulled-W2infty}, the sequence in $m$ extends to a uniformly $C^{1,1}$-bounded family on a common neighborhood. Then we can extract a convergent subsequence by the compact embedding $C^{1,1}\hookrightarrow C^{1,\beta}$ ($\beta<1$). Using $\Phi_{\varepsilon,m}\to\operatorname{Id}$ and the uniform gradient bound, every unpulled subsequential limit agrees on compact subsets with the pulled-back limit. Bedford--Taylor continuity preserves the equation, and the comparison principle identifies this limit as $u_{\varepsilon,\delta}$. Hence
\[
u_{\varepsilon,\delta,m}\circ\Phi_{\varepsilon,m}\longrightarrow u_{\varepsilon,\delta} \quad\text{in }C^{1,\beta}(\overline{U_\varepsilon}).
\]

The estimate of the $W^{2,\infty}$ norm follows from weak-* lower semicontinuity. On each compact subset of $U_\varepsilon$, distributional convergence and the uniform Hessian bound give weak-* convergence of the Hessian coefficients, while Bedford--Taylor continuity gives convergence of the mixed currents. To pass to pairings over the full domains, first remove a fixed thin boundary collar. Its contribution is uniformly small by the coefficient bounds, and $\Vol(U_{\varepsilon,m}\mathbin{\triangle}U_\varepsilon)\to0$. Letting $m\to\infty$ and then shrinking the collar proves the asserted convergence.
\end{proof}

\subsection{Uniform boundary modulus and \texorpdfstring{$L^\infty$}{L-infinity} domain stability}

\begin{convention}[Uniform collar constants]
\label{reg:conv-uniform-collar}
Shrink $\varepsilon_1$ so that the parallel family over $I$ has the uniform geometry of Proposition~\ref{prop:BH_to_collar} and the barrier calculation below is valid in a fixed collar containing $\{|\rho|\le2\varepsilon_1\}$. On that collar put
\[
c_\nabla:=\inf|\nabla\rho|_{\omega_0}>0, \qquad K_1:=\sup|\Delta_{\omega_0}\rho|,
\]
and shrink once more so that $6\varepsilon_1K_1\le c_\nabla^2/2$. Set $M:=\sup_Xu_{\varepsilon_1}$.  Monotonicity gives $0\le u_\varepsilon\le M$ for every $\varepsilon\in I$.
\end{convention}

\begin{lemma}[Uniform boundary modulus]
\label{reg:lem-barrier}
There exists a uniform $L>0$ such that, for every $\varepsilon\in I$,
\begin{equation}\label{reg:eq-uniform-signed-barrier}
0\le u_\varepsilon(x)\le L(\varepsilon-\rho(x))_+
\end{equation}
whenever $x$ belongs to the fixed boundary collar.
\end{lemma}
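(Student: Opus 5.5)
The lower bound $u_\varepsilon\ge0$ is already known from the zero competitor. For the upper bound, the plan is a standard exterior-type barrier built from the defining function. We construct it on the collar region $\{|\rho|\le 2\varepsilon_1\}$, or more precisely on the inner shell $S_\varepsilon:=\{\varepsilon-2\varepsilon_1<\rho<\varepsilon\}\subset U_\varepsilon$. The candidate is
\[
b_\varepsilon:=L(\varepsilon-\rho)-A(\varepsilon-\rho)^2 ,
\]
with $L,A$ fixed constants, and we try to show $u_\varepsilon\le b_\varepsilon$ on $S_\varepsilon$. Outside $U_\varepsilon$ the bound is trivial since $u_\varepsilon=0$ there. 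Moreover $b_\varepsilon\le L(\varepsilon-\rho)_+$ on $S_\varepsilon$, which gives \eqref{reg:eq-uniform-signed-barrier} on the collar.

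Step one is to make $b_\varepsilon$ linearly superharmonic. Writing $s=\varepsilon-\rho\in(0,2\varepsilon_1)$, we compute
\[
\Delta_{\omega_0}b_\varepsilon=-L\Delta_{\omega_0}\rho+2As\,\Delta_{\omega_0}\rho-2A|\partial\rho|^2_{\omega_0}
\le LK_1+4\varepsilon_1AK_1-2Ac_\nabla^2 .
\]
We compare this with the lower bound $\Delta_{\omega_0}u_\varepsilon\ge -n$, which holds in the distributional sense because $\omega_{u_\varepsilon}\ge0$. Using $6\varepsilon_1K_1\le c_\nabla^2/2$ from Convention~\ref{reg:conv-uniform-collar}, it suffices to pick $A$ large compared with $LK_1+n$, so that $\Delta_{\omega_0}b_\varepsilon<-n\le\Delta_{\omega_0}u_\varepsilon$ on $S_\varepsilon$. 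Then $w:=u_\varepsilon-b_\varepsilon$ satisfies $\Delta_{\omega_0}w\ge0$ distributionally on $S_\varepsilon$. Up to normalization $|\partial\rho|^2_{\omega_0}$ is $|\nabla\rho|^2$, and the smallness condition on $\varepsilon_1$ is exactly what keeps the $s\Delta\rho$ term absorbed.

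Step two checks the boundary values and applies the weak maximum principle, Lemma~\ref{prelim:lem-linear-weak-max}. On the outer boundary $\Sigma_\varepsilon$ we have $b_\varepsilon=0$. There $u_\varepsilon\le 0$ in the limsup sense, by regularity of $\Omega_\varepsilon$ (the envelope is continuous with value $0$ on $\Omega_\varepsilon$). On the inner boundary $\{\rho=\varepsilon-2\varepsilon_1\}$ we have $s=2\varepsilon_1$ and
\[
b_\varepsilon=2\varepsilon_1(L-2A\varepsilon_1).
\]
We need this to be at least $M\ge u_\varepsilon$.

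This is where the constants compete, and it is the main obstacle. $A$ must dominate $LK_1+n$, yet we need $L-2A\varepsilon_1\ge M/(2\varepsilon_1)$. I would first try choosing $A:=(n+1)/c_\nabla^2$ plus a multiple of $LK_1/c_\nabla^2$; the smallness $K_1\varepsilon_1\lesssim c_\nabla^2$ then makes $2A\varepsilon_1\le L/2+O(\varepsilon_1)$, so $L\gtrsim M/\varepsilon_1+1$ closes the estimate.

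If the $L$-dependence in $A$ causes circularity, I would remove it. One option is to use a thinner shell $\{\varepsilon-h<\rho<\varepsilon\}$ with $h$ fixed depending only on $(c_\nabla,K_1,n)$. Another is to replace the quadratic correction by $-A(1-e^{-\lambda s})$-type convexification, making $b_\varepsilon$ concave enough that only $A\ge C(n,c_\nabla)$ is needed, with $L$ chosen afterwards as $M/h$ plus a constant. Either way, all constants depend only on $(X,\omega_0,U,\rho)$ and $M=\sup u_{\varepsilon_1}$, so they are uniform in $\varepsilon\in I$. The maximum principle then yields $w\le0$ on $S_\varepsilon$, which is the claim.
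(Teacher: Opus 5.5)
Your proposal is correct and is essentially the paper's proof: the same quadratic barrier $C_1r-C_2r^2$ in $r=\varepsilon-\rho$ with $C_2=(n+C_1K_1)c_\nabla^{-2}$, superharmonicity from $6\varepsilon_1K_1\le c_\nabla^2/2$, and the weak maximum principle against $\Delta_{\omega_0}u_\varepsilon\ge-n$. Your first choice of $A$ already closes with no circularity, since the smallness condition makes the $L$-dependent part of $2A\varepsilon_1$ at most $L/6$, so the fallbacks are unnecessary. The one difference is that the paper takes the fixed inner level $\{\rho=-2\varepsilon_1\}$, keeping the shell inside $\{|\rho|\le2\varepsilon_1\}$ and covering the whole collar at once. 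Your moving shell instead reaches $\rho=-3\varepsilon_1$, and deeper in the collar you need the (trivial, but unstated) remark $u_\varepsilon\le M\le L(\varepsilon-\rho)$.
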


\begin{proof}
Taking the $\omega_0$-trace of $\omega_0+\ddc u_\varepsilon\ge0$ gives $\Delta_{\omega_0}u_\varepsilon\ge-n$.  On $A_\varepsilon:=\{-2\varepsilon_1<\rho<\varepsilon\}$ put $r=\varepsilon-\rho$ and
\[
h_\varepsilon=C_1r-C_2r^2, \qquad C_2=(n+C_1K_1)c_\nabla^{-2}.
\]
Since $0\le r\le3\varepsilon_1$ on $A_\varepsilon$, direct differentiation and the choice of $\varepsilon_1$ give
\[
\Delta_{\omega_0}h_\varepsilon \le C_1K_1+6\varepsilon_1C_2K_1-2C_2c_\nabla^2\le-n.
\]
With $\varepsilon_1$ and $M$ already fixed in Convention~\ref{reg:conv-uniform-collar}, choose $C_1$ so large that
\[
\frac{3\varepsilon_1n}{C_1c_\nabla^2}\le\frac14, \qquad \frac{C_1\varepsilon_1}{2}\ge M.
\]
Since $3\varepsilon_1K_1/c_\nabla^2\le1/4$, this choice gives $C_1-C_2r\ge C_1/2$ for $0\le r\le3\varepsilon_1$. Uniformly for $\varepsilon\in I$ we therefore have
\[
h_\varepsilon=0\ \text{on }\Sigma_\varepsilon, \qquad h_\varepsilon\ge M\ \text{on }\{\rho=-2\varepsilon_1\},
\]
and $0\le h_\varepsilon\le C_1r$ on $A_\varepsilon$.  The weak maximum principle of Lemma~\ref{prelim:lem-linear-weak-max} gives $u_\varepsilon\le h_\varepsilon$ there.  Outside $U_\varepsilon$ both sides of \eqref{reg:eq-uniform-signed-barrier} vanish.
\end{proof}

Taking $\varepsilon=0$ gives
\[
u_0(x)\le L(-\rho(x))_+
\]
near $\partial U$. Thus the base envelope has boundary value $0$ from the interior side.  The stronger up-to-boundary regularity is obtained below from the fixed-domain Dirichlet theory.

\begin{lemma}[Pairwise $L^\infty$ domain stability]
\label{reg:lem-linf-approx}
There is $C>0$ such that, for all $s,t\in I$,
\begin{equation}
\|u_t-u_s\|_{L^\infty(X)}\le C|t-s|.
\end{equation}
\end{lemma}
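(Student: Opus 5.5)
Fix $s\le t$ in $I$. By monotonicity, $u_s\le u_t$, so only the upper bound $u_t-u_s\le C(t-s)$ is needed. The plan is to build, from $u_t$, an admissible competitor for the envelope defining $u_s$. The natural candidate is
\[
v:=u_t-L'(t-s)
\]
with a uniform constant $L'$. This function is $\omega_0$-psh, but it need not be $\le0$ on $\Omega_s$. On $\Omega_t\subset\Omega_s$ we have $v=-L'(t-s)\le0$. The problem is the strip $\{s\le\rho<t\}=\Omega_s\setminus\Omega_t$, where $u_t$ can be positive. There Lemma~\ref{reg:lem-barrier} (applied with $\varepsilon=t$) gives
\[
u_t(x)\le L(t-\rho(x))_+\le L(t-s),
\]
because $\rho\ge s$ on the strip; the strip lies in the fixed collar since $|s|,|t|\le\varepsilon_1$.

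Taking $L'=L$ therefore gives $v\le0$ on all of $\Omega_s$, so $v$ is a competitor in the supremum defining $u_s$. Hence $v\le u_s$ everywhere, which gives $u_t-u_s\le L(t-s)$ on $X$. Combining this with $u_s\le u_t$ yields the claim with $C=L$.

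The only point needing care is that the barrier bound must hold uniformly in $t$ on the whole strip, with $L$ independent of $s,t$. This is exactly what Lemma~\ref{reg:lem-barrier} provides, since its constant $L$ is uniform over $\varepsilon\in I$. No upper semicontinuous regularization issue arises: $v$ is already $\omega_0$-psh and satisfies the obstacle condition pointwise, and $u_s$ dominates the envelope before regularization. I therefore do not expect a genuine obstacle; the argument is a direct comparison.
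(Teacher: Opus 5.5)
Your proposal is correct and is essentially the paper's proof: monotonicity gives the lower bound, and the uniform barrier of Lemma~\ref{reg:lem-barrier} on $\Omega_s$ makes $u_t-L(t-s)$ an admissible competitor for $u_s$. The paper appeals to the barrier on all of $\Omega_s$ in one line, whereas you split $\Omega_s$ into $\Omega_t$ (where $u_t=0$) and the collar strip $\{s\le\rho<t\}$; this is just a slightly more explicit version of the same step.
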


\begin{proof}
Assume $s<t$.  Monotonicity gives $u_s\le u_t$.  On $\Omega_s=\{\rho\ge s\}$, Lemma~\ref{reg:lem-barrier} gives
\[
u_t\le L(t-\rho)_+\le L(t-s).
\]
Thus $u_t-L(t-s)$ is admissible for the envelope $u_s$, and hence $0\le u_t-u_s\le L(t-s)$ on $X$.  Interchanging $s$ and $t$ proves the general case.
\end{proof}

\subsection{Fixed-domain Dirichlet theory and the base envelope}

\begin{proposition}[Fixed-domain Dirichlet package]
	\label{reg:prop-boucksom-fixed-package}
	For every $\delta\in(0,1]$, the Dirichlet problem
	\begin{equation}
		\begin{cases}
			(\omega_0+\ddc u_{0,\delta})^n=\delta\,\omega_0^n 			& \text{in }U,\\
			u_{0,\delta}=0 			& \text{on }\partial U
		\end{cases}
	\end{equation}
	admits a unique bounded solution, obtained as the limit of smooth-domain classical solutions, with interior smoothness
	\[
	u_{0,\delta} \in \PSH(U,\omega_0)\cap C^\infty_{\mathrm{loc}}(U).
	\]
		Moreover, there is a constant $C>0$, independent of $\delta$, such that
		\begin{equation}\label{reg:eq-fixed-domain-C11}
		\|u_{0,\delta}\|_{C^{1,1}(\overline U)} 		+\|\Delta_{\omega_0}u_{0,\delta}\|_{L^\infty(U)} \le C ,
		\end{equation}
		and
		\[
		-\omega_0\le \ddc u_{0,\delta}\le C\omega_0 \qquad\text{a.e. on }U .
		\]
	As $\delta\downarrow0$, $u_{0,\delta}$ converges to the unique continuous 	solution $u_D$ of
	\[
	(\omega_0+\ddc u_D)^n=0 \quad\text{in }U, \qquad u_D|_{\partial U}=0.
	\]
		The convergence is strong in $C^{1,\beta}(\overline U)$ for every 		$0<\beta<1$ and weakly-* in $W^{2,\infty}(U)$.
\end{proposition}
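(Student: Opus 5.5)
The plan is to obtain $u_{0,\delta}$ as the $\varepsilon=0$ member of the smooth-domain family of Proposition~\ref{reg:conv-c31-smoothing-reduction}, and then pass to the limit $\delta\downarrow0$ using the uniform estimates.

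\emph{Existence, interior smoothness and the $C^{1,1}$ bound.} Fix $\delta\in(0,1]$ and take the classical solutions $u_{0,\delta,m}$ on the smooth strongly pseudoconvex domains $U_{0,m}$ of Lemma~\ref{reg:lem-smooth-parallel-geometry}. Their existence for positive density is the classical Caffarelli--Kohn--Nirenberg--Spruck / Guan theory; the subsolution is $A\rho_m$ with $A$ large, which is strictly $\omega_0$-psh near the boundary by strong pseudoconvexity, patched with a global term in the interior. Proposition~\ref{reg:conv-c31-smoothing-reduction} then gives uniform $W^{2,\infty}$ bounds after pullback by $\Phi_{0,m}$, and convergence in $C^{1,\beta}(\overline U)$ to a limit $u_{0,\delta}$ satisfying \eqref{reg:eq-approximant-C11}. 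Bedford--Taylor continuity shows that this limit solves the equation with zero boundary values.

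Uniqueness among bounded solutions follows from the comparison principle, applied in both directions with the boundary $\liminf$ condition. Interior smoothness holds because the density $\delta>0$ is smooth and positive and $u_{0,\delta}\in C^{1,1}$. The equation is therefore uniformly elliptic on compact sets, so Evans--Krylov together with Schauder bootstrapping gives $C^\infty_{\rm loc}(U)$.

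\emph{The Hessian inequalities.} The a.e.\ inequality $\ddc u_{0,\delta}\ge-\omega_0$ is just $\omega_0$-plurisubharmonicity. The upper bound $\ddc u_{0,\delta}\le C\omega_0$ and the bound on $\Delta_{\omega_0}u_{0,\delta}$ both follow from $|\nabla^2 u_{0,\delta}|\le C$. All constants are uniform in $\delta$, since the constants in Proposition~\ref{reg:conv-c31-smoothing-reduction} are uniform in $(\varepsilon,\delta,m)$.

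\emph{The limit $\delta\downarrow0$.} The comparison principle shows that $u_{0,\delta}$ is monotone in $\delta$: a smaller density gives a larger solution. Compactness of $C^{1,1}\hookrightarrow C^{1,\beta}$ yields subsequential limits in $C^{1,\beta}(\overline U)$ and weak-* limits in $W^{2,\infty}$. By Bedford--Taylor continuity under uniform convergence, every such limit satisfies $(\omega_0+\ddc u)^n=0$ with zero boundary values. The homogeneous comparison of Corollary~\ref{prelim:cor-homogeneous-dirichlet-comparison}, applied in both directions, shows that the continuous solution $u_D$ is unique. Hence the whole family converges, and the $C^{1,1}$ bound passes to $u_D$ by lower semicontinuity.

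\emph{Main obstacle.} The real content is the $\delta$-uniform Hessian bound. Here I simply invoke Proposition~\ref{reg:prop-uniform-degenerate-boundary} through Proposition~\ref{reg:conv-c31-smoothing-reduction}; everything else is soft. The one point that needs care is that the smooth approximants were constructed for $\rho_m$ close to $\rho$, and the identification of the limit on $U$ must not depend on the subsequence. Uniqueness from the comparison principle settles this.
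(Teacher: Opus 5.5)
Your proposal is correct and follows the paper's proof essentially step for step: the $\delta$-uniform $C^{1,1}$ and Laplacian bounds come from specializing Proposition~\ref{reg:conv-c31-smoothing-reduction} to $\varepsilon=0$, the two-sided bound on $\ddc u_{0,\delta}$ from positivity plus the Hessian bound, and the limit $\delta\downarrow0$ from compactness in $C^{1,\beta}(\overline U)$, Banach--Alaoglu, and uniqueness of the homogeneous solution via the comparison principle. The only difference is that the paper cites Boucksom's notes for existence and uniqueness at fixed $\delta$, whereas you build the solution from the smooth-domain approximants. In your version, since $\delta\le1$ the zero function is already a subsolution for the classical problems on $U_{0,m}$, so the $A\rho_m$ construction is unnecessary; as stated it would also need an $e^{\Lambda\rho_m}$-type convexification to be strictly $\omega_0$-psh in the normal direction.
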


\begin{proof}
	For each $\delta>0$, \cite[Ch.~7]{Boucksom2012MABoundary} gives existence and uniqueness for the equation, but the constants in its estimates generally depend on $\delta$.  To obtain \eqref{reg:eq-fixed-domain-C11}, we  specialize Proposition~\ref{reg:conv-c31-smoothing-reduction} to $\varepsilon=0$ and let $m\to\infty$.  We can get the two-sided complex-Hessian estimate from the positivity of $\omega_0+\ddc u_{0,\delta}$ together with its uniform trace bound.  For the convergence to the unique homogeneous solution $u_D$ as $\delta\downarrow0$, the same subsequence argument as before is used, now with the compact embedding $C^{1,\beta'}(\overline U)\hookrightarrow C^{1,\beta}(\overline U)$ ($\beta<\beta'<1$) and the Bedford--Taylor comparison principle.  Finally,  we also need Banach--Alaoglu compactness to prove the weak-* $W^{2,\infty}$ convergence, and the $C^{1,\beta}$ convergence shows that every weak-* cluster point is $u_D$; so we find the whole family converges weak-*.
\end{proof}

\begin{theorem}
	\label{reg:thm-base-regularity}
	The fixed-domain degenerate Dirichlet limit $u_D$ in Proposition~\ref{reg:prop-boucksom-fixed-package} coincides with the restriction of the global equilibrium envelope:
	\[
	u_D=u_0|_U .
	\]
		Consequently,
		\begin{equation}
		u_0\in C^{1,1}(\overline U), \qquad 		\|u_0\|_{C^{1,1}(\overline U)}\le C, 		\qquad -\omega_0\le \ddc u_0\le C\omega_0 		\quad\text{a.e. on }U,
		\end{equation}
		and the Dirichlet approximants $u_{0,\delta}$ satisfy
		\[
		u_{0,\delta}\to u_0 \quad\text{strongly in }C^{1,\beta}(\overline U) 		\quad(0<\beta<1), 		\qquad 		u_{0,\delta}\overset{*}{\rightharpoonup}u_0 		\quad\text{in }W^{2,\infty}(U).
		\]
\end{theorem}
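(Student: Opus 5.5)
The plan is to show the two inequalities $u_D\le u_0$ and $u_0\le u_D$ on $U$. The regularity and convergence statements then follow at once from Proposition~\ref{reg:prop-boucksom-fixed-package}, since they are already proved there for $u_D$ and for the family $u_{0,\delta}$.

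\emph{Step 1: $u_0|_U\le u_D$.} On $U$ the function $u_0$ is bounded and $\omega_0$-psh. By Lemma~\ref{reg:lem-barrier} with $\varepsilon=0$ we have $0\le u_0\le L(-\rho)_+$ near $\partial U$, so $\limsup_{x\to\zeta}u_0(x)\le 0=u_D(\zeta)$ for every $\zeta\in\partial U$. Here $u_D$ is continuous up to $\overline U$ with zero boundary values and satisfies $(\omega_0+\ddc u_D)^n=0$ on $U$. Corollary~\ref{prelim:cor-homogeneous-dirichlet-comparison}, applied with $D=U$, $u=u_D$ and $w=u_0$, therefore gives $u_0\le u_D$ on $U$.

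\emph{Step 2: $u_D\le u_0$.} Define $\tilde u$ to be $u_D$ on $U$ and $0$ on $\Omega=X\setminus U$. I would show $\tilde u\in\PSH(X,\omega_0)$; then it is a competitor in the envelope (it vanishes on $\Omega$), so $\tilde u\le u_0$. First, $u_D\ge0$ on $U$, since $u_{0,\delta}\ge0$ by comparison with the zero function (as in the proof of Proposition~\ref{reg:conv-c31-smoothing-reduction}) and $u_{0,\delta}\to u_D$. Consequently $\tilde u=\max\{u_D,0\}$ on $U$. Pasting $u_D$ to the global $\omega_0$-psh function $w=0$ is more delicate than Lemma~\ref{prelim:lem-max-gluing} allows directly, because that lemma needs the inner function to lie below $w$ at $\partial U$ while being an honest psh function on a neighbourhood. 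To get around this, I would use the strict approximation $\max\{u_D-\eta,\,0\}$ for $\eta>0$. Because $u_D$ is continuous on $\overline U$ and vanishes on $\partial U$, we have $u_D-\eta<0$ on a neighbourhood $N$ of $\partial U$ inside $U$. So on $N$ the maximum equals $0$, and the glued function ($\max\{u_D-\eta,0\}$ on $U$, $0$ on $\Omega$) is locally either a maximum of two $\omega_0$-psh functions or identically $0$. Hence it is $\omega_0$-psh on $X$, as in Lemma~\ref{prelim:lem-max-gluing} with $G=U$. It vanishes on $\Omega$, so it lies below $u_0$. Letting $\eta\downarrow0$ gives $u_D\le u_0$ on $U$.

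\emph{Step 3: consequences.} Combining Steps 1 and 2 gives $u_D=u_0|_U$. Now \eqref{reg:eq-fixed-domain-C11} passes to the limit: $C^{1,\beta}$ convergence together with a uniform $C^{1,1}$ bound gives $\|u_0\|_{C^{1,1}(\overline U)}\le C$ by lower semicontinuity of the Lipschitz seminorm of the gradient. The a.e. bounds $-\omega_0\le\ddc u_0\le C\omega_0$ are preserved under weak-* $W^{2,\infty}$ limits, since they are closed convex constraints on the Hessian coefficients tested against nonnegative functions. The stated convergences are those of Proposition~\ref{reg:prop-boucksom-fixed-package} with $u_D$ replaced by $u_0$.

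The main obstacle is Step 2, namely the gluing across $\partial U$. It needs continuity of $u_D$ up to the boundary with zero boundary values, which comes from the uniform $C^{1,1}(\overline U)$ bound in Proposition~\ref{reg:prop-boucksom-fixed-package}, and the $\eta$-shift to make the pasting strictly local. A secondary point is justifying the boundary $\limsup$ in Step 1, which is supplied by the barrier of Lemma~\ref{reg:lem-barrier}.
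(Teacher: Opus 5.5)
Your proof is correct and follows essentially the paper's route. The zero extension of $u_D$ is a competitor for $u_0$ by Lemma~\ref{prelim:lem-max-gluing}, the reverse inequality comes from Corollary~\ref{prelim:cor-homogeneous-dirichlet-comparison}, and the regularity and convergence statements are inherited from Proposition~\ref{reg:prop-boucksom-fixed-package}. The differences are cosmetic: you apply the comparison to $u_0$ itself, using Lemma~\ref{reg:lem-barrier} for the boundary $\limsup$, whereas the paper applies it to $v^+$ for each competitor $v$ and then takes the supremum. Your $\eta$-shift is also unnecessary, because Lemma~\ref{prelim:lem-max-gluing} only requires $u_D\in\PSH(U,\omega_0)$ with $\limsup_{U\ni x\to\zeta}u_D(x)\le 0$, and this holds since $u_D\in C(\overline U)$ vanishes on $\partial U$.
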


\begin{proof}
	We first prove $u_D=u_0|_U$ by using the supremum in the definition of $u_0$.  By Lemma~\ref{prelim:lem-max-gluing}, the zero extension of $u_D$ is globally $\omega_0$-psh and admissible for $u_0$, so $u_D\le u_0$ on $U$; this gives one inequality.  For the other one, take any function $v$ that is admissible for $u_0$.  Then $v^+=\max\{v,0\}$ vanishes on $X\setminus U$, and its boundary limsup is at most $u_D|_{\partial U}=0$.  Corollary~\ref{prelim:cor-homogeneous-dirichlet-comparison} gives $v\le v^+\le u_D$ on $U$.  Taking the supremum over all such $v$, we obtain $u_0\le u_D$. Proposition~\ref{reg:prop-boucksom-fixed-package} supplies the remaining $C^{1,1}$ estimates and strong convergence; the same weak-* compactness argument gives the asserted $W^{2,\infty}$ convergence.
\end{proof}

\begin{corollary}
	\label{reg:cor-ma-conv}
		For every $0<\beta<1$,
		\[
		u_{0,\delta}\to u_0 \quad\text{strongly in }C^{1,\beta}(\overline U), 		\qquad 		u_{0,\delta}\overset{*}{\rightharpoonup}u_0 		\quad\text{in }W^{2,\infty}(U).
		\]
	Moreover, for every $0\le j\le n$,
	\[
	(\omega_0+\ddc u_{0,\delta})^j\wedge\omega_0^{n-j} \rightharpoonup (\omega_0+\ddc u_0)^j\wedge\omega_0^{n-j}
	\]
	weakly as Radon measures on $U$.  In particular, consistently with Proposition~\ref{prelim:prop-envelope-ma},
	\[
	(\omega_0+\ddc u_0)^n=0 \qquad\text{on }U.
	\]
\end{corollary}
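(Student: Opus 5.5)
The first claim is already contained in Theorem~\ref{reg:thm-base-regularity}, so I would simply quote it. The work is in the weak convergence of the mixed measures and in the consequence $(\omega_0+\ddc u_0)^n=0$.

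Fix $0\le j\le n$ and a test function $\varphi\in C_c(U)$. By density in the sup norm it suffices to take $\varphi\in C_c^\infty(U)$. The masses are uniformly bounded on $\operatorname{supp}\varphi$ by the uniform Hessian bound $-\omega_0\le\ddc u_{0,\delta}\le C\omega_0$, so density approximation suffices. On a compact $K=\operatorname{supp}\varphi$, write $\omega_{u_{0,\delta}}^j\wedge\omega_0^{n-j}$ pointwise as a polynomial in the coefficients of $\ddc u_{0,\delta}$. These coefficients converge only weakly-* in $L^\infty$, so products are not directly controlled. I would therefore use the standard Bedford--Taylor route instead. Since $u_{0,\delta}\to u_0$ uniformly on $\overline U$, and these functions are uniformly bounded and $\omega_0$-psh near $K$, the local Bedford--Taylor continuity under uniform convergence yields weak convergence of all mixed products. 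Concretely, I would add a local potential $\psi$ of $\omega_0$ and use the integration-by-parts induction
\[
\int\varphi\,\ddc v_\delta\wedge S_\delta=\int v_\delta\,\ddc\varphi\wedge S_\delta ,
\]
with $v_\delta=u_{0,\delta}+\psi$ and $S_\delta$ a mixed product of lower degree. The difference $(v_\delta-v)\,\ddc\varphi\wedge S_\delta$ tends to $0$ by uniform convergence and uniform mass bounds, and $v\,\ddc\varphi\wedge S_\delta\to v\,\ddc\varphi\wedge S$ by the induction hypothesis. Here $v$ is continuous; in fact it is $C^{1,1}$. Induction on $j$ then gives the claim.

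An alternative, simpler argument exploits $C^{1,\beta}$ convergence. Write $\ddc u\wedge S=d(\dc u\wedge S)$ when $S$ is closed. Then
\[
\int\varphi\,\ddc u_{0,\delta}\wedge S_\delta=-\int d\varphi\wedge\dc u_{0,\delta}\wedge S_\delta .
\]
Here $\dc u_{0,\delta}\to\dc u_0$ uniformly, and $S_\delta$ has bounded coefficients and converges weakly by induction. So the pairing converges, and in this route the weak-* convergence of the lower-order factor times the uniformly convergent first-order factor is the only point to check. Either route closes the induction.

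Finally, for $j=n$ the approximants satisfy $(\omega_0+\ddc u_{0,\delta})^n=\delta\,\omega_0^n\to0$, so the limit measure vanishes on $U$. This recovers Proposition~\ref{prelim:prop-envelope-ma}.

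The main obstacle is that Hessians converge only weakly-*, so the nonlinearity must be handled through closedness and integration by parts rather than through pointwise products. Uniform convergence of the potentials, or of the gradients via $C^{1,\beta}$ convergence, is exactly what makes this work.
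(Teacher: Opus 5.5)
Your proposal is correct and is essentially the paper's proof. The $C^{1,\beta}$ and weak-$*$ $W^{2,\infty}$ statements are quoted from Theorem~\ref{reg:thm-base-regularity}. The measure convergence is Bedford--Taylor continuity under uniform convergence, which you simply unpack, either by the standard integration-by-parts induction or by the strong--weak pairing of $\dc u_{0,\delta}$ with the lower-order products as in Lemma~\ref{stat:lem-strong-weak-flux}. The case $j=n$ follows from $(\omega_0+\ddc u_{0,\delta})^n=\delta\,\omega_0^n\to0$.

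One point should be stated explicitly. The induction hypothesis must be weak convergence of the lower-degree products as currents tested against forms such as $v\,\ddc\varphi$ or $d\varphi\wedge\dc u_0$, not merely as measures; the uniform mass bounds make this routine.
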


\begin{proof}
The first assertions are contained in Theorem~\ref{reg:thm-base-regularity}; the measure convergence follows from Bedford--Taylor continuity under uniform convergence.  Taking $j=n$ in the approximating equation gives the final assertion.
\end{proof}

\subsection{Uniform regularity on parallel domains}

Proposition~\ref{prop:BH_to_collar} gives the parallel levels a uniform $C^{3,1}$ atlas, a common tubular radius, and a Levi lower bound independent of $\varepsilon\in I$.  Hence Proposition~\ref{reg:conv-c31-smoothing-reduction} applies on every $U_\varepsilon$ with constants independent of $\varepsilon$.  When $n=1$, the same conclusions follow directly from the uniformly controlled linear Dirichlet problem $\ddc u=(\delta-1)\omega_0$.

\begin{proposition}[Uniform moving-domain Dirichlet package]
		\label{reg:lem-uniform-inner-C11}
		There exists a constant $C>0$, independent of
		\[
		(\varepsilon,\delta)\in I\times(0,1],
		\]
	such that, for every such pair $(\varepsilon,\delta)$, the Dirichlet problem
	\begin{equation}\label{reg:eq-approx-moving}
		\begin{cases}
			(\omega_0+\ddc u_{\varepsilon,\delta})^n=\delta\,\omega_0^n 			& \text{in }U_\varepsilon,\\
			u_{\varepsilon,\delta}=0 			& \text{on }\Sigma_\varepsilon
		\end{cases}
	\end{equation}
	admits a unique bounded solution, obtained as the limit of smooth-domain classical solutions, with interior smoothness
	\[
	u_{\varepsilon,\delta}\in \PSH(U_\varepsilon,\omega_0) \cap C^\infty_{\mathrm{loc}}(U_\varepsilon).
	\]
		Moreover,
		\begin{equation}\label{reg:eq-uniform-C11-family}
		\|u_{\varepsilon,\delta}\|_{C^{1,1}(\overline{U_\varepsilon})} 		+\|\Delta_{\omega_0}u_{\varepsilon,\delta}\|_{L^\infty(U_\varepsilon)} 		\le C,
		\end{equation}
		and
		\begin{equation}\label{reg:eq-parallel-uniform-ddc-bounds}
			-\omega_0\le\ddc u_{\varepsilon,\delta}	\le	C\omega_0	\qquad\text{a.e. on }U_\varepsilon .
		\end{equation}
		
		As $\delta\downarrow0$, the functions $u_{\varepsilon,\delta}$ converge to $u_\varepsilon|_{U_\varepsilon}$ strongly in $C^{1,\beta}(\overline{U_\varepsilon})$ for every $0<\beta<1$ and weakly-* in $W^{2,\infty}(U_\varepsilon)$. Consequently,
		\begin{equation}\label{reg:eq-parallel-limit-estimates}
			\|u_\varepsilon\|_{C^{1,1}(\overline{U_\varepsilon})} 			+\|\Delta_{\omega_0}u_\varepsilon\|_{L^\infty(U_\varepsilon)}\le C, 			\qquad -\omega_0\le\ddc u_\varepsilon \le C\omega_0 			\quad\text{a.e. on }U_\varepsilon .
		\end{equation}
\end{proposition}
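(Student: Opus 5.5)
The plan is to repeat the fixed-domain argument of Proposition~\ref{reg:prop-boucksom-fixed-package} and Theorem~\ref{reg:thm-base-regularity} with $U$ replaced by $U_\varepsilon$, and to check that every constant stays uniform in $\varepsilon\in I$.

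\emph{Existence and uniqueness.} Fix $(\varepsilon,\delta)$ with $\delta>0$. Existence and uniqueness of a bounded solution of \eqref{reg:eq-approx-moving}, smooth in the interior, come from \cite[Ch.~7]{Boucksom2012MABoundary} applied on $U_\varepsilon$. By Proposition~\ref{prop:BH_to_collar}, $\Sigma_\varepsilon$ is $C^{3,1}$ and strongly pseudoconvex. Uniqueness follows from the Bedford--Taylor comparison principle.

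\emph{Uniform estimates.} Apply Proposition~\ref{reg:conv-c31-smoothing-reduction} to the smoothed domains $U_{\varepsilon,m}$ of Lemma~\ref{reg:lem-smooth-parallel-geometry}. Its constants depend only on four things: the common atlas, the tubular radius, the Levi lower bound, and the uniform $C^{3,1}$ bounds of the $\rho_m$. All of these are uniform in $\varepsilon\in I$. The zero-order barrier $H$ lives on the fixed $\widehat U$, and the boundary barrier $b_{\varepsilon,m}$ is built from $\rho_m$, whose constants do not depend on $\varepsilon$. Proposition~\ref{reg:conv-c31-smoothing-reduction} then identifies the $C^{1,\beta}$ limit as $m\to\infty$ with $u_{\varepsilon,\delta}$, and weak-* lower semicontinuity gives the bound \eqref{reg:eq-uniform-C11-family} with $C$ independent of $(\varepsilon,\delta)$.

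\emph{Complex Hessian bounds.} The two-sided bound \eqref{reg:eq-parallel-uniform-ddc-bounds} has two halves. The lower bound $\ddc u_{\varepsilon,\delta}\ge-\omega_0$ is just $\omega_0$-plurisubharmonicity. For the upper bound, $\omega_0+\ddc u_{\varepsilon,\delta}$ is a positive form whose trace is bounded by $n+C$, since $\Delta_{\omega_0}u_{\varepsilon,\delta}$ is bounded. A positive $(1,1)$-form is dominated by its trace times $\omega_0$, which gives $\ddc u_{\varepsilon,\delta}\le C\omega_0$. For $n=1$ everything reduces to the linear problem $\ddc u=(\delta-1)\omega_0$, and uniform Schauder theory on the uniformly $C^{3,1}$ domains handles it.

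\emph{The limit $\delta\downarrow0$.} Fix $\varepsilon$ and let $\delta\downarrow0$. The family $\{u_{\varepsilon,\delta}\}_\delta$ is bounded in $C^{1,1}(\overline{U_\varepsilon})$, so by Arzel\`a--Ascoli and Banach--Alaoglu every sequence has a subsequence converging in $C^{1,\beta}$ and weak-* in $W^{2,\infty}$. Bedford--Taylor continuity under uniform convergence shows that any limit $w$ solves $(\omega_0+\ddc w)^n=0$ in $U_\varepsilon$ with $w=0$ on $\Sigma_\varepsilon$. By the comparison principle $w$ is the unique such continuous solution, so the whole family converges.

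\emph{Identification with the envelope.} The limit must be $u_\varepsilon|_{U_\varepsilon}$, and the argument is the one in Theorem~\ref{reg:thm-base-regularity} with $U$ replaced by $U_\varepsilon$. For $w\le u_\varepsilon$: the zero extension of $w$ is globally $\omega_0$-psh by Lemma~\ref{prelim:lem-max-gluing} and is admissible for the envelope. For $u_\varepsilon\le w$: if $v$ is admissible, then $v^+$ has boundary limsup at most $0$, and Corollary~\ref{prelim:cor-homogeneous-dirichlet-comparison} gives $v\le w$. The estimates \eqref{reg:eq-parallel-limit-estimates} then pass to the limit by lower semicontinuity of the $W^{2,\infty}$ and $C^{1,1}$ norms under weak-* convergence, together with a.e.\ weak-* limits of the Hessian coefficients, which preserve the pointwise form inequalities.

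\emph{Main obstacle.} The hardest point is making sure the constants in Proposition~\ref{reg:prop-uniform-degenerate-boundary} and Lemma~\ref{reg:lem-uniform-collar-core-bridge} really depend only on the listed geometric quantities and not on $\varepsilon$. To check this, I would verify two things uniformly over $I\times\{m\gg1\}$: that the comparison-domain barriers are built from $\rho_m-\varepsilon$ with the fixed $\kappa$ of Proposition~\ref{prop:BH_to_collar}, and that the collar function $\varrho$ has uniform bounds.
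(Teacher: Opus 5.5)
Your proposal is correct and follows essentially the paper's own route. The paper takes the fixed-$(\varepsilon,\delta)$ existence and the uniform $C^{1,1}$, Laplacian and complex-Hessian bounds directly from Proposition~\ref{reg:conv-c31-smoothing-reduction}, repeats the compactness, Bedford--Taylor and comparison argument of Proposition~\ref{reg:prop-boucksom-fixed-package} for $\delta\downarrow0$, and identifies the limit with $u_\varepsilon|_{U_\varepsilon}$ as in Theorem~\ref{reg:thm-base-regularity}. The $\varepsilon$-uniformity you flag as the main obstacle is already built into the $(\varepsilon,\delta,m)$-uniform statement of Proposition~\ref{reg:conv-c31-smoothing-reduction}, which rests on the uniform parallel geometry of Proposition~\ref{prop:BH_to_collar}, so the paper does not verify it again here.
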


\begin{proof}
	For fixed $(\varepsilon,\delta)$, the statement  just follows from Proposition~\ref{reg:conv-c31-smoothing-reduction}.  The passage to the limit $\delta\downarrow0$ only repeats the proof of Proposition~\ref{reg:prop-boucksom-fixed-package}, with $U$ replaced by $U_\varepsilon$.  We also use Theorem~\ref{reg:thm-base-regularity} to identify the cluster limit.
\end{proof}

\begin{lemma}
	\label{reg:lem-global-lipschitz-moving}
		There exists a constant $C>0$, independent of $\varepsilon\in I$, such that
	\[
	\|u_\varepsilon\|_{C^{0,1}(X)} \le C .
	\]
\end{lemma}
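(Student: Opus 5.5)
The statement to prove is Lemma~\ref{reg:lem-global-lipschitz-moving}: a global Lipschitz bound for $u_\varepsilon$ on $X$, uniform in $\varepsilon\in I$. The plan is to glue two Lipschitz pieces across $\Sigma_\varepsilon$. On $\overline{U_\varepsilon}$, \eqref{reg:eq-parallel-limit-estimates} of Proposition~\ref{reg:lem-uniform-inner-C11} already gives $\|u_\varepsilon\|_{C^{1,1}(\overline{U_\varepsilon})}\le C$ with $C$ independent of $\varepsilon$. This yields a uniform gradient bound $|\nabla u_\varepsilon|_{g_0}\le C$ on $U_\varepsilon$, continuous up to $\Sigma_\varepsilon$. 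On $\Omega_\varepsilon$ we have $u_\varepsilon\equiv0$, since the obstacle set is regular. So $u_\varepsilon$ is continuous on $X$, vanishes on $\Sigma_\varepsilon$ from both sides, and is piecewise Lipschitz with uniform constants on the two closed pieces $\overline{U_\varepsilon}$ and $\Omega_\varepsilon$.

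To pass to a global Lipschitz bound, fix $x,y\in X$ and join them by a minimizing $g_0$-geodesic $\gamma$ of length $d(x,y)$. If $\gamma$ stays in one piece, the estimate is immediate. The only subtlety is that $\overline{U_\varepsilon}$ need not be geodesically convex, so the $C^{1,1}$ norm in the fixed atlas does not by itself control the intrinsic Lipschitz constant along paths in $U_\varepsilon$. The cleaner route is to avoid this by working along $\gamma$ itself. The function $f(s)=u_\varepsilon(\gamma(s))$ is continuous. On the relatively open set $\{s:\gamma(s)\in U_\varepsilon\}$, which is a countable union of intervals, $f$ is locally Lipschitz with $|f'|\le \sup_{U_\varepsilon}|\nabla u_\varepsilon|\le C$. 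On the complement, $f=0$. On each component interval $(a,b)$ we get $|f(s)-f(s')|\le C|s-s'|$, because $f$ is differentiable a.e.\ there with bounded derivative and continuous up to the endpoints. Where the endpoints lie on $\Sigma_\varepsilon$, $f$ vanishes there. Summing over the at most two partial intervals containing the endpoints of $\gamma$ gives $|u_\varepsilon(x)-u_\varepsilon(y)|\le C\,d(x,y)$. The intermediate zero set contributes nothing, and when both points lie in $U_\varepsilon$ we may split at the first and last exit times.

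A remark on uniformity: the gradient bound from Proposition~\ref{reg:lem-uniform-inner-C11} is stated in the fixed atlas. It is converted to a $g_0$-bound using the compactness of $X$ and the fixed metric comparison constants, neither of which depends on $\varepsilon$.

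The main (mild) obstacle is justifying the boundary value and the interior gradient bound up to $\Sigma_\varepsilon$ in a way that is uniform. For this I would appeal to Lemma~\ref{reg:lem-barrier}, which gives $0\le u_\varepsilon\le L(\varepsilon-\rho)_+$ in the collar. That shows continuity across $\Sigma_\varepsilon$ with an explicit uniform modulus, and it also gives the Lipschitz estimate directly for pairs with one point in $\Omega_\varepsilon$ and one near $\Sigma_\varepsilon$, since $|u_\varepsilon(x)-0|\le L\,\mathrm{dist}(x,\Sigma_\varepsilon)\le L\,d(x,y)$. Points of $U_\varepsilon$ away from the collar are handled by the interior $C^{1,1}$ bound together with compactness.
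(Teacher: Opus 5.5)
Your proposal is correct and follows essentially the paper's argument: the uniform $C^{1,1}(\overline{U_\varepsilon})$ gradient bound from Proposition~\ref{reg:lem-uniform-inner-C11}, $u_\varepsilon\equiv0$ on $\Omega_\varepsilon$, the barrier of Lemma~\ref{reg:lem-barrier} to rule out a jump across $\Sigma_\varepsilon$, and integration along minimizing geodesics. The only difference is in how the two pieces are glued. The paper first checks that the zero extension has $L^\infty$ distributional gradient on $X$ and then integrates. You instead glue directly on the one-dimensional restriction $s\mapsto u_\varepsilon(\gamma(s))$ using first-exit and last-entry times, which is slightly more elementary because it avoids passing from a $W^{1,\infty}$ bound to a pointwise estimate along curves.
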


\begin{proof}
	The uniform bound for the gradient $\nabla u_\varepsilon$ is already given by Proposition~\ref{reg:lem-uniform-inner-C11}.  We next show that its distributional gradient is an $L^\infty$ function.  Integrating along length-minimizing geodesics then gives
	\[
	|u_\varepsilon(x)-u_\varepsilon(y)|\le\|\nabla u_\varepsilon\|_{L^\infty}\,d_{g_0}(x,y),
	\]
	and compactness yields the global $C^{0,1}$ bound.  Note that \eqref{reg:eq-uniform-signed-barrier} gives $0\le u_\varepsilon(x)\le L(\varepsilon-\rho(x))_+$, so we can find $u_\varepsilon$ is continuous near $\Sigma_\varepsilon$ and takes values close to $0$ there.  Computing the distributional derivative of its global zero extension directly, with an arbitrary smooth test vector field and in the sense of distributions, we find that the jump across the boundary is $0$; combined with the $L^\infty$ bound for $\nabla u_\varepsilon$, this proves the claim.
\end{proof}

\begin{lemma}
\label{reg:lem-uniform-C11-interpolation}
Let $D$ range over the domains $U_\varepsilon$, $\varepsilon\in I$, or let all functions be pulled back to one of these domains by $\Xi_{s,t}$.  If $f\in C^{1,1}(\overline D)$ and $\|f\|_{C^{1,1}(\overline D)}\le M$, then
\begin{equation}\label{reg:eq-uniform-gradient-interpolation}
\|Df\|_{L^\infty(D)} \le C_M\bigl(\|f\|_{L^\infty(D)}^{1/2} +\|f\|_{L^\infty(D)}\bigr).
\end{equation}
Moreover, for $0<\beta<1$,
\begin{equation}\label{reg:eq-uniform-C11-interpolation}
\|f\|_{C^{1,\beta}(\overline D)} \le C_{M,\beta}\bigl( \|f\|_{L^\infty(D)}+ \|f\|_{L^\infty(D)}^{(1-\beta)/2}\bigr).
\end{equation}
The constants are uniform over the family.
\end{lemma}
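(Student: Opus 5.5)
This is a uniform Landau--Kolmogorov interpolation inequality. The plan is to prove it pointwise in a fixed finite atlas, using only that the domains $D$ satisfy a uniform interior cone (or uniform interior ball) condition. This holds because, by Proposition~\ref{prop:BH_to_collar}, the $\Sigma_\varepsilon$ are uniformly $C^{3,1}$ with a common tubular radius. It also holds after pullback by $\Xi_{s,t}$, since \eqref{prelim:eq-parallel-identification-bounds} makes these maps uniformly bi-Lipschitz with bounded $C^2$ norms, so $C^{1,1}$ and $C^{1,\beta}$ norms change only by uniform factors. Set $A:=\|f\|_{L^\infty(D)}$ and $L:=\|D^2f\|_{L^\infty}\le M$.

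\textbf{Gradient bound.} Fix $x\in D$ and a unit vector $e$. Uniform geometry gives a radius $r_0>0$ and an aperture such that for every $x$ and every $e$ there is a unit direction $e'$ with $\langle e,e'\rangle\ge c_0>0$ and the segment $x+he'$, $0\le h\le r_0$, lying in $\overline D$. One can first approximate $e$ by a direction in the cone, which costs only a constant factor. Taylor's formula with the Lipschitz gradient gives
\[
|Df(x)e'|\le \frac{2A}{h}+\frac{Mh}{2},\qquad 0<h\le r_0.
\]
If $A\le r_0^2$, choose $h=\sqrt{A}$; otherwise choose $h=r_0$. This yields $|Df(x)e'|\le C(A^{1/2}+A)$. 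To recover the full gradient, apply this to $n'$ directions $e'_i$ in the cone at $x$ spanning $\mathbb R^{2n}$ with uniformly bounded inverse Gram matrix. Such directions exist because the cone has a fixed aperture. This proves \eqref{reg:eq-uniform-gradient-interpolation}.

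\textbf{H\"older bound.} For $x,y\in D$, estimate the seminorm $[Df]_\beta$ at distance $d=|x-y|$ in two ways. On one hand, $|Df(x)-Df(y)|\le C M d$, using a uniform quasi-convexity constant of $D$: points at distance $d$ can be joined in $D$ by a path of length $\le Cd$, again by uniform collar geometry. On the other hand, $|Df(x)-Df(y)|\le 2\|Df\|_\infty\le C(A^{1/2}+A)$. Taking the minimum,
\[
\frac{|Df(x)-Df(y)|}{d^\beta}\le \min\bigl(CMd^{1-\beta},\,C(A^{1/2}+A)d^{-\beta}\bigr)\le C\,(MD)^{\beta}\,\bigl(A^{1/2}+A\bigr)^{1-\beta},
\]
where the crossover is at $d\sim (A^{1/2}+A)/M$. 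Since $(A^{1/2}+A)^{1-\beta}\le C(A^{(1-\beta)/2}+A^{1-\beta})$ and $A^{1-\beta}\le A+A^{(1-\beta)/2}$, this part is bounded by the right side of \eqref{reg:eq-uniform-C11-interpolation}. The terms $\|f\|_\infty$ and $\|Df\|_\infty$ are already bounded by $C(A+A^{1/2})\le C(A+A^{(1-\beta)/2})$, because $A^{1/2}\le A^{(1-\beta)/2}$ for $A\le1$ and $A^{1/2}\le A$ for $A\ge1$.

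\textbf{Main obstacle.} The only delicate point is uniformity: the cone aperture, $r_0$, and the quasi-convexity constant must be independent of $\varepsilon\in I$ and of the pullback. I would verify this directly in the signed collar coordinates $(y,r)$ of Lemma~\ref{prelim:lem-parallel-identifications}. There $U_\varepsilon$ is locally $\{r<\varepsilon\}$ with uniformly controlled metric, so the inward normal direction $-\partial_r$ together with a fixed-aperture cone around it serves all boundary points. Interior points at distance $\ge r_0$ from $\Sigma_\varepsilon$ have full balls available.
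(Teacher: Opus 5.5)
Your argument is correct, but it takes a different route from the paper's.

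\textbf{The paper's route.} The paper never works inside $D$. It invokes extension operators to neighborhoods of fixed size, uniform over the family, that are bounded simultaneously in $L^\infty$ and in $C^{1,1}$. It then applies the one-dimensional Taylor estimate $|D_ef(x)|\le Cm/h+CMh$ to the extension $\widetilde f$ in every coordinate direction $e$, with $h=\min\{h_0,\sqrt{m/(M+1)}\}$. The H\"older part follows from $[g]_{C^{0,\beta}}\le 2^{1-\beta}\|g\|_{L^\infty}^{1-\beta}[g]_{C^{0,1}}^{\beta}$ with $g=Df$, using $m\le M$ to simplify $\|Df\|_{L^\infty}\le C_Mm^{1/2}$.

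\textbf{Your route.} You stay intrinsic to $D$. A uniform interior cone condition lets you run Taylor along inward segments only. You recover the full gradient from $2n$ cone directions with a controlled Gram matrix. A local quasi-convexity constant supplies the Lipschitz bound on $Df$; this is needed only because you read the $C^{1,1}$ bound as $\|D^2f\|_{L^\infty}\le M$. Your crossover computation at $d\sim(A^{1/2}+A)/M$ is the same interpolation as the paper's, written out explicitly.

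\textbf{What each buys.} Your version needs only uniform cone and local quasi-convexity constants, which follow directly from the common tubular radius. It also avoids producing an extension operator whose $L^\infty$ bound depends on $\|f\|_{L^\infty}$ alone. The paper asserts such an operator but does not construct it; a higher-order reflection in flattened collar coordinates would supply it. The paper's version is shorter once that operator is granted, since every direction is then admissible and no linear algebra on cone directions is needed.

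\textbf{Two small slips.} First, the sentence about approximating an arbitrary $e$ by a cone direction $e'$ with $\langle e,e'\rangle\ge c_0$ does not by itself control $Df(x)e$. It also fails for outward $e$ unless you allow $-e$. The spanning argument is what does the work, so that sentence can be deleted. Second, $(MD)^\beta$ in your H\"older display should read $(CM)^\beta$.
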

\begin{proof}
Set $m=\|f\|_{L^\infty}$. The uniformly controlled boundary charts give extension operators to fixed-size neighborhoods that are bounded both in $L^\infty$ and in $C^{1,1}$, uniformly over the family. Apply Taylor's formula to such an extension $\widetilde f$. For any unit coordinate direction $e$ and $0<h\le h_0$,
\[
|D_ef(x)| \le\frac{|\widetilde f(x+he)|+|\widetilde f(x)|}{h}+CMh \le\frac{Cm}{h}+CMh.
\]
For $m=0$ the claim is immediate. Otherwise choose $h=\min\{h_0,\sqrt{m/(M+1)}\}$. If the second term is the minimum, the bound is $C_Mm^{1/2}$; if $h=h_0$, then $m\ge(M+1)h_0^2$, so the bound is $C_Mm$. This proves \eqref{reg:eq-uniform-gradient-interpolation}. Since $m\le M$, it also gives $\|Df\|_{L^\infty}\le C_Mm^{1/2}$.

For any Lipschitz continuous function $g$ (that is, $g\in C^{0,1}$) and $r=|x-y|$, we have
\[
[g]_{C^{0,\beta}} =\sup_{x\ne y}\frac{|g(x)-g(y)|}{|x-y|^\beta} \le 2^{1-\beta}\|g\|_{L^\infty}^{1-\beta}[g]_{C^{0,1}}^{\beta} \le C\|g\|_{L^\infty}^{1-\beta}[g]_{C^{0,1}}^{\beta},
\]
where we used the elementary inequality
\[
|g(x)-g(y)| \le\min\bigl(2\|g\|_{L^\infty},\,[g]_{C^{0,1}}r\bigr) \le\bigl(2\|g\|_{L^\infty}\bigr)^{1-\beta}\bigl([g]_{C^{0,1}}r\bigr)^{\beta}.
\]
Taking $g=Df$, we obtain
\[
[Df]_{C^{0,\beta}} \le C\bigl(C_Mm^{1/2}\bigr)^{1-\beta}M^{\beta}.
\]
Now
\[
\|f\|_{C^{1,\beta}} =\underbrace{\|f\|_{L^\infty}}_{m} +\underbrace{\|Df\|_{L^\infty}}_{\le C_M(m^{1/2}+m)} +\underbrace{[Df]_{C^{0,\beta}}}_{\le C_{M,\beta}m^{(1-\beta)/2}}.
\]
Since the term $m^{1/2}$ is absorbed by $m^{(1-\beta)/2}$ or by $m$,
\[
\|f\|_{C^{1,\beta}} \le C_{M,\beta}\bigl(m+m^{(1-\beta)/2}\bigr),
\]
which is \eqref{reg:eq-uniform-C11-interpolation}.
\end{proof}

\begin{proposition}
\label{reg:prop-quantitative-delta-domain-stability}

For every $0<\beta<1$ there are constants $C,C_\beta>0$ such that, for $\varepsilon\in I$ and $0<\delta\le1$,
\begin{align}
\|u_{\varepsilon,\delta}-u_\varepsilon\|_{L^\infty(U_\varepsilon)} &\le C\delta^{1/n},
\label{reg:eq-delta-Linfty-rate}\\
\|u_{\varepsilon,\delta}-u_\varepsilon\|_{C^{1,\beta}(\overline{U_\varepsilon})} &\le C_\beta\delta^{(1-\beta)/(2n)}.
\label{reg:eq-delta-C1beta-rate}
\end{align}
For every $s,t\in I$,
\begin{align}
\|u_t\circ\Xi_{s,t}-u_s\|_{L^\infty(U_s)} &\le C|t-s|, \\
\|u_t\circ\Xi_{s,t}-u_s\|_{C^{1,\beta}(\overline{U_s})} &\le C_\beta|t-s|^{(1-\beta)/2}.
\label{reg:eq-t-pullback-C1beta-rate}
\end{align}
If $s\le t$, so that $U_s\subset U_t$, then also
\begin{equation}\label{reg:eq-t-common-domain-C1beta-rate}
\|u_t-u_s\|_{C^{1,\beta}(\overline{U_s})} \le C_\beta|t-s|^{(1-\beta)/2}.
\end{equation}
In particular,
\[
\|\nabla(u_{\varepsilon,\delta}-u_\varepsilon)\|_{L^\infty(U_\varepsilon)} \le C\delta^{1/(2n)}, \qquad \|\nabla(u_t\circ\Xi_{s,t}-u_s)\|_{L^\infty(U_s)} \le C|t-s|^{1/2}.
\]
If $\nu_t:=\nabla_{g_0}\rho|_{\Sigma_t}$ and $q_t:=-\partial_{\nu_t}u_t$, then
\begin{equation}\label{reg:eq-normal-derivative-half-holder}
\|q_t\circ\Psi_{s,t}-q_s\|_{C^0(\Sigma_s)} \le C|t-s|^{1/2}.
\end{equation}
\end{proposition}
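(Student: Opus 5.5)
The plan is to obtain every $C^{1,\beta}$ statement from an $L^\infty$ rate by the interpolation Lemma~\ref{reg:lem-uniform-C11-interpolation}, using the uniform $C^{1,1}$ bounds of Proposition~\ref{reg:lem-uniform-inner-C11} for the differences. The work therefore splits into two $L^\infty$ estimates, one in $\delta$ and one in $t-s$, followed by a routine application of that lemma.

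\emph{$\delta$-rate.} Fix $\varepsilon\in I$. We have $\omega_{u_{\varepsilon,\delta}}^n=\delta\omega_0^n\ge0=\omega_{u_\varepsilon}^n$ on $U_\varepsilon$, and both functions vanish on $\Sigma_\varepsilon$. The comparison principle (Corollary~\ref{prelim:cor-homogeneous-dirichlet-comparison}) therefore gives $u_{\varepsilon,\delta}\le u_\varepsilon$. For the reverse inequality I would use a strictly psh bounded barrier $\psi$ on a neighborhood of $\overline{U}_{\varepsilon_1}$ with $\psi\le0$ on the domains, $\ddc\psi\ge\omega_0$ and $|\psi|\le C$, uniformly in $\varepsilon$. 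For instance, take $\psi=A(e^{B\rho'}-e^{B\varepsilon_1'})$ built from a strictly psh exhaustion of a neighborhood of $\overline U$. Such an exhaustion exists near $\overline U$ because strong pseudoconvexity gives a Stein-type neighborhood; alternatively, I would use $\psi = -A(\varepsilon-\rho)+B(\varepsilon-\rho)^2$-type corrections together with the Levi bound of Proposition~\ref{prop:BH_to_collar}. Put $v=(1-\delta^{1/n})u_\varepsilon+\delta^{1/n}\psi$, after normalizing $\psi$ so that $\omega_0+\ddc\psi\ge\omega_0$. Then $\omega_v\ge\delta^{1/n}\omega_0$, hence $\omega_v^n\ge\delta\omega_0^n$, and $v\le0=u_{\varepsilon,\delta}$ on $\Sigma_\varepsilon$. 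The comparison principle yields $v\le u_{\varepsilon,\delta}$, so
\[
u_\varepsilon-u_{\varepsilon,\delta}\le\delta^{1/n}\bigl(\|u_\varepsilon\|_\infty+\|\psi\|_\infty\bigr)\le C\delta^{1/n}.
\]
This proves \eqref{reg:eq-delta-Linfty-rate}. The difference $f=u_{\varepsilon,\delta}-u_\varepsilon$ has $\|f\|_{C^{1,1}(\overline{U_\varepsilon})}\le 2C$ by \eqref{reg:eq-uniform-C11-family} and \eqref{reg:eq-parallel-limit-estimates}. Lemma~\ref{reg:lem-uniform-C11-interpolation} then gives $\|f\|_{C^{1,\beta}}\le C_\beta(\delta^{1/n}+\delta^{(1-\beta)/(2n)})$, which is \eqref{reg:eq-delta-C1beta-rate}; the gradient bound is the case $\beta\to0$ of \eqref{reg:eq-uniform-gradient-interpolation}. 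I expect the main obstacle to be constructing $\psi$ with constants uniform in $\varepsilon$. If a global strictly $\omega_0$-psh bounded function with $\ddc\psi\ge c\omega_0$ on $U_{\varepsilon_1}$ is unavailable, I would instead compare with the Lipschitz barrier of Lemma~\ref{reg:lem-barrier} near $\Sigma_\varepsilon$ and use $\delta^{1/n}$-scaled local strictly psh functions in the interior.

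\emph{$t$-rate.} Write
\[
u_t\circ\Xi_{s,t}-u_s=(u_t\circ\Xi_{s,t}-u_t)+(u_t-u_s).
\]
The second term is bounded by $C|t-s|$ by Lemma~\ref{reg:lem-linf-approx}. For the first, \eqref{prelim:eq-parallel-identification-bounds} gives $|\Xi_{s,t}-\mathrm{Id}|\le C|t-s|$, and $u_t$ is uniformly Lipschitz on $X$ by Lemma~\ref{reg:lem-global-lipschitz-moving}, so this term is also $O(|t-s|)$. Next, $u_t\circ\Xi_{s,t}$ is uniformly $C^{1,1}$ on $\overline{U_s}$ by the chain rule and the $C^2$ bounds on $\Xi_{s,t}$. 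Lemma~\ref{reg:lem-uniform-C11-interpolation} then gives \eqref{reg:eq-t-pullback-C1beta-rate}. For $s\le t$ one applies the same lemma directly to $u_t-u_s$ on $\overline{U_s}$, where both functions are uniformly $C^{1,1}$, obtaining \eqref{reg:eq-t-common-domain-C1beta-rate}.

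\emph{Normal derivatives.} On $\Sigma_s$ write
\[
q_t\circ\Psi_{s,t}-q_s=-\langle\nabla u_t\circ\Psi_{s,t},\nu_t\circ\Psi_{s,t}\rangle+\langle\nabla u_s,\nu_s\rangle.
\]
Because $\Psi_{s,t}$ is the normal projection along the signed-distance collar, $\nu_t\circ\Psi_{s,t}$ is the parallel transport of $\nu_s$ along the normal geodesic. Comparing in charts, $|\nu_t\circ\Psi_{s,t}-\nu_s|\le C|t-s|$. Moreover, $\nabla(u_t\circ\Xi_{s,t})=D\Xi_{s,t}^{T}(\nabla u_t)\circ\Xi_{s,t}$ with $|D\Xi_{s,t}-I|\le C|t-s|$. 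Combining these with the gradient bound $\|\nabla(u_t\circ\Xi_{s,t}-u_s)\|_{L^\infty}\le C|t-s|^{1/2}$ and the uniform gradient bounds gives
\[
\|q_t\circ\Psi_{s,t}-q_s\|_{C^0(\Sigma_s)}\le C|t-s|^{1/2}+C|t-s|,
\]
which is \eqref{reg:eq-normal-derivative-half-holder}, since $|t-s|\le 2\varepsilon_1$.
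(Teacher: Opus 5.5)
Your proposal is correct and is essentially the paper's proof. The $t$-rate splits $u_t\circ\Xi_{s,t}-u_s=(u_t\circ\Xi_{s,t}-u_t)+(u_t-u_s)$ and uses the global Lipschitz bound together with Lemma~\ref{reg:lem-linf-approx}. Every $C^{1,\beta}$ and gradient rate comes from Lemma~\ref{reg:lem-uniform-C11-interpolation}, and the bound for $q_t$ comes from the chain rule with $D\Xi_{s,t}-\mathrm{Id}=O(|t-s|)$; the paper does all three in the same way.

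The one thing to fix is your ``main obstacle'' in the $\delta$-rate: no barrier is needed. Taking $\psi\equiv0$ already gives $\omega_v=(1-\delta^{1/n})\omega_{u_\varepsilon}+\delta^{1/n}\omega_0\ge\delta^{1/n}\omega_0$, because $\omega_0$ itself is K\"ahler. This is exactly the paper's comparison $(1-\delta^{1/n})u_\varepsilon\le u_{\varepsilon,\delta}\le u_\varepsilon$. Your Stein-neighbourhood justification for a strictly psh $\psi$ is also false in general. In Example~\ref{dyn:ex-toric-collapse} the strongly pseudoconvex domain $U_R$ contains the compact divisor $E$, so no bounded $\psi$ with $\ddc\psi\ge\omega_0$ exists there.
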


\begin{proof}
Put $a=\delta^{1/n}$. Since
\[
\omega_0+\ddc((1-a)u_\varepsilon) = a\omega_0+(1-a)(\omega_0+\ddc u_\varepsilon),
\]
positivity and the binomial expansion give
\[
(\omega_0+\ddc((1-a)u_\varepsilon))^n \ge a^n\omega_0^n = \delta\omega_0^n.
\]
The comparison principle, applied with the common zero boundary value, gives
\[
(1-a)u_\varepsilon \le u_{\varepsilon,\delta} \le u_\varepsilon.
\]
The uniform $L^\infty$ bound proves \eqref{reg:eq-delta-Linfty-rate}.

We can use Lemma~\ref{reg:lem-linf-approx}, Lemma~\ref{reg:lem-global-lipschitz-moving}, and \eqref{prelim:eq-parallel-identification-bounds} to obtain the estimate for the domain parameter:
\[
\begin{aligned}
	\|u_t\circ\Xi_{s,t}-u_s\|_{L^\infty(U_s)} 	&\le \|u_t\circ\Xi_{s,t}-u_t\|_{L^\infty(U_s)} + \|u_t-u_s\|_{L^\infty(X)} \\
	&\le C|t-s|.
\end{aligned}
\]
The differences in these two comparisons have uniformly bounded $C^{1,1}$ norms; for the second one this follows from the chain rule and the uniform $C^2$ bounds for $\Xi_{s,t}$. Lemma~\ref{reg:lem-uniform-C11-interpolation} therefore proves \eqref{reg:eq-delta-C1beta-rate} and \eqref{reg:eq-t-pullback-C1beta-rate}, including the displayed gradient rates. When $s\le t$, apply the same interpolation lemma directly to $u_t-u_s$ on the common domain $U_s$ to obtain \eqref{reg:eq-t-common-domain-C1beta-rate}.

On $\Sigma_s$ put
\[
Z_{s,t} := (D\Xi_{s,t})^{-1}(\nu_t\circ\Psi_{s,t}), \qquad w_{s,t} := u_t\circ\Xi_{s,t}-u_s.
\]
The chain rule gives the exact identity
\[
q_t\circ\Psi_{s,t}-q_s = -dw_{s,t}(Z_{s,t}) + du_s(\nu_s-Z_{s,t}).
\]
The uniform $C^2$ estimate for the normal-flow identifications implies
\[
\|Z_{s,t}-\nu_s\|_{C^0(\Sigma_s)} \le C|t-s|, \qquad \|Z_{s,t}\|_{C^0(\Sigma_s)} \le C.
\]
Finally, after combining these inequalities with \eqref{reg:eq-uniform-gradient-interpolation} and the uniform gradient bound we can prove \eqref{reg:eq-normal-derivative-half-holder}.
\end{proof}

\section{Flux, trace, and singular boundary mass}\label{sec:flux}

By Proposition~\ref{prelim:prop-envelope-ma}, there is the singular boundary measure $\mu_0^{\rm sing}:=\MA(u_0)\llcorner\partial U$. The main task of this section is to identify this measure as a weak outward flux. In this process, we will find the boundary density tools needed in Section~\ref{sec:hadamard}.

The main tool for this section is the $\DM^\infty$ trace theory developed in Subsection~\ref{prelim:subsec-dm}. Throughout, we hope the reader should note the change of viewpoint supplied by the Hodge dictionary (Lemma~\ref{lem:hodge}): a bounded $(2n-1)$-form with $L^\infty$ coefficients is identified with a divergence-measure field.

\subsection{Flux current and divergence identity}

For a sufficiently regular function $u$, recall the flux current
\[
T_u:=\sum_{j=0}^{n-1}\omega_u^j\wedge\omega_0^{n-1-j}, \qquad \mathcal J[u]:=\frac1V\dc u\wedge T_u.
\]
Theorem~\ref{thm:global-trace} gives its outward trace whenever $\mathcal J[u]\in\DM^\infty(U)$.

\begin{corollary}[Divergence and base-flux trace recovery]

\label{stat:cor-base-flux-trace-recovery}
For the base envelope, $\mathcal J[u_0]\in\DM^\infty(U)$,
\[
d\mathcal J[u_0]=-\frac1V\omega_0^n\quad\text{on }U.
\]
Its outward trace is recovered from the inner parallel levels in the sense of Theorem~\ref{prelim:thm-innerparallel}.
\end{corollary}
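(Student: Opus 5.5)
The plan is to derive the divergence identity first for the positive-density Dirichlet approximants $u_{0,\delta}$ of Proposition~\ref{reg:prop-boucksom-fixed-package}, which are smooth in $U$, and then pass to the limit $\delta\downarrow0$. For a smooth $\omega_0$-psh function $u$ on $U$, $\omega_u$ and $\omega_0$ are closed, so $d(\dc u\wedge T_u)=\ddc u\wedge T_u$; here I use the convention $d\dc=\ddc$ fixed in the introduction. Since $\ddc u=\omega_u-\omega_0$, the sum telescopes:
\[
(\omega_u-\omega_0)\wedge\sum_{j=0}^{n-1}\omega_u^j\wedge\omega_0^{n-1-j}=\omega_u^n-\omega_0^n .
\]
Applied to $u=u_{0,\delta}$ this gives
\[
d\mathcal J[u_{0,\delta}]=\frac{\delta-1}{V}\,\omega_0^n \qquad\text{on }U,
\]
pointwise and hence distributionally on $U$.

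Next I pass to the limit. By Theorem~\ref{reg:thm-base-regularity}, $u_{0,\delta}\to u_0$ in $C^{1,\beta}(\overline U)$, so $\dc u_{0,\delta}\to\dc u_0$ uniformly on $\overline U$. On each $K\Subset U$, Corollary~\ref{reg:cor-ma-conv} and Bedford--Taylor continuity under uniform convergence give $T_{u_{0,\delta}}\rightharpoonup T_{u_0}$ as positive closed currents. A continuous $1$-form converging uniformly, wedged with positive currents converging weakly with locally bounded mass, converges weakly. Hence $\mathcal J[u_{0,\delta}]\to\mathcal J[u_0]$ in the sense of currents on $U$. Because $d$ is continuous for distributional convergence, $d\mathcal J[u_0]=\lim_{\delta\to0}(\delta-1)V^{-1}\omega_0^n=-V^{-1}\omega_0^n$ on $U$. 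Equivalently, I could run the telescoping computation directly with the Bedford--Taylor identity $\omega_{u_0}^n=0$ from Proposition~\ref{prelim:prop-envelope-ma}. The limit argument is cleaner, however, because it avoids differentiating the products of bounded Hessian coefficients.

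For membership in $\DM^\infty(U)$ I still need $L^\infty$ coefficients. Theorem~\ref{reg:thm-base-regularity} gives $u_0\in C^{1,1}(\overline U)$, so $\dc u_0$ is bounded, and $0\le\omega_{u_0}\le(C+1)\omega_0$ a.e. This bounds every mixed product $\omega_{u_0}^j\wedge\omega_0^{n-1-j}$ by a constant multiple of $\omega_0^{n-1}$, so its coefficients are bounded. I must also check that the interior Bedford--Taylor representative of $T_{u_0}$ coincides with the pointwise a.e. product of the $L^\infty$ Hessian coefficients.

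I expect this identification to be the main obstacle. I would handle it as follows. The uniform bound $-\omega_0\le\ddc u_{0,\delta}\le C\omega_0$ and the weak-* $W^{2,\infty}$ convergence show that the weak limit of $T_{u_{0,\delta}}$ has $L^\infty$ coefficients bounded by $C\omega_0^{n-1}$. That limit is the Bedford--Taylor current by the previous step. For $W^{2,\infty}_{\mathrm{loc}}$ functions, Bedford--Taylor products agree with the a.e. pointwise products, by the standard $W^{2,n}_{\mathrm{loc}}$ theory. With both facts in hand, $\mathcal J[u_0]$ is a bounded $(2n-1)$-form whose differential $-V^{-1}\omega_0^n$ is a finite measure on $U$.

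Through the Hodge dictionary (Lemma~\ref{lem:hodge}), this places $\mathcal J[u_0]$ in $\DM^\infty(U)$. Theorem~\ref{thm:global-trace} then supplies the outward trace, with $L^\infty$ density bounded by $C_\Sigma\|\mathcal J[u_0]\|_{L^\infty}$. Its recovery from the inner parallel levels $\Sigma_{-t}$, in the essential-limit sense, follows from Theorem~\ref{prelim:thm-innerparallel} applied chartwise, which is exactly the final assertion.
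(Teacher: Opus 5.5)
Your proposal is correct and follows essentially the paper's route. You use the smooth telescoping identity $d\mathcal J[u]=V^{-1}(\omega_u^n-\omega_0^n)$ for the positive-density approximants, giving $d\mathcal J[u_{0,\delta}]=\frac{\delta-1}{V}\omega_0^n$, and then the strong--weak limit $\mathcal J[u_{0,\delta}]\rightharpoonup\mathcal J[u_0]$ from uniform convergence of $\dc u_{0,\delta}$ and weak convergence of $T_{u_{0,\delta}}$ with bounded mass. The differences are minor: the paper applies the identity to the smooth-domain solutions $u_{0,\delta,m}$ and lets $m\to\infty$, whereas you invoke interior smoothness of $u_{0,\delta}$ directly, and you spell out the $L^\infty$-coefficient, a.e.-product, and trace-recovery steps that the paper's proof leaves implicit.
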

\begin{proof}
	
	We note that for smooth $u$, factorization and closedness directly give
	\[
	\omega_u^n-\omega_0^n=\ddc u\wedge\sum_{j=0}^{n-1}\omega_u^j\wedge\omega_0^{n-1-j}=d\!\left(\dc u\wedge\sum_{j=0}^{n-1}\omega_u^j\wedge\omega_0^{n-1-j}\right),
	\]
	so $d\mathcal J[u]=V^{-1}(\omega_u^n-\omega_0^n)$.
	We now approximate the desired conclusion by the smooth case. Applying this identity to $u_{0,\delta,m}$ on $U_{0,m}$ gives
	\[
	d\mathcal J[u_{0,\delta,m}]=\frac{\delta-1}{V}\omega_0^n.
	\]
	Proposition~\ref{reg:conv-c31-smoothing-reduction} allows us to let $m\to\infty$, so that on $U$ we have
	\[
	d\mathcal J[u_{0,\delta}]=\frac1V\bigl(\omega_{u_{0,\delta}}^n-\omega_0^n\bigr)=\frac{\delta-1}{V}\omega_0^n.
	\]
	Set $T_\delta:=T_{u_{0,\delta}}$ and $T_0:=T_{u_0}$, and let $\eta$ be a smooth compactly supported real $1$-form on $U$.
	Adding and subtracting $\langle T_\delta,\eta\wedge\dc u_0\rangle$ gives
	\begin{align*}
		V\langle\mathcal J[u_{0,\delta}]-\mathcal J[u_0],\eta\rangle
		&=\langle T_\delta,\eta\wedge\dc(u_{0,\delta}-u_0)\rangle+\langle T_\delta-T_0,\eta\wedge\dc u_0\rangle.
	\end{align*}
	The first term satisfies
	\[
	\bigl\vert{}\langle T_\delta,\eta\wedge\dc(u_{0,\delta}-u_0)\rangle\bigr\vert{}\le C_\eta\,\Vert{}T_\delta\Vert{}(U)\,\Vert{}u_{0,\delta}-u_0\Vert{}_{C^1(\overline U)}\longrightarrow0.
	\]
	For the second term, set $\psi:=\eta\wedge\dc u_0$ and choose smooth compactly supported $2$-forms $\psi_k$ with $\psi_k\to\psi$ uniformly.
	Then
	\[
	\bigl\vert{}\langle T_\delta-T_0,\psi\rangle\bigr\vert{}\le\bigl\vert{}\langle T_\delta-T_0,\psi_k\rangle\bigr\vert{}+\bigl(\Vert{}T_\delta\Vert{}(U)+\Vert{}T_0\Vert{}(U)\bigr)\Vert{}\psi-\psi_k\Vert{}_{C^0}.
	\]
	For fixed $k$, the first term on the right tends to zero by the weak convergence of currents, while the second term becomes uniformly small as $k\to\infty$.
	Therefore,
	\[
	\mathcal J[u_{0,\delta}]\rightharpoonup\mathcal J[u_0]\qquad\text{as currents on }U.
	\]

\end{proof}

We record two continuity statements for the fluxes used below.
\begin{lemma}[Strong--weak continuity of flux products]\label{stat:lem-strong-weak-flux}
Let $B_k\rightharpoonup B$ be order-zero currents of a fixed degree on a domain $D$, with locally uniformly bounded mass.
Let $a_k\to a$ locally uniformly be continuous forms of a fixed degree such that $a_k\wedge B_k$ is defined.
Then
\[
a_k\wedge B_k\rightharpoonup a\wedge B\qquad\text{as currents on }D.
\]
\end{lemma}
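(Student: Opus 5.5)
The argument is the same add-and-subtract splitting already used in the proof of Corollary~\ref{stat:cor-base-flux-trace-recovery}, carried out for an arbitrary test form. Fix a smooth compactly supported test form $\eta$ on $D$ of complementary degree, and let $K:=\operatorname{supp}\eta$. We must show $\langle a_k\wedge B_k,\eta\rangle\to\langle a\wedge B,\eta\rangle$. Up to a sign depending only on the degrees, $\langle a_k\wedge B_k,\eta\rangle=\pm\langle B_k,\eta\wedge a_k\rangle$, since $B_k$ has order zero and $\eta\wedge a_k$ is a continuous compactly supported form. So it suffices to study $\langle B_k,\eta\wedge a_k\rangle$, and we write
\[
\langle B_k,\eta\wedge a_k\rangle-\langle B,\eta\wedge a\rangle
=\langle B_k,\eta\wedge(a_k-a)\rangle+\langle B_k-B,\eta\wedge a\rangle .
\]

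\textbf{First term.} Order-zero currents pair with continuous compactly supported forms with a bound by mass times sup norm. Hence
\[
|\langle B_k,\eta\wedge(a_k-a)\rangle|\le C_\eta\,\|B_k\|(K)\,\sup_K|a_k-a|.
\]
The mass $\|B_k\|(K)$ is bounded uniformly in $k$ by hypothesis, and $a_k\to a$ uniformly on $K$, so this term tends to $0$.

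\textbf{Second term.} Here $\psi:=\eta\wedge a$ is only continuous with compact support in $K$, while the convergence $B_k\rightharpoonup B$ is tested against smooth forms. Mollify $\psi$ in charts and patch with a partition of unity to obtain smooth forms $\psi_j$ with support in a fixed compact $K'\Subset D$ and $\psi_j\to\psi$ uniformly. Then
\[
|\langle B_k-B,\psi\rangle|\le|\langle B_k-B,\psi_j\rangle|+\bigl(\|B_k\|(K')+\|B\|(K')\bigr)\|\psi-\psi_j\|_{C^0}.
\]
The mass of $B$ on $K'$ is finite by lower semicontinuity of mass under weak convergence, so the bracket is bounded uniformly in $k$. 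Choosing $j$ first makes the second summand small, and letting $k\to\infty$ then makes the first summand vanish. Thus the second term tends to $0$ as well.

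\textbf{Main point of care.} The only delicate step is the density step in the second term. It needs three things: the uniform mass bound on a slightly larger compact $K'$, finiteness of the mass of $B$ (from lower semicontinuity), and the fact that order-zero currents extend continuously to continuous compactly supported test forms. Once these are in place, the rest is bookkeeping of signs and degrees.
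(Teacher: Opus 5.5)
Your proposal is correct and follows essentially the same route as the paper. You use the same decomposition into $(a_k-a)\wedge B_k$ and $a\wedge(B_k-B)$, bound the first term by mass times $\sup_K|a_k-a|$, and handle the second by uniformly approximating the continuous test form $\eta\wedge a$ with smooth forms of common compact support under the uniform mass bound.
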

\begin{proof}
Write
\[
a_k\wedge B_k-a\wedge B=(a_k-a)\wedge B_k+a\wedge(B_k-B).
\]
When tested against a smooth compactly supported form, the first term tends to zero by uniform convergence and the mass bound.
For the second term, approximate the continuous test form containing $a$ uniformly by smooth forms with a common compact support.
Weak current convergence and the same mass bound give the limit.
\end{proof}

\begin{lemma}[Trace stability]\label{dyn:lem-trace-stability}
Let $J_\varepsilon,J_0\in\DM^\infty(U)$ and suppose that, on every $U_{-r}\Subset U$, $J_\varepsilon\rightharpoonup J_0$ as currents and $dJ_\varepsilon\rightharpoonup^*dJ_0$ as Radon measures.
If the coefficients of $J_\varepsilon$ and the densities of $dJ_\varepsilon$ are uniformly bounded in a fixed boundary collar for $0\le\varepsilon\le\bar\varepsilon$, then
\[
\Tr_{\partial U}^{\mathrm{out}}J_\varepsilon\rightharpoonup^*\Tr_{\partial U}^{\mathrm{out}}J_0\qquad\text{in }\mathcal M(\partial U).
\]
\end{lemma}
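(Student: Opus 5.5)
We use the Gauss--Green characterization of Theorem~\ref{thm:global-trace} with a test function built from a cutoff supported in a thin collar. Fix $\varphi\in C^1(\partial U)$. Its extension is $\widetilde\varphi:=(\varphi\circ\pi)\,\chi_r$, where $\pi$ is the normal projection onto $\partial U$ and $\chi_r=\chi(-\rho/r)$, with $\chi\equiv1$ on $[0,1/2]$ and $\chi\equiv 0$ on $[1,\infty)$. By Theorem~\ref{thm:global-trace}, for each $\varepsilon\in[0,\bar\varepsilon]$,
\[
\int_{\partial U}\varphi\,d\bigl(\Tr^{\mathrm{out}}_{\partial U}J_\varepsilon\bigr)=\int_U\widetilde\varphi\,d(dJ_\varepsilon)+\int_U d\widetilde\varphi\wedge J_\varepsilon .
\]
The right-hand side does not depend on the choice of extension. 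The difficulty is that $d\chi_r$ is supported near $\partial U$, where we only have bounds and no convergence.

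Next we split the two integrals. Choose $r_0$ so small that $\{-r_0<\rho<0\}$ lies in the boundary collar on which the uniform bounds hold. We compare with a second extension: take a fixed $r_1<r_0$ and write the right-hand side using $\widetilde\varphi_{r_1}$. The key observation is that the right-hand side does not depend on the extension, so we may instead choose the extension $\widetilde\varphi$ itself. Write $\widetilde\varphi=\widetilde\varphi\,\theta+\widetilde\varphi(1-\theta)$, where $\theta$ is a smooth cutoff equal to $1$ on $U_{-r}$ and supported in $U_{-r/2}$. Both pieces are Lipschitz. The interior part $\int_U\widetilde\varphi\theta\,d(dJ_\varepsilon)+\int_U d(\widetilde\varphi\theta)\wedge J_\varepsilon$ involves only data on $U_{-r/4}\Subset U$, with a smooth compactly supported test function and test form. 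It therefore converges to the corresponding expression for $J_0$, by the assumed weak convergence of currents and the weak-* convergence of $dJ_\varepsilon$ on $U_{-r/4}$. Here we use the compactly supported versions, and we may mollify $\theta$ if needed, since the hypotheses are stated on every $U_{-r}$.

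The boundary-layer remainder $R_\varepsilon(r):=\int_{\{-r<\rho<0\}}\widetilde\varphi(1-\theta)\,d(dJ_\varepsilon)+\int d(\widetilde\varphi(1-\theta))\wedge J_\varepsilon$ is where the obstacle sits, because $|d\theta|\sim 1/r$. The first term is bounded by $C\|\varphi\|_\infty\Vol\{-r<\rho<0\}\le Cr$, using the uniform density bound and Proposition~\ref{prop:BH_to_collar}. In the second term the contribution $(1-\theta)\,d\widetilde\varphi\wedge J_\varepsilon$ is likewise $O(r)$. The dangerous piece is $-\widetilde\varphi\,d\theta\wedge J_\varepsilon$. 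Since it is not small, we do not estimate it but pass it to the limit. We choose $\theta=\theta(\rho)$ depending only on $\rho$. Then $-\int \widetilde\varphi\,\theta'(\rho)\,d\rho\wedge J_\varepsilon$ is, by coarea, an average over $s\in(-r,-r/2)$ of the inner-level fluxes $\int_{\Sigma_s}\widetilde\varphi\,J_\varepsilon$. This piece has support in $\{-r<\rho<-r/4\}$, a compact subset of $U$. There $d\theta$ is a smooth compactly supported form, so this term also converges as $\varepsilon\to0$ by the current convergence.

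The conclusion then follows directly. Every term except the $O(r)$ errors converges, and those errors are bounded by $Cr$ uniformly in $\varepsilon$ thanks to the collar bounds. Hence $\limsup_{\varepsilon}\bigl|\int\varphi\,d\Tr J_\varepsilon-\int\varphi\,d\Tr J_0\bigr|\le Cr$, and letting $r\downarrow0$ gives the limit. The argument actually shows that the whole right-hand side with the single extension $\widetilde\varphi\theta$ involves only compactly supported data, up to an $O(r)$ error, because $\widetilde\varphi(1-\theta)$ near the boundary contributes only small volume terms. Finally, the trace bound $\|\Tr J_\varepsilon\|_{L^\infty}\le C_\Sigma\|J_\varepsilon\|_{L^\infty}$ gives uniformly bounded total mass. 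Together with the density of $C^1(\partial U)$ in $C(\partial U)$, this upgrades the convergence to weak-* convergence in $\mathcal M(\partial U)$. The main point requiring care is the mollification of $\theta$ and $\widetilde\varphi$ so that the test forms are smooth enough for the current convergence, which holds only against smooth forms. Lemma~\ref{stat:lem-strong-weak-flux} covers this once the masses are bounded.
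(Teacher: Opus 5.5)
Your argument is correct and is essentially the paper's proof. Both use Gauss--Green with an interior cutoff, pass to the limit in the pieces supported in some $U_{-r}\Subset U$ using the interior hypotheses, bound the collar error by $O(r)$ via the uniform bounds and $\Vol\{-r<\rho<0\}\le Cr$, and then use the trace bound of Theorem~\ref{thm:global-trace} and density to reach all of $C(\partial U)$. The only difference is cosmetic: the paper inserts $1=\chi_r+(1-\chi_r)$ into the integrands of $\int_U\Phi\,d(dJ_\varepsilon)+\int_U d\Phi\wedge J_\varepsilon$ rather than into the test function, so no derivative of the cutoff appears (its interior terms sum exactly to your $-\int\widetilde\varphi\,d\theta\wedge J_\varepsilon$, and your ``interior part'' is identically zero by the definition of $dJ_\varepsilon$); one small fix is that the cutoff $\chi$ in your extension must live at a scale fixed before $r\downarrow0$, because with $\chi_r$ and $\theta$ at the same scale $(1-\theta)\,d\widetilde\varphi$ has size $1/r$ and your claimed $O(r)$ bound fails (though that term is then compactly supported in $U$ and converges anyway).
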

\begin{proof}
First, take $\Phi$, which is required to be smooth throughout a neighborhood of $\overline U$. Then define $\varphi=\Phi|_{\partial U}$ by restricting this function to the boundary. Applying the Gauss--Green theorem gives
\[
I_\varepsilon(\varphi):=\int_{\partial U}\varphi\,d(\Tr_{\partial U}^{\mathrm{out}}J_\varepsilon)=\int_U\Phi\,d(dJ_\varepsilon)+\int_Ud\Phi\wedge J_\varepsilon.
\]
Next, construct an auxiliary function $\chi_r\in C_c^\infty(U)$. It is equal to $1$ throughout $U_{-2r}$, all of its nonzero values lie inside $U_{-r}$, and its values satisfy $0\le\chi_r\le1$. We now use $\chi_r+(1-\chi_r)$ to split each of the two integrals into an interior part and a boundary-collar part.

Before taking the limit, first fix $r$ and keep it fixed throughout this step. The terms containing $\chi_r$ converge by the two interior convergence assumptions, whereas the collar terms have the upper bound $Cr\|\Phi\|_{C^1(\overline U)}$. Combining these statements, we obtain
\[
\limsup_{\varepsilon\downarrow0}|I_\varepsilon(\varphi)-I_0(\varphi)|\le2Cr\|\Phi\|_{C^1(\overline U)}.
\]
We can now let $r\downarrow0$, which proves convergence for every smooth boundary test function. After making one fixed cutoff in the boundary collar, apply the trace estimate in Theorem~\ref{thm:global-trace}. It gives a uniform bound for the total variations of the boundary traces. Finally, the density of smooth functions in the continuous functions on the boundary extends the convergence to every test function in $C(\partial U)$.
\end{proof}

\subsection{Absolute continuity and jump density}
The next lemma isolates the boundary relation used for all choices of weights.

\begin{lemma}[Boundary proportionality of flux traces]\label{stat:lem-boundary-flux-proportionality}
Let $B$ be a closed real $(n-1,n-1)$-current with $L^\infty$ coefficients on $U$, and let $v\in C^{1,1}(\overline U)$ vanish on $\Sigma$.
Put $J_g:=V^{-1}\dc g\wedge B$ for $g=\rho,v$.
Then $J_\rho,J_v\in\DM^\infty(U)$ and
\[
\Tr_\Sigma^{\mathrm{out}}J_v=\partial_{\nu_{\mathrm{out}}}v\,\Tr_\Sigma^{\mathrm{out}}J_\rho.
\]
\end{lemma}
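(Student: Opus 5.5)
The plan is to split the argument into a membership step and a boundary-identification step.

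\emph{Membership in $\DM^\infty(U)$.} Since $B$ has $L^\infty$ coefficients and $\dc\rho,\dc v\in W^{1,\infty}$, both $J_\rho$ and $J_v$ have $L^\infty$ coefficients. Because $B$ is closed, we get distributionally
\[
V\,dJ_g=\ddc g\wedge B,
\]
where the products are those of $L^\infty$ forms. To justify this, mollify $g$ (which is $C^{1,1}$ near $\overline U$, resp.\ $W^{2,\infty}$): for smooth $g_k\to g$ in $C^1$ with $\ddc g_k\to\ddc g$ a.e.\ and boundedly, use $d(\dc g_k\wedge B)=\ddc g_k\wedge B$ (closedness of $B$ tested against $\dc g_k\wedge\varphi$) and pass to the limit by dominated convergence. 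Thus $dJ_g$ is an $L^\infty$ density, hence a finite Radon measure, and Theorem~\ref{thm:global-trace} gives both traces.

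\emph{Identifying the traces.} First I would write $v=h\rho$ near $\Sigma$ with $h:=v/\rho$ on $U\cap\mathcal N_{\delta_0}$. Since $v\in C^{1,1}$ vanishes on $\Sigma=\{\rho=0\}$ and $\rho$ is the signed distance, Hadamard's lemma along normal segments gives
\[
h(x)=\int_0^1\partial_\nu v\bigl(\pi(x)+s\rho(x)\nu\bigr)\,ds,
\]
so $h\in W^{1,\infty}$ on the collar with $h|_\Sigma=\partial_{\nu_{\mathrm{out}}}v$. Indeed, $\nabla v$ is Lipschitz, so $h$ is Lipschitz, using the $C^{2,1}$ collar coordinates from Proposition~\ref{prop:BH_to_collar}. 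Then, in the collar,
\[
\dc v=h\,\dc\rho+\rho\,\dc h,\qquad\text{so}\qquad J_v=hJ_\rho+V^{-1}\rho\,\dc h\wedge B.
\]
The first piece has trace $h|_\Sigma\Tr J_\rho=\partial_\nu v\,\Tr J_\rho$ by the product rule (Lemma~\ref{lem:product} / Theorem~\ref{thm:global-trace}). It remains to show the second piece $K:=\rho\,\dc h\wedge B$ has zero trace. Using a cutoff equal to $1$ near $\Sigma$ (which does not affect traces), $K=J_v-hJ_\rho$ is in $\DM^\infty$. Its coefficients are $O(|\rho|)$ in $L^\infty$, so by the inner parallel recovery (Theorem~\ref{prelim:thm-innerparallel}) the flux through $\Sigma_{-t}$ is bounded by $Ct\,\mathrm{Area}(\Sigma_{-t})\to0$. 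Hence $\Tr K=0$.

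Alternatively, I would verify $\Tr K=0$ directly via Gauss--Green with test $\widetilde\varphi\chi_r$, where $\chi_r$ cuts off at distance $r$. The term $\int d\chi_r\wedge K$ is $O(r^{-1}\cdot r\cdot r)$, and the measure term is $O(r)$ on the collar, since $dK=dJ_v-dh\wedge J_\rho-h\,dJ_\rho$ is bounded.

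I expect the main obstacle to be the regularity of $h$, namely that $v/\rho$ is Lipschitz up to $\Sigma$ with boundary value exactly $\partial_\nu v$; this needs $C^{1,1}$ of $v$ together with the $C^{2,1}$ normal exponential map. A secondary subtlety is that $h\,dJ_\rho$ and $dh\wedge J_\rho$ are only $L^\infty$, but the product rule handles $W^{1,\infty}$ multipliers, so this is fine.
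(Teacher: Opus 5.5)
Your proof is correct and is essentially the paper's argument. The paper also subtracts a Lipschitz extension of $\partial_{\nu_{\mathrm{out}}}v$ times $J_\rho$, though it uses the extension constant along normal segments rather than your exact quotient $h=v/\rho$. It then observes that the difference has coefficients $O(r)$ on $\Sigma_{-r}$, kills its trace by inner parallel recovery together with the collar area bound, and finishes with the product rule. Your alternative cutoff argument via Gauss--Green is also valid, since $dK=dJ_v-dh\wedge J_\rho-h\,dJ_\rho$ is bounded; it is a slightly more elementary way to get $\Tr_\Sigma^{\mathrm{out}}K=0$ without the essential-limit recovery.
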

\begin{proof}
Closedness of $B$ and the weak product rule give $dJ_g=V^{-1}\ddc g\wedge B\in L^\infty(U)$, so both currents lie in $\DM^\infty(U)$.
Set $f:=\partial_{\nu_{\mathrm{out}}}v$ and choose a Lipschitz extension $\widetilde f$ to $\overline U$ that is constant along normal segments in a boundary collar.
Since $dv=f\,d\rho$ on $\Sigma$, the $C^{1,1}$ bound and the normal flow give
\begin{equation}\label{stat:eq-off-boundary-dc-estimate}
\|\iota_{-r}^*(\dc v-\widetilde f\,\dc\rho)\|_{L^\infty(\Sigma_{-r})}\le Cr.
\end{equation}
Here $\iota_{-r}:\Sigma_{-r}\hookrightarrow U$ is the inclusion.
By the product rule, $E:=J_v-\widetilde fJ_\rho\in\DM^\infty(U)$. The coefficient bound for $B$ gives $\|\iota_{-r}^*E\|_{L^\infty}\le Cr\|B\|_{L^\infty}$ for almost every small $r>0$.
Inner-parallel recovery in Theorem~\ref{thm:global-trace} and the uniform collar area bound give $\Tr_\Sigma^{\mathrm{out}}E=0$.
The trace product rule now proves the identity.
\end{proof}

For nonnegative weights $c_p$, the next proposition treats the geometric flux and the weighted total flux simultaneously.

\begin{proposition}[Weighted absolute continuity and RN density]\label{prop:jump-density}
Let $c_0,\ldots,c_{n-1}\ge0$, not all zero, and set
\[
B_c:=\sum_{p=0}^{n-1}c_p\,\omega_{u_0}^p\wedge\omega_0^{n-1-p},\quad \mathcal J[\rho;c]:=\frac1V\dc\rho\wedge B_c,\quad \mathcal J[u_0;c]:=\frac1V\dc u_0\wedge B_c.
\]
Then $\mathcal J[\rho;c],\mathcal J[u_0;c]\in\DM^\infty(U)$, $d\sigma_{\rho,c}:=\Tr_\Sigma^{\mathrm{out}}\mathcal J[\rho;c]$ is a positive Radon measure, and
\[
\Tr_\Sigma^{\mathrm{out}}\mathcal J[u_0;c] =\partial_{\nu_{\mathrm{out}}}u_0\,d\sigma_{\rho,c}\le0.
\]
\end{proposition}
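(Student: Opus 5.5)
The plan is to reduce everything to Lemma~\ref{stat:lem-boundary-flux-proportionality} with $B=B_c$ and $v=u_0$, and then to prove positivity of $d\sigma_{\rho,c}$ separately.

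\textbf{Step 1: $B_c$ is admissible for the lemma.} By Theorem~\ref{reg:thm-base-regularity} we have $u_0\in C^{1,1}(\overline U)$ and $-\omega_0\le\ddc u_0\le C\omega_0$ a.e. on $U$. Hence $\omega_{u_0}$ has $L^\infty$ coefficients, and each Bedford--Taylor product $\omega_{u_0}^p\wedge\omega_0^{n-1-p}$ coincides with the pointwise a.e. wedge product of bounded forms. These products are closed currents, since Bedford--Taylor products of closed positive currents are closed. Therefore $B_c$ is a closed real $(n-1,n-1)$-current with $L^\infty$ coefficients. Moreover $u_0$ vanishes on $\Sigma$, by Lemma~\ref{reg:lem-barrier} and continuity.

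Applying Lemma~\ref{stat:lem-boundary-flux-proportionality} then gives $\mathcal J[\rho;c],\mathcal J[u_0;c]\in\DM^\infty(U)$ and $\Tr^{\mathrm{out}}_\Sigma\mathcal J[u_0;c]=\partial_{\nu_{\mathrm{out}}}u_0\,d\sigma_{\rho,c}$. The sign $\partial_{\nu_{\mathrm{out}}}u_0\le0$ follows from $u_0\ge0$ in $U$ and $u_0=0$ on $\Sigma$. So the final inequality reduces to positivity of $d\sigma_{\rho,c}$.

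\textbf{Step 2: positivity via inner parallel recovery.} By Theorem~\ref{thm:global-trace}, for a.e. small $r$ the trace is the weak-$*$ limit of the pullbacks $\iota_{-r}^*\bigl(V^{-1}\dc\rho\wedge B_c\bigr)$ on $\Sigma_{-r}$, transported to $\Sigma$. It therefore suffices to show that the pullback is a nonnegative multiple of the surface measure, up to an error $o(1)$ as $r\downarrow0$.

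At a point of $\Sigma_{-r}$ where $u_0$ is twice differentiable, use a unitary frame adapted to $T^{1,0}\Sigma_{-r}\oplus\mathbb C\,\partial\rho$. For a top-degree form $\dc\rho\wedge\beta$ with $\beta$ a real $(n-1,n-1)$-form, the restriction to the level set is
\[
\iota^*(\dc\rho\wedge\beta)=c_n\,|\partial\rho|\,\bigl(\text{tangential }(n-1,n-1)\text{ component of }\beta\bigr)\,d\sigma.
\]
Here $\dc\rho$ restricts to a multiple of the contact form, and $c_n>0$ is dimensional. The tangential component of $\omega_{u_0}^p\wedge\omega_0^{n-1-p}$ is the mixed determinant of the restrictions of $\omega_{u_0}$ and $\omega_0$ to the complex tangent space $H=T^{1,0}\Sigma_{-r}$. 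Since $\omega_{u_0}\ge0$ a.e., its restriction to $H$ is positive semidefinite, and mixed discriminants of semidefinite and definite forms are nonnegative.

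Thus each term is $\ge0$ a.e. on a.e. level, and with $c_p\ge0$ the pullback densities are nonnegative. Nonnegativity passes to the weak-$*$ limit, so $d\sigma_{\rho,c}\ge0$.

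\textbf{Main obstacle.} The delicate point is justifying the pointwise algebra on the levels $\Sigma_{-r}$ when $B_c$ is only $L^\infty$. The chosen levels in Theorem~\ref{prelim:thm-innerparallel} are those on which a precise representative has an $H^{N-1}$-a.e. restriction, and on such levels the a.e. positivity of $\omega_{u_0}$ must hold $H^{N-1}$-a.e.

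I would sidestep this by approximation. Apply the same restriction computation to the smooth $u_{0,\delta,m}$, for which $\omega_{u_{0,\delta,m}}>0$ pointwise, so that the corresponding traces $d\sigma^{\delta,m}_{\rho,c}$ are nonnegative by the classical pointwise computation on $\Sigma$ itself. Then pass to the limit using Lemma~\ref{dyn:lem-trace-stability}. Its hypotheses hold for two reasons: the coefficients are uniformly bounded by Proposition~\ref{reg:conv-c31-smoothing-reduction}, and the interior currents converge by Bedford--Taylor continuity together with Lemma~\ref{stat:lem-strong-weak-flux}. The divergences are $V^{-1}\ddc\rho\wedge B_c^{\delta}$, which are uniformly bounded and converge weakly. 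For the $m$-limit, one must pull back by $\Phi_{0,m}$ or compare traces on $\partial U_{0,m}$ with traces on $\Sigma$; I expect this bookkeeping to be the most technical part.
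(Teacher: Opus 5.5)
Your Steps 1 and 2 are essentially the paper's proof. The paper also applies Lemma~\ref{stat:lem-boundary-flux-proportionality} for $\DM^\infty$ membership and the density identity, and it gets positivity from $d\rho\wedge\mathcal J[\rho;c]=V^{-1}d\rho\wedge\dc\rho\wedge B_c\ge0$ a.e.\ (equivalent to your tangential mixed-discriminant computation), followed by inner-level trace recovery.

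The obstacle you flag is settled in one line by coarea. Since $|\nabla\rho|=1$ on the collar, the Lebesgue-null set where the flux density is negative meets almost every $\Sigma_{-r}$ in an $H^{2n-1}$-null set. So the approximation detour is unnecessary. If you did want it, $u_{0,\delta}$, which is smooth with $\omega_{u_{0,\delta}}>0$ inside $U$, would already avoid the $m$-bookkeeping.
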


\begin{proof}
The current $B_c$ is positive, closed, and has $L^\infty$ coefficients.
Lemma~\ref{stat:lem-boundary-flux-proportionality} gives the $\DM^\infty$ membership and the Radon--Nikodym identity.
For positivity, write $\mathcal J[\rho;c]=\iota_FdV_{g_0}$ in the signed-distance collar.
Then
\[
(F\cdot\nabla\rho)\,dV_{g_0}=d\rho\wedge\mathcal J[\rho;c]=\frac1Vd\rho\wedge\dc\rho\wedge B_c\ge0\qquad\text{a.e.}
\]
By coarea, the outward flux is nonnegative on almost every inner parallel level. Theorem~\ref{thm:global-trace} therefore gives a nonnegative trace.
Thus $d\sigma_{\rho,c}\ge0$. Since $u_0\ge0$ on $U$ and $u_0|_\Sigma=0$, we also have $\partial_{\nu_{\mathrm{out}}}u_0\le0$.
\end{proof}

\subsection{Boundary mass as outward flux}

The following lemma combines zero-boundary integration by parts with Gauss--Green extraction; it also applies to the mixed currents in Section~\ref{sec:hadamard}.
\begin{lemma}[Boundary extraction by integration by parts]
\label{stat:lem-bt-gg-extraction}
Let $v\in C^{1,1}(\overline U)$ vanish on $\Sigma$, let $B$ be a closed real $(n-1,n-1)$-current with $L^\infty$ coefficients on $U$, and put $J:=V^{-1}\dc v\wedge B$.
Suppose that $\nu$ is a finite signed Radon measure on $X$, supported in $\overline U$, and that
\[
\int_X\phi\,d\nu=\frac1V\int_Uv\,\ddc\phi\wedge B\qquad(\phi\in C^\infty(X)).
\]
Then $\nu\llcorner U=dJ$ and $\nu\llcorner\Sigma=-\Tr_\Sigma^{\mathrm{out}}J$.
\end{lemma}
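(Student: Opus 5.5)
We compute $dJ$ in the interior and compare with $\nu$, then use the Gauss--Green characterization of the trace in Theorem~\ref{thm:global-trace} to read off the boundary part.

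\textbf{Step 1: interior identification.} Since $B$ is closed with $L^\infty$ coefficients and $v\in C^{1,1}$, the weak product rule gives $dJ=V^{-1}\ddc v\wedge B$, a bounded density on $U$. In particular $J\in\DM^\infty(U)$, so the outward trace of Theorem~\ref{thm:global-trace} exists. For $\phi\in C_c^\infty(U)$ we integrate by parts twice; there are no boundary terms because of the compact support:
\[
\frac1V\int_Uv\,\ddc\phi\wedge B=\frac1V\int_U\phi\,\ddc v\wedge B=\int_U\phi\,d(dJ).
\]
To justify this for $C^{1,1}$ data, we mollify $v$ locally, or equivalently use that $v$ has $L^\infty$ second derivatives and $B$ is closed. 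Comparing with the hypothesis gives $\nu\llcorner U=dJ$.

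\textbf{Step 2: boundary part.} Fix $\phi\in C^\infty(X)$. Because $\nu$ is supported in $\overline U$, we can split
\[
\int\phi\,d\nu=\int_U\phi\,d(dJ)+\int_\Sigma\phi\,d(\nu\llcorner\Sigma).
\]
Next we need the integration-by-parts identity up to the boundary:
\[
\frac1V\int_Uv\,\ddc\phi\wedge B=\frac1V\int_U\phi\,\ddc v\wedge B+\frac1V\int_Ud\phi\wedge\dc v\wedge B-\frac1V\int_U d v\wedge\dc\phi\wedge B\cdots
\]
More cleanly, we do one integration by parts:
\[
\int_Uv\,\ddc\phi\wedge B=-\int_U dv\wedge\dc\phi\wedge B.
\]
There is no boundary term here, since $v|_\Sigma=0$ and $v$ is Lipschitz; this is justified by cutting off with $\chi_r$ and letting $r\downarrow0$, the collar error being $O(r)$ because $|v|\le Cr$ on $\{\operatorname{dist}<r\}$ and the $d\chi_r$ term is $O(1)$ on a set of volume $O(r)$. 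Then the symmetry $dv\wedge\dc\phi\wedge B=d\phi\wedge\dc v\wedge B$ holds pointwise, because $B$ is a real $(n-1,n-1)$ form. This gives
\[
\int\phi\,d\nu=-\int_Ud\phi\wedge J.
\]
The Gauss--Green formula of Theorem~\ref{thm:global-trace}, with $\widetilde\varphi=\phi$, reads
\[
\int_\Sigma\phi\,d\Tr^{\mathrm{out}}_\Sigma J=\int_U\phi\,d(dJ)+\int_Ud\phi\wedge J.
\]
Combining the three displays yields
\[
\int_\Sigma\phi\,d(\nu\llcorner\Sigma)=-\int_Ud\phi\wedge J-\int_U\phi\,d(dJ)=-\int_\Sigma\phi\,d\Tr^{\mathrm{out}}_\Sigma J.
\]
Restrictions of smooth $\phi$ are dense in $C(\Sigma)$, so $\nu\llcorner\Sigma=-\Tr_\Sigma^{\mathrm{out}}J$.

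\textbf{Main obstacle.} The delicate point is the vanishing of the boundary term in the first integration by parts, and the symmetry identity, when $B$ is only an $L^\infty$ current rather than smooth. I would handle both by working on inner parallel domains $U_{-r}$, where $v=O(r)$ on $\Sigma_{-r}$ and the boundary integrals are $O(r)$ by the uniform area bound. I would also use that the symmetry holds at the level of coefficients almost everywhere.
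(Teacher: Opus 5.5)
Your proof is correct and follows essentially the paper's route: one integration by parts with no boundary term because $v|_\Sigma=0$, then the $(n-1,n-1)$-type symmetry $dv\wedge\dc\phi\wedge B=d\phi\wedge\dc v\wedge B$, and finally the Gauss--Green characterization of $\Tr_\Sigma^{\mathrm{out}}J$ together with density of restrictions in $C(\Sigma)$. The only difference is technical: the paper removes the boundary term with the trace product rule, $\Tr_\Sigma^{\mathrm{out}}(vK_\phi)=0$ for $K_\phi:=V^{-1}\dc\phi\wedge B$, while your $\chi_r$ cutoff estimate proves that instance directly, and the paper reads off $\nu\llcorner U=dJ$ from the same single identity rather than a separate double integration by parts.
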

\begin{proof}
By Lemma~\ref{stat:lem-boundary-flux-proportionality}, we have $J\in\DM^\infty(U)$.
For smooth $\phi$, set $K_\phi:=V^{-1}\dc\phi\wedge B$.
The two facts $dK_\phi=V^{-1}\ddc\phi\wedge B\in L^\infty(U)$ and $v|_\Sigma=0$ allow us to use the trace product rule, giving $\Tr_\Sigma^{\mathrm{out}}(vK_\phi)=0$.
Gauss--Green then gives
\[
\frac1V\int_Uv\,\ddc\phi\wedge B=-\frac1V\int_Udv\wedge\dc\phi\wedge B=-\int_Ud\phi\wedge J.
\]
Here the second equality uses the $(n-1,n-1)$ type of $B$.
Testing with compact support in $U$ shows that $\nu\llcorner U=dJ$.
Split $\nu$ over $U$ and $\Sigma$, and use Gauss--Green again:
\[
\int_\Sigma\phi\,d\nu=\int_X\phi\,d\nu-\int_U\phi\,d(dJ)=-\int_\Sigma\phi\,d(\Tr_\Sigma^{\mathrm{out}}J).
\]
Restrictions of ambient smooth functions are dense in $C(\Sigma)$, which proves the boundary identity.
\end{proof}

\begin{proposition}[Outward flux identifies the boundary mass]\label{stat:prop-boundary-mass}
	On the prescribed interface,
	\begin{equation}\label{stat:eq-boundary-mass-identity}
		\mu_0^{\mathrm{sing}} 		=-\Tr^{\mathrm{out}}_{\partial U}(\mathcal J[u_0]) 		=\left(-\frac{\partial u_0}{\partial\nu_{\mathrm{out}}}\right)d\sigma_\rho 		\qquad\text{in }\mathcal M(\partial U),
	\end{equation}
	where $\mu_0^{\mathrm{sing}}:=\mu_0\llcorner\partial U$ and $\mu_0=\MA(u_0)$.
\end{proposition}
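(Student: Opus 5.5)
The plan is to apply Lemma~\ref{stat:lem-bt-gg-extraction} with $v=u_0|_{\overline U}$, $B=T_{u_0}$ and $\nu=\mu_0-\frac1V\omega_0^n\llcorner U$ (suitably interpreted), and then to read off the density from Proposition~\ref{prop:jump-density} with all weights $c_p=1$. By Theorem~\ref{reg:thm-base-regularity}, $u_0\in C^{1,1}(\overline U)$ vanishes on $\Sigma$. The current $T_{u_0}$ is a closed positive $(n-1,n-1)$-current, and its coefficients are $L^\infty$ on $U$ because $-\omega_0\le\ddc u_0\le C\omega_0$ a.e. So the hypotheses on $v$ and $B$ hold. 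What remains is the integration-by-parts identity for the measure $\nu$.

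To get that identity, I will start from the global telescoping: for $\phi\in C^\infty(X)$,
\[
\int_X\phi\,(\omega_{u_0}^n-\omega_0^n)=\int_X u_0\,\ddc\phi\wedge\sum_{j=0}^{n-1}\omega_{u_0}^j\wedge\omega_0^{n-1-j}.
\]
This is the Bedford--Taylor integration by parts for bounded $\omega_0$-psh functions on a compact manifold. It follows by factoring $\omega_{u_0}^n-\omega_0^n=\ddc u_0\wedge T_{u_0}$ and using Stokes on closed $X$; smooth decreasing approximants of $u_0$ and monotone continuity justify it. Since $u_0=0$ on $\Omega=X\setminus U$, the right-hand side equals $\int_U u_0\,\ddc\phi\wedge T_{u_0}$. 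On $U$ the current is the interior Bedford--Taylor representative, so we may replace it by its a.e. $L^\infty$ coefficients there. The point is that $u_0$ kills whatever singular part $T_{u_0}$ might carry on $\Sigma$ or on $\Omega$.

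Next, I set $\nu:=\MA(u_0)-\frac1V\omega_0^n$, viewed as a signed measure on $X$. The displayed identity gives $\int\phi\,d\nu=\frac1V\int_U u_0\,\ddc\phi\wedge T_{u_0}$, and this forces $\nu$ to vanish on the open set $X\setminus\overline U$. Indeed, testing with $\phi$ supported there makes the right side zero, which is consistent with $\omega_{u_0}=\omega_0$ in the interior of $\Omega$. Thus $\nu$ is supported in $\overline U$. Lemma~\ref{stat:lem-bt-gg-extraction} then yields $\nu\llcorner\Sigma=-\Tr^{\mathrm{out}}_\Sigma\mathcal J[u_0]$. Since $\omega_0^n$ does not charge the hypersurface $\Sigma$, which is Lebesgue-null, we get $\nu\llcorner\Sigma=\MA(u_0)\llcorner\partial U=\mu_0^{\mathrm{sing}}$, and this is the first equality. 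As a consistency check, the lemma also gives $\nu\llcorner U=d\mathcal J[u_0]=-\frac1V\omega_0^n$, which matches Corollary~\ref{stat:cor-base-flux-trace-recovery} and $\MA(u_0)\llcorner U=0$.

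The second equality is Proposition~\ref{prop:jump-density} with $c_p\equiv1$, where $B_c=T_{u_0}$ and $d\sigma_{\rho,c}=d\sigma_\rho$; it gives $-\Tr^{\mathrm{out}}\mathcal J[u_0]=(-\partial_{\nu_{\mathrm{out}}}u_0)\,d\sigma_\rho$. I expect the main obstacle to be justifying the global integration by parts. The $\omega_0$-psh function $u_0$ is only Lipschitz across $\Sigma$, so $T_{u_0}$ may have mass on $\Sigma$. The identity $\int_X u_0\,\ddc\phi\wedge T_{u_0}=\int_U u_0\,\ddc\phi\wedge T_{u_0}^{\mathrm{int}}$ needs the continuity of $u_0$ together with $u_0|_\Sigma=0$, so that the product $u_0\,T_{u_0}$ puts no mass on $\Sigma$. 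I would handle this with a cutoff $\chi_r$ near $\Sigma$ and the bound $|u_0|\le Lr$ from Lemma~\ref{reg:lem-barrier}, plus the uniform mass bound of $T_{u_0}$. That bound shows the collar contribution is $O(r)$, so it vanishes in the limit.
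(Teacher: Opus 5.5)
Your proposal is correct and follows the paper's proof essentially line by line: global Bedford--Taylor integration by parts with $\nu=\MA(u_0)-V^{-1}\omega_0^n$, support of $\nu$ in $\overline U$, Lemma~\ref{stat:lem-bt-gg-extraction} with $v=u_0|_U$ and $B=T_{u_0}|_U$ for the first equality, and Proposition~\ref{prop:jump-density} with $c_p\equiv1$ for the second. Two small remarks. The collar-cutoff justification is harmless but unnecessary: $u_0$ is continuous and vanishes on $X\setminus U$, so the measure $u_0\,\ddc\phi\wedge T_{u_0}$ already puts no mass outside $U$. Also, the $\llcorner U$ in your first paragraph should be dropped, which your third paragraph in effect already does.
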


\begin{proof}
Take $\phi\in C^\infty(X)$.
Global Bedford--Taylor integration by parts, using $u_0=0$ on $X\setminus U$, gives
\begin{equation}\label{stat:eq-global-bt-ibp}
\int_X\phi\left(\mu_0-\frac1V\omega_0^n\right)=\frac1V\int_Uu_0\,\ddc\phi\wedge T_{u_0}.
\end{equation}
Put $\nu:=\mu_0-V^{-1}\omega_0^n$.
The equality $\mu_0=V^{-1}\omega_0^n$ on $X\setminus\overline U$ shows that this measure is supported in $\overline U$, with $\nu\llcorner\Sigma=\mu_0^{\mathrm{sing}}$.
Apply Lemma~\ref{stat:lem-bt-gg-extraction} with $v=u_0|_U$ and $B=T_{u_0}|_U$ to obtain the first equality in \eqref{stat:eq-boundary-mass-identity}.
The second follows from Proposition~\ref{prop:jump-density}, completing the proof of Theorem~\ref{intro:thm-boundary-mass}.
\end{proof}

\subsection{Total geometric jump density}
For $n\ge2$, polarization produces mixed boundary terms with weights $p+1$. The following current packages them; Example~\ref{dyn:ex-toric-collapse} shows why these weights cannot be replaced by a common coefficient.

\begin{proposition}[Total geometric jump density]\label{prop:total_jump_density}
With $B_0^{\mathrm{tot}}$ as in Section~\ref{sec:intro}, set
\[
\mathcal J[\rho;\mathrm{tot}]:=\frac1V\dc\rho\wedge B_0^{\mathrm{tot}}, \qquad \mathcal J[u_0;\mathrm{tot}]:=\frac1V\dc u_0\wedge B_0^{\mathrm{tot}}.
\]
Both currents lie in $\DM^\infty(U)$, and
\begin{equation}\label{stat:eq-total-jump-density}
d\sigma_\rho^{\mathrm{tot}} :=\Tr_\Sigma^{\mathrm{out}}\mathcal J[\rho;\mathrm{tot}]\ge0, \qquad \sigma_{\mathrm{tot}} :=-\Tr_\Sigma^{\mathrm{out}}\mathcal J[u_0;\mathrm{tot}] =\left(-\partial_{\nu_{\mathrm{out}}}u_0\right)d\sigma_\rho^{\mathrm{tot}}.
\end{equation}
\end{proposition}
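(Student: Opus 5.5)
This is the special case $c_p=p+1$ of Proposition~\ref{prop:jump-density}, so the proof reduces to checking that this weight vector satisfies the hypotheses there and then quoting the conclusion. With $c_p=p+1$ for $0\le p\le n-1$, all weights are nonnegative and $c_0=1\neq0$. Moreover $B_c$ coincides with $B_0^{\mathrm{tot}}=\sum_{p=0}^{n-1}(p+1)\omega_{u_0}^p\wedge\omega_0^{n-1-p}$. It follows that $\mathcal J[\rho;c]=\mathcal J[\rho;\mathrm{tot}]$ and $\mathcal J[u_0;c]=\mathcal J[u_0;\mathrm{tot}]$.

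Before invoking the proposition, I would recheck the structural inputs its proof uses, since here they must hold for $B_0^{\mathrm{tot}}$.

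\begin{itemize}
\item \emph{Closedness.} Each Bedford--Taylor product $\omega_{u_0}^p\wedge\omega_0^{n-1-p}$ on $U$ is closed, and a finite linear combination of closed currents is closed.
\item \emph{Positivity.} Each such product is a positive $(n-1,n-1)$-current, and the weights are positive, so $B_0^{\mathrm{tot}}$ is positive.
\item \emph{$L^\infty$ coefficients.} Theorem~\ref{reg:thm-base-regularity} gives $-\omega_0\le\ddc u_0\le C\omega_0$ a.e. on $U$, hence $0\le\omega_{u_0}\le (C+1)\omega_0$. The coefficients of the wedge powers are therefore bounded pointwise a.e., and they agree with the Bedford--Taylor products because $u_0\in C^{1,1}(\overline U)$.
\end{itemize}

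With these checks in place, Lemma~\ref{stat:lem-boundary-flux-proportionality} applies with $B=B_0^{\mathrm{tot}}$ and $v=u_0\in C^{1,1}(\overline U)$, which vanishes on $\Sigma$. It gives $\DM^\infty$ membership for both currents, together with the density identity $\Tr^{\mathrm{out}}_\Sigma\mathcal J[u_0;\mathrm{tot}]=\partial_{\nu_{\mathrm{out}}}u_0\,d\sigma^{\mathrm{tot}}_\rho$. Changing sign yields the claimed formula for $\sigma_{\mathrm{tot}}$.

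For positivity of the trace, I would repeat the argument from the proof of Proposition~\ref{prop:jump-density}. The pointwise identity $d\rho\wedge\dc\rho\wedge B_0^{\mathrm{tot}}\ge0$ holds a.e. because $d\rho\wedge\dc\rho$ is a positive $(1,1)$-form and $B_0^{\mathrm{tot}}$ is positive. By coarea, the flux through almost every inner parallel level is then nonnegative, and inner parallel recovery (Theorem~\ref{thm:global-trace}) shows that $d\sigma^{\mathrm{tot}}_\rho\ge0$.

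The only point needing care, and not a serious obstacle, is the $L^\infty$-coefficient check. It depends on the two-sided Hessian bound of Theorem~\ref{reg:thm-base-regularity}, which is what makes the mixed wedge powers genuine bounded forms rather than merely measures.
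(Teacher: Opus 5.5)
Your proposal is correct and takes the same approach as the paper, whose proof is the single line applying Proposition~\ref{prop:jump-density} with $c_p=p+1$. Your checks of closedness, positivity and the bound $0\le\omega_{u_0}\le(C+1)\omega_0$ from Theorem~\ref{reg:thm-base-regularity} spell out the properties of $B_c$ that the proof of that proposition already asserts.
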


\begin{proof}
	Apply Proposition~\ref{prop:jump-density} with $c_p=p+1$.
\end{proof}

\begin{proposition}[Explicit densities on the parallel levels]
\label{stat:prop-explicit-parallel-density}
For $t\in I$, let $\iota_t:\Sigma_t\hookrightarrow X$ be the inclusion and orient $\Sigma_t=\partial U_t$ by the outward unit normal $\nu_t=\nabla_{g_0}\rho$.
Put
\[
\theta_t:=\iota_t^*\dc\rho,\qquad \alpha_t:=\iota_t^*\omega_0,\qquad q_t:=-\partial_{\nu_t}u_t,\qquad A_t:=\alpha_t-q_t\,d\theta_t.
\]
Let $d\sigma_{\rho,t}$ and $d\sigma_{\rho,t}^{\mathrm{tot}}$ be the recentered traces from Proposition~\ref{prop:jump-density} for $c_p=1$ and $c_p=p+1$.
Then $A_t$ is semipositive on $\ker\theta_t$, and
\begin{align}
d\sigma_{\rho,t}&=\frac1V\theta_t\wedge\sum_{p=0}^{n-1}A_t^p\wedge\alpha_t^{n-1-p},\label{stat:eq-explicit-static-density}\\
d\sigma_{\rho,t}^{\mathrm{tot}}&=\frac1V\theta_t\wedge\sum_{p=0}^{n-1}(p+1)A_t^p\wedge\alpha_t^{n-1-p}.\label{stat:eq-explicit-total-density}
\end{align}
These positive measures are represented by Lipschitz top-degree forms.
With $dS_t$ denoting the $g_0$-surface measure, there are $c,C>0$, independent of $t\in I$, such that
\begin{equation}\label{stat:eq-reference-surface-equivalence}
c\,dS_t\le d\sigma_{\rho,t}\le C\,dS_t,\qquad c\,dS_t\le d\sigma_{\rho,t}^{\mathrm{tot}}\le C\,dS_t.
\end{equation}
\end{proposition}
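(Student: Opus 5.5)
The plan is to compute both traces first for smooth approximants, where the outward trace is the classical boundary pullback, then pass to the limit with trace stability. The algebraic core is the following. If $v$ is $C^1$ up to $\Sigma_t$ and vanishes there, then $dv=(\partial_{\nu_t}v)\,d\rho$ on $\Sigma_t$, hence $\dc v=(\partial_{\nu_t}v)\,\dc\rho$ there. Consequently
\[
\iota_t^*\dc v=-q_v\,\theta_t,\qquad q_v:=-\partial_{\nu_t}v,
\]
\[
\iota_t^*\ddc v=d(\iota_t^*\dc v)=-q_v\,d\theta_t-dq_v\wedge\theta_t.
\]
After wedging with $\theta_t$ the last term drops. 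So for smooth $v$ vanishing on $\Sigma_t$,
\[
\theta_t\wedge\iota_t^*\bigl(\omega_v^p\wedge\omega_0^{n-1-p}\bigr)=\theta_t\wedge(\alpha_t-q_v\,d\theta_t)^p\wedge\alpha_t^{n-1-p}.
\]
For smooth fields the Anzellotti trace of $V^{-1}\dc\rho\wedge B$ is $\iota_t^*$ of that form (Lemma~\ref{lem:local}). Summing with weights $c_p=1$ or $c_p=p+1$ gives \eqref{stat:eq-explicit-static-density}--\eqref{stat:eq-explicit-total-density} with $q_t$ replaced by $q_v$.

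First I apply this to the classical solutions $u_{t,\delta,m}$ on $U_{t,m}$, after recentering $\rho\mapsto\rho-t$, which leaves $\dc\rho$ unchanged. Next I let $m\to\infty$ and then $\delta\downarrow0$. I transport boundary quantities by $\Phi_{t,m}$ of Lemma~\ref{reg:lem-smooth-parallel-geometry}. The uniform $W^{2,\infty}$ bound \eqref{reg:eq-pulled-W2infty} and \eqref{reg:eq-uniform-C11-family} bound the coefficients of $\dc\rho\wedge B$ uniformly in a fixed collar. The divergence $V^{-1}\ddc\rho\wedge B$ is also uniformly $L^\infty$, since $B$ is closed. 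Bedford--Taylor convergence (Propositions~\ref{reg:conv-c31-smoothing-reduction} and~\ref{reg:lem-uniform-inner-C11}) combined with Lemma~\ref{stat:lem-strong-weak-flux} gives interior convergence of the currents and of their divergences. Lemma~\ref{dyn:lem-trace-stability} then yields weak-$*$ convergence of the traces. On the explicit side, $q_{u_{t,\delta}}\to q_t$ uniformly on $\Sigma_t$ by the $C^{1,\beta}$ rate \eqref{reg:eq-delta-C1beta-rate}. The smooth-domain boundary forms converge after pullback, because $\rho_m\to\rho$ in $C^{3,\alpha}$ and so $\theta$ and $d\theta$ converge uniformly. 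Uniqueness of weak-$*$ limits identifies the two sides. I expect this double limit to be the main technical obstacle: the boundaries move with $m$, so trace stability must be applied on a fixed domain after pulling back, and one must check that the pullback perturbs only lower-order terms. If that becomes awkward, I would instead rerun the computation directly on inner parallel levels $\Sigma_{t-r}$ with the $O(r)$ error estimate \eqref{stat:eq-off-boundary-dc-estimate}, as in Lemma~\ref{stat:lem-boundary-flux-proportionality}.

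For semipositivity, let $H=\ker\theta_t\cap T\Sigma_t$, the complex tangent space. There $dq\wedge\theta$ vanishes on $H\times H$, so $A_{t,\delta}|_H=\omega_{u_{t,\delta}}|_H\ge0$, and this persists in the uniform limit. (Only the restriction to $H$ enters, since a top form $\theta_t\wedge\beta$ on $\Sigma_t$ depends only on $\beta|_H$.) The density is then $\theta_t(J\nu_t)$ times a sum of mixed volumes of the semipositive forms $A_t|_H$ and $\alpha_t|_H$ on the Hermitian space $H$. Each term is therefore nonnegative. The $p=0$ term is $\theta_t\wedge\alpha_t^{n-1}$, which is a fixed positive multiple of $dS_t$ with constant uniform in $t$; this gives the lower bounds in \eqref{stat:eq-reference-surface-equivalence}.

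For the upper bounds I use the uniform bounds $|q_t|\le C$ from Theorem~\ref{intro:thm-uniform-regularity} and $|d\theta_t|\le C$, the latter because $\rho\in C^{3,1}$. Lipschitz regularity of the densities follows because $q_t$ is Lipschitz ($u_t\in C^{1,1}$), while $\theta_t$ and $d\theta_t$ are $C^{1,1}$ on $\Sigma_t$. All constants come from Proposition~\ref{prop:BH_to_collar} and the uniform $C^{1,1}$ bound, hence are independent of $t\in I$.
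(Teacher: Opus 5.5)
Your proposal is correct and follows essentially the same route as the paper: the boundary identity for the smooth solutions $u_{t,\delta,m}$ via cancellation of the $dq\wedge\theta$ term, the double limit $m\to\infty$ then $\delta\downarrow0$ with Lemma~\ref{dyn:lem-trace-stability} on the fixed domain $U_t$, semipositivity of $A_t$ on the complex tangent space inherited from $\omega_u\ge0$, and the $p=0$ term giving the lower bound. The only cosmetic difference is in the $m\to\infty$ step. The paper passes to the limit directly in a localized Stokes identity on $U_{t,m}\cap N_{2\eta}$, using $\theta_{t,m}=\iota_{t,m}^*\dc\rho_m$ and normalizing $a_{\delta,m}$ by $|d\rho_m|_{g_0}$, since $u_{t,\delta,m}$ vanishes on $\{\rho_m=t\}$ rather than on $\Sigma_t$. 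You instead pull back by $\Phi_{t,m}$; the paper reserves trace stability for the $\delta$-limit.
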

\begin{proof}
\noindent\textbf{Step 1. The smooth boundary identity.}
Fix $t\in I$ and set
\[
D_m:=U_{t,m},\qquad \Sigma_{t,m}:=\partial D_m,\qquad \nu_{t,m}:=\frac{\nabla\rho_m}{|d\rho_m|_{g_0}},\qquad a_{\delta,m}:=-\frac{\partial_{\nu_{t,m}}u_{t,\delta,m}}{|d\rho_m|_{g_0}}.
\]
Let $\iota_{t,m}:\Sigma_{t,m}\hookrightarrow X$.
Set $\theta_{t,m}:=\iota_{t,m}^*\dc\rho_m$, $\alpha_{t,m}:=\iota_{t,m}^*\omega_0$, and $A_{\delta,m}:=\alpha_{t,m}-a_{\delta,m}d\theta_{t,m}$.
Since $u_{t,\delta,m}=0$ on $\Sigma_{t,m}$,
\[
\iota_{t,m}^*\dc u_{t,\delta,m}=-a_{\delta,m}\theta_{t,m},\qquad \iota_{t,m}^*\ddc u_{t,\delta,m}=-da_{\delta,m}\wedge\theta_{t,m}-a_{\delta,m}d\theta_{t,m}.
\]
Wedge with $\theta_{t,m}$. The $da_{\delta,m}\wedge\theta_{t,m}$ terms vanish, giving
\begin{equation}\label{stat:eq-boundary-da-cancellation}
\theta_{t,m}\wedge\iota_{t,m}^*(\omega_{u_{t,\delta,m}}^p\wedge\omega_0^{n-1-p})=\theta_{t,m}\wedge A_{\delta,m}^p\wedge\alpha_{t,m}^{n-1-p}.
\end{equation}

\noindent\textbf{Step 2. Identification of the weak traces.}
For $w_p=1$ or $w_p=p+1$, set
\[
B_{\delta,m}^{(w)}:=\sum_{p=0}^{n-1}w_p\omega_{u_{t,\delta,m}}^p\wedge\omega_0^{n-1-p},\qquad J_{\delta,m}^{(w)}:=\frac1V\dc\rho_m\wedge B_{\delta,m}^{(w)}.
\]
We select a smooth test function $\widetilde\varphi$ on the ambient manifold, and its support is contained within the fixed collar neighborhood $N_{2\eta}$.
Apply Stokes' theorem, and a new formula can be derived:
\begin{align}
\int_{\Sigma_{t,m}}\widetilde\varphi\,\iota_{t,m}^*J_{\delta,m}^{(w)}&=\frac1V\int_{D_m\cap N_{2\eta}}\widetilde\varphi\,\ddc\rho_m\wedge B_{\delta,m}^{(w)}\notag\\
&\quad+\int_{D_m\cap N_{2\eta}}d\widetilde\varphi\wedge J_{\delta,m}^{(w)}.\label{stat:eq-localized-smooth-stokes}
\end{align}
First fix $\delta$, invoke the previously mentioned Proposition~\ref{reg:conv-c31-smoothing-reduction} and Lemma~\ref{stat:lem-strong-weak-flux}, then combine these with the property that $\rho_m\to\rho$ in $C^2$, to calculate the limit of the right-hand side of the formula as $m\to\infty$.
All coefficients of the relevant forms have a uniform bound in absolute value, and this property can control two types of error terms: one is the error brought by the collar neighborhood itself, and the other is the error generated by the symmetric differences of the studied domains.
The strong boundary $C^1$ convergence, after we identify the boundaries, can lead to the derivation of the following uniform limits:
\[
a_{\delta,m}\longrightarrow q_{t,\delta}:=-\partial_{\nu_t}u_{t,\delta},\qquad (\theta_{t,m},d\theta_{t,m},\alpha_{t,m})\longrightarrow(\theta_t,d\theta_t,\alpha_t)
\]
In this way, the sequence of boundary polynomials in the original expression \eqref{stat:eq-boundary-da-cancellation} will converge uniformly, and then through Equation~\eqref{stat:eq-localized-smooth-stokes}, the limits of these polynomials can be identified as the Gauss--Green traces on the fixed domain $U_t$.

Proposition~\ref{reg:lem-uniform-inner-C11} guarantees that $q_{t,\delta}\to q_t$ uniformly.
Combined with the previously mentioned uniform boundedness of coefficients, the mixed Bedford--Taylor convergence will make all coefficient functions achieve weak-* convergence in $L^\infty$ on compact subsets.
Multiply these coefficient sequences by the fixed coefficients of $\dc\rho$ and $\ddc\rho$, and the convergence of the reference fluxes and their distributional divergences can be obtained.
Then invoke Lemma~\ref{dyn:lem-trace-stability} for the fixed domain $U_t$, and the weak-* convergence of the boundary traces can be obtained.
Those sequences of polynomials serving as density functions will converge uniformly, and the limit values they converge to can exactly restore the original identities \eqref{stat:eq-explicit-static-density} and \eqref{stat:eq-explicit-total-density}.
On the complex tangent space of the boundary, $A_{\delta,m}$ is equal to the restriction of the semipositive form $\omega_{u_{t,\delta,m}}\ge0$.
After we take the uniform limit on the boundary, $A_t$ can still maintain semipositive definiteness on $\ker\theta_t$.
In the density formulas derived from this, every additive term is nonnegative, and the term corresponding to $p=0$ satisfies the formula:
\[
\theta_t\wedge\alpha_t^{n-1}=\frac1{2n}\iota_t^*(\iota_{\nu_t}\omega_0^n)=\frac{(n-1)!}{2}\,dS_t.
\]
In addition, the uniform $C^{1,1}$ a priori estimate in the derivation, combined with the uniformly controlled geometric properties of the collar neighborhood itself, can further give the upper bounds corresponding to the densities, and at the same time prove that these density functions are Lipschitz continuous.
\end{proof}

\begin{corollary}[Nondegeneracy and stability of the parallel densities]\label{stat:cor-parallel-density-bounds}
With the notation above, there are $c,C>0$, independent of $t\in I$, such that $c\le q_t\le C$ on $\Sigma_t$.
Put $\mu_t^{\mathrm{sing}}:=\MA(u_t)\llcorner\Sigma_t$ and $\sigma_{\mathrm{tot},t}:=-\Tr_{\Sigma_t}^{\mathrm{out}}(V^{-1}\dc u_t\wedge B_t^{\mathrm{tot}})$.
Then
\begin{equation}\label{stat:eq-explicit-physical-densities}
\mu_t^{\mathrm{sing}}=q_t\,d\sigma_{\rho,t},\qquad \sigma_{\mathrm{tot},t}=q_t\,d\sigma_{\rho,t}^{\mathrm{tot}}.
\end{equation}
These measures and $q_t^2d\sigma_{\rho,t}^{\mathrm{tot}}$ are uniformly equivalent to $dS_t$ and have full support.
For $s,t\in I$, the boundary identification in Lemma~\ref{prelim:lem-parallel-identifications} satisfies
\begin{equation}\label{stat:eq-total-density-half-holder}
\left\|\Psi_{s,t}^*d\sigma_{\rho,t}^{\mathrm{tot}}-d\sigma_{\rho,s}^{\mathrm{tot}}\right\|_{C^0(\Sigma_s,g_0)}\le C|t-s|^{1/2},
\end{equation}
where the norm is the coefficient norm of the represented top-degree forms.
\end{corollary}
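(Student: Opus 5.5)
The corollary has four parts: two-sided bounds on $q_t$, the density identities \eqref{stat:eq-explicit-physical-densities}, uniform equivalence with $dS_t$, and the half-Hölder estimate \eqref{stat:eq-total-density-half-holder}. I would prove them in that order.

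\textbf{Bounds on $q_t$.} The upper bound $q_t\le C$ is immediate from the uniform $C^{1,1}$ (hence $C^1$) bound of Proposition~\ref{reg:lem-uniform-inner-C11}.

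For the lower bound I would build an interior subsolution with a definite slope at $\Sigma_t$. Fix a uniform collar and take a strictly $\omega_0$-psh local defining function $\psi_t=-\rho_t+A\rho_t^2$ near $\Sigma_t$. Strong pseudoconvexity, uniform in $t$ by Proposition~\ref{prop:BH_to_collar}, together with a large $A$ makes $\ddc\psi_t$ bounded below on the collar. Scale $\psi_t$ by a small $\lambda>0$ so that $\omega_0+\lambda\ddc\psi_t\ge0$ there. Glue by a maximum with a small negative constant deep inside, using Lemma~\ref{prelim:lem-max-gluing}, and extend by $0$ outside $U_t$. This produces a global $\omega_0$-psh competitor $v_t\le0$ on $\Omega_t$ with $v_t\ge c\,(-\rho_t)$ near $\Sigma_t$, where $c$ is uniform in $t$.

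The gluing step is the main obstacle. A psh function that is positive in $U_t$ and vanishes on $\Sigma_t$ must be glued to something that stays nonpositive on $\Omega_t$. I would instead use $\max\{\lambda(-\rho_t+A\rho_t^2),\,0\}$ inside the collar. This is admissible because $-\rho_t+A\rho_t^2\le0$ for $\rho_t\ge0$ small. The maximum then has to be glued to $0$ or to a constant inward: choose $\lambda$ so that the function drops below the constant $\lambda c'$ before reaching the inner edge of the collar. Since $u_t\ge v_t$ and both vanish on $\Sigma_t$, comparing slopes gives $q_t\ge c$.

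\textbf{Density identities.} These follow from Proposition~\ref{stat:prop-boundary-mass} and Proposition~\ref{prop:total_jump_density}, recentered at level $t$. The proofs there use only uniform properties of $U$, so they apply verbatim to $U_t$ with $u_t$ in place of $u_0$. Given $c\le q_t\le C$ and the equivalence \eqref{stat:eq-reference-surface-equivalence}, the measures $q_t\,d\sigma_{\rho,t}$, $q_t\,d\sigma_{\rho,t}^{\mathrm{tot}}$ and $q_t^2\,d\sigma_{\rho,t}^{\mathrm{tot}}$ are uniformly equivalent to $dS_t$. Full support follows, since $dS_t$ has full support on $\Sigma_t$.

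\textbf{Half-Hölder estimate.} I would use the explicit density \eqref{stat:eq-explicit-total-density}. Pull back $\theta_t,\alpha_t,d\theta_t$ by $\Psi_{s,t}$. Because these forms are restrictions of the fixed $C^{2,1}$ forms $\dc\rho$, $\omega_0$, $\ddc\rho$, and $\|\Psi_{s,t}-\mathrm{Id}\|_{C^2}\le C|t-s|$ by Lemma~\ref{prelim:lem-parallel-identifications}, their pullbacks differ from the level-$s$ forms by $O(|t-s|)$ in $C^0$.

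The scalar $q_t$ enters $A_t$ polynomially, with uniformly bounded coefficients. By \eqref{reg:eq-normal-derivative-half-holder}, $\|q_t\circ\Psi_{s,t}-q_s\|_{C^0}\le C|t-s|^{1/2}$. Expanding the multilinear expression term by term and using uniform bounds on every factor gives the difference bound $C(|t-s|+|t-s|^{1/2})\le C|t-s|^{1/2}$ on the bounded interval $I$.
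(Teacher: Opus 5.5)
Apart from one step, your outline matches the paper's proof. The upper bound comes from the uniform gradient estimate. The identities \eqref{stat:eq-explicit-physical-densities} come from recentering Propositions~\ref{stat:prop-boundary-mass} and~\ref{prop:total_jump_density}. Equivalence with $dS_t$ follows from \eqref{stat:eq-reference-surface-equivalence}. The half-H\"older bound comes from expanding \eqref{stat:eq-explicit-total-density}, with $O(|t-s|)$ errors in $\theta_t,d\theta_t,\alpha_t$ and the $O(|t-s|^{1/2})$ error \eqref{reg:eq-normal-derivative-half-holder} in $q_t$.

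The step that fails is your construction of the competitor for $q_t\ge c$. For $\rho_t<0$ both terms of $-\rho_t+A\rho_t^2$ are positive, so $\lambda\psi_t$ \emph{increases} as you move into $U_t$; it never drops below an interior constant. Lemma~\ref{prelim:lem-max-gluing} produces $v=\max\{\lambda\psi_t,w\}\ge w$. Admissibility then forces $w\le0$ on $\Omega_t$, while the gluing condition forces $w\ge\lambda\psi_t>0$ on the inner edge of the collar. So $w$ cannot be constant, and a nonconstant such $w$ is exactly the competitor you are trying to build. Strong pseudoconvexity also works against you here: $-\ddc\rho$ is negative on $T^{1,0}\Sigma_t$, so $\psi_t$ is not strictly $\omega_0$-psh. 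The lower bound on $\ddc\psi_t$ you get is just the $C^2$ bound.

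The missing observation is that no gluing is needed. The function $\rho$ is a global $W^{2,\infty}$ function, so $\ddc\rho\in L^\infty(X)$, and some $\gamma>0$ satisfies $\omega_0-\gamma\ddc\rho\ge0$ on all of $X$. Then $\gamma(t-\rho)$ is globally $\omega_0$-psh. Hence $\gamma(t-\rho)_+=\max\{\gamma(t-\rho),0\}$ is an admissible competitor that vanishes on $\Omega_t$, and $u_t\ge\gamma(t-\rho)_+$. Both functions vanish on $\Sigma_t$ and $u_t\in C^1(\overline{U_t})$, so comparing interior normal derivatives gives $q_t\ge\gamma\,\partial_{\nu_t}\rho=\gamma$, uniformly in $t$. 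This is the paper's argument.

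If you want to keep a collar construction, flip the sign of the quadratic term. The function $\lambda(-\rho_t-A\rho_t^2)$ returns to $0$ at depth $1/A$, so it can be max-glued to $0$.
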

\begin{proof}
Choose $\gamma>0$ with $\omega_0-\gamma\ddc\rho\ge0$.
The admissible competitor $\gamma(t-\rho)_+\le u_t$ gives $q_t\ge\gamma$. The gradient estimate bounds $q_t$ from above.
Recentering Propositions~\ref{stat:prop-boundary-mass} and~\ref{prop:total_jump_density} gives \eqref{stat:eq-explicit-physical-densities}.
Together with \eqref{stat:eq-reference-surface-equivalence}, these bounds give equivalence to surface measure and full support.

Under $\Psi_{s,t}$, the forms $\theta_t,d\theta_t,\alpha_t$ vary by $O(|t-s|)$, while \eqref{reg:eq-normal-derivative-half-holder} gives $q_t\circ\Psi_{s,t}-q_s=O(|t-s|^{1/2})$.
Applying these estimates to the finite polynomial \eqref{stat:eq-explicit-total-density} proves \eqref{stat:eq-total-density-half-holder}.
\end{proof}

\section{First variation of the energy}\label{sec:hadamard}
The main purpose of this section is to use the tools developed in Section~\ref{sec:flux} to prove our main theorem Theorem~\ref{intro:thm-hadamard}.  Specifically, the Aubin--Mabuchi telescoping
identity \cite[Corollary~4.2]{BermanBoucksom2010EnergyEquilibrium} first writes the energy increment in terms of mixed boundary measures; then, after translating these into traces of mixed flux currents \cite{Anzellotti1983Pairings,ChenFrid1999Divergence,ChenComiTorres2019CauchyFluxes},
we obtain the desired formula along the signed parallel family.

\subsection{Boundary localization}\label{dyn:subsec-localization}
\begin{lemma}\label{dyn:lem-first-order-expansion}
As $\varepsilon\downarrow0$,
\[
u_\varepsilon(x)-u_0(x)=-\varepsilon\partial_{\nu_{\mathrm{out}}}u_0(x)+o(\varepsilon)
\]
uniformly for $x\in\partial U$.
\end{lemma}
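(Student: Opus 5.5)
Since $u_0=0$ on $\partial U$, the claim is that $u_\varepsilon(x)=\varepsilon q_0(x)+o(\varepsilon)$ uniformly for $x\in\Sigma=\Sigma_0$, where $q_0=-\partial_{\nu_{\mathrm{out}}}u_0$. The plan is to compare the value of $u_\varepsilon$ at $x\in\Sigma_0$ with its zero value at the point $\Psi_{0,\varepsilon}(x)\in\Sigma_\varepsilon$, which lies at $g_0$-distance $\varepsilon$ along the outward normal. The required first-order information then comes from the interior $C^{1,1}$ regularity of $u_\varepsilon$ on $\overline{U_\varepsilon}$ (Proposition~\ref{reg:lem-uniform-inner-C11}) together with the $1/2$-H\"older stability of normal derivatives (Proposition~\ref{reg:prop-quantitative-delta-domain-stability}).

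\textbf{Taylor expansion along the normal.} Fix $x\in\Sigma_0$. In the signed-distance collar, the unit-speed geodesic $\gamma(r)=\exp_x(r\nu_0(x))$ for $r\in[0,\varepsilon]$ is an integral curve of $\nabla\rho$. Hence $\rho(\gamma(r))=r$, so $\gamma([0,\varepsilon))\subset U_\varepsilon$, and $\gamma(\varepsilon)=\Psi_{0,\varepsilon}(x)\in\Sigma_\varepsilon$ with $\gamma'(\varepsilon)=\nu_\varepsilon(\gamma(\varepsilon))$. Because $u_\varepsilon\in C^{1,1}(\overline{U_\varepsilon})$ with a bound uniform in $\varepsilon$, the function $f(r)=u_\varepsilon(\gamma(r))$ is $C^{1,1}$ on $[0,\varepsilon]$ with $|f''|\le C$. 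This uses the uniform Hessian bound together with the bounded geodesic curvature term, which vanishes for geodesics anyway. Since $f(\varepsilon)=0$, Taylor's formula at $r=\varepsilon$ gives
\[
u_\varepsilon(x)=f(0)=f(\varepsilon)-\varepsilon f'(\varepsilon)+O(\varepsilon^2)=\varepsilon\,q_\varepsilon(\Psi_{0,\varepsilon}(x))+O(\varepsilon^2),
\]
where $O(\varepsilon^2)$ is uniform in $x$ and $f'(\varepsilon)=\partial_{\nu_\varepsilon}u_\varepsilon=-q_\varepsilon$ is the one-sided derivative from inside $U_\varepsilon$.

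\textbf{Stability of $q$.} By \eqref{reg:eq-normal-derivative-half-holder} with $s=0$ and $t=\varepsilon$, we have $|q_\varepsilon\circ\Psi_{0,\varepsilon}-q_0|\le C\varepsilon^{1/2}$ on $\Sigma_0$. Substituting this into the Taylor expansion gives
\[
u_\varepsilon(x)-u_0(x)=\varepsilon q_0(x)+O(\varepsilon^{3/2})
\]
uniformly on $\partial U$, which is the claim and even supplies the rate $o(\varepsilon)=O(\varepsilon^{3/2})$.

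\textbf{Main obstacle.} The delicate point is justifying the one-sided derivative and the second-order Taylor bound up to the boundary point $\gamma(\varepsilon)\in\Sigma_\varepsilon$. This requires the $C^{1,1}(\overline{U_\varepsilon})$ statement in the sense of a $C^1$ extension with Lipschitz gradient up to $\Sigma_\varepsilon$, and the constant must be uniform in $\varepsilon$; the uniform collar geometry of Proposition~\ref{prop:BH_to_collar} is what guarantees the geodesic segment stays in the collar.

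If one prefers to avoid the boundary point, there is an alternative route. Expand $f$ at $r=0$ instead, using $u_\varepsilon\in C^{1,1}$ on the common domain $\overline{U_0}\subset\overline{U_\varepsilon}$, and compare $\nabla u_\varepsilon$ with $\nabla u_0$ at $x$ via \eqref{reg:eq-t-common-domain-C1beta-rate}, which gives a gradient gap of order $\varepsilon^{1/2}$. This yields $f'(0)=-q_0(x)+O(\varepsilon^{1/2})$, and the same conclusion follows.
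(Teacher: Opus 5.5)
Your proof is correct. It differs from the paper's proof only in where you anchor the first-order expansion, and hence in which stability estimate closes the argument.

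The paper writes
$u_\varepsilon(x)-u_0(x)=-\int_0^\varepsilon\partial_{\nu_s}u_\varepsilon(x_s)\,ds$
along the same normal segment. It then freezes the derivative at the base point $x\in\Sigma_0$ using the uniform $C^{1,\beta}(\overline{U_\varepsilon})$ bound. Finally it compares $\partial_{\nu_{\mathrm{out}}}u_\varepsilon$ with $\partial_{\nu_{\mathrm{out}}}u_0$ on the common domain $\overline{U_0}\subset\overline{U_\varepsilon}$ via \eqref{reg:eq-t-common-domain-C1beta-rate}. That is precisely your ``alternative route''.

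Your main route instead freezes the derivative at the far endpoint $\Psi_{0,\varepsilon}(x)\in\Sigma_\varepsilon$ and then invokes the moving-boundary estimate \eqref{reg:eq-normal-derivative-half-holder}.
\begin{itemize}
\item What your route buys: the clean intermediate identity $u_\varepsilon|_{\Sigma_0}=\varepsilon\,q_\varepsilon\circ\Psi_{0,\varepsilon}+O(\varepsilon^2)$, a discrete version of the Darcy-law heuristic in the introduction, and the explicit rate $O(\varepsilon^{3/2})$ at no extra cost.
\item What the paper's route buys: it is slightly more economical. The common-domain estimate needs only the inclusion $U_0\subset U_\varepsilon$ and interpolation. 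It uses no identification maps $\Xi_{s,t}$ and none of the $Z_{s,t}$ chain-rule bookkeeping behind \eqref{reg:eq-normal-derivative-half-holder}. As written it records only $o(\varepsilon)$, but it also yields $O(\varepsilon^{3/2})$ if you use the $C^{1,1}$ bound and the gradient interpolation \eqref{reg:eq-uniform-gradient-interpolation}.
\end{itemize}

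Three small corrections:
\begin{itemize}
\item The ``main obstacle'' you flag is already settled by \eqref{reg:eq-parallel-limit-estimates}. That is a uniform $C^{1,1}(\overline{U_\varepsilon})$ bound up to $\Sigma_\varepsilon$. Your one-sided interior derivative at $\gamma(\varepsilon)$ matches the paper's convention for $q_t$.
\item In your alternative route, the Taylor step along the segment needs $C^{1,1}$ on $\overline{U_\varepsilon}$, because the segment lies in the shell $\{0\le\rho\le\varepsilon\}$, not in $\overline{U_0}$. The common domain enters only when you compare the gradients at $x$.
\item The $\varepsilon^{1/2}$ gradient gap comes from \eqref{reg:eq-uniform-gradient-interpolation}. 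The estimate \eqref{reg:eq-t-common-domain-C1beta-rate} by itself only gives $\varepsilon^{(1-\beta)/2}$, which is still enough for $o(\varepsilon)$.
\end{itemize}
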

\begin{proof}
Fix $x\in\partial U$ and put $x_s=\exp_x(s\nu_{\mathrm{out}})$ and $\nu_s=\nabla_{g_0}\rho(x_s)$.
Since $x_\varepsilon\in\Sigma_\varepsilon$ and $u_\varepsilon(x_\varepsilon)=u_0(x)=0$,
\[
u_\varepsilon(x)-u_0(x) =-\int_0^\varepsilon\partial_{\nu_s}u_\varepsilon(x_s)\,ds.
\]
The uniform $C^{1,\beta}$ estimate, together with the $C^1$ normal field, gives $\partial_{\nu_s}u_\varepsilon(x_s)=\partial_{\nu_{\mathrm{out}}}u_\varepsilon(x)+O(s^\beta)$ uniformly.
Equation~\eqref{reg:eq-t-common-domain-C1beta-rate} also gives $\partial_{\nu_{\mathrm{out}}}u_\varepsilon\to\partial_{\nu_{\mathrm{out}}}u_0$ uniformly on $\partial U$.
Insert these estimates in the integral to conclude.
\end{proof}

By Lemma~\ref{prelim:lem-am-telescoping},
\begin{equation}\label{dyn:eq-energy-algebraic}
E_{\omega_0}(u_\varepsilon)-E_{\omega_0}(u_0)=\frac1{n+1}\sum_{j=0}^{n}\int_X(u_\varepsilon-u_0)d\mu_{0,\varepsilon}^{(j)}, \quad \mu_{0,\varepsilon}^{(j)}:=\frac1V\omega_{u_\varepsilon}^{j}\wedge\omega_{u_0}^{n-j}.
\end{equation}
\begin{lemma}\label{dyn:lem-bulk-shell-vanishing}
For every $0\le j\le n$,
\[
\int_{X\setminus\partial U}(u_\varepsilon-u_0)d\mu_{0,\varepsilon}^{(j)}=o(\varepsilon).
\]
\end{lemma}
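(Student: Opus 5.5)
We split $X\setminus\partial U$ into three pieces: the exterior region $X\setminus\overline{U_\varepsilon}=\Omega_\varepsilon\setminus\Sigma_\varepsilon$, the shell $S_\varepsilon:=\overline{U_\varepsilon}\setminus\overline U=\{0<\rho\le\varepsilon\}$, and the interior $U$. Each contribution is estimated separately, using $0\le u_\varepsilon-u_0\le L\varepsilon$ on $X$ (Lemma~\ref{reg:lem-linf-approx}).

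\emph{Exterior.} On $\{\rho>\varepsilon\}$ both functions vanish identically, so $u_\varepsilon-u_0=0$ and this piece contributes nothing.

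\emph{Shell.} On $S_\varepsilon$ we have $u_0=0$, and Lemma~\ref{reg:lem-barrier} gives $0\le u_\varepsilon\le L(\varepsilon-\rho)_+\le L\varepsilon$. The mass of $\mu_{0,\varepsilon}^{(j)}$ on the shell is then controlled as follows.
\begin{itemize}
\item On the open set $\{0<\rho<\varepsilon\}\subset U_\varepsilon\setminus\overline U$, $u_0\equiv0$ is smooth with $\omega_{u_0}=\omega_0$, and $u_\varepsilon$ has bounded complex Hessian by Proposition~\ref{reg:lem-uniform-inner-C11}. Hence $\mu_{0,\varepsilon}^{(j)}=V^{-1}\omega_{u_\varepsilon}^j\wedge\omega_0^{n-j}$ has a density bounded uniformly in $\varepsilon$, and Proposition~\ref{prop:BH_to_collar} gives $\mu_{0,\varepsilon}^{(j)}(\{0<\rho<\varepsilon\})\le C\varepsilon$.
\item The remaining part of the shell is $\Sigma_\varepsilon$, where $u_\varepsilon-u_0=0$.
\end{itemize}
Together these give a shell contribution of $O(\varepsilon^2)=o(\varepsilon)$.

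\emph{Interior $U$.} This is the main obstacle, because the measure $\omega_{u_\varepsilon}^j\wedge\omega_{u_0}^{n-j}$ on $U$ does not simply vanish.
\begin{itemize}
\item For $j=0$ it is $(\omega_0+\ddc u_0)^n=0$ on $U$ by Proposition~\ref{prelim:prop-envelope-ma}, so the term is zero.
\item For $1\le j\le n$, both $u_0$ and $u_\varepsilon$ are $C^{1,1}$ on $\overline U$ (Propositions~\ref{reg:lem-uniform-inner-C11} and Theorem~\ref{reg:thm-base-regularity}), so the mixed measure has a density bounded uniformly in $\varepsilon$. The trivial bound is therefore $O(\varepsilon)$, and we must show the integral is $o(\varepsilon)$.
\end{itemize}

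The strategy is to use the degeneracy of $\omega_{u_0}$ together with convergence of the mixed measures. Write $w_\varepsilon:=(u_\varepsilon-u_0)/\varepsilon$. On $U$ this is bounded by $L$ and uniformly Lipschitz by Lemma~\ref{reg:lem-uniform-C11-interpolation}.

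Since $w_\varepsilon$ is bounded and the densities are uniformly bounded, it suffices to show that the mixed measures $\mu_{0,\varepsilon}^{(j)}\llcorner U$ converge weakly to $\omega_{u_0}^n\llcorner U=0$, at least when tested against functions uniformly bounded in $C^{0,1}$.

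\begin{itemize}
\item \emph{Compact subsets $K\Subset U$.} We have $u_\varepsilon\to u_0$ uniformly, so Bedford--Taylor continuity (the mixed-product convergence theorem, with the strong--weak Lemma~\ref{stat:lem-strong-weak-flux}) gives $\omega_{u_\varepsilon}^j\wedge\omega_{u_0}^{n-j}\rightharpoonup\omega_{u_0}^n=0$ on $K$.
\item \emph{Positivity upgrades this to total mass.} The measures are positive, so weak convergence to $0$ means their total mass on $K$ tends to $0$. Consequently $\int_K w_\varepsilon\,d\mu_{0,\varepsilon}^{(j)}\le L\,\mu_{0,\varepsilon}^{(j)}(K)\to0$.
\item \emph{Collar near $\partial U$.} On the collar $U\setminus K$, with $K=U_{-r}$, the uniform density bound and Proposition~\ref{prop:BH_to_collar} give mass $\le Cr$. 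The contribution of this region is at most $CLr\varepsilon$.
\end{itemize}

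Choosing $r$ small first and then letting $\varepsilon\downarrow0$ shows the interior term is $o(\varepsilon)$.

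The delicate point is justifying that uniform convergence of the potentials $u_\varepsilon\to u_0$ on $U$ is enough for the mixed Bedford--Taylor measures to converge on $K$, given that $u_\varepsilon$ depends on $\varepsilon$. This follows from the standard Bedford--Taylor continuity under uniform convergence, which is already used in Corollary~\ref{reg:cor-ma-conv}. Summing the exterior, shell and interior contributions proves the lemma.
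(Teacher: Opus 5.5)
Your proof is correct and follows essentially the same route as the paper. It uses the same split into exterior, shell (contributing $O(\varepsilon^2)$) and interior, and handles the interior in the same way: Bedford--Taylor convergence $\mu_{0,\varepsilon}^{(j)}\rightharpoonup V^{-1}\omega_{u_0}^n=0$ on $U_{-r}$, an $O(r)$ mass bound on the collar $U\setminus U_{-r}$, then $\limsup_{\varepsilon\downarrow0}$ followed by $r\downarrow0$.

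One aside is unsupported, though you never use it. Lemma~\ref{reg:lem-uniform-C11-interpolation} gives only $\|\nabla(u_\varepsilon-u_0)\|_{L^\infty(U)}\le C\varepsilon^{1/2}$, hence $\|\nabla w_\varepsilon\|_{L^\infty(U)}\le C\varepsilon^{-1/2}$, not a uniform Lipschitz bound. Your actual estimate needs only $0\le w_\varepsilon\le L$ and positivity of the measures, so the argument is unaffected.
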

\begin{proof}
	
	Note that
	\[
	X\setminus\partial U=(X\setminus U_\varepsilon)\cup\{0<\rho<\varepsilon\}\cup U.
	\]
	On the first set the integrand is always $0$.  On the second set, after using these four facts: $u_0=0$, $0\le u_\varepsilon\le C\varepsilon$, and the mixed densities are uniformly bounded by Theorem~\ref{intro:thm-uniform-regularity}, and the shell has volume $O(\varepsilon)$ by Proposition~\ref{prop:BH_to_collar}, we will find this part is $O(\varepsilon^2)$.
	
	For the fixed $r>0$, we will use the decomposition $U=U_{-r}\cup(U\setminus U_{-r})$ to prove one term is ``zero value'' and the other term is ``zero measure''.  Using $0\le u_\varepsilon-u_0\le C\varepsilon$, we have
	
	\[
	\frac1\varepsilon\left|\int_U(u_\varepsilon-u_0)d\mu_{0,\varepsilon}^{(j)}\right|
	\le C\mu_{0,\varepsilon}^{(j)}(U_{-r}) +C\mu_{0,\varepsilon}^{(j)}(U\setminus U_{-r}).
	\]
	
	Note $r$ is fixed, Bedford--Taylor continuity \cite{BedfordTaylor1976Dirichlet,BedfordTaylor1982Capacity} on $U_{-r}\Subset U$ gives $\mu_{0,\varepsilon}^{(j)}\rightharpoonup V^{-1}\omega_{u_0}^n=0$ for the first term, so it tends to $0$.  For the second term, the uniform Hessian bounds of Theorem~\ref{intro:thm-uniform-regularity} make the measure bounded, and then estimating the region of integration via Proposition~\ref{prop:BH_to_collar} gives $\mu_{0,\varepsilon}^{(j)}(U\setminus U_{-r})\le Cr$.  After taking the limsup of the left-hand side and letting $r\downarrow0$, we prove the claim.
	
\end{proof}
\begin{proposition}[Localized first-order expansion]
As $\varepsilon\downarrow0$,
\begin{equation}\label{dyn:eq-localized-first-order-expansion}
E_{\omega_0}(u_\varepsilon)-E_{\omega_0}(u_0)=\frac{\varepsilon}{n+1}\sum_{j=0}^{n-1}\int_{\partial U}\left(-\partial_{\nu_{\mathrm{out}}}u_0\right)d\sigma_\varepsilon^{(j)}+o(\varepsilon),
\end{equation}
where $\sigma_\varepsilon^{(j)}:=\mu_{0,\varepsilon}^{(j)}\llcorner\partial U$.
\end{proposition}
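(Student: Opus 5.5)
The plan is to start from the telescoping identity \eqref{dyn:eq-energy-algebraic} and split each integral $\int_X(u_\varepsilon-u_0)\,d\mu_{0,\varepsilon}^{(j)}$ into the part over $X\setminus\partial U$ and the part over $\partial U$. Lemma~\ref{dyn:lem-bulk-shell-vanishing} shows that each bulk-and-shell piece is $o(\varepsilon)$. Therefore
\[
E_{\omega_0}(u_\varepsilon)-E_{\omega_0}(u_0)=\frac1{n+1}\sum_{j=0}^{n}\int_{\partial U}(u_\varepsilon-u_0)\,d\sigma_\varepsilon^{(j)}+o(\varepsilon),
\]
where we write $\sigma_\varepsilon^{(n)}:=\mu_{0,\varepsilon}^{(n)}\llcorner\partial U$ for the extra $j=n$ term.

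The $j=n$ term has to be shown to be $o(\varepsilon)$. Here $\mu_{0,\varepsilon}^{(n)}=\MA(u_\varepsilon)$. The key observation is that $\partial U=\Sigma_0\subset U_\varepsilon$ for $\varepsilon>0$. On the open set $U_\varepsilon$ Proposition~\ref{prelim:prop-envelope-ma} gives $\omega_{u_\varepsilon}^n=0$, so $\MA(u_\varepsilon)(\partial U)=0$ and this term vanishes identically. Only $0\le j\le n-1$ then remain, which matches the range in \eqref{dyn:eq-localized-first-order-expansion}.

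Next we insert Lemma~\ref{dyn:lem-first-order-expansion}, which gives $u_\varepsilon-u_0=\varepsilon(-\partial_{\nu_{\mathrm{out}}}u_0)+R_\varepsilon$ with $\|R_\varepsilon\|_{C^0(\partial U)}=o(\varepsilon)$. The error is then bounded by $\|R_\varepsilon\|_{C^0}\,\sum_j|\sigma_\varepsilon^{(j)}|(\partial U)$. It remains to show that the masses $\sigma_\varepsilon^{(j)}(\partial U)$ are bounded uniformly in $\varepsilon$; these are nonnegative, since the mixed Bedford--Taylor products of $\omega_0$-psh functions are positive.

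I expect this uniform mass bound to be the main, though modest, obstacle. I would bound the total mass through $X$: $\mu_{0,\varepsilon}^{(j)}(X)=\frac1V\int_X\omega_{u_\varepsilon}^j\wedge\omega_{u_0}^{n-j}=1$ by cohomological invariance for bounded $\omega_0$-psh functions (Stokes/Bedford--Taylor integration by parts on compact $X$). Hence $0\le\sigma_\varepsilon^{(j)}(\partial U)\le1$, which gives the estimate $\frac{1}{n+1}\sum_j\|R_\varepsilon\|_{C^0}=o(\varepsilon)$.

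If one prefers a local argument, the trace bound of Theorem~\ref{thm:global-trace}, applied as in Lemma~\ref{stat:lem-bt-gg-extraction} with the uniform Hessian bounds of Theorem~\ref{intro:thm-uniform-regularity}, yields the same uniform bound. The global mass identity is simpler, so I would use it. Collecting the terms gives \eqref{dyn:eq-localized-first-order-expansion}.
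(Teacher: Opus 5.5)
Your proposal is correct and follows the paper's proof essentially step for step. You use the same split of \eqref{dyn:eq-energy-algebraic} into the part off $\partial U$, handled by Lemma~\ref{dyn:lem-bulk-shell-vanishing}, and the boundary part; the same observation that the $j=n$ boundary term vanishes because $\partial U\subset U_\varepsilon$, where $\omega_{u_\varepsilon}^n=0$; and the same control of the remainder from Lemma~\ref{dyn:lem-first-order-expansion} through the total-mass bound $\sigma_\varepsilon^{(j)}(\partial U)\le\mu_{0,\varepsilon}^{(j)}(X)=1$, so your alternative trace-based mass bound is not needed.
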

\begin{proof}
We can insert the decomposition
\[
\mu_{0,\varepsilon}^{(j)} =\mu_{0,\varepsilon}^{(j)}\llcorner(X\setminus\partial U) +\sigma_\varepsilon^{(j)}
\]
into \eqref{dyn:eq-energy-algebraic}.  The first part is $o(\varepsilon)$ by Lemma~\ref{dyn:lem-bulk-shell-vanishing}.  Note that $\omega_{u_\varepsilon}^n=0$ in a neighborhood of $\partial U\subset U_\varepsilon$, so the $j=n$ term vanishes.  Lemma~\ref{dyn:lem-first-order-expansion} states that, uniformly on $\partial U$,
\[
u_\varepsilon-u_0 =\varepsilon\left(-\partial_{\nu_{\mathrm{out}}}u_0\right)+o(\varepsilon).
\]
Then, using the estimate $\|\sigma_\varepsilon^{(j)}\|\le \mu_{0,\varepsilon}^{(j)}(X)=V^{-1}\int_X\omega_{u_\varepsilon}^{j}\wedge\omega_{u_0}^{n-j}=1$, we obtain that the integral of the remainder is also $o(\varepsilon)$.  Finally, summing over $j$ gives the result.
\end{proof}

\subsection{Mixed flux currents and stability}\label{dyn:subsec-mixed-flux}
For $0\le j\le n-1$ define
\[
S_0^{(j)}:=\sum_{k=0}^{n-j-1}\omega_0^k\wedge\omega_{u_0}^{n-j-1-k}, \quad \lambda_\varepsilon^{(j)}:=\frac1V\omega_{u_\varepsilon}^j\wedge\omega_0^{n-j},
\]
and
\begin{equation}\label{dyn:eq-mixed-current}
\mathcal J_\varepsilon^{(j)}:=\frac1V\dc u_0\wedge\omega_{u_\varepsilon}^j\wedge S_0^{(j)}.
\end{equation}
\begin{proposition}\label{dyn:thm-finite-eps-mixed-flux}
Let $0<\varepsilon\le\varepsilon_1$.
For every $0\le j\le n-1$,
\[
d\mathcal J_\varepsilon^{(j)}=\mu_{0,\varepsilon}^{(j)}-\lambda_\varepsilon^{(j)}\quad\text{on }U, \qquad \sigma_\varepsilon^{(j)}=-\Tr_{\partial U}^{\mathrm{out}}\mathcal J_\varepsilon^{(j)}.
\]
\end{proposition}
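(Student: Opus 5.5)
The plan is to follow the template of Proposition~\ref{stat:prop-boundary-mass}: first establish the divergence identity on $U$ by an algebraic factorization, then obtain the boundary identity from a global Bedford--Taylor integration by parts combined with Lemma~\ref{stat:lem-bt-gg-extraction}. The key algebraic fact is the telescoping factorization
\[
\omega_{u_0}^{n-j}-\omega_0^{n-j}=\ddc u_0\wedge\sum_{k=0}^{n-j-1}\omega_0^k\wedge\omega_{u_0}^{n-j-1-k}=\ddc u_0\wedge S_0^{(j)}.
\]
Wedging with $\omega_{u_\varepsilon}^j$ and dividing by $V$ gives $\mu_{0,\varepsilon}^{(j)}-\lambda_\varepsilon^{(j)}$ as $V^{-1}\ddc u_0\wedge\omega_{u_\varepsilon}^j\wedge S_0^{(j)}$.

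\textbf{Divergence identity.} Since $B:=\omega_{u_\varepsilon}^j\wedge S_0^{(j)}$ is a closed $(n-1,n-1)$-current, we get $d\mathcal J_\varepsilon^{(j)}=V^{-1}\ddc u_0\wedge B$ formally. I would justify this on $U$ by approximation. Replace $u_0$ by $u_{0,\delta}$ (smooth in $U$) and $u_\varepsilon$ by $u_{\varepsilon,\delta}$, where the identity holds classically on compact subsets of $U$. Then let $\delta\downarrow0$, using the $C^{1,\beta}$ convergence in Proposition~\ref{reg:lem-uniform-inner-C11}, Bedford--Taylor continuity of the mixed products, and Lemma~\ref{stat:lem-strong-weak-flux} to pass the limit through $\dc u_{0,\delta}\wedge B_\delta$. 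This is exactly as in Corollary~\ref{stat:cor-base-flux-trace-recovery}. Note that $U\Subset U_\varepsilon$, so $u_\varepsilon$ is $C^{1,1}$ across $\partial U$ with bounded Hessian there. Hence $B$ has $L^\infty$ coefficients on $U$ by Theorem~\ref{intro:thm-uniform-regularity}, and the limit identity places $\mathcal J_\varepsilon^{(j)}$ in $\DM^\infty(U)$.

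\textbf{Boundary identity.} Take $\phi\in C^\infty(X)$. Because $u_0$ vanishes on $X\setminus U$ and is bounded $\omega_0$-psh, the global Bedford--Taylor integration by parts on the compact manifold $X$ (no boundary terms) gives
\[
\int_X\phi\,\bigl(\mu_{0,\varepsilon}^{(j)}-\lambda_\varepsilon^{(j)}\bigr)=\frac1V\int_X u_0\,\ddc\phi\wedge\omega_{u_\varepsilon}^j\wedge S_0^{(j)}=\frac1V\int_U u_0\,\ddc\phi\wedge B.
\]
This uses the global factorization above with $S_0^{(j)}$ and $\omega_{u_\varepsilon}^j$ closed. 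Outside $\overline U$ we have $u_0=0$, so there $\omega_{u_0}=\omega_0$ and $\mu_{0,\varepsilon}^{(j)}=\lambda_\varepsilon^{(j)}$. Thus the signed measure $\nu:=\mu_{0,\varepsilon}^{(j)}-\lambda_\varepsilon^{(j)}$ is supported in $\overline U$. Moreover, $\lambda_\varepsilon^{(j)}$ has bounded density and so charges no mass on $\partial U$, which gives $\nu\llcorner\partial U=\sigma_\varepsilon^{(j)}$.

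We can now apply Lemma~\ref{stat:lem-bt-gg-extraction} with $v=u_0|_U\in C^{1,1}(\overline U)$, which vanishes on $\Sigma$, and with $B$ as above. It yields $\nu\llcorner U=d\mathcal J_\varepsilon^{(j)}$, which recovers the divergence identity, and $\sigma_\varepsilon^{(j)}=-\Tr^{\mathrm{out}}_{\partial U}\mathcal J_\varepsilon^{(j)}$.

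\textbf{Main obstacle.} The delicate step is justifying the global integration by parts with the non-smooth, mixed currents, and in particular making sure the Bedford--Taylor product $\omega_{u_\varepsilon}^j\wedge\omega_{u_0}^{n-j}$ charges $\partial U$ consistently with the formula. I would first decrease $u_0$ by smooth $\omega_0$-psh approximants on $X$, or alternatively use the global telescoping formalism of Lemma~\ref{prelim:lem-am-telescoping}, where such identities are standard for bounded $\omega_0$-psh functions. Monotone continuity of mixed products then passes to the limit in both sides, since $u_0\ddc\phi\wedge\cdots$ converges by the Bedford--Taylor convergence theorem for bounded decreasing sequences.
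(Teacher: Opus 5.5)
Your proposal is correct and follows the paper's proof essentially step for step. The paper also uses the factorization $\ddc u_0\wedge S_0^{(j)}=\omega_{u_0}^{n-j}-\omega_0^{n-j}$ for the divergence identity, then global Bedford--Taylor integration by parts, support of $\mu_{0,\varepsilon}^{(j)}-\lambda_\varepsilon^{(j)}$ in $\overline U$, absence of $\lambda_\varepsilon^{(j)}$-mass on $\partial U$, and Lemma~\ref{stat:lem-bt-gg-extraction} with $v=u_0|_U$. One small correction: $\lambda_\varepsilon^{(j)}$ has bounded density only near $\partial U\Subset U_\varepsilon$, since it may charge $\Sigma_\varepsilon$ when $j\ge1$, but that local statement is all your argument uses.
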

\begin{proof}
Write $B_\varepsilon^{(j)}:=\omega_{u_\varepsilon}^j\wedge S_0^{(j)}$.
This is closed and has $L^\infty$ coefficients on $U$.
Apply the weak product rule and use the Bedford--Taylor factorization
\[
\ddc u_0\wedge S_0^{(j)}=\omega_{u_0}^{n-j}-\omega_0^{n-j}
\]
to obtain the divergence identity.

Set $\nu_\varepsilon^{(j)}:=\mu_{0,\varepsilon}^{(j)}-\lambda_\varepsilon^{(j)}$ and fix $\varphi\in C^\infty(X)$.
Because $u_0=0$ on $X\setminus U$, global Bedford--Taylor integration by parts gives
\begin{equation}\label{dyn:eq-ibtp-mixed}
\int_X\varphi\,d\nu_\varepsilon^{(j)}=\frac1V\int_Uu_0\,\ddc\varphi\wedge B_\varepsilon^{(j)}.
\end{equation}
The equality $\mu_{0,\varepsilon}^{(j)}=\lambda_\varepsilon^{(j)}$ on $X\setminus\overline U$ places the support of $\nu_\varepsilon^{(j)}$ in $\overline U$.
For each fixed $\varepsilon>0$, $\lambda_\varepsilon^{(j)}$ has $L^\infty$ density near $\Sigma\Subset U_\varepsilon$, and so $\nu_\varepsilon^{(j)}\llcorner\Sigma=\sigma_\varepsilon^{(j)}$.
We can now use Lemma~\ref{stat:lem-bt-gg-extraction} with $v=u_0|_U$ and $B=B_\varepsilon^{(j)}|_U$ to extract the boundary identity.
\end{proof}

\begin{theorem}[Weak-* stability of mixed traces]\label{thm:mixed_trace_stability}
For every $0\le j\le n-1$,
\[
\sigma_\varepsilon^{(j)}\rightharpoonup^*\sigma_{0+}^{(j)}:=-\Tr_{\partial U}^{\mathrm{out}}\mathcal J_0^{(j)}\qquad\text{in }\mathcal M(\partial U).
\]
Moreover,
\[
\sum_{j=0}^{n-1}\sigma_{0+}^{(j)}=\sigma_{\mathrm{tot}}=\left(-\partial_{\nu_{\mathrm{out}}}u_0\right)d\sigma_\rho^{\mathrm{tot}}.
\]
\end{theorem}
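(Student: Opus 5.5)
Two things have to be proved. For the first, I plan to transfer the identity $\sigma_\varepsilon^{(j)}=-\Tr_{\partial U}^{\mathrm{out}}\mathcal J_\varepsilon^{(j)}$ from Proposition~\ref{dyn:thm-finite-eps-mixed-flux} to the limit $\varepsilon\downarrow0$ with Lemma~\ref{dyn:lem-trace-stability}. For the second, I plan to reduce $\sum_j\mathcal J_0^{(j)}$ algebraically to $\mathcal J[u_0;\mathrm{tot}]$ and then quote Proposition~\ref{prop:total_jump_density}.

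\textbf{Step 1: checking the hypotheses of Lemma~\ref{dyn:lem-trace-stability}.} Theorem~\ref{intro:thm-uniform-regularity} gives $-\omega_0\le\ddc u_\varepsilon\le C\omega_0$ a.e. on $U\subset U_\varepsilon$, uniformly in $\varepsilon$. Hence the coefficients of $\omega_{u_\varepsilon}^j\wedge S_0^{(j)}$ are uniformly bounded on $U$, and so are the coefficients of $\mathcal J_\varepsilon^{(j)}$, since $\dc u_0$ is Lipschitz. For the divergences, Proposition~\ref{dyn:thm-finite-eps-mixed-flux} gives $d\mathcal J_\varepsilon^{(j)}=\mu_{0,\varepsilon}^{(j)}-\lambda_\varepsilon^{(j)}$ on $U$, and both measures have uniformly bounded densities by the same Hessian bounds. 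The same computation at $\varepsilon=0$, via the factorization $\ddc u_0\wedge S_0^{(j)}=\omega_{u_0}^{n-j}-\omega_0^{n-j}$, shows that $\mathcal J_0^{(j)}\in\DM^\infty(U)$ with $d\mathcal J_0^{(j)}=V^{-1}(\omega_{u_0}^n-\omega_{u_0}^j\wedge\omega_0^{n-j})$. By Theorem~\ref{thm:global-trace}, $\sigma_{0+}^{(j)}$ is therefore a well-defined Radon measure.

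\textbf{Step 2: interior convergence.} On each $U_{-r}\Subset U$ we have $u_\varepsilon\to u_0$ uniformly (Lemma~\ref{reg:lem-linf-approx}), with uniformly bounded psh-type potentials. Bedford--Taylor continuity under uniform convergence then gives $\omega_{u_\varepsilon}^j\wedge S_0^{(j)}\rightharpoonup\omega_{u_0}^j\wedge S_0^{(j)}$ as currents with locally bounded mass, and likewise $\mu_{0,\varepsilon}^{(j)}\rightharpoonup V^{-1}\omega_{u_0}^n$ and $\lambda_\varepsilon^{(j)}\rightharpoonup V^{-1}\omega_{u_0}^j\wedge\omega_0^{n-j}$. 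Lemma~\ref{stat:lem-strong-weak-flux}, applied with the fixed continuous form $a_k=a=V^{-1}\dc u_0$, gives $\mathcal J_\varepsilon^{(j)}\rightharpoonup\mathcal J_0^{(j)}$. Lemma~\ref{dyn:lem-trace-stability} then yields $-\sigma_\varepsilon^{(j)}\rightharpoonup^*\Tr^{\mathrm{out}}_{\partial U}\mathcal J_0^{(j)}$.

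I expect Step 2 to be the main subtlety. The limit is taken only on interior compacta, whereas the boundary mass $\sigma_\varepsilon^{(j)}$ is \emph{not} the restriction of any limit measure. The argument must therefore rely only on uniform collar bounds, never on convergence of the measures up to $\partial U$, and Lemma~\ref{dyn:lem-trace-stability} is designed precisely for this.

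\textbf{Step 3: the algebraic identity.} We compute
\[
\sum_{j=0}^{n-1}\omega_{u_0}^j\wedge S_0^{(j)}=\sum_{j=0}^{n-1}\sum_{k=0}^{n-j-1}\omega_0^k\wedge\omega_{u_0}^{n-1-k}.
\]
For fixed $k$, the index $j$ ranges over $0,\dots,n-1-k$, which is $n-k$ values. Setting $p=n-1-k$, the weight is $p+1$, so the sum equals $\sum_p(p+1)\omega_{u_0}^p\wedge\omega_0^{n-1-p}=B_0^{\mathrm{tot}}$, using commutativity of even-degree forms. Hence $\sum_j\mathcal J_0^{(j)}=\mathcal J[u_0;\mathrm{tot}]$. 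Linearity of the trace, which follows from the Gauss--Green characterization, together with Proposition~\ref{prop:total_jump_density} gives $\sum_j\sigma_{0+}^{(j)}=\sigma_{\mathrm{tot}}=(-\partial_{\nu_{\mathrm{out}}}u_0)\,d\sigma_\rho^{\mathrm{tot}}$.
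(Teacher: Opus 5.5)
Your proposal is correct and follows the paper's own proof step for step. Both arguments use Bedford--Taylor convergence on each $U_{-r}$, then Lemma~\ref{stat:lem-strong-weak-flux} with Proposition~\ref{dyn:thm-finite-eps-mixed-flux}, then the uniform collar bounds from Proposition~\ref{reg:lem-uniform-inner-C11} fed into Lemma~\ref{dyn:lem-trace-stability}, and finish with the same combinatorial identity and Proposition~\ref{prop:total_jump_density}. The only difference is minor: you use uniform convergence of $u_\varepsilon$ and note that the $\dc$ factor is the fixed form $\dc u_0$. The paper instead cites $C^1$ convergence from Proposition~\ref{reg:prop-quantitative-delta-domain-stability}, which your observation shows is not needed.
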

\begin{proof}
The convergence of the measures can be realized by viewing them as traces, and the convergence of the traces can be justified through the convergence of the functions on the inner levels.  Specifically, fix $U_{-r}\Subset U$; on it we have $u_\varepsilon\to u_0$ in $C^1$ by Proposition~\ref{reg:prop-quantitative-delta-domain-stability}.  Using Bedford--Taylor continuity \cite[Theorem~2.4 and Proposition~5.2]{BedfordTaylor1982Capacity}, we obtain the convergence of the mixed currents; then, using Lemma~\ref{stat:lem-strong-weak-flux} and Proposition~\ref{dyn:thm-finite-eps-mixed-flux}, we obtain
\[
\mathcal J_\varepsilon^{(j)}\rightharpoonup\mathcal J_0^{(j)},\qquad d\mathcal J_\varepsilon^{(j)}\rightharpoonup^*d\mathcal J_0^{(j)}\qquad\text{on }U_{-r}.
\]
Then, using Proposition~\ref{reg:lem-uniform-inner-C11} and Lemma~\ref{dyn:lem-trace-stability}, we obtain the desired result.
For the final measure identity, it suffices to note the combinatorial identity
\[
\sum_{j=0}^{n-1}\omega_{u_0}^j\wedge S_0^{(j)}=\sum_{j=0}^{n-1}\sum_{k=0}^{n-j-1}\omega_{u_0}^{n-1-k}\wedge\omega_0^k=\sum_{p=0}^{n-1}(p+1)\omega_{u_0}^p\wedge\omega_0^{n-1-p},
\]
together with Proposition~\ref{prop:total_jump_density}.  Hence Theorem~\ref{intro:thm-mixed-traces} is proved.
\end{proof}

\subsection{The Hadamard formula}\label{dyn:subsec-hadamard-formula}
\begin{theorem}\label{dyn:thm-closed-hadamard}
The right derivative exists and satisfies
\begin{equation}
\left.\frac{d}{d\varepsilon}E_{\omega_0}(u_\varepsilon)\right|_{\varepsilon=0^+} =\frac1{n+1}\int_{\partial U}\left(-\partial_{\nu_{\mathrm{out}}}u_0\right)d\sigma_{\mathrm{tot}},
\end{equation}
where $\sigma_{\mathrm{tot}}=\sum_{j=0}^{n-1}\sigma_{0+}^{(j)}$. Equivalently,
\begin{equation}\label{dyn:eq-closed-hadamard-square}
\left.\frac{d}{d\varepsilon}E_{\omega_0}(u_\varepsilon)\right|_{\varepsilon=0^+} =\frac1{n+1}\int_{\partial U}\left(-\partial_{\nu_{\mathrm{out}}}u_0\right)^2d\sigma_\rho^{\mathrm{tot}}.
\end{equation}
\end{theorem}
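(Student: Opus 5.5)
The plan is to divide the localized first-order expansion \eqref{dyn:eq-localized-first-order-expansion} by $\varepsilon$ and pass to the limit $\varepsilon\downarrow0$ using the weak-* convergence of the mixed boundary traces in Theorem~\ref{thm:mixed_trace_stability}.

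Dividing \eqref{dyn:eq-localized-first-order-expansion} by $\varepsilon>0$ gives
\[
\frac{E_{\omega_0}(u_\varepsilon)-E_{\omega_0}(u_0)}{\varepsilon}=\frac1{n+1}\sum_{j=0}^{n-1}\int_{\partial U}\left(-\partial_{\nu_{\mathrm{out}}}u_0\right)d\sigma_\varepsilon^{(j)}+o(1).
\]
To take the limit, the integrand must be a legitimate test function for weak-* convergence in $\mathcal M(\partial U)=C(\partial U)^*$. This holds because $u_0\in C^{1,1}(\overline U)$ by Theorem~\ref{reg:thm-base-regularity}, so $-\partial_{\nu_{\mathrm{out}}}u_0$ is Lipschitz, hence continuous, on $\partial U$. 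The total masses of the $\sigma_\varepsilon^{(j)}$ are bounded by $1$, although weak-* convergence against a fixed continuous function already suffices.

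Theorem~\ref{thm:mixed_trace_stability} then gives, for each $0\le j\le n-1$,
\[
\int_{\partial U}\left(-\partial_{\nu_{\mathrm{out}}}u_0\right)d\sigma_\varepsilon^{(j)}\to\int_{\partial U}\left(-\partial_{\nu_{\mathrm{out}}}u_0\right)d\sigma_{0+}^{(j)}.
\]
Summing over $j$ shows that the right difference quotient converges to $\frac1{n+1}\int_{\partial U}(-\partial_{\nu_{\mathrm{out}}}u_0)\,d\sigma_{\mathrm{tot}}$. So the right derivative exists and has the first stated form.

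For the equivalent form \eqref{dyn:eq-closed-hadamard-square}, I substitute the identity $\sigma_{\mathrm{tot}}=(-\partial_{\nu_{\mathrm{out}}}u_0)\,d\sigma_\rho^{\mathrm{tot}}$ from Theorem~\ref{thm:mixed_trace_stability} and Proposition~\ref{prop:total_jump_density}. Integrating the continuous function $-\partial_{\nu_{\mathrm{out}}}u_0$ against a measure that has density $-\partial_{\nu_{\mathrm{out}}}u_0$ with respect to $d\sigma_\rho^{\mathrm{tot}}$ produces the square. The only point to check is that this density relation holds as an identity of Radon measures, so that multiplying by a continuous function is allowed. That is exactly what the cited results state; by Proposition~\ref{stat:prop-explicit-parallel-density}, $d\sigma_\rho^{\mathrm{tot}}$ is even a Lipschitz multiple of surface measure.

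No step here is a genuine obstacle, since all the analytic work is already contained in \eqref{dyn:eq-localized-first-order-expansion} and in the trace stability. The one place needing care is that the $o(\varepsilon)$ remainder in \eqref{dyn:eq-localized-first-order-expansion} is uniform, which follows from the uniformity in Lemma~\ref{dyn:lem-first-order-expansion} together with the mass bound $\|\sigma_\varepsilon^{(j)}\|\le1$. Given that, dividing by $\varepsilon$ gives an $o(1)$ term.
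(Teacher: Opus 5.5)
Your proposal is correct and matches the paper's proof: divide the localized expansion \eqref{dyn:eq-localized-first-order-expansion} by $\varepsilon$, then pass to the limit. Use the weak-* convergence $\sigma_\varepsilon^{(j)}\rightharpoonup^*\sigma_{0+}^{(j)}$ from Theorem~\ref{thm:mixed_trace_stability}, tested against the continuous function $-\partial_{\nu_{\mathrm{out}}}u_0$, and the summed identity $\sigma_{\mathrm{tot}}=(-\partial_{\nu_{\mathrm{out}}}u_0)\,d\sigma_\rho^{\mathrm{tot}}$ for the squared form; your extra checks (continuity of the test function via the $C^{1,1}$ bound, and the mass bound $\|\sigma_\varepsilon^{(j)}\|\le1$) are exactly the details the paper leaves implicit.
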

\begin{proof}
Divide \eqref{dyn:eq-localized-first-order-expansion} by $\varepsilon$ and apply Theorem~\ref{thm:mixed_trace_stability}.
Its summed trace identity gives both formulas.
\end{proof}

\begin{theorem}[Hadamard formula along the signed parallel family]
\label{dyn:thm-signed-family-hadamard}
Choose $\tau>0$ with $2\tau<\varepsilon_1$ (Convention~\ref{intro:conv-parameters}), and put $I_\tau=[-\tau,\tau]$.  For $t\in I_\tau$, set
\[
e(t):=E_{\omega_0}(u_t), \qquad F(t):=\frac1{n+1}\int_{\Sigma_t}q_t^2 \,d\sigma_{\rho,t}^{\mathrm{tot}}, \qquad q_t=-\partial_{\nu_t}u_t.
\]
Then $F\in C^{0,1/2}(I_\tau)$ and
\begin{equation}\label{dyn:eq-signed-energy-fundamental}
e(t)-e(s)=\int_s^tF(r)\,dr, \qquad s,t\in I_\tau.
\end{equation}
In particular, $e\in C^{1,1/2}(I_\tau)$ and, for $-\tau<t<\tau$,
\begin{equation}
\frac{d}{dt}E_{\omega_0}(u_t) =\frac1{n+1}\int_{\Sigma_t}q_t\,d\sigma_{\mathrm{tot},t} =\frac1{n+1}\int_{\Sigma_t}q_t^2 \,d\sigma_{\rho,t}^{\mathrm{tot}}.
\end{equation}
At the endpoints the precise one-sided identities are
\[
D_+e(-\tau)=F(-\tau),\qquad D_-e(\tau)=F(\tau).
\]
\end{theorem}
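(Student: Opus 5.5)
The plan is to recenter Theorem~\ref{dyn:thm-closed-hadamard} at an arbitrary base level $s\in I_\tau$. This gives the right derivative $D_+e(s)=F(s)$ for every $s\in[-\tau,\tau)$. We then prove that $F$ is $1/2$-H\"older, and finally upgrade a continuous right derivative to a genuine $C^1$ function by an elementary real-variable lemma.

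\textbf{Step 1: recentering.} Fix $s\in I_\tau$ and replace $(U,\rho,u_0)$ by $(U_s,\rho_s,u_s)$. Since $2\tau<\varepsilon_1$, the family $U_{s+\varepsilon}$, $0\le\varepsilon\le\varepsilon_1-\tau$, is again a parallel family of the same uniform geometry, with $\dc\rho_s=\dc\rho$. All inputs to Sections~\ref{sec:flux}--\ref{sec:hadamard} hold with constants independent of $s$:
\begin{itemize}
\item Proposition~\ref{prop:BH_to_collar} for the collar geometry;
\item Theorem~\ref{intro:thm-uniform-regularity} and Proposition~\ref{reg:prop-quantitative-delta-domain-stability} for the regularity and stability estimates;
\item the maximality of $u_s$ on $U_s$ and the telescoping identity.
\end{itemize}
Repeating the localization, the mixed flux construction and the trace stability (Lemma~\ref{dyn:lem-first-order-expansion} through Theorem~\ref{dyn:thm-closed-hadamard}) at level $s$ gives
\[
D_+e(s)=\frac1{n+1}\int_{\Sigma_s}q_s\,d\sigma_{\mathrm{tot},s}=F(s),
\]
where the second equality is \eqref{stat:eq-explicit-physical-densities}. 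The only thing to check is that every "uniform in $\varepsilon$" statement used there is in fact uniform in the base point $s$. This holds because all constants depend only on $(X,\omega_0,U,\rho)$.

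\textbf{Step 2: H\"older continuity of $F$.} Pull $F(t)$ back to $\Sigma_s$ via $\Psi_{s,t}$:
\[
F(t)=\frac1{n+1}\int_{\Sigma_s}(q_t\circ\Psi_{s,t})^2\,\Psi_{s,t}^*d\sigma_{\rho,t}^{\mathrm{tot}}.
\]
Then estimate the difference from $F(s)$ using three facts:
\begin{itemize}
\item $\|q_t\circ\Psi_{s,t}-q_s\|_{C^0}\le C|t-s|^{1/2}$, from \eqref{reg:eq-normal-derivative-half-holder};
\item the uniform bound $q\le C$, from Corollary~\ref{stat:cor-parallel-density-bounds};
\item the density estimate \eqref{stat:eq-total-density-half-holder}, together with the uniform surface-area bound.
\end{itemize}
These give $|F(t)-F(s)|\le C|t-s|^{1/2}$.

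\textbf{Step 3: from right derivatives to $C^{1,1/2}$.} The function $e$ is continuous; it is even Lipschitz, since $\|u_t-u_s\|_{L^\infty}\le C|t-s|$ and the telescoping identity has unit-mass measures. Its right Dini derivative exists everywhere on $[-\tau,\tau)$ and equals the continuous function $F$.

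Set $g(t)=e(t)-e(-\tau)-\int_{-\tau}^tF$. Then $g$ is continuous with $D_+g\equiv0$ on $[-\tau,\tau)$. The standard lemma, that a continuous function with vanishing right derivative is constant, can be proved by applying the argument for $g\pm\eta t$ with the sup of the set where the inequality holds. This yields $g\equiv0$, which is \eqref{dyn:eq-signed-energy-fundamental}.

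By the fundamental theorem of calculus, $e'=F$ on $(-\tau,\tau)$, and $e\in C^{1,1/2}(I_\tau)$. At the endpoints:
\begin{itemize}
\item $D_+e(-\tau)=F(-\tau)$ is Step 1 directly;
\item $D_-e(\tau)=F(\tau)$ follows from the integral formula and the continuity of $F$.
\end{itemize}

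\textbf{Main obstacle.} I expect the main difficulty to be Step 1, namely checking that the one-sided limit argument really is uniform in the base level. In particular, the weak-* convergence in Theorem~\ref{thm:mixed_trace_stability} is only a pointwise statement in $s$. For this proof it suffices that it holds at each $s$ separately, because uniformity is needed only for $F$'s continuity (Step 2), not for the derivative. So I would verify carefully that no hidden $s$-dependence enters through the recentered collar.
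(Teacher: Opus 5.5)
Your proposal is correct and follows essentially the paper's proof: you recenter Theorem~\ref{dyn:thm-closed-hadamard} at each base level to get $D_+e=F$, and you derive the $1/2$-H\"older bound on $F$ from \eqref{reg:eq-normal-derivative-half-holder}, \eqref{stat:eq-total-density-half-holder} and the uniform area bound. The only variation is the final real-variable step: the paper uses the Lipschitz bound on $e$ (giving a.e.\ differentiability with $e'=D_+e=F$, then absolute continuity), while you use the elementary lemma that a continuous function with identically vanishing right derivative is constant; either argument closes the proof.
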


\begin{proof}
Since the normal flow gives the identification, using \eqref{reg:eq-normal-derivative-half-holder} and Corollary~\ref{stat:cor-parallel-density-bounds} we obtain
\[
\left\|\Psi_{s,t}^*(q_t^2d\sigma_{\rho,t}^{\mathrm{tot}}) -q_s^2d\sigma_{\rho,s}^{\mathrm{tot}}\right\|_{C^0(\Sigma_s,g_0)} \le C|t-s|^{1/2}.
\]
Then, using the uniform bound on the areas of the integration domains $\Sigma_t$, we also have
\begin{equation}\label{dyn:eq-F-half-holder}
	|F(t)-F(s)|\le C|t-s|^{1/2}.
\end{equation}
Let $s<t$.  A direct computation gives
\[
0 \le e(t) - e(s) \le \frac{1}{n+1} \sum_j \|u_t - u_s\|_{L^\infty(X)} \cdot 1 = \|u_t - u_s\|_{L^\infty(X)} \le C|t - s|,
\]
where the last step uses Lemma~\ref{reg:lem-linf-approx}.  Hence $e$ is Lipschitz continuous and absolutely continuous on $I_\tau$.  Fix $t_0\in[-\tau,\tau)$.  Applying Theorem~\ref{dyn:thm-closed-hadamard} at $t_0$ gives $D_+e(t_0)=F(t_0)$.  Therefore, $e'=F$ almost everywhere.  Using absolute continuity, we obtain \eqref{dyn:eq-signed-energy-fundamental}.  Since $F\in C^{0,1/2}(I_\tau)$, we get $e\in C^{1,1/2}(I_\tau)$.  The assertions at the endpoints follow by taking one-sided limits in the integral representation.  This proves Theorem~\ref{intro:thm-hadamard}.
\end{proof}

\subsection{Examples}

\begin{dynexample}{Trace, flux and Hadamard check on
\texorpdfstring{$\mathbb P^1$}{P1}}\label{dyn:ex-cp1-verification} On the affine chart $z=re^{i\theta}$, let
\[
\omega_0=\ddc\log(1+|z|^2) =\frac{2r}{(1+r^2)^2}\,dr\wedge d\theta, \qquad V=2\pi.
\]
For $U_R=\{|z|<R\}$,
\[
u_R=\max\!\left\{\log\frac{1+R^2}{1+|z|^2},0\right\}.
\]
On $U_R$, $\omega_{u_R}=0$ and $\dc u_R=-r^2(1+r^2)^{-1}d\theta$.  Since $n=1$, $\mathcal J[u_R]=\mathcal J[u_R;\mathrm{tot}]=V^{-1}\dc u_R$.  The normalized Monge--Amp\`ere measure has total mass one.  Its exterior part has mass $V^{-1}\int_{\{r>R\}}\omega_0=(1+R^2)^{-1}$, while its interior part vanishes.  Direct computation of the boundary mass and the interior flux gives
\[
\mu_R^{\mathrm{sing}}(\partial U_R) =-\Tr_{\partial U_R}^{\mathrm{out}}\mathcal J[u_R](\partial U_R) =\frac{R^2}{1+R^2}.
\]
Furthermore $-\partial_{\nu_{\mathrm{out}}}u_R=\sqrt2R$ (the factor $\sqrt2$ is the metric normalization $|\partial_r|_{g_0}=\sqrt2/(1+R^2)$ at $\partial U_R$), so the trace side is
\[
\frac12\int_{\partial U_R}(-\partial_{\nu_{\mathrm{out}}}u_R)\,d\sigma_{\mathrm{tot}} =\frac{R^3}{\sqrt2(1+R^2)}.
\]
A direct energy computation gives
\[
E_{\omega_0}(u_R)=\frac12\left(\log(1+R^2)-\frac{R^2}{1+R^2}\right), \qquad d\varepsilon=\frac{\sqrt2}{1+R^2}\,dR,
\]
and differentiation with respect to normal distance yields the same value, including the factor $1/2$.
\end{dynexample}

\begin{dynexample}{Toric blow-up model: boundary mass and mixed Hadamard flux}\label{dyn:ex-toric-collapse}
Let $X=\operatorname{Bl}_p\mathbb P^n$ ($n\ge2$), with exceptional divisor $E$ and the strict transform $H_\infty$ of the hyperplane at infinity.  On $X\setminus(E\cup H_\infty)\simeq\mathbb C^n\setminus\{0\}$ put $t=\log|z|^2$ and normalize $\int_{S^1}\dc t=2\pi$ and $\int_{\mathbb P^{n-1}}\omega_{FS}^{n-1}=(2\pi)^{n-1}$; set $C_n:=(2\pi)^n$.  Choose a Calabi-type K\"ahler form \cite{Calabi1982Extremal,HwangSinger2002Momentum}
\[
\omega_0=m'(t)dt\wedge\dc t+m(t)\omega_{FS},\qquad m'>0,\quad m(-\infty)=b\in(0,1),\quad m(+\infty)=1.
\]
Smoothness across $E$ gives $m(t)-b=O(e^t)$, and orbit integration gives
\[
V=C_n\int_{-\infty}^{\infty}n m'(t)m(t)^{n-1}dt =C_n(1-b^n), \qquad g_0(\partial_t,\partial_t)=\frac{m'(t)}2.
\]

Fix $t_R$, let $U_R$ be $\{t<t_R\}$ together with $E$, and put $a:=m(t_R)$.  The comparison principle identifies the envelope as
\[
u_R(t)=
\begin{cases}
\displaystyle\int_t^{t_R}(m(s)-b)\,ds,&t<t_R,\\
0,&t\ge t_R,
\end{cases}
\]
The displayed formula extends continuously to $E$.  Its radial moment is $b$ on $U_R$ and $m(t)$ outside.  It is therefore $\omega_0$-psh; on $U_R$, $\omega_{u_R}=b\omega_{FS}$ and $\omega_{u_R}^n=0$.  At $\partial U_R$,
\[
\nu_{\mathrm{out}}=\sqrt{\frac2{m'(t_R)}}\,\partial_t, \qquad -\partial_{\nu_{\mathrm{out}}}u_R =\frac{a-b}{\sqrt{m'(t_R)/2}}.
\]
Here $\sqrt{2/m'(t_R)}$ is the unit-normal conversion coming from $g_0(\partial_t,\partial_t)=m'/2$; the same normalization produces the $\sqrt2$ factor in Example~\ref{dyn:ex-cp1-verification}.

For $\omega_q=q'dt\wedge\dc t+q\omega_{FS}$, orbit integration gives $\int_{\{t_1<t<t_2\}}\omega_q^n=C_n(q(t_2)^n-q(t_1)^n)$.  Apply this identity to smooth approximations of the radial moment of $u_R$ and pass to the limit.  The jump from $b$ to $a$ then yields
\[
\mu_R^{\mathrm{sing}}(\partial U_R) =\frac{C_n}{V}(a^n-b^n).
\]
The interior flux gives the same static measure:
\[
-\Tr^{\mathrm{out}}_{\partial U_R}\mathcal J[u_R](\partial U_R) =\frac{C_n(a-b)}V\sum_{p=0}^{n-1}b^pa^{n-1-p} =\frac{C_n}{V}(a^n-b^n).
\]
In contrast, for $\mathcal J[u_R;\mathrm{tot}]:=V^{-1}\dc u_R\wedge\sum_{p=0}^{n-1}(p+1)\omega_{u_R}^p\wedge\omega_0^{n-1-p}$ and $\sigma_{\mathrm{tot},R}:=-\Tr^{\mathrm{out}}_{\partial U_R}\mathcal J[u_R;\mathrm{tot}]$,
\[
\sigma_{\mathrm{tot},R}(\partial U_R) =\frac{C_n(a-b)}{V} \sum_{p=0}^{n-1}(p+1)b^pa^{n-1-p}.
\]

Let $U_{R,\varepsilon}$ be the outward normal deformation and write its boundary as $t=t_R(\varepsilon)$.  Then
\begin{equation}\label{dyn:eq-toric-normal-parameter}
t_R(0)=t_R, \qquad t_R'(0) =\frac1{\sqrt{m'(t_R)/2}}.
\end{equation}
Since $(a-b)\sum_{p=0}^{n-1}(p+1)b^pa^{n-1-p}=\sum_{k=0}^{n-1}a^{n-k}b^k-nb^n$, the trace prediction is
\begin{equation}\label{dyn:eq-toric-trace-prediction}
\frac1{n+1}\int_{\partial U_R} (-\partial_{\nu_{\mathrm{out}}}u_R)\,d\sigma_{\mathrm{tot},R} =\frac{C_n(a-b)}{(n+1)V\sqrt{m'(t_R)/2}} \left(\sum_{k=0}^{n-1}a^{n-k}b^k-nb^n\right).
\end{equation}

In a separate energy calculation, the boundary terms vanish.  Expanding the mixed powers and integrating by parts once gives
\begin{align*}
E_{\omega_0}(u_R) &=\frac{C_n}{(n+1)V}\int_{-\infty}^{t_R} u_Rm'\sum_{q=1}^n q m^{q-1}b^{n-q}\,dt\\
&=\frac{C_n}{(n+1)V}\int_{-\infty}^{t_R} (m-b)\left(\sum_{q=1}^n m^qb^{n-q}-nb^n\right)dt.
\end{align*}
Therefore
\[
\frac{d}{dt_R}E_{\omega_0}(u_R) =\frac{C_n(a-b)}{(n+1)V} \left(\sum_{k=0}^{n-1}a^{n-k}b^k-nb^n\right),
\]
and \eqref{dyn:eq-toric-normal-parameter} yields exactly \eqref{dyn:eq-toric-trace-prediction}.

The static mass contains $a^n-b^n$, while the variational trace contains $\sum_{k=0}^{n-1}a^{n-k}b^k-nb^n$.  For $n=2$, these become $a^2-b^2$ and $a^2+ab-2b^2$, so the two fluxes already differ in the first higher-dimensional case.  At $\varepsilon=0$, the literal measures $\mu_{0,0}^{(j)}:=V^{-1}\omega_{u_R}^j\wedge\omega_{u_R}^{n-j}$ all equal $\mu_R$, and their boundary restrictions sum to $n\mu_R^{\mathrm{sing}}$.  The one-sided limiting traces instead sum to $\sigma_{\mathrm{tot},R}$. This is the boundary-restriction discontinuity described above.
\end{dynexample}

\section*{Acknowledgments}
The author thanks Akito Futaki for helpful discussions and suggestions, and acknowledges the use of DeepSeek-series models to improve the English presentation.

\appendix
\numberwithin{equation}{section}
\section{Proofs of uniform regularity estimates}\label{app:sec-proofs-regularity}

We prove Lemma~\ref{reg:lem-uniform-collar-core-bridge} and Proposition~\ref{reg:prop-uniform-degenerate-boundary} below, after recording the three external inputs in the forms needed for the degenerate Dirichlet problem.

\subsection{External estimates used in the following text}\label{app:subsec-imported}

In the next two proofs, set $D=U_{\varepsilon,m}$ and $u=u_{\varepsilon,\delta,m}$.
The following estimates are applied at the smooth approximation level.
Unless stated otherwise, their constants are uniform in $(\varepsilon,\delta,m)$ and do not require a positive lower bound for the density.
\begin{enumerate}[label=\textbf{(E\arabic*)},ref=E\arabic*,leftmargin=*,itemsep=.15\baselineskip,topsep=0pt,parsep=0pt]
\item\label{app:input-blocki}\textbf{Tangential--normal boundary estimate (B{\l}ocki).}
Under uniformly controlled boundary geometry, a strict subsolution, and a uniform $C^1$ bound, the tangential--normal estimate in \cite[Theorem~3.2 and the proof of Theorem~3.2', p.~7]{Blocki2012Geodesics} gives
\[
|\nabla^2u(T,\nu)|\le C_{\rm B}\qquad\text{on }\partial D
\]
for every unit tangent vector $T$ and outward unit normal $\nu$.

\item\label{app:input-chu}\textbf{Core real-Hessian propagation (Chu).}
Let $D'$ be either $D$ or a uniformly controlled core $D_r=\{\varrho\ge r\}$.
After replacing $u$ by $u-\sup_{D'}u$, the boundary form of \cite[\S4A and the proof of Proposition~4.1]{Chu2021C11} gives
\[
1+\sup_{D'}|\nabla^2u|\le C_{\rm Ch}\bigl(1+\sup_{\partial D'}|\nabla^2u|\bigr).
\]
The auxiliary maximum-principle argument also gives the interior--boundary eigenvalue alternative used below, with $C_{\rm Ch}$ uniform for all admissible $r$.

\item\label{app:input-ds}\textbf{Localized three-stage boundary mechanism (Dinew--Sr{\'o}ka).}
In the adapted coordinates used below, let $\nu_{\rm E}$ be the inward Euclidean unit normal and put
\[
U=u+\Phi,\qquad \widehat w_q:=U_{(\xi_q)(\xi_q)}-U_{(\xi_q)(\xi_q)}(q)-\ell_q,\qquad C_s:=C_{\rm loc}+A_0s^{-4}C_{{\rm cap},s},
\]
Under the strict-barrier and affine-field hypotheses of \cite[Proposition~5.7, Lemmas~5.8--5.10, and Theorem~5.11]{DinewSroka2025Krylov}, the inputs
\[
\begin{aligned}
\mathrm{(H_{phys})}\quad \widehat w_q&\le C|\tau_q|^2+CM_E|\tau_q|^4,\\
\mathrm{(H_{cap})}\quad \sup_{\mathcal F_{\rm cap}^{s}(q)}|\nabla^2_{\rm Eucl}U|&\le c_0s^5M_E+C_{{\rm cap},s}
\end{aligned}
\]
yield
\[
\begin{aligned}
\mathrm{(O_1)}\quad \widehat w_q&\le C_4|\tau_q|^2+C_4M_E|\tau_q|^4+C_4M_E\varrho,\\
\mathrm{(O_2)}\quad \bigl(U_{(\xi_q)(\xi_q)}\bigr)_{\nu_{\rm E}}(q)&\le C_*(sM_E+C_s),\\
\mathrm{(O_3)}\quad M_E&\le C_*(sM_E+C_s).
\end{aligned}
\]
Subsection~\ref{app:subsec-proof-boundary} verifies the hypotheses, and $C_4,C_*$ are independent of $s$.
\end{enumerate}
\subsection{Proof of Lemma~\ref{reg:lem-uniform-collar-core-bridge}}\label{app:subsec-proof-bridge}

\begin{proof}[Proof of Lemma~\ref{reg:lem-uniform-collar-core-bridge}]
We combine Inputs~\ref{app:input-blocki} and \ref{app:input-chu} with the normalized collar quotient below: Step~0 reduces the boundary Hessian to $M_\delta$, Steps~1--3 build the quotient $W/\mathcal V$ and prove its coercivity without any lower eigenvalue bound for $a$, Step~4 applies the maximum principle on the collar and removes the feedback through Input~\ref{app:input-chu}, and Step~5 makes the coefficient of $M_\delta$ arbitrarily small.  All auxiliary parameters are fixed independently of $\delta$.

\noindent\textbf{Step 0. Boundary reduction to $M_\delta$.}
Constants in this proof depend only on $G$ and the uniform geometry in Lemma~\ref{reg:lem-smooth-parallel-geometry}.  Differentiating $u|_{\partial D}=0$ twice in tangential directions gives
\[
 \nabla^2u(T,T')=-(\partial_\nu u)\operatorname{II}(T,T')  \quad\text{on }\partial D.
\]
The uniform $C^1$ bound and the uniform control of $\operatorname{II}$ in Lemma~\ref{reg:lem-smooth-parallel-geometry} therefore give $|\nabla^2u(T,T')|\le C$ on $\partial D$.  Input~\ref{app:input-blocki} separately gives $|\nabla^2u(T,\nu)|\le C$.  Together,
\begin{equation}
 |\nabla^2u(T,T')|+|\nabla^2u(T,\nu)|\le C \quad\text{on }\partial D.
 \label{reg:eq-old-boundary}
\end{equation}
Thus the only uncontrolled boundary component is the double-normal one measured by $M_\delta$ in \eqref{reg:eq-Mdelta}.

\noindent\textbf{Step 1. Normalized collar linearization.}
We set $\widetilde\omega=\omega_0+\ddc u$ and introduce three new quantities:
\begin{equation}
 S_0=\tr_{\widetilde\omega}\omega_0,  \qquad a^{i\bar j}=\frac{\widetilde g^{i\bar j}}{S_0},  \qquad Lf=a^{i\bar j}\nabla_i\nabla_{\bar j}f.
 \label{reg:eq-normalized-linearization}
\end{equation}
Here $a$ is positive definite with $\tr_{g_0}a=1$; feeding these two properties into the arithmetic--geometric mean inequality gives
\begin{equation}
 Lu=\frac n{S_0}-1\le\delta^{1/n}-1\le-\frac12.
 \label{reg:eq-Lu-negative}
\end{equation}
No lower bound on the eigenvalues of $a$ will be used anywhere below.  On the common boundary collar we set $r:=\rho_m-\varepsilon<0$ and $\varrho:=c_\varrho(1-e^{\Lambda r})$, so that $\varrho>0$ in $D$, $\varrho=0$ on $\partial D$, and $\ddc\varrho=-c_\varrho\Lambda e^{\Lambda r}(\ddc r+\Lambda\,dr\wedge\dc r)$. For fixed $\Lambda\gg1$, the tangential Levi term and the rank-one normal term give $\ddc\varrho\le-c\omega_0$ on a common collar.  Contracting with $a\ge0$, using $\tr_{g_0}a=1$, and scaling $c_\varrho$, we find $R_0>0$ such that
\begin{equation}
 L\varrho\le-2\quad\text{on }\mathcal C_{R_0}:=\{0<\varrho<R_0\}
 \label{reg:eq-Lrho-negative}
\end{equation}
for every $a>0$ with $\tr_{g_0}a=1$.  All these choices are uniform in $(\varepsilon,m)$.

\noindent\textbf{Step 2. Lift to the direction bundle.}
At a point, diagonalize $a=\operatorname{diag}(\lambda_k)$ in the unitary frame $Z_k=(e_k-iJe_k)/\sqrt2$, write $(E_1,\ldots,E_{2n})=(e_1,Je_1,\ldots,e_n,Je_n)$, and extend $a$ to the real bilinear form $A$ with
\[
 A(e_k,e_k)=A(Je_k,Je_k)=\frac{\lambda_k}{2},  \qquad \tr_{\mathbb R}A=1,  \qquad A^{ab}\nabla_a\nabla_b=L.
\]
We use the curvature convention
\[
 \operatorname{Rm}(X,Y)Z  :=\nabla_X\nabla_YZ-\nabla_Y\nabla_XZ-\nabla_{[X,Y]}Z.
\]
Because $(X,\omega_0)$ is K\"ahler, the Chern and Levi--Civita connections agree.  For a function $F$ on $T^{1,0}D$, a curve $\gamma$ with $\gamma'(0)=X$, and $\zeta(s)$ parallel along $\gamma$, we define the two derivative operators
\[
 \mathcal H_XF(x,\zeta)  :=\left.\frac d{ds}\right|_{s=0}F(\gamma(s),\zeta(s)),  \qquad  D_Y^{\rm v}F(x,\zeta)  :=\left.\frac d{ds}\right|_{s=0}F(x,\zeta+sY^{1,0})
\]
for real $Y$.  Their complex components $\mathcal H_i,\mathcal H_{\bar j}$ and $\mathcal D^{\rm v}_i,\mathcal D^{\rm v}_{\bar j}$ separate the two chain-rule directions of $F(x,\zeta(x))$: $\mathcal H$ moves the base point while keeping $\zeta$ parallel, whereas $D^{\rm v}$ fixes the base point and modifies the fibre vector.  For a complex fibre vector $\zeta$, the real vector $V=\zeta+\bar\zeta$ satisfies $|V|^2=2|\zeta|^2$.  Writing $\zeta=\zeta^kZ_k$, we use the lowered conjugate components $\bar\zeta_j=g_{j\bar k}\overline{\zeta^k}$ and $\zeta_{\bar j}=g_{k\bar j}\zeta^k$, and we set
\[
 t=\varrho+\beta,  \quad p=\partial\varrho(\zeta),  \quad \mathfrak r=\frac{\alpha\bar p}{t},  \quad q_j=\frac1t\left(\frac{\bar\zeta_j}{4}+\frac{\alpha\varrho_j\bar p}{t}\right),  \quad B_A(V)=A^{ab}\operatorname{Rm}(E_a,V)E_b.
\]
with $(aq)^i=a^{i\bar j}q_{\bar j}$ and $q_{\bar j}=\overline{q_j}$.
\begin{samepage}
Following \cite[(4.6)--(4.11)]{DinewSroka2025Krylov}, we define the reduced-fibre covariant operator
\begin{align}
 \mathcal PF={}&\operatorname{Re}\!\left\{a^{i\bar j}\bigl(  \mathcal H_i\mathcal H_{\bar j}F +\mathfrak r\mathcal H_i\mathcal D^{\rm v}_{\bar j}F +\bar{\mathfrak r}\mathcal D^{\rm v}_i\mathcal H_{\bar j}F +|\mathfrak r|^2\mathcal D^{\rm v}_i\mathcal D^{\rm v}_{\bar j}F\bigr)\right\}  \notag\\
 &-2\operatorname{Re}\bigl((aq)^i\mathcal D^{\rm v}_iF\bigr)  +D^{\rm v}_{B_A(V)}F.
 \label{reg:eq-covariant-P}
\end{align}
\end{samepage}
Freezing coefficients, the four second-order terms expand $(\mathcal H_i+\bar{\mathfrak r}\mathcal D^{\rm v}_i)(\mathcal H_{\bar j}+\mathfrak r\mathcal D^{\rm v}_{\bar j})$, with principal block
\[
 \begin{pmatrix}a&\mathfrak r a\\ \bar{\mathfrak r}a&|\mathfrak r|^2a\end{pmatrix}
 =\begin{pmatrix}1\\ \bar{\mathfrak r}\end{pmatrix}
  \begin{pmatrix}1&\mathfrak r\end{pmatrix}\!\otimes a\ge0,
\]
hence the maximum principle holds for every $\delta>0$ without a uniform ellipticity constant.  This prepares the coercivity of the numerator and denominator verified next.

\noindent\textbf{Step 3. Coercivity of the numerator and denominator.}
Adapting the Hessian quotient in \cite[(4.23), (4.31)--(4.40)]{DinewSroka2025Krylov}, we set $T=\nabla^2u$ and $G=\nabla u$, and define
\[
 \mathcal S_A(T)=A^{ab}\sum_cT(E_a,E_c)T(E_b,E_c),  \quad W=T(V,V)+K|\zeta|^2|G|^2,  \quad \mathcal V=t^{-\alpha}|p|^2+\alpha^{-1}t^{1-\alpha}|\zeta|^2.
\]
The correction term in $W$ makes $\mathcal PW$ supply the coercive $K|\zeta|^2\mathcal S_A(T)$ term, using only $\tr_{\mathbb R}A=1$.  The collar weight $\mathcal V$ balances $p$ and $\zeta$ at scale $t$, and $L\varrho\le-2$ gives $-\mathcal P\mathcal V>0$; thus $W/\mathcal V$ is the boundary-adapted quotient for the maximum principle, with no lower eigenvalue bound for $a$.  There are fixed $K\ge1$, $\alpha_0>0$, $0<R_1\le R_0$, and $c>0$ such that, for $0<\alpha\le\alpha_0$, $0<R\le R_1$, and $0<\beta\le R$, one has on $T\mathcal C_R$
\begin{align}
 \mathcal PW&\ge cK|\zeta|^2\mathcal S_A(T)-C_{\alpha,\beta,K}|\zeta|^2,
 \label{reg:eq-numerator-coercive}\\
 -\mathcal P\mathcal V&\ge ct^{-\alpha}  \left(\frac{|\zeta|^2}{\alpha}+\frac{\alpha}{t}|p|^2\right).
 \label{reg:eq-denominator-coercive}
\end{align}
Here and below no constant depends on $\delta$ or on $\sup|\nabla^2u|$.

We first verify the numerator.  For the curvature convention above, Hessian Codazzi reads
\[
 (\nabla_XT)(Y,Z)-(\nabla_YT)(X,Z)  =-\langle\operatorname{Rm}(X,Y)Z,G\rangle.
\]
Extend $X,Y,V$ so that their covariant derivatives vanish at the point in question.  Applying Codazzi twice and commuting the two derivatives acting on $T$ gives
\begin{align}
 (\nabla_X\nabla_YT)(V,V)={}&(\nabla_V\nabla_VT)(X,Y)  -2T(\operatorname{Rm}(X,V)Y,V) \notag\\
 &-T(Y,\operatorname{Rm}(X,V)V)   -T(X,\operatorname{Rm}(Y,V)V) \notag\\
 &-\big\langle(\nabla_X\operatorname{Rm})(Y,V)V  +(\nabla_V\operatorname{Rm})(X,V)Y,G\big\rangle.
 \label{reg:eq-fourth-covariant-commutator}
\end{align}
Since $\log(\det\widetilde g/\det g_0)=\log\delta$, twice differentiating along a geodesic extension of $V$ yields
\[
 A^{ab}(\nabla_V\nabla_VT)(E_a,E_b)=\mathcal Q_V,  \qquad  \mathcal Q_V  =\frac1{S_0}\left|  \widetilde g^{-1/2}(\nabla_V\widetilde g)\widetilde g^{-1/2}  \right|_{\rm HS}^{2}\ge0.
\]
Contracting \eqref{reg:eq-fourth-covariant-commutator} with $A$ gives
\begin{align}
 A^{ab}\mathcal H_a\mathcal H_b[T(V,V)]  ={}&\mathcal Q_V-2T(B_A(V),V)  -2A^{ab}T(E_b,\operatorname{Rm}(E_a,V)V)  -\langle C_A(V),G\rangle,
 \label{reg:eq-numerator-commutator}
\end{align}
where $\mathcal Q_V\ge0$ and
\[
 C_A(V)=A^{ab}\bigl((\nabla_{E_a}\operatorname{Rm})(E_b,V)V  +(\nabla_V\operatorname{Rm})(E_a,V)E_b\bigr),  \qquad |C_A(V)|\le C|V|^2.
\]
Moreover, $D^{\rm v}_{B_A(V)}[T(V,V)]=2T(B_A(V),V)$, so the complete vertical drift in \eqref{reg:eq-covariant-P} cancels the second term exactly. Differentiating the equation once and then applying Hessian Codazzi gives
\[
 a^{i\bar j}(\nabla_{Z_i}T)(V,\bar Z_j)  =-a^{i\bar j}\langle\operatorname{Rm}(Z_i,V)\bar Z_j,G\rangle,  \qquad  \left|a^{i\bar j}(\nabla_{Z_i}T)(V,\bar Z_j)\right|  \le C|G|\,|V|.
\]
Since $\tr_{g_0}a=1$, the $g_0$-eigenvalues of $a$ are nonnegative and sum to $1$, so $0<a\le g_0^{-1}$.  Thus every remaining curvature or $q$-drift term linear in $T$ is bounded by $C_{\alpha,\beta}|\zeta|^2\mathcal S_A(T)^{1/2}$; the vertical--vertical contraction contains only $Lu$, which lies in $(-1,-1/2]$.  Since $|\mathfrak r|\le C\alpha\beta^{-1}|\zeta|$, $|q|\le C(\beta^{-1}+\alpha\beta^{-2})|\zeta|$, and $L|G|^2=2\mathcal S_A(T)+O(1)$, weighted Young inequalities that use only $A$, never $A^{-1}$, give, for every $0<\eta<1$,
\begin{align}
 \mathcal P[T(V,V)]  &\ge-\eta|\zeta|^2\mathcal S_A(T)       -C_{\eta,\alpha,\beta}|\zeta|^2,
 \label{reg:eq-numerator-part}\\
 \mathcal P(|\zeta|^2|G|^2)  &\ge(2-\eta)|\zeta|^2\mathcal S_A(T)       -C_{\eta,\alpha,\beta}|\zeta|^2.
 \label{reg:eq-gradient-part}
\end{align}
Hence
\[
 \mathcal PW\ge\bigl((2-\eta)K-\eta\bigr)|\zeta|^2\mathcal S_A(T)  -C_{\eta,\alpha,\beta,K}|\zeta|^2.
\]
Fixing, for example, $\eta=1/4$ and then any fixed $K\ge1$ proves \eqref{reg:eq-numerator-coercive}.  The inverse metric in $\mathcal Q_V$ is used only inside a discarded nonnegative square, so this argument introduces no inverse power of $\delta$.

For the denominator we set $h_i=\varrho_i$, $|h|_A^2=a^{i\bar j}h_i h_{\bar j}$, $|\zeta|_A^2=a^{i\bar j}\zeta_j\bar\zeta_i$, and $C_{\zeta,h,p}=\operatorname{Re}(a^{i\bar j}\zeta_jh_i\bar p)$.  The commutator, paired in $p$ and $\bar p$ rather than taken for $p$ alone, is
\begin{align}
 &\operatorname{Re}\!\left\{a^{i\bar j}\bigl[  (\mathcal H_i\mathcal H_{\bar j}p)\bar p  +p(\mathcal H_i\mathcal H_{\bar j}\bar p)\bigr]\right\}  \notag\\
 &\qquad=\operatorname{Re}\!\left\{a^{i\bar j}\bigl[  (\nabla_\zeta\nabla^2\varrho)_{i\bar j}\bar p  +(\nabla_{\bar\zeta}\nabla^2\varrho)_{i\bar j}p\bigr]\right\}  -2\operatorname{Re}(\bar p\,\partial\varrho(\mathcal B_A\zeta)).
 \label{reg:eq-paired-commutator}
\end{align}
where $\mathcal B_A\zeta=(B_A(V))^{1,0}$.  In the diagonal frame,
\[
 \mathcal B_A\zeta  =\frac12\sum_k\lambda_k\operatorname{Rm}(\bar Z_k,\zeta)Z_k.
\]
The K\"ahler type identities give
\[
 a^{i\bar j}\mathcal H_i\mathcal H_{\bar j}p  =a^{i\bar j}(\nabla_\zeta\nabla^2\varrho)_{i\bar j},  \qquad  a^{i\bar j}\mathcal H_i\mathcal H_{\bar j}\bar p  =a^{i\bar j}(\nabla_{\bar\zeta}\nabla^2\varrho)_{i\bar j}  -2\overline{\partial\varrho(\mathcal B_A\zeta)},
\]
which proves \eqref{reg:eq-paired-commutator}.  Moreover,
\[
 D^{\rm v}_{B_A(V)}(t^{-\alpha}|p|^2)  =2t^{-\alpha}\operatorname{Re}    \bigl(\bar p\,\partial\varrho(\mathcal B_A\zeta)\bigr),
\]
so the curvature rows cancel exactly.  Substituting $\mathfrak r,q$, while retaining for the moment the vertical drift acting on the angular summand $\alpha^{-1}t^{1-\alpha}|\zeta|^2$, we obtain
\begin{align}
 \mathcal P\mathcal V  -D^{\rm v}_{B_A(V)}   \bigl(\alpha^{-1}t^{1-\alpha}|\zeta|^2\bigr)  ={}&E_\varrho +\frac{1-\alpha}{\alpha}t^{-\alpha}L\varrho|\zeta|^2 +\alpha t^{-\alpha-1}(1+L\varrho)|p|^2 \notag\\
&-(1-\alpha)t^{-\alpha-1}|h|_A^2|\zeta|^2 -\alpha t^{-\alpha-2}|p|^2|h|_A^2 \notag\\
&-\left(\frac12+2\alpha\right)t^{-\alpha-1}C_{\zeta,h,p} -\frac1{2\alpha}t^{-\alpha}|\zeta|_A^2,
 \label{reg:eq-denominator-expansion}
\end{align}
where
\begin{align*}
 E_\varrho={}&t^{-\alpha}a^{i\bar j}\bigl(  \varrho_{\zeta i\bar j}\bar p+\varrho_{\bar\zeta i\bar j}p  +\varrho_{\zeta i}\varrho_{\bar\zeta\bar j}  +\varrho_{\zeta\bar j}\varrho_{\bar\zeta i}\bigr)\\
 &-\alpha t^{-\alpha-1}a^{i\bar j}\bigl(  h_{\bar j}\varrho_{\bar\zeta i}p+h_i\varrho_{\zeta\bar j}\bar p\bigr),
\end{align*}
with $\varrho_{\zeta i}:=(\nabla^2\varrho)(\zeta,Z_i)$, $\varrho_{\zeta i\bar j}:=(\nabla_\zeta\nabla^2\varrho)_{i\bar j}$, and the analogous barred notation.  Put
\[
 s=|\zeta|^2,\qquad x=|h|_A,\qquad y=|\zeta|_A,  \qquad P_0=|p|.
\]
Since $|p|\le C|\zeta|$, $\tr_{g_0}a=1$, and the collar has a uniform $C^{3,1}$ norm, there is a fixed $C_0$ such that
\begin{equation}
 |E_\varrho|  \le C_0t^{-\alpha}\left(s+\frac{\alpha}{t}P_0x\sqrt{s}\right),  \qquad |C_{\zeta,h,p}|\le P_0xy.
 \label{reg:eq-denominator-errors}
\end{equation}
By \eqref{reg:eq-Lrho-negative},
\[
 \frac{1-\alpha}{\alpha}L\varrho\,s  +\frac{\alpha}{t}(1+L\varrho)P_0^2  \le-2\frac{1-\alpha}{\alpha}s-\frac{\alpha}{t}P_0^2.
\]
The two required weighted Young inequalities are
\[
 C_0\frac{\alpha}{t}P_0x\sqrt{s}  \le\frac{1-\alpha}{2t}x^2s       +\frac{C_1\alpha^2}{t}P_0^2,
\]
where $C_1=C_1(C_0)$ is fixed, and, once $\alpha_0$ is small enough that $\frac12+2\alpha_0\le2^{-1/2}$,
\[
 \left(\frac12+2\alpha\right)\frac{P_0xy}{t}  \le\frac{\alpha}{2t^2}P_0^2x^2+\frac1{4\alpha}y^2.
\]
Shrink $\alpha_0$ further so that $C_0\le(1-\alpha)/\alpha$ and $C_1\alpha^2\le3\alpha/4$ for $0<\alpha\le\alpha_0$.  Substitution in \eqref{reg:eq-denominator-expansion} gives
\[
 \mathcal P\mathcal V  -D^{\rm v}_{B_A(V)}   \bigl(\alpha^{-1}t^{1-\alpha}|\zeta|^2\bigr)  \le  -\frac{1-\alpha}{\alpha}t^{-\alpha}|\zeta|^2  -\frac\alpha4t^{-\alpha-1}|p|^2  -\frac\alpha2t^{-\alpha-2}|p|^2|h|_A^2  -\frac1{4\alpha}t^{-\alpha}|\zeta|_A^2.
\]
The only uncanceled part of the complete drift is its action on the angular term, and
\[
 \left|D^{\rm v}_{B_A(V)}  \bigl(\alpha^{-1}t^{1-\alpha}|\zeta|^2\bigr)\right|  \le C\alpha^{-1}t^{1-\alpha}|\zeta|^2.
\]
Since $t\le2R_1$, shrink $R_1\le R_0$ so that $2CR_1\le(1-\alpha_0)/2$.  The first negative row absorbs the remaining drift and proves \eqref{reg:eq-denominator-coercive}, the covariant reduced-fibre analogue of \cite[Lemma~4.3, (4.22)]{DinewSroka2025Krylov}.

\noindent\textbf{Step 4a. Quotient estimate on the collar.}
Let $r_b=\theta R$, $\Gamma_s=\{\varrho=s\}$, and set
\[
 H_{r_b}=1+\sup_{\{\varrho\ge r_b\}}|\nabla^2u|,  \qquad H_{\Gamma_{r_b}}=1+\sup_{\Gamma_{r_b}}|\nabla^2u|,  \qquad P_*=\sup_{\mathcal C_R}|\partial\varrho|^2.
\]
Applying the maximum principle to the degree-zero quotient $W/\mathcal V$ on the unit sphere bundle gives
\begin{equation}
 H_{\Gamma_{r_b}}  \le Cq_{\rm col}H_{r_b}  +C\beta^\alpha K_{r_b}M_\delta+C_{R,\alpha,\beta},
 \label{reg:eq-collar-interface}
\end{equation}
where
\begin{align*}
 K_{r_b}&=\alpha^{-1}(r_b+\beta)^{1-\alpha}  +P_*(r_b+\beta)^{-\alpha},\\
 q_{\rm col}&=x_0^{1-\alpha}  +\frac{\alpha P_*}{R+\beta}x_0^{-\alpha},  \qquad x_0=\frac{r_b+\beta}{R+\beta}.
\end{align*}
At an interior positive maximum, \eqref{reg:eq-numerator-coercive} and \eqref{reg:eq-denominator-coercive} bound the quotient by a constant independent of $H_{r_b}$ and $M_\delta$.  On the outer level $\Gamma_R$ the quotient is at most $C\alpha(R+\beta)^{\alpha-1}H_{r_b}+C$, whereas on the physical boundary \eqref{reg:eq-old-boundary} gives $W/\mathcal V\le C\beta^\alpha M_\delta+C_{\alpha,\beta}$.  Multiplying by $\mathcal V\le K_{r_b}|\zeta|^2$ on $\Gamma_{r_b}$ gives the upper estimate in \eqref{reg:eq-collar-interface}.  Applying the same argument to $JV$, together with admissibility, gives the lower estimate.

\noindent\textbf{Step 4b. Core estimate and feedback removal.}
The feedback term in \eqref{reg:eq-collar-interface} is removed by applying Input~\ref{app:input-chu} on the core
\[
 D_{r_b}:=\{\varrho\ge r_b\},\qquad \partial D_{r_b}=\Gamma_{r_b}.
\]
Following \cite[\S4A, (4.1)--(4.3)]{Chu2021C11}, normalize $u$ by subtracting its supremum, let $\lambda_1$ be the largest eigenvalue of $\nabla^2u$, and write $Q=\log\lambda_1+Q_{\rm rem}$.  The term $h_1(|\widetilde\omega|_{g_0}^2)$ with
\[
 h_1(s)=-\frac13\log\bigl(10H_{r_b}^2-s\bigr)
\]
satisfies, since $|\widetilde\omega|_{g_0}^2\le5H_{r_b}^2$,
\[
 \operatorname{osc}_{D_{r_b}}  h_1\bigl(|\widetilde\omega|_{g_0}^2\bigr)  \le\frac13\log2,
\]
and the other terms of $Q_{\rm rem}$ have uniformly bounded oscillation. If $H_{r_b}$ is large, the eigenvalue comparison recorded in Input~\ref{app:input-chu} gives $x_*$ with $\lambda_1(x_*)\ge cH_{r_b}$; comparison with a maximum point $y$ of $Q$ yields
\[
 H_{r_b}\le C\bigl(1+\lambda_1(y)\bigr).
\]
If $y$ lies in the interior, then $\lambda_1(y)\le C$, while for $y\in\Gamma_{r_b}$ it gives $\lambda_1(y)\le H_{\Gamma_{r_b}}$. Consequently
\begin{equation}
 H_{r_b}\le C_{\rm core}(1+H_{\Gamma_{r_b}}).
 \label{reg:eq-Chu-core}
\end{equation}
Here $C_{\rm core}$ is uniform for $0<r_b\le R$ and independent of $(\delta,\varepsilon,m)$, by the uniformity recorded in Input~\ref{app:input-chu}.

\noindent\textbf{Step 5. Parameter order and small-coefficient extraction.}
Choose $R,\theta,r_b,\alpha,\beta$ in that order.  Fix $R\le R_1$ and choose $\theta>0$ so that $6CC_{\rm core}\theta<1/2$ and $r_b:=\theta R<r_*$.  Next take
\[
 \alpha\le\min\left\{\alpha_0,\frac{R\theta}{P_*},  \frac{\log2}{\log(2/\theta)}\right\}.
\]
For $\beta\le r_b$ this yields $q_{\rm col}\le6\theta$.  Combining \eqref{reg:eq-collar-interface} and \eqref{reg:eq-Chu-core} and absorbing the feedback yields
\[
 H_{r_b}\le C\beta^\alpha K_{r_b}M_\delta+C_{R,\alpha,\beta}.
\]
Since $\beta^\alpha K_{r_b}\to0$ as $\beta\downarrow0$, once $R,\theta,\alpha$ are fixed, for every $\eta_0>0$ one can choose $\beta$, independently of $\delta$, so that
\begin{equation}
 \sup_{\{\varrho\ge r_b\}}|\nabla^2u|  \le\eta_0M_\delta+C_{\eta_0,r_b}.
 \label{reg:eq-weak-interior-bridge}
\end{equation}
After completing all these derivation steps, the proof of this lemma is fully completed.
\end{proof}

\subsection{Proof of Proposition~\ref{reg:prop-uniform-degenerate-boundary}}\label{app:subsec-proof-boundary}

\begin{figure}[!htbp]
\centering
\begin{tikzpicture}[scale=1.05]
\draw[very thick] (-2.6,0) .. controls (-1.2,-0.5) and (1.2,-0.5) .. (2.6,0);
\node[below] at (2.35,-0.28) {$\partial D$};
\draw[gray,dashed] (-1.7,0.03) -- (-1.7,1.75) -- (1.7,1.75) -- (1.7,0.03);
\node[gray,above] at (1.7,1.75) {$\omega_{\varsigma_0}(q)$};
\draw[blue,thick] (-0.85,-0.1) .. controls (0,-0.3) .. (0.85,-0.1);
\draw[red,thick] (-0.85,0.75) -- (0.85,0.75);
\draw[orange,thick] (-0.85,-0.1) -- (-0.85,0.75);
\draw[orange,thick] (0.85,-0.1) -- (0.85,0.75);
\node at (0,1.05) {$\omega_\varsigma(q)$};
\fill (0,-0.2) circle (0.035);
\node[blue,below] at (0,-0.62) {$\mathcal F^{\varsigma}_{\mathrm{phys}}(q)\subset\partial D$};
\node[red,below] at (0,0.66) {$\mathcal F^{\varsigma}_{\mathrm{cap}}(q)$};
\node[orange,left] at (-0.9,0.4) {$\mathcal F^{\varsigma}_{\mathrm{lat}}(q)$};
\draw[->] (2.1,0.55) -- (2.1,1.2);
\node[above] at (2.1,1.2) {$\varrho$};
\draw[->] (2.1,0.55) -- (2.65,0.55);
\node[right] at (2.65,0.55) {$\tau_q$};
\end{tikzpicture}
\caption{The adapted three-face cylinder $\omega_\varsigma(q)\subset\omega_{\varsigma_0}(q)$ in tangential--normal coordinates $(\tau_q,\varrho)$.}
\label{fig:three-face-cylinder}
\end{figure}

\begin{proof}[Proof of Proposition~\ref{reg:prop-uniform-degenerate-boundary}]
\noindent\textbf{Step 1. Reduction to the degenerate higher-dimensional case.}
First, write down $D=U_{\varepsilon,m}$ and $u=u_{\varepsilon,\delta,m}$. After this step, we discuss the derivation logic by case.  If $n=1$, the current equation falls into the category of linear equations under a fixed background, $\ddc u=(\delta-1)\omega_0$, and the corresponding conclusion can be directly derived through the uniform linear boundary Schauder estimate.  As for the range $2^{-n}\le\delta\le1$, it can also be derived by another mature method, namely the usual nondegenerate boundary estimate with fixed density floor $2^{-n}$.  The above two cases have been discussed, and next we move to the remaining scenario, which requires independent derivation: assume $n\ge2$ and $0<\delta\le2^{-n}$, and define $M_\delta$ as in \eqref{reg:eq-Mdelta}.  After entering this scenario, the estimate \eqref{reg:eq-old-boundary} applies, and all constants used in it are independent of $(\varepsilon,\delta,m)$.  Finally, there are two necessary mathematical tools from Lemma~\ref{reg:lem-uniform-collar-core-bridge}: the uniformly controlled inward collar function $\varrho$, and the collar radius $R_0$.

\noindent\textbf{Step 2. Euclideanization and a coarse whole-domain estimate.}
This second step has two core tasks: Euclideanization, and a coarse whole-domain estimate.  We start with the basic initial setup: in each chart of the fixed holomorphic boundary atlas we choose a K\"ahler potential $\Phi$ for $\omega_0$ and set $U=u+\Phi$, so that
\[
 \det(U_{i\bar j})=\delta\det(g_{0,i\bar j}),  \qquad U|_{\partial D}=\Phi.
\]
The $C^{1,1}$ norm of the $n$-th root of the right-hand side is uniform in $\delta$.  Euclidean and covariant Hessians are uniformly comparable. If $M_E$ denotes one plus the maximum of the positive Euclidean double-normal derivatives over a finite collection of precompact boundary charts, then \eqref{reg:eq-old-boundary} gives
\begin{equation}
 C_A^{-1}M_\delta\le M_E\le C_AM_\delta.
 \label{reg:eq-normal-dictionary}
\end{equation}

Lemma~\ref{reg:lem-smooth-parallel-geometry}, together with the uniform norms of $\Phi|_{\partial D}$ and $\delta^{1/n}\det(g_0)^{1/n}$, supplies the structural bounds required by Input~\ref{app:input-ds}.  Its physical-face and cap hypotheses are verified below.  The $C^4$ calculations are performed only at the approximation level, but the final constants do not depend on fourth-derivative moduli.

We now derive the coarse whole-domain estimate.  Admissibility and \eqref{reg:eq-old-boundary} give, since $J\nu$ is tangential to $\partial D$,
\[
 u_{\nu\nu}+u_{J\nu,J\nu}\ge-C,  \qquad  1+\sup_{\partial D}|\nabla^2u|\le CM_\delta.
\]
We apply the same maximum-point alternative used to prove \eqref{reg:eq-Chu-core}, now on $D$ with its physical boundary; the boundary dependence is linear and the constant is independent of $\delta$.  Hence
\begin{equation}
 1+\sup_D|\nabla^2_{g_0}u|  \le C_{\rm wh}\bigl(1+\sup_{\partial D}|\nabla^2_{g_0}u|\bigr)  \le C_{\rm wh}M_\delta  \le C_{\rm wh}C_AM_E.
 \label{reg:eq-whole-domain-coarse}
\end{equation}
After adding the fixed local potential and changing coordinates, this also gives
\[
 1+\sup_{D\cap W_j}|\nabla^2_{\rm Eucl}U_j|  \le C_{\rm wh}'M_E
\]
in every boundary chart.  Finally, we note the scope of this rough estimate: it is used only on the artificial faces at the fixed outer scale, never in the final absorption step.

\noindent\textbf{Step 3. Three-face geometry and fixed-scale propagation.}
This third step addresses the three-face geometry and the fixed-scale propagation.  We first shrink the fixed boundary atlas once and for all to finitely many nested pairs
\[
 V_j\Subset W_j,  \qquad \partial D\subset\bigcup_jV_j,
\]
uniformly in $(\varepsilon,m)$.  Next we fix two numbers: a geometric depth constant $c_*>0$, and $0<\varsigma_0\le1$ satisfying $c_*\varsigma_0^4<R_0$, such that every point of $\partial D\cap V_j$ admits, inside $W_j$, adapted coordinates on a ball of radius $4\varsigma_0$, with all coordinate changes and defining functions uniformly bounded in $C^{3,1}$.  On an overlap $W_j\cap W_k$, the two K\"ahler potentials differ by a smooth pluriharmonic function, so the complex Hessian equation is unchanged.  The potential and affine-subtraction changes contribute only uniformly bounded additive terms, denoted by $C_{\rm ov}$, while coordinate Hessians and Euclidean normals are compared by fixed multiplicative constants; these factors are absorbed into $C_A,C_{\rm pot},A_1$, and $C_*$.  By the nesting, all recentering near a point of $V_j$ may be performed using one fixed potential on $W_j$.

In adapted coordinates centered at $q\in\partial D\cap V_j$, write $\tau_q=(z'_q,y_{n,q})$ and, for $0<s\le\varsigma_0$, set
\[
 \omega_s(q):=  \bigl\{x\in D\cap W_j:  0<\varrho(x)<c_*s^4,\ |\tau_q(x)|<s\bigr\}.
\]
The boundary of this region is the union of the three faces
\[
 \begin{aligned}
 \mathcal F_{\rm phys}^s(q)    &:=\partial D\cap\overline{\omega_s(q)},\\
 \mathcal F_{\rm cap}^s(q)    &:=\{\varrho=c_*s^4,\ |\tau_q|<s\},\\
 \mathcal F_{\rm lat}^s(q)    &:=\{0\le\varrho\le c_*s^4,\ |\tau_q|=s\}.
 \end{aligned}
\]
We next take the affine approximate-tangential field $\xi_q$ from \cite[Proposition~5.7]{DinewSroka2025Krylov} and subtract the affine Taylor term:
\[
 \widehat w_q  :=U_{(\xi_q)(\xi_q)}    -U_{(\xi_q)(\xi_q)}(q)-\ell_q.
\]
The boundary condition $U=\Phi$ and the already controlled tangential and mixed derivatives give
\begin{equation}
 \widehat w_q  \le C|\tau_q|^2+CM_E|\tau_q|^4  \quad\text{on }\mathcal F_{\rm phys}^{\varsigma_0}(q),
 \label{reg:eq-local-physical-quartic}
\end{equation}
which is $\mathrm{(H_{phys})}$; cf.\
\cite[Lemma~5.8, (5.46)]{DinewSroka2025Krylov}.  On the other two faces $\mathcal F_{\rm cap}^{\varsigma_0}(q)$ and $\mathcal F_{\rm lat}^{\varsigma_0}(q)$, the coarse estimate \eqref{reg:eq-whole-domain-coarse} gives $|\widehat w_q|\le CM_E$.  With $\varsigma_0$ fixed, we choose the barrier coefficient from Input~\ref{app:input-ds} so that its boundary inequality holds on each of the three faces.  The resulting constant $C_4$, independent of the later scale $\varsigma$ and of $\delta$, satisfies
\begin{equation}
 \widehat w_q  \le C_4|\tau_q|^2     +C_4M_E|\tau_q|^4     +C_4M_E\varrho  \quad\text{on }\omega_{\varsigma_0}(q).
 \label{reg:eq-fixed-scale-lateral}
\end{equation}
This is $\mathrm{(O_1)}$; cf.\ \cite[Lemma~5.9, (5.47)]{DinewSroka2025Krylov}.

\noindent\textbf{Step 4. Conditional small-scale closure and recentering.}
Now take $0<\varsigma<\varsigma_0/8$, and make a provisional premise: in every recentered chart the cap estimate
\begin{equation}
 \sup_{\mathcal F_{\rm cap}^{\varsigma}(q)}  |\nabla^2_{\rm Eucl}U|  \le c_0\varsigma^5M_E+C_{{\rm cap},\varsigma},
 \label{reg:eq-local-cap-input}
\end{equation}
holds, where $C_{{\rm cap},\varsigma}$ is independent of $(\varepsilon,\delta,m)$.  We first let $C_{\rm loc}\ge1$ absorb the fixed additive constants: they come from the density-root derivatives, the boundary potentials, the affine subtractions, \eqref{reg:eq-fixed-scale-lateral}, and the uniformly bounded coordinate changes.  We then fix a geometric constant $A_0\ge1$, large enough in terms of $c_*$ and the barrier and coordinate-comparison constants, and set
\begin{equation}
 C_\varsigma:=C_{\rm loc}+A_0\varsigma^{-4}C_{{\rm cap},\varsigma}.
 \label{reg:eq-inflated-cap-constant}
\end{equation}
Thus $C_{{\rm cap},\varsigma}$ is the raw cap constant, whereas $C_\varsigma$ is the enlarged constant used in the barrier.

We verify the three faces of $\omega_\varsigma(q)$ one by one.  On the physical face, \eqref{reg:eq-local-physical-quartic} gives
\[
 \widehat w_q\le C_4|\tau_q|^2+CM_E|\tau_q|^4;
\]
since $|\tau_q|\le\varsigma$, the quartic term is dominated by $\varsigma M_E|\tau_q|^2$, and the fixed quadratic term by $C_{\rm loc}|\tau_q|^2$.  On the lateral face, \eqref{reg:eq-fixed-scale-lateral} gives
\[
 \widehat w_q\le C_4\varsigma^2+CM_E\varsigma^4.
\]
For uniformly small $\varsigma$, these terms are dominated by the angular part of a barrier with coefficient $\varsigma M_E+C_\varsigma$.  Finally, on the cap, \eqref{reg:eq-local-cap-input} and the uniformly controlled affine subtraction give
\[
 \widehat w_q  \le A_1c_0\varsigma^5M_E+A_1C_{{\rm cap},\varsigma}
\]
for a fixed geometric $A_1$.  The negative $\varrho$-part of that barrier equals
\[
 -Kc_*\bigl(\varsigma^5M_E+C_\varsigma\varsigma^4\bigr).
\]
After decreasing $c_0$ and increasing $A_0$, if necessary, it dominates both cap terms; the factor $\varsigma^{-4}$ is the inflation in Lemma~5.10.

It follows that, for fixed geometric $K\gg1$ and $\eta>0$ small, the function
\[
 (\varsigma M_E+C_\varsigma)K  \bigl(-\varrho-\eta|\tau_q|^2\bigr)+\widehat w_q
\]
is nonpositive on all three faces.  We can now invoke the interior differential inequality specified in Input~\ref{app:input-ds}, and it gives
\begin{equation}
 \bigl(U_{(\xi_q)(\xi_q)}\bigr)_{\nu_{\rm E}}(q)  \le C_*\bigl(\varsigma M_E+C_\varsigma\bigr),
 \label{reg:eq-local-DS-normal-derivative}
\end{equation}
where $C_*$ is independent of $\varsigma$ and $\delta$.  This is $\mathrm{(O_2)}$; cf.\ \cite[Lemma~5.10, (5.50)--(5.56)]{DinewSroka2025Krylov}.

Let $q$ realize the double-normal maximum entering $M_E$, and recenter $\mathrm{(O_2)}$ at each nearby boundary point $w$.  As in \cite[(5.62)--(5.72)]{DinewSroka2025Krylov}, we choose the new affine fields to agree at $w$ with the original tangential projections; tangency identifies the double-normal coefficients, and all remaining transition, potential, normal, and affine-field errors are bounded.  Taylor expansion therefore gives
\[
 U_{\nu_{\rm E}}(z)  \le U_{\nu_{\rm E}}(q)+a_q\cdot\tau_q(z)  +C_*\bigl(\varsigma M_E+C_\varsigma\bigr)|\tau_q(z)|^2
\]
on $\partial D$ locally near $q$, with $|a_q|\le C$.  This recentered boundary envelope is now inserted into the separate fixed-normal barrier from \cite[(5.73)--(5.75)]{DinewSroka2025Krylov}:
\[
 v_q:=K\bigl(-\varrho-\eta|\tau_q|^2\bigr)  -\frac{a_q\cdot\tau_q}{\varsigma M_E+C_\varsigma}.
\]
Applying the maximum principle to $(\varsigma M_E+C_\varsigma)v_q+U_{\nu_{\rm E}}-U_{\nu_{\rm E}}(q)$ and then taking the inward-normal derivative at the contact point $q$ yields
\begin{equation}
 M_E\le C_*\bigl(\varsigma M_E+C_\varsigma\bigr).
 \label{reg:eq-local-quartic-output}
\end{equation}
The cap estimate below uses Lemma~\ref{reg:lem-uniform-collar-core-bridge} with parameters determined by $\varsigma$, but this does not make the absorption circular: $C_*$ is already fixed at the outer scale and is independent of $\varsigma$ and $\delta$, whereas $C_{{\rm cap},\varsigma}$ affects only the additive constant $C_\varsigma$.  We may therefore impose $C_*\varsigma<1/2$ before verifying the cap estimate.

\noindent\textbf{Step 5. Cap verification, absorption, and completion.}
This fifth step completes the cap verification, the absorption, and the wrap-up of the whole proof.  We first freeze the nested atlas and $\varsigma_0,c_*,c_0,C_{\rm wh},C_4,C_*$, and choose $0<\varsigma<\varsigma_0/8$ so that $c_*\varsigma^4<R_0$ and $C_*\varsigma<1/2$.  Applying Lemma~\ref{reg:lem-uniform-collar-core-bridge} with
\[
 r_*:=\frac{c_*\varsigma^4}{2}  \qquad\text{and}\qquad  \eta_0=\frac{c_0\varsigma^5}{2C_AC_{\rm pot}},
\]
we obtain $0<r_b<r_*$ and \eqref{reg:eq-weak-interior-bridge}; here $C_{\rm pot}$ is the fixed coordinate--covariant Hessian comparison constant.  We also fix a bound $C_\Phi$ for the local potentials and their required derivatives.  Since every small cap has $\varrho=c_*\varsigma^4>r_b$, it lies in $\{\varrho\ge r_b\}$, and
\[
 \begin{aligned}
 \sup_{\mathcal F_{\rm cap}^{\varsigma}(q)}  |\nabla^2_{\rm Eucl}U|  &\le C_{\rm pot}       \bigl(\eta_0M_\delta+C_{\eta_0,r_b}\bigr)       +C_\Phi+C_{\rm ov}\\
 &\le \frac{c_0}{2}\varsigma^5M_E+C_{{\rm cap},\varsigma},
 \end{aligned}
\]
where
\[
 C_{{\rm cap},\varsigma}  :=C_{\rm pot}C_{\eta_0,r_b}+C_\Phi+C_{\rm ov}+1.
\]
The notation $C_{{\rm cap},\varsigma}$ suppresses its dependence on the already fixed geometric and bridge parameters; it is independent of $(\varepsilon,\delta,m)$.  Hence \eqref{reg:eq-local-cap-input} holds in every recentered chart with $C_\varsigma$ defined by \eqref{reg:eq-inflated-cap-constant}.  Absorbing the first term in \eqref{reg:eq-local-quartic-output} gives $M_E\le C_\varsigma$, and \eqref{reg:eq-normal-dictionary} then gives $M_\delta\le C_\varsigma$. Together with \eqref{reg:eq-old-boundary} and admissibility, this proves the boundary estimate; \eqref{reg:eq-whole-domain-coarse} gives the global real-Hessian bound.

For the remaining Schur-complement statement, positivity of the adapted boundary block gives $0\le c^*B^{-1}c<d$.  The scalar $d$ is a fixed background term plus a fixed multiple of $u_{\nu\nu}+u_{J\nu,J\nu}$, so the boundary estimate gives $d\le C$. Lemma~\ref{reg:lem-smooth-parallel-geometry} makes all geometric constants uniform in $(\varepsilon,m)$, and every auxiliary parameter above is fixed before $\delta$.
\end{proof}

\begingroup
\footnotesize
\apptocmd{\thebibliography}{\setlength{\itemsep}{0pt}}{}{}
\bibliographystyle{alpha}
\bibliography{reference_for_submission}
\endgroup
\end{document}